\theoremstyle{definition}
\newtheorem{definition}{Definition}[section]
\newtheorem{example}[definition]{Example}
\newtheorem{problem}[definition]{Problem}
\theoremstyle{plain}
\newtheorem{corollary}[definition]{Corollary}
\newtheorem{lemma}[definition]{Lemma}
\newtheorem{proposition}[definition]{Proposition}
\newtheorem{theorem}[definition]{Theorem}
\numberwithin{equation}{section}
\newcommand*\defterm{\emph}
\newcommand*\diam[1]{\mathrm{diam}(#1)}
\newcommand*\dist[3]{d_{#1}(#2,#3)}
\newcommand*\girth[1]{\mathrm{girth}(#1)}
\newcommand*\knitdegree[1]{\mathrm{kd}(#1)}
\newcommand*\cliquenumber[2][]{\omega\parens[#1]{#2}}
\newcommand*\chromaticnumber[2][]{\chi\parens[#1]{#2}}
\newcommand*\degree[2]{\mathrm{deg}_{#1}#2}
\newcommand*\maxdegree[1]{\Delta(#1)}
\newcommand*\graphjointwo{\mathbin{\nabla}}
\newcommand*\commgraph[1]{\mathcal{G}(#1)}
\newcommand*\extendedcommgraph[1]{\mathcal{G}^*(#1)}
\newcommand*\generalcommgraph[2]{\mathcal{G}(#1,#2)}
\newcommand*\centre[1]{Z(#1)}
\DeclarePairedDelimiter{\abs}{\lvert}{\rvert}
\DeclarePairedDelimiter{\parens}{\lparen}{\rparen}
\DeclarePairedDelimiter{\bracks}{\lbrack}{\rbrack}
\DeclarePairedDelimiter{\braces}{\{}{\}}
\DeclarePairedDelimiter{\set}{\{}{\}}
\DeclarePairedDelimiterX{\gset}[2]{\{}{\}}{\,#1:#2\,}
\newcommand*\Xn{\set{1,\ldots,n}}
\newcommand*\X[1]{\set{1,\ldots,#1}}
\newcommand*\Rees[4]{\mathcal{M}[#1; \allowbreak #2,\allowbreak #3; \allowbreak #4]}
\newcommand*\Reeszero[4]{\mathcal{M}_0[#1; \allowbreak #2,\allowbreak #3; \allowbreak #4]}
\newcommand*\simplifiedgraph[3]{\mathcal{G}(#1,\allowbreak #2,\allowbreak #3)}
\newcommand*\submatrix{$\leftrightarrow$-submatrix} 
\newcommand*\submatrices{$\leftrightarrow$-submatrices}
\newcommand*\equivalent{$\leftrightarrow$-equivalent}
\newcommand*\zeroindex[2]{\zeta(#2,#1)}
\newcommand*\zeroclosuresubmatrix[2]{$0$-closure $(#2,#1)$-submatrix of $P$}
\newcommand*\zeroclosuresub{$0$-closure submatrix of $P$}
\newcommand*\timeszero[1]{\overline{#1}}
\newcommand*\Psubmatrix[3]{#1[#3|#2]}
\newcommand*{\sizeddelimiter}[2]{\bBigg@{#1}#2}
\newcommand*\blocktimes[3]{
	\begin{tikzpicture}[baseline=#3]
		\pgfmathsetlengthmacro{\d}{#1}
		\pgfmathsetlengthmacro{\p}{#2}
		\begin{scope}[node font=\scriptsize,inner sep=0pt]
			\node (ne) at (\p,\d) {\(\times\)};
			\node (nw) at (-\p,\d) {\(\times\)};
			\node (sw) at (-\p,-\d) {\(\times\)};
			\node (se) at (\p,-\d) {\(\times\)};
		\end{scope}
		\begin{scope}[dash pattern=on 0pt off 1.83pt,line cap=round, thick]
			\draw (sw) -- (se);
			\draw (sw) -- (nw);
			\draw (se) -- (ne);
			\draw (nw) -- (ne);
			\draw (nw) -- (se);
		\end{scope}
	\end{tikzpicture}
}
\newcommand*\rowtimes[2]{
	\begin{tikzpicture}[baseline=#2]
		\pgfmathsetlengthmacro{\d}{#1}
		\begin{scope}[node font=\scriptsize,inner sep=0pt]
			\node (ne) at (\d,\d) {\(\times\)};
			\node (nw) at (-\d,\d) {\(\times\)};
		\end{scope}
		\begin{scope}[dash pattern=on 0pt off 2.3pt,line cap=round, thick]
			\draw (nw) -- (ne);
		\end{scope}
	\end{tikzpicture}
}
\newcommand\columntimes{
	\begin{tikzpicture}[baseline=0em]
		\pgfmathsetlengthmacro{\d}{0.7em}
		\begin{scope}[node font=\scriptsize,inner sep=0pt]
			\node (ne) at (\d,\d) {\(\times\)};
			\node (se) at (\d,-\d) {\(\times\)};
		\end{scope}
		\begin{scope}[dash pattern=on 0pt off 1.95pt,line cap=round, thick]
			\draw (se) -- (ne);
		\end{scope}
	\end{tikzpicture}
}
\setlist[description]{font=\normalfont\itshape}
\tikzset{
	vertex/.style={
		circle,
		minimum size=1.8mm,
		fill,
		inner sep=0,
		outer sep=0,
	},
	edge/.style={
		line width=.25mm,
	}
}
\begin{document}

\title{Commuting graphs of completely $\mathbf{0}$-simple semigroups}

\author{Tânia Paulista}
\address[T. Paulista]{%
Center for Mathematics and Applications (NOVA Math) \& Department of Mathematics\\
NOVA School of Science and Technology\\
NOVA University of Lisbon\\
2829--516 Caparica\\
Portugal
}
\email{%
tpl.paulista@gmail.com
}
\thanks{This work is funded by national funds through the FCT -- Fundação para a Ciência e a Tecnologia, I.P., under the scope of the projects UID/297/2025 and UID/PRR/297/2025 (Center for Mathematics and Applications - NOVA Math). The author is also funded by national funds through the FCT -- Fundação para a Ciência e a Tecnologia, I.P., under the scope of the studentship 2021.07002.BD}

\thanks{The author is thankful to her supervisors António Malheiro and Alan J. Cain for all the support, encouragement and guidance; and also for reading a draft of this paper}

\subjclass[2020]{Primary 05C25; Secondary  05C12, 05C15, 05C38, 05C40, 20M99}

\begin{abstract}
	The aim of this paper is to study commuting graphs of completely $0$-simple semigroups, using the characterization of these semigroups as $0$-Rees matrix semigroups over a groups. We establish a method to decide whether the commuting graph of this semigroup construction is connected or not. If it is not connected, we also supply a way to identify the connected components of the commuting graph. We show how to obtain the diameter of the commuting graph (when it is connected) and the diameters of the connected components of the commuting graph (when it is not connected). Moreover, we obtain the clique number and girth of the commuting graph of such a semigroup, as well as two upper bounds (either of which can be the best in different situations) for its chromatic number. We also determine the knit degree of such a semigroup.
	
	Finally, we use the results regarding the properties of the commuting graph of a $0$-Rees matrix semigroup over a group to determine the set of possible values for the diameter, clique number, girth, chromatic number and knit degree of the commuting graph of a completely $0$-simple semigroup.
\end{abstract}

\maketitle

\section{Introduction}

The commuting graph of a semigroup $S$ on a non-empty subset $T$ of $S$ is the simple graph whose vertex set is $T$, and where two distinct vertices $x,y\in T$ are adjacent if and only if $xy=yx$. We denote this graph by $\generalcommgraph{S}{T}$. Over the years there have been various choices for $T$: the most common ones seem to be $S\setminus\centre{S}$ and $S$ (where $\centre{S}=\gset{x\in S}{xy=yx \text{ for all } y\in S}$ is the center of $S$). If $T=S\setminus\centre{S}$, then we call this graph the commuting graph of $S$ and, if $T=S$, we call this graph the extended commuting graph of $S$.

Commuting graphs seem to have been used for the first time in 1955 by Brauer and Fowler \cite{First_paper_commuting_graphs}, who showed that, if $G$ is a finite group of even order that contains more than one conjugacy class of involutions, then the distance between two involutions in $\generalcommgraph{G}{G\setminus\set{1_G}}$ is at most $3$.

Ever since commuting graphs were first introduced, they have been studied from different perspectives. For instance, several authors have determined various properties of the commuting graphs of important groups and semigroups. Among these groups/semigroups we highlight the symmetric group \cite{Commuting_graph_I_X, Symmetric_group, Diameter_commuting_graph_symmetric_group, Commuting_graph_symmetric_alternating_groups}, the alternating group \cite{Commuting_graph_symmetric_alternating_groups, Alternating_group}, the transformation semigroup \cite{Commuting_graph_T_X} and the symmetric inverse semigroup \cite{Commuting_graph_I_X}.

Several authors contributed to establishing that, if $G$ is a finite non-abelian simple group and $H$ is a group, then their commuting graphs are isomorphic if and only if $H\simeq G$ \cite{Isomorphic_commuting_graphs_simple_group_alternating, Isomorphic_commuting_graphs_simple_group_sporadic, Isomorphic_commuting_graphs_simple_group_Lie_type}. Cameron showed that every graph is isomorphic to an induced subgraph of the extended commuting graph of a finite group \cite{Cameron_commuting_graphs_notes}. Recently, Arvind et al. \cite{Graphs_arise_as_commuting_graphs_groups} developed a quasipolynomial-time algorithm that decides whether a given simple graph is the extended commuting graph of some group. Another interesting result is that a simple graph is isomorphic to the commuting graph of some semigroup if and only if it contains at least two vertices and no vertex is adjacent to all the other vertices of the graph \cite{Graphs_that_arise_as_commuting_graphs_of_semigroups, Graphs_that_arise_as_commuting_graphs_of_semigroups_2}.

Other authors focused on the problem of determining the possible values for some properties (diameter, clique number, girth, chromatic number, knit degree) of the commuting graphs of semigroups/groups. In 2011, Araújo, Kinyon and Konieczny \cite{Commuting_graph_T_X} proved that, for each $n\in\mathbb{N}$ such that $n\geqslant 2$, there is a semigroup whose commuting graph has diameter equal to $n$. More recently, Cutolo \cite{Group_whose_commuting_graph_has_diameter_n} established the same result for groups. Araújo, Kinyon and Konieczny \cite{Commuting_graph_T_X} also proved that for each $n\in\mathbb{N}\setminus\set{1,3}$ there is a semigroup whose knit degree is equal to $n$ and, in 2016, Bauer and Greenfeld \cite{Graphs_that_arise_as_commuting_graphs_of_semigroups} constructed a semigroup of knit degree $3$. It follows from \cite{Graphs_that_arise_as_commuting_graphs_of_semigroups, Graphs_that_arise_as_commuting_graphs_of_semigroups_2} that the set of possible values for the clique/chromatic number of the commuting graph of a semigroup is $\mathbb{N}$, and that the set of possible values for the girth of the commuting graph of a semigroup is $\mathbb{N}\setminus\set{1,2}$. Recently, the present author \cite{Completely_simple_semigroups_paper} contributed to this problem by showing that every positive integer is a possible values for the clique/chromatic number of the commuting graph of a completely simple semigroup, that $3$ is the unique possible value for the girth of the commuting graph of a completely simple semigroup, and that there are no possible values for the knit degree of a completely simple semigroup.

Commuting graphs of semigroups have also been used to solve some group/semigroup problems. For example, the study of the graphs $\generalcommgraph{G}{C}$, where $G$ is a group and $C$ is a conjugacy class of $3$-transpositions, played an important role in the discovery of three sporadic simple groups (now known as the Fischer groups) \cite{Sporadic_simple_groups}. Commuting graphs and extended commuting graphs were also used to determine an upper bound for the size of the abelian subgroups of a finite group \cite{Importance_commuting_graphs_1}. Furthermore, commuting graphs were important in establishing some results concerning finite dimensional division algebras (see \cite{Importance_commuting_graphs_2, Importance_commuting_graphs_3, Importance_commuting_graphs_5, Importance_commuting_graphs_6, Importance_commuting_graphs_4}). More recently, the notions of left path and knit
degree were introduced (for commuting graphs) to answer (positively, except in one case) a conjecture made by Schein (see \cite{Schein_conjecture}) in the process of determining a characterization for
$r$-semisimple bands \cite{Commuting_graph_T_X}.

The aim of this paper is to study commuting graphs of completely $0$-simple semigroups. These semigroups are `close to' groups with a zero adjoined. In fact, completely $0$-simple semigroups and groups with a zero adjoined share several properties: both contain a zero and their unique ideals are the semigroup itself and the singleton formed by the zero; and all their non-zero idempotents are minimal. Completely $0$-simple semigroups are also `close to' completely simple semigroups: completely simple semigroups become completely $0$-simple by adjoining a zero element (although most completely $0$-simple semigroups do not arise in this way). In \cite{Completely_simple_semigroups_paper} the commuting graphs of completely simple semigroups were investigated through the determination of properties of the commuting graph of a Rees matrix semigroup over a group --- a semigroup construction that characterizes completely simple semigroups.

The commuting graphs of particular completely $0$-simple semigroups have already been investigated by other authors: in \cite{Brandt_semigroups_1, Brandt_semigroups_2} Kumar, Dalal and Pandey determined various properties of the commuting graphs of some Brandt semigroups --- a semigroup construction used to characterize the semigroups that are simultaneously completely $0$-simple and inverse. There is also a semigroup construction that characterizes completely $0$-simple semigroups, which is called the $0$-Rees matrix semigroup over a group (this semigroup construction is a more complex version of a Rees matrix semigroup over a group). According to the Rees--Suschkewitsch Theorem, a semigroup is completely $0$-simple if and only if it is isomorphic to some $0$-Rees matrix semigroup over a group. This way, we conduct the study of commuting graphs of completely $0$-simple semigroups through the analysis of commuting graphs of $0$-Rees matrix semigroups over groups.

This paper contains ten sections. Section~\ref{Preliminaries} contains definitions and notations that will be used frequently in the paper and Section~\ref{sec: 0Rees} contains some basic properties regarding the commuting graph of a $0$-Rees matrix semigroup over a group. The succeeding five sections are occupied with determining properties of this commuting graph. More specifically, in Section~\ref{sec: connectedness diameter 0Rees} we provide a way to decide when the commuting graph is connected and we show how to find the connected components of the commuting graph (when it is not connected). Furthermore, we describe a method for determining the diameter of the commuting graph (respectively, connected components of the commuting graph) when it is connected (respectively, it is not connected). We also determine the clique number of the commuting graph (Section~\ref{sec: clique number 0Rees}); study the existence of cycles and determine the girth of the commuting graph (Section~\ref{sec: girth 0Rees}); find upper bounds for the chromatic number (Section~\ref{sec: chromatic number 0Rees}) and study the existence left paths, which will lead to the determination of the knit degree (Section~\ref{sec: knit degree 0Rees}). 

In the process of determining these properties, we end up highlighting how the group, index sets and matrix chosen to construct the $0$-Rees matrix semigroup over a group influence the characteristics of its commuting graph. Using the information gathered in Sections~\ref{sec: connectedness diameter 0Rees}--\ref{sec: knit degree 0Rees}, we then answer some questions regarding commuting graphs of completely $0$-simple semigroups in Sections~\ref{sec: completely 0-simple smg}. More precisely, we identify the possible values for the diameter, clique number, chromatic number and girth of commuting graphs of completely $0$-simple semigroups, as well as the possible values for the knit degree of completely $0$-simple semigroups.

Finally, in Section~\ref{sec: open problems} we discuss some open problems concerning commuting graphs of completely $0$-simple semigroups.

This paper is based on Chapter 9 of the author's Ph.D. thesis~\cite{My_thesis}. A paper in preparation, based on Chapters 10 and 11 of the author's Ph.D. thesis, will examine commuting graphs of inverse semigroups and completely regular semigroups \cite{Commuting_graphs_inverse_completely_regular}.

\section{Preliminaries} \label{Preliminaries}

For general background on graphs see, for example, \cite{Graphs_Wilson}. For general background on semigroups we use \cite{Nine_chapters_Cain}.

\subsection{Graphs}\label{Subsection graphs}

A \defterm{simple graph} $G=(V,E)$ consists of a non-empty set $V$ --- whose elements are called \defterm{vertices} --- and a set $E$ --- whose elements are called \defterm{edges} --- formed by $2$-subsets of $V$. Throughout this subsection we will assume that $G=(V,E)$ is a simple graph.

Let $x$ and $y$ be vertices of $G$. If $\set{x,y}\in E$, then we say that the vertices $x$ and $y$ are \defterm{adjacent}, and that the vertices $x$ and $y$ are \defterm{incident} with the edge $\set{x,y}$.


If $H=\parens{V',E'}$ is also a simple graph, then we say that $G$ and $H$ are \defterm{isomorphic} if there exists a bijection $\varphi: V\to V'$ such that for all $x,y\in V$ we have $\set{x,y}\in E$ if and only if $\set{x\varphi,y\varphi}\in E'$ (that is, for all $x,y\in V$ we have that $x$ and $y$ are adjacent in $G$ if and only if $x\varphi$ and $y\varphi$ are adjacent in $H$).


A simple graph $H=\parens{V',E'}$ is a \defterm{subgraph} of $G$ if $V'\subseteq V$ and $E'\subseteq E$. Note that, since $H$ is a simple graph, the elements of $E'$ are $2$-subsets of $V'$.

Given $V'\subseteq V$, the \defterm{subgraph induced by $V'$} is the subgraph of $G$ whose set of vertices is $V'$ and where two vertices are adjacent if and only if they are adjacent in $G$ (that is, the set of edges of the induced subgraph is $\braces{\braces{x,y}\in E: x,y\in V'}$).

A \defterm{complete graph} is a simple graph where all distinct vertices are adjacent to each other. The unique (up to isomorphism) complete graph with $n$ vertices is denoted $K_n$.


A \defterm{path} in $G$ from a vertex $x$ to a vertex $y$ is a sequence of pairwise distinct vertices (except, possibly, $x$ and $y$) $x=x_1,x_2,\ldots,x_n=y$ such that $\braces{x_1,x_2}, \braces{x_2,x_3},\ldots, \braces{x_{n-1},x_n}$ are pairwise distinct edges of $G$. The \defterm{length} of the path is the number of edges of the path; thus, the length of our example path is $n-1$. If $n=1$, then we call the path --- which has only one vertex and whose length is $0$ --- a \defterm{trivial path}. If $x=y$ then we call the path a \defterm{cycle}. Whenever we want to mention a path, we will write that $x=x_1-x_2-\cdots-x_n=y$ is a path (instead of writing that $x=x_1,x_2,\ldots,x_n=y$ is a path). The \defterm{distance} between the vertices $x$ and $y$, denoted $\dist{G}{x}{y}$, is the length of a shortest path from $x$ to $y$. If there is no such path between the vertices $x$ and $y$, then the distance between $x$ and $y$ is defined to be infinity, that is, $\dist{G}{x}{y}=\infty$.

We say that $G$ is \defterm{connected} if for all vertices $x,y\in V$ there is a path from $x$ to $y$. We can partition $V$, the vertex set of $G$, into several non-empty sets $V_1,\ldots,V_n$ such that
\begin{enumerate}
	\item For all $i\in\Xn$ and vertices $x,y\in V_i$ there is a path from $x$ to $y$.
	
	\item For all distinct $i,j\in\Xn$ and $x\in V_i$ and $y\in V_j$ there is no path from $x$ to $y$.
\end{enumerate}
Then each subgraph of $G$ induced by $V_i$, where $i\in\Xn$, is connected and we call it a \defterm{connected component} of $G$. It is clear that $G$ is connected if and only if $G$ contains exactly one connected component.


The \defterm{diameter} of $G$, denoted $\diam{G}$, is the maximum distance between vertices of $G$, that is, $\diam{G}=\max\gset{\dist{G}{x}{y}}{x,y\in V}$. We notice that the diameter of $G$ is finite if and only if $G$ is connected.

If $x$ and $y$ are vertices of $G$, then we are going to use the notation $x\sim y$ to mean that either $x=y$ or $\set{x,y}\in E$. Note that if $x_1-x_2-\cdots-x_n$ is a path, then we have $x_1\sim x_2 \sim\cdots\sim x_n$. However, if we have $x_1\sim x_2 \sim\cdots\sim x_n$, then that sequence of vertices does not necessarily form a path because there might exist distinct $i,j\in\Xn$ such that $x_i=x_j$.

Given a vertex $x\in V$ of $G$, the \defterm{degree of $x$} is the number of edges of $G$ that are incident with $x$. We denote the degree of $x$ by $\degree{G}{x}$ and we denote by $\maxdegree{G}$ the maximum degree of a vertex of $G$, that is, $\maxdegree{G}=\max\gset{\degree{G}{x}}{x\in V}$.

Let $K\subseteq V$. We say that $K$ is a \defterm{clique} in $G$ if $\braces{x,y}\in E$ for all $x,y\in K$, that is, if the subgraph of $G$ induced by $K$ is complete. The \defterm{clique number} of $G$, denoted $\cliquenumber{G}$, is the size of a largest clique in $G$, that is, $\cliquenumber{G}=\max\left\{|K|: K \text{ is a clique in } G\right\}$.

If the graph $G$ contains cycles, then the \defterm{girth} of $G$, denoted $\girth{G}$, is the length of a shortest cycle in $G$. If $G$ contains no cycles, then $\girth{G}=\infty$.

Let $n\in\mathbb{N}$. We say that $G$ is \defterm{$n$-colourable} if it is possible to colour the vertices of $G$ with $n$ colours in a way such that adjacent vertices have different colours. More formally, we say that $G$ is $n$-colourable if there exists a set $C$ of size $n$ and a map $\varphi: V \to C$ such that for all $c\in C$ the set $\set{c}\varphi^{-1}$ contains no adjacent vertices of $G$. The idea of this map is that each element of $C$ represents a colour and for each $v\in V$ we have that $v\varphi$ is the colour assigned to vertex $v$. Hence for each $c\in C$ the set $\set{c}\varphi^{-1}$ contains all the vertices of $G$ assigned with the colour $c$. The smallest $n\in\mathbb{N}$ such that $G$ is $n$-colourable is called \defterm{chromatic number} of $G$ and it is denoted by $\chromaticnumber{G}$.

The following two results are known upper bounds for the chromatic number of a simple graph.

\begin{lemma}\label{preli: 0Rees chromatic number}
	Let $G=(V,E)$ be a simple graph. Then $\chromaticnumber{G}\parens{\chromaticnumber{G}-1}\leqslant 2\abs{E}$.
\end{lemma}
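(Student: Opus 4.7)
The plan is to use the standard fact that, in any optimal proper colouring of $G$, every pair of colour classes must be joined by at least one edge. More precisely, let $k=\chromaticnumber{G}$ and fix a proper colouring $\varphi\colon V\to C$ witnessing this, where $\abs{C}=k$. For each pair of distinct colours $c_1,c_2\in C$, I would argue that there exists some edge $\set{x,y}\in E$ with $x\varphi=c_1$ and $y\varphi=c_2$: if not, then swapping all vertices in $\set{c_1}\varphi^{-1}$ to the colour $c_2$ would still yield a proper colouring (since no vertex in $\set{c_1}\varphi^{-1}$ is adjacent to any vertex in $\set{c_2}\varphi^{-1}$, and no two vertices within $\set{c_1}\varphi^{-1}$ are adjacent because $\varphi$ was already proper), producing a proper colouring of $G$ using only $k-1$ colours, contradicting the minimality of $k$.

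Next I would use this to count edges. Choose, for each unordered pair $\set{c_1,c_2}$ of distinct colours in $C$, one edge $e_{\set{c_1,c_2}}\in E$ joining a vertex of colour $c_1$ to one of colour $c_2$. Since an edge determines the (unordered) pair of colours of its endpoints, distinct pairs of colours give distinct edges, so the map $\set{c_1,c_2}\mapsto e_{\set{c_1,c_2}}$ is injective from the $2$-subsets of $C$ into $E$. Therefore
\begin{equation*}
\abs{E}\geqslant \binom{k}{2}=\frac{k(k-1)}{2}=\frac{\chromaticnumber{G}\parens*{\chromaticnumber{G}-1}}{2},
\end{equation*}
which rearranges to the desired inequality $\chromaticnumber{G}\parens*{\chromaticnumber{G}-1}\leqslant 2\abs{E}$.

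This argument is entirely routine; the only subtlety worth stating carefully is the recolouring step, since one has to verify that merging the two colour classes really does preserve properness, which follows from the assumption that no edge joins them together with the fact that $\varphi$ was already proper on each class individually. I do not anticipate any genuine obstacle here, as the result is a textbook consequence of the pigeonhole-style observation on colour classes.
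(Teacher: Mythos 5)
Your proof is correct. The paper states this lemma as a known result and gives no proof of its own, so there is nothing to compare against; your argument — that in an optimal colouring every pair of colour classes must be joined by an edge (else the two classes could be merged), whence $\abs{E}\geqslant\binom{\chromaticnumber{G}}{2}$ — is the standard textbook derivation, and the recolouring step is justified exactly as you describe.
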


\begin{theorem}[Brooks' Theorem]\label{preli: ORees Brooks}
	Let $G$ be a connected simple graph. If $G$ is neither a complete graph nor an odd cycle, then $\chromaticnumber{G}\leqslant\maxdegree{G}$.
\end{theorem}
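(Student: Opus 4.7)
The plan is to prove $\chromaticnumber{G}\leqslant\maxdegree{G}$ by constructing a vertex ordering along which greedy colouring succeeds. Write $\Delta = \maxdegree{G}$. First I would dispose of the small cases: if $\Delta\leqslant 2$ then the connected graph $G$ is a path or a cycle, and since we have excluded complete graphs ($K_1$, $K_2$, $K_3$) and odd cycles, $G$ is either a path on at least three vertices or an even cycle, so $\chromaticnumber{G}=2\leqslant\Delta$. I may henceforth assume $\Delta\geqslant 3$.

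The non-regular case is easy. Pick a vertex $v$ with $\degree{G}{v}<\Delta$ and order the vertices $v_1,\ldots,v_n$ so that $v_n=v$ and every $v_i$ with $i<n$ has at least one neighbour later in the ordering (take the reverse of a breadth-first-search ordering rooted at $v$). Greedy colouring — at step $i$, assign $v_i$ the smallest colour not already used on its earlier neighbours — then uses at most $\Delta$ colours: for $i<n$ at least one neighbour of $v_i$ is still uncoloured, and for $v_n=v$ we have $\degree{G}{v_n}<\Delta$.

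The heart of the argument, and the main obstacle, is the $\Delta$-regular case. Here the strategy is to find a \emph{good triple} $(v,u,w)$ with $u,w$ neighbours of $v$, with $u$ and $w$ non-adjacent, and such that $G-\set{u,w}$ remains connected. Granted such a triple, order the vertices so that $v_1=u$, $v_2=w$, $v_n=v$, and each intermediate $v_i$ has a later neighbour (reverse BFS from $v$ inside $G-\set{u,w}$). The greedy algorithm then assigns $u$ and $w$ the same colour, each intermediate $v_i$ uses at most $\Delta$ colours because one of its neighbours comes later, and finally $v$ sees at most $\Delta-1$ distinct colours on its neighbours (since $u$ and $w$ agree), giving $\chromaticnumber{G}\leqslant\Delta$.

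It remains to produce the good triple; I would split on the vertex-connectivity $\kappa(G)$. If $\kappa(G)\geqslant 3$, pick a vertex $v$ with two non-adjacent neighbours $u, w$ (one exists because otherwise every closed neighbourhood is a clique and $G=K_{\Delta+1}$), and note that $G-\set{u,w}$ is connected by $3$-connectivity. The delicate sub-case is $\kappa(G)\leqslant 2$: I would inspect the block-cut tree of $G$, select two distinct endblocks, and — using $\Delta$-regularity and $\Delta\geqslant 3$ — locate suitable non-cut vertices $u$ and $w$ in those endblocks together with a common neighbour $v$ playing the role of the root. Verifying that $G-\set{u,w}$ remains connected, particularly when $\kappa(G)=2$ and one must argue that no $2$-cut is created by removing $u$ and $w$, is where the subtlest bookkeeping of the proof lies.
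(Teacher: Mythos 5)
The paper does not prove this statement at all: Brooks' Theorem is quoted in the preliminaries as a classical result and used as a black box, so there is no in-paper proof to compare yours against. Your proposal is the standard Lov\'{a}sz-style greedy proof, and the non-regular case, the reduction to a ``good triple'' $(v,u,w)$, and the $3$-connected case are all correct as written.

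There is, however, a genuine gap in the one place you yourself flag as delicate, and it is not just bookkeeping: the plan for $\kappa(G)\leqslant 2$, as literally described, cannot be executed. You propose to take two distinct endblocks of $G$ and find non-cut vertices $u$, $w$ in them with a \emph{common neighbour} $v$. But an edge of $G$ lies inside a single block, so a common neighbour of $u$ and $w$ would have to belong to both endblocks, i.e.\ be their shared cut vertex --- and two endblocks of the block-cut tree need not share a cut vertex at all (take a chain of three blocks $B_1$--$B_2$--$B_3$: the endblocks $B_1$ and $B_3$ are disjoint, so no such $v$ exists). Moreover, even when $v$ is a cut vertex with neighbours $u$, $w$ in different components of $G-v$, the graph $G-\set{u,w}$ need not be connected unless $u$ and $w$ are chosen non-separating \emph{within their respective lobes}. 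The standard repairs are: for $\kappa(G)=1$, abandon the good triple and instead split $G$ at a cut vertex $x$ into lobes $G_i$, note $\degree{G_i}{x}<\Delta$ so each $G_i$ falls under your non-regular case, and permute colours to agree at $x$; for $\kappa(G)=2$, choose $v$ inside a $2$-cut so that $G-v$ has a cut vertex, and take $u$, $w$ to be neighbours of $v$ that are non-cut vertices lying in two different endblocks \emph{of $G-v$} (such neighbours exist because $G$ is $2$-connected, and $G-\set{u,w}$ stays connected because $\Delta\geqslant 3$ gives $v$ a third neighbour into $G-v-u-w$). Without one of these arguments the regular low-connectivity case, which is the crux of the theorem, is unproven.
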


Let $G=(V,E)$ and $H=(V',E')$ be two simple graphs. We can assume, without loss of generality, that $V\cap V'=\emptyset$. The \defterm{graph join} of $G$ and $H$, denoted $G\graphjointwo H$, is defined to be the (simple) graph whose set of vertices is $V\cup V'$ and whose set of edges is $E\cup E'\cup\gset{\set{x,y}}{x\in V \text{ and } y\in V'}$. This means that, in the graph $G\graphjointwo H$, two vertices $x,y\in V\cup V'$ are adjacent if and only if one of the following conditions is satisfied:
\begin{enumerate}
	\item $x\in V$ and $y\in V'$ (or vice versa).
	\item $x,y\in V$ and $\braces{x,y}\in E$ (or $x,y\in V'$ and $\braces{x,y}\in E'$).
\end{enumerate}

The next lemma, which is easy to prove, shows the relationship between the clique numbers of two graphs and of their graph join.  

\begin{lemma}\label{preli: graph join clique/chromatic numbers}
	Let $G$ and $H$ be two simple graphs. Then $\cliquenumber{G\graphjointwo H}=\cliquenumber{G}+\cliquenumber{H}$.
\end{lemma}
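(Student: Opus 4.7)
The plan is to establish the equality by proving the two inequalities separately, using the definition of the graph join directly. Recall that in $G \graphjointwo H$, every vertex of $G$ is adjacent to every vertex of $H$, while the adjacencies within $V$ and within $V'$ are exactly those of $G$ and $H$ respectively.

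For the lower bound $\cliquenumber{G\graphjointwo H}\geqslant\cliquenumber{G}+\cliquenumber{H}$, I would take a maximum clique $K_1$ in $G$ and a maximum clique $K_2$ in $H$, and show that $K_1\cup K_2$ is a clique in $G\graphjointwo H$. Any two vertices of $K_1$ are adjacent in $G$, hence in $G\graphjointwo H$; any two vertices of $K_2$ are adjacent in $H$, hence in $G\graphjointwo H$; and any vertex of $K_1\subseteq V$ is adjacent to any vertex of $K_2\subseteq V'$ by the definition of the graph join. Since $V\cap V'=\emptyset$, we have $\abs{K_1\cup K_2}=\abs{K_1}+\abs{K_2}=\cliquenumber{G}+\cliquenumber{H}$, which yields the desired inequality.

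For the upper bound $\cliquenumber{G\graphjointwo H}\leqslant\cliquenumber{G}+\cliquenumber{H}$, I would take a maximum clique $K$ in $G\graphjointwo H$ and consider the partition $K=K_1\cup K_2$ where $K_1=K\cap V$ and $K_2=K\cap V'$. The key observation is that any two vertices of $K_1$ are adjacent in $G\graphjointwo H$, but the only edges of $G\graphjointwo H$ with both endpoints in $V$ are the edges of $G$, so $K_1$ is a clique in $G$; analogously $K_2$ is a clique in $H$. Therefore $\abs{K}=\abs{K_1}+\abs{K_2}\leqslant\cliquenumber{G}+\cliquenumber{H}$, which combined with the previous inequality gives the result.

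The argument is almost entirely a matter of carefully unpacking the definitions of graph join and clique number, so I do not anticipate any real obstacle; the only mild subtlety is making explicit that edges of $G\graphjointwo H$ whose endpoints both lie in $V$ (respectively $V'$) must come from $E$ (respectively $E'$), which is precisely why the induced cliques $K_1$ and $K_2$ live in the original graphs.
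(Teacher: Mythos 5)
Your proof is correct. The paper states this lemma without proof (it is described as "easy to prove"), and your argument — the lower bound via the union of maximum cliques of $G$ and $H$, and the upper bound via partitioning a maximum clique of $G\graphjointwo H$ into its intersections with $V$ and $V'$ — is precisely the standard argument the paper implicitly relies on.
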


\subsection{Commuting graphs and extended commuting graphs}\label{Subsection commuting graphs}

In this subsection we present the two most common definitions of commuting graph of a semigroup. In both definitions, the condition that determines adjacency of vertices is the same, but the vertex set is distinct: in one definition the vertices of the graph are the non-central elements of the semigroup --- we call this graph the commuting graph of the semigroup --- and in the other one the vertices are all the elements of the semigroup --- we call this graph the extended commuting graph of the semigroup. These terminologies were also used in \cite{Completely_simple_semigroups_paper}.

The \defterm{center} of a semigroup $S$ is the set
\begin{displaymath}
	\centre{S}=\gset{x\in S}{xy=yx \text{ for all } y\in S}.
\end{displaymath}

Let $S$ be a finite non-commutative semigroup. The \defterm{commuting graph} of $S$, denoted $\commgraph{S}$, is the simple graph whose set of vertices is $S\setminus Z(S)$ and where two distinct vertices $x,y\in S\setminus Z(S)$ are adjacent if and only if $xy=yx$. (This is the definition of commuting graph used in \cite{Commuting_graph_T_X, Commuting_graph_I_X, Commuting_graph_symmetric_alternating_groups}, for example.)

Let $S$ be a finite semigroup. The \defterm{extended commuting graph} of $S$, denoted $\extendedcommgraph{S}$, is the simple graph whose set of vertices is $S$ and where two distinct vertices $x,y\in S$ are adjacent if and only if $xy=yx$. (This is the definition of commuting graph used in \cite{Graphs_arise_as_commuting_graphs_groups, Cameron_commuting_graphs_notes, Commuting_graphs_groups_split}, for example.)

It follows from both definitions that, for all vertices $x$ and $y$ of $\commgraph{S}$ (respectively $\extendedcommgraph{S}$), we have $x\sim y$ if and only if $xy=yx$.

Note that in the first definition the semigroup must be non-commutative (because otherwise we would obtain an empty vertex set), but in the second one we allow the semigroup to be commutative. Furthermore, as a consequence of the first definition we have $\diam{\commgraph{S}}\geqslant 2$ because, since $S$ must be non-commutative, then there exist $x,y\in S$ such that $xy\neq yx$, which implies that $\diam{\commgraph{S}}\geqslant\dist{\commgraph{S}}{x}{y}>1$. Additionally, the second definition implies that the center of the semigroup is a clique in the extended commuting graph of the semigroup.

The next lemma, which is easy to prove, gives a characterization of the extended commuting graph of a semigroup. When the semigroup is not commutative, this characterization shows a relationship between the commuting graph and the extended commuting graph of the semigroup.

\begin{lemma}\label{preli: commgraph and extended commgraph}
	Let $S$ be a finite semigroup.
	\begin{enumerate}
		\item If $S$ is commutative, then $\extendedcommgraph{S}$ is isomorphic to $K_{\abs{S}}$.
		
		\item If $S$ is non-commutative, then $\extendedcommgraph{S}$ is isomorphic to $K_{\abs{Z\parens{S}}}\graphjointwo\commgraph{S}$.
	\end{enumerate}
\end{lemma}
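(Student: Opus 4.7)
The plan is to construct, in each case, the obvious bijection between the vertex sets and then verify edge-preservation by a short case-split. The point in both parts is that the condition for two distinct elements of $S$ to be adjacent is the same in $\commgraph{S}$ and in $\extendedcommgraph{S}$, namely $xy=yx$; only the vertex set changes.

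For part (1), commutativity of $S$ gives $xy=yx$ for every pair of distinct $x,y\in S$, so every pair of distinct vertices of $\extendedcommgraph{S}$ is adjacent. This is precisely the defining property of $K_{\abs{S}}$, and so the identity map on $S$ realises the isomorphism $\extendedcommgraph{S}\simeq K_{\abs{S}}$.

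For part (2), I would write $S=Z(S)\sqcup(S\setminus Z(S))$ and (after renaming vertices to satisfy the disjointness convention in the definition of graph join) take the map $\varphi$ that sends $Z(S)$ to the vertex set of $K_{\abs{Z(S)}}$ and $S\setminus Z(S)$ to the vertex set of $\commgraph{S}$. Edge-preservation follows from three cases for distinct $x,y\in S$. If $x,y\in Z(S)$, then $xy=yx$ by the definition of the centre, and the images are adjacent inside the complete graph $K_{\abs{Z(S)}}$. If exactly one of $x,y$ lies in $Z(S)$, then $xy=yx$ again holds by the definition of the centre, and the images are adjacent by the definition of graph join, since vertices in the two halves of a join are always adjacent. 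If $x,y\in S\setminus Z(S)$, then adjacency in $\extendedcommgraph{S}$ and adjacency in $\commgraph{S}$ are literally the same condition $xy=yx$.

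No step of this argument presents any real difficulty; the only point requiring a moment's thought is the convention to adopt when $Z(S)=\emptyset$, in which case $K_{\abs{Z(S)}}$ has no vertices and the graph join should be interpreted as just $\commgraph{S}$ itself, matching the fact that $\extendedcommgraph{S}=\commgraph{S}$ whenever the centre is empty.
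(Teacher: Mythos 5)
Your proof is correct; the paper omits a proof of this lemma entirely (stating only that it is easy to prove), and your argument — identity/partition map plus the three-way case split using the definition of the centre — is exactly the intended one. The remark about the degenerate case $Z(S)=\emptyset$ is a sensible precaution, though under the paper's conventions it never causes trouble in the places the lemma is applied.
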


The next lemma, whose proof is straightforward, shows the relationship between the largest cliques in a commuting graph of a semigroup and the size of its largest commutative subsemigroups.

\begin{lemma}\label{preli: largest cliques, commutative subsemigroups}
	Let $S$ be a finite non-commutative semigroup and let $\centre{S}\subseteq T\subseteq S$. Then $T$ is a commutative subsemigroup of $S$ of maximum size if and only if $T\setminus\centre{S}$ is a clique in $\commgraph{S}$ of maximum size.
\end{lemma}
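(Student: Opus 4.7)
The plan is to connect the two sides through the key observation that for any subset $T$ with $\centre{S} \subseteq T \subseteq S$, the condition that all elements of $T$ pairwise commute is equivalent to $T \setminus \centre{S}$ being a clique in $\commgraph{S}$: elements of $\centre{S}$ commute with everything automatically, while the remaining elements of $T$ are vertices of $\commgraph{S}$, so the clique condition precisely captures mutual commutativity of the non-central elements.

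For the forward direction, suppose $T$ is a commutative subsemigroup of maximum size among those containing $\centre{S}$. Then $T \setminus \centre{S}$ is immediately a clique in $\commgraph{S}$. To see it is of maximum size, let $K$ be any clique in $\commgraph{S}$. Since $K \cap \centre{S} = \emptyset$, all elements of $K \cup \centre{S}$ pairwise commute, so the generated subsemigroup $\langle K \cup \centre{S} \rangle$ is commutative, contains $\centre{S}$, and has size at least $\abs{K} + \abs{\centre{S}}$. Maximality of $T$ then yields $\abs{T} = \abs{T \setminus \centre{S}} + \abs{\centre{S}} \geqslant \abs{K} + \abs{\centre{S}}$, hence $\abs{T \setminus \centre{S}} \geqslant \abs{K}$.

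For the reverse direction, assume $T \setminus \centre{S}$ is a clique of maximum size in $\commgraph{S}$. I would first verify that $T$ is automatically a subsemigroup: since the elements of $T$ pairwise commute, $\langle T \rangle$ is a commutative subsemigroup containing $\centre{S}$, so $\langle T \rangle \setminus \centre{S}$ is a clique containing $T \setminus \centre{S}$; by maximality of the latter, equality holds, so $\langle T \rangle = T$. Commutativity of $T$ is then immediate, and maximality follows because any commutative subsemigroup $T'$ with $\centre{S} \subseteq T'$ has $T' \setminus \centre{S}$ a clique, whence $\abs{T'} = \abs{T' \setminus \centre{S}} + \abs{\centre{S}} \leqslant \abs{T \setminus \centre{S}} + \abs{\centre{S}} = \abs{T}$.

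The only subtle point is the closure step in the reverse direction: one must invoke clique maximality to force $T$ to be closed under multiplication, rather than merely a set of pairwise commuting elements. Once this is established, both directions reduce to the size identity $\abs{T} = \abs{T \setminus \centre{S}} + \abs{\centre{S}}$ combined with the generation-of-subsemigroup trick applied to any candidate clique.
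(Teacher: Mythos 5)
Your argument is correct and is precisely the ``straightforward'' proof the paper has in mind (the paper states this lemma without proof): both directions reduce to the observation that, for $\centre{S}\subseteq T$, pairwise commutativity of $T$ is equivalent to $T\setminus\centre{S}$ being a clique, together with the generated-subsemigroup trick and the count $\abs{T}=\abs{T\setminus\centre{S}}+\abs{\centre{S}}$. The only point worth tightening is in the reverse direction, where the conclusion requires $T$ to have maximum size among \emph{all} commutative subsemigroups, not only those containing $\centre{S}$; this follows immediately, since any commutative subsemigroup $T'$ satisfies $\abs{T'}=\abs{T'\setminus\centre{S}}+\abs{T'\cap\centre{S}}\leqslant\cliquenumber{\commgraph{S}}+\abs{\centre{S}}=\abs{T}$, because $T'\setminus\centre{S}$ is itself a clique.
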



The following definitions are of concepts that were first defined (in \cite{Commuting_graph_T_X}) specifically for commuting graphs of semigroups.

Let $S$ be a non-commutative semigroup. A \defterm{left path} in $\commgraph{S}$ is a path $x_1,\ldots,x_n$ in $\commgraph{S}$ such that $x_1\neq x_n$ and $x_1x_i=x_nx_i$ for all $i\in\braces{1,\ldots,n}$. If $\commgraph{S}$ contains left paths, then the \defterm{knit degree} of $S$, denoted $\knitdegree{S}$, is the length of a shortest left path in $\commgraph{S}$.

\subsection{Completely simple semigroups}\label{Subsection completely simple semigroups}

We say that a semigroup $S$ with a zero $0$ is a \defterm{completely $0$-simple semigroup} if it satisfies the following conditions:
\begin{enumerate}
	\item $S$ is \defterm{$0$-simple}, which means that $S$ is not a null semigroup and its ideals are precisely $\set{0}$ and $S$;
	
	\item $S$ contains a \defterm{primitive} idempotent, which means that $S$ contains a minimal idempotent among the set of non-zero idempotents.
\end{enumerate}

Completely $0$-simple semigroups can also be characterized via a semigroup construction called the $0$-Rees matrix semigroup over a group (Theorem~\ref{preli: completely 0-simple semigroup <=> Rees matrix construction}), which is described below.

Let $G$ be a group, $I$ and $\Lambda$ be index sets, and $P$ be a regular $\Lambda\times I$ matrix with entries from $G^0$. (Recall that a \defterm{regular} matrix is a matrix where every row and every column contains at least one non-zero entry.) For each $i\in I$ and $\lambda \in \Lambda$, we denote by $p_{\lambda i}$ the $\parens{\lambda, i}$-th entry of the matrix $P$. A \defterm{$0$-Rees matrix semigroup over a group}, denoted $\Reeszero{G}{I}{\Lambda}{P}$, is the set $\parens{I\times G\times \Lambda}\cup\set{0}$ with multiplication defined as follows
\begin{gather*}
	\parens{i,x,\lambda}\parens{j,y,\mu} =\begin{cases}
		(i,xp_{\lambda j}y,\mu) & \text{if } p_{\lambda j}\neq 0,\\
		0 & \text{if } p_{\lambda j}=0;
	\end{cases}\\
	0\parens{i,x,\lambda}=\parens{i,x,\lambda}0=00=0.
\end{gather*}


\begin{theorem}[Rees--Suschkewitsch Theorem]\label{preli: completely 0-simple semigroup <=> Rees matrix construction}
	A semigroup $S$ is completely $0$-simple if and only if there exist a group $G$, index sets $I$ and $\Lambda$, and a regular $\Lambda\times I$ matrix $P$ with entries from $G^0$ such that $S\simeq \Reeszero{G}{I}{\Lambda}{P}$.
\end{theorem}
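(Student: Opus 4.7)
The plan is to prove the two directions separately, noting that one direction is relatively short while the other is a substantial structural argument. The easy direction shows that $\Reeszero{G}{I}{\Lambda}{P}$ with $P$ regular is always completely $0$-simple; the hard direction is the structural converse.

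For the easy direction, fix $S = \Reeszero{G}{I}{\Lambda}{P}$. I would first verify that the multiplication is associative (a straightforward calculation using associativity in $G$) and that $0$ acts as a zero. Because $P$ is regular, some $p_{\lambda i}$ is non-zero, so $(i,1_G,\lambda)(i,1_G,\lambda) = (i, p_{\lambda i}, \lambda) \neq 0$, hence $S$ is not null. To establish $0$-simplicity, I would take any non-zero $(i,x,\lambda)\in S$ and use regularity of $P$: for each $j\in I$, pick $\mu\in\Lambda$ with $p_{\mu j}\neq 0$, and similarly for each $\nu\in\Lambda$ pick $k\in I$ with $p_{\nu k}\neq 0$. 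Then explicit left/right multiplications by elements of the form $(k,y,\mu)$ show that the two-sided principal ideal generated by $(i,x,\lambda)$ contains every $(j,y,\nu)$, so it equals $S$. To exhibit a primitive idempotent, observe that $(i,x,\lambda)$ is idempotent exactly when $p_{\lambda i}\neq 0$ and $x = p_{\lambda i}^{-1}$, and a direct calculation with the natural order $e\leq f\iff ef=fe=e$ shows that distinct non-zero idempotents are incomparable.

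For the hard direction, let $S$ be completely $0$-simple. I would invoke standard facts about Green's relations on such semigroups: since $S$ is $0$-simple, all non-zero elements form a single $\mathcal{D}$-class $D$; since $D$ is regular, every $\mathcal{H}$-class of $D$ that meets an idempotent is a maximal subgroup; and all such maximal subgroups are isomorphic to a single group $G$. Index the $\mathcal{R}$-classes of $D$ by $I$ and the $\mathcal{L}$-classes by $\Lambda$, so each $\mathcal{H}$-class is $H_{i\lambda} = R_i\cap L_\lambda$. Fix a group $\mathcal{H}$-class $H_{1,1}$ with identity $e$, and using Green's Lemma pick, for each $i\in I$, a representative $r_i\in H_{i,1}$ with $r_i e = r_i$ (taking $r_1=e$), and similarly $q_\lambda\in H_{1,\lambda}$ with $eq_\lambda = q_\lambda$ (taking $q_1=e$); the translation maps $g\mapsto r_i g q_\lambda$ give bijections $G\to H_{i,\lambda}$ whenever $H_{i,\lambda}$ is a group, and with care can be extended to a parameterisation of all of $D\cup\{0\}$ by $I\times G\times\Lambda\cup\{0\}$. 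Define the sandwich matrix $P$ by $p_{\lambda i} = q_\lambda r_i$ in $G$ when this product is non-zero, and $p_{\lambda i}=0$ otherwise; regularity of $P$ follows because otherwise an entire $\mathcal{L}$- or $\mathcal{R}$-class would annihilate everything on one side, contradicting $0$-simplicity. A direct computation then shows the map $(i,g,\lambda)\mapsto r_i g q_\lambda$, $0\mapsto 0$, is the required isomorphism.

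The main obstacle lies in the hard direction, specifically in choosing the representatives $r_i$ and $q_\lambda$ coherently so that the product formula in $S$ matches the Rees formula exactly. This requires repeated application of Green's Lemma and the fact that, in a completely $0$-simple semigroup, every non-zero $\mathcal{R}$-class and every non-zero $\mathcal{L}$-class contains an idempotent. Verifying that these choices give an isomorphism (and not merely a bijection that preserves some products) is where most of the care is needed. Since this is the classical Rees--Suschkewitsch Theorem, I would defer to the presentation in \cite{Nine_chapters_Cain} for the detailed verifications and simply state the result here.
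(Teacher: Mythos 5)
The paper does not prove this statement: it is quoted as the classical Rees--Suschkewitsch Theorem in the preliminaries, with no argument given, and is used throughout as known background (the general theory is deferred to \cite{Nine_chapters_Cain}). Your outline is the standard textbook proof of that classical result and is essentially sound, so there is nothing in the paper to compare it against beyond the citation.

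Two small points worth fixing if you were to write it out. First, in the $0$-simplicity step the auxiliary indices must be chosen relative to the element you start from, not the target: to land $\parens{j,y,\nu}$ in the ideal generated by $\parens{i,x,\lambda}$ you need some $\mu$ with $p_{\mu i}\neq 0$ (regularity of column $i$) and some $k$ with $p_{\lambda k}\neq 0$ (regularity of row $\lambda$), so that $\parens{j,a,\mu}\parens{i,x,\lambda}\parens{k,b,\nu}=\parens{j,ap_{\mu i}xp_{\lambda k}b,\nu}$ sweeps out all of $\set{j}\times G\times\set{\nu}$; as written you pick $\mu$ against $j$. Second, $0$-simplicity alone only gives a single nonzero $\mathcal{J}$-class; that the nonzero elements form a single $\mathcal{D}$-class uses the primitivity/completeness hypothesis (or finiteness), and the bijections $g\mapsto r_igq_\lambda$ from Green's Lemma parameterise \emph{every} $H_{i,\lambda}$, not only the group ones, so no extra extension step is needed there. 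Neither point affects the overall correctness of the plan.
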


This theorem is fundamental and we will use it without explicit reference in the rest of the paper.

\subsection{Matrices}\label{sec: 0Rees definitions/notations}

This subsection is a compilation of definitions of terms and notations that will be used frequently in the course of the paper. We also present notations that will be adopted. 

\begin{definition}
	Given two matrices $Q$ and $M$, we say that $Q$ is \defterm{\equivalent} to $M$ if $Q$ can be obtained from $M$ by exchanging rows and/or columns. 
\end{definition}

It follows immediately from the definition that $\leftrightarrow$-equivalence is, as its name suggests, an equivalence relation.

\begin{definition}
	Given two matrices $Q$ and $M$, we say that $Q$ is a \defterm{\submatrix} of $M$ if $Q$ is \equivalent\ to a submatrix of $M$.
\end{definition}

The previous definition also implies that if $Q$ is a submatrix of $M$, then $Q$ is a \submatrix\ of $M$; and if $Q$ is \equivalent\ to $M$ then $Q$ is a \submatrix\ of $M$.

Let $I$ and $\Lambda$ be index sets and let $P$ be a $\Lambda\times I$ matrix. Given $i\in I$ and $\Lambda\in\lambda$, we denote by $p_{\lambda i}$ the $\parens{\lambda, i}$-th entry of $P$.

Let $I'\subseteq I$ and $\Lambda'\subseteq\Lambda$. Assume that $I'=\set{i_1,\ldots,i_n}$, $\Lambda'=\set{\lambda_1,\ldots,\lambda_m}$ and that the indices $i_1,\ldots,i_n$ (respectively, $\lambda_1,\ldots,\lambda_m$) are in the order in which they appear in the columns (respectively, rows) of $P$.

We denote by $\Psubmatrix{P}{I'}{\Lambda'}$ the submatrix of $P$ formed by the rows and columns of $P$ whose indices belong to $\Lambda'$ and $I'$, respectively. In the matrix $\Psubmatrix{P}{I'}{\Lambda'}$ the columns (respectively, rows) appear in the original order: $i_1,\ldots,i_n$ (respectively, $\lambda_1,\ldots,\lambda_m$). If $\Lambda'=\set{\lambda}$ or $I'=\set{i}$, then we will often replace the relevant singleton sets by their elements, that is, we will write $\Psubmatrix{P}{I'}{\lambda}$ or $\Psubmatrix{P}{i}{\Lambda'}$, respectively.

We extend the notation $\Psubmatrix{P}{I'}{\Lambda'}$ to \submatrices\ by allowing the (unordered) sets $I'$ and $\Lambda'$ to be replaced by (ordered) sequences of elements of $I$ and $\Lambda$, respectively. Let $\alpha$ be a permutation of $\Xn$ and $\beta$ a permutation of $\X{m}$. We denote by $\Psubmatrix{P}{i_{1\alpha},\ldots,i_{n\alpha}}{\lambda_{1\beta},\ldots,\lambda_{m\beta}}$ the \submatrix\ of $P$ obtained by selecting the row and column indices of $P$ in the following orders: $\lambda_{1\beta},\ldots,\lambda_{m\beta}$ and $i_{1\alpha},\ldots,i_{n\alpha}$, respectively. We note that, if $\alpha$ and $\beta$ are both equal to the identity, then we obtain the \submatrix\ $\Psubmatrix{P}{i_1,\ldots,i_n}{\lambda_1,\ldots,\lambda_m}=\Psubmatrix{P}{I'}{\Lambda'}$, which is a submatrix of $P$.


%

Now we introduce a new matrix which will be used several times in the course of the paper. We will reveal its importance in Lemma~\ref{0Rees: lemma (0,X)-representation}.

\begin{definition}
	Let $G$ be a group, $I$ and $\Lambda$ be index sets and $P$ be a $\Lambda\times I$ matrix whose entries are elements of $G^0$. We denote by $\timeszero{P}$ the $\Lambda\times I$ matrix whose entries are elements of $\set{0,\times}$ and such that
	\begin{displaymath}
		\overline{p}_{\lambda i}=\begin{cases}
			0& \text{if } p_{\lambda i}=0,\\
			\times& \text{if } p_{\lambda i}\in G.
		\end{cases}
	\end{displaymath}
\end{definition}

It is straightforward to see that $\timeszero{P}$ can be obtained from $P$ by replacing all its non-zero entries by $\times$. Furthermore, $\timeszero{P}$ satisfies the following properties:
\begin{enumerate}
	\item Let $M$ be a matrix with entries in $\set{0,\times}$. Then $M$ is \equivalent\ to $\timeszero{P}$ if and only if there exists a matrix $Q$ such that $M=\timeszero{Q}$ and $Q$ is \equivalent\ to $P$.
	
	\item Let $M$ be a matrix with entries in $\set{0,\times}$. Then $M$ is a submatrix of $\timeszero{P}$ if and only if there exists a matrix $Q$ such that $M=\timeszero{Q}$ and $Q$ is a submatrix of $P$.
	
	\item Let $M$ be a matrix with entries in $\set{0,\times}$. Then $M$ is a \submatrix\ of $\timeszero{P}$ if and only if there exists a matrix $Q$ such that $M=\timeszero{Q}$ and $Q$ is a \submatrix\ of $P$.
	
	\item Let $i_1,\ldots,i_n\in I$ and $\lambda_1,\ldots,\lambda_m$ and assume that they are pairwise distinct. Then $\timeszero{\Psubmatrix{P}{i_1,\ldots,i_n}{\lambda_1,\ldots,\lambda_m}}=\Psubmatrix{\timeszero{P}}{i_1,\ldots,i_n}{\lambda_1,\ldots,\lambda_m}$.
\end{enumerate}

Finally, we define two types of matrices which will be used several times in the course of the paper. For each $n\in\mathbb{N}$ we define $D_n$ to be the $n\times n$ matrix whose diagonal entries are $\times$ and the remaining entries are $0$, that is
\begin{displaymath}
	D_n=\begin{bNiceMatrix}[first-row,first-col]
		&1&2&\cdots&n\\
		1&\times&0&\cdots&0\\
		2&0&\times&\cdots&0\\
		\vdots&\vdots&\vdots&\ddots&\vdots\\
		n&0&0&\cdots&\times
	\end{bNiceMatrix}.
\end{displaymath}
Additionally, for each $n,m\in\mathbb{N}$ we define $O_{n\times m}$ to be the $n\times m$ matrix whose entries are all zeros.

\section{Basic properties of the commuting graph of a 0-Rees matrix semigroup over a group}\label{sec: 0Rees}

Let $G$ be a group, let $I$ and $\Lambda$ be index sets, and let $P$ be a regular $\Lambda\times I$ matrix whose entries are elements of $G^0$. When all the entries of $P$ are elements of $G$ (that is, when none of the entries of $P$ is a zero), we have that $\Reeszero{G}{I}{\Lambda}{P}=\parens{\Rees{G}{I}{\Lambda}{P}}^0$, which implies that (when $\Reeszero{G}{I}{\Lambda}{P}$ and $\Rees{G}{I}{\Lambda}{P}$ are not commutative and, consequently, their commuting graphs are defined) the graphs $\commgraph{\Reeszero{G}{I}{\Lambda}{P}}$ and $\commgraph{\parens{\Rees{G}{I}{\Lambda}{P}}^0}$ are isomorphic. Moreover, since a zero is a central element of a semigroup, then a zero is not a vertex of the commuting graph of a semigroup. Hence the graphs $\commgraph{\parens{\Rees{G}{I}{\Lambda}{P}}^0}$ and $\commgraph{\Rees{G}{I}{\Lambda}{P}}$ are also isomorphic. Therefore, when all the entries of $P$ are elements of $G$, the graphs $\commgraph{\Reeszero{G}{I}{\Lambda}{P}}$ and $\commgraph{\Rees{G}{I}{\Lambda}{P}}$ are isomorphic. Consequently, determining the properties of $\commgraph{\Reeszero{G}{I}{\Lambda}{P}}$ is equivalent to determining the properties of $\commgraph{\Rees{G}{I}{\Lambda}{P}}$, which was done in \cite{Completely_simple_semigroups_paper}. Since it is already known how to determine the properties of $\commgraph{\Reeszero{G}{I}{\Lambda}{P}}$ when all the entries of $P$ are elements of $G$, then for the remainder of the paper we assume, often without further comment, that $P$ contains at least one zero entry.

Suppose that $P$ contains at least one zero entry. The aim of this section is to identify the vertex set of $\commgraph{\Reeszero{G}{I}{\Lambda}{P}}$ (Proposition~\ref{0Rees: center}) and to provide necessary and sufficient conditions for adjacency of vertices of $\commgraph{\Reeszero{G}{I}{\Lambda}{P}}$ (Lemma~\ref{0Rees: commutativity}). Moreover, in Lemma~\ref{0Rees: lemma (0,X)-representation} we see how the investigation of the properties of the commuting graph of $\Reeszero{G}{I}{\Lambda}{P}$ can be simplified. 

We start with Lemma~\ref{0Rees: p=0 <=> p'=0}, which provides information regarding commutativity in $\Reeszero{G}{I}{\Lambda}{P}$.

\begin{lemma}\label{0Rees: p=0 <=> p'=0}
	Let $i_1,i_2,j_1,j_2\in I$ and $\lambda_1,\lambda_2,\mu_1,\mu_2\in\Lambda$ and $x_1,x_2,y_1,y_2\in G$ be such that $\parens{i_1,x_1,\lambda_1}\parens{i_2,x_2,\lambda_2}\allowbreak=\parens{j_1,y_1,\mu_1}\parens{j_2,y_2,\mu_2}$. Then $p_{\lambda_1 i_2}=0$ if and only if $p_{\mu_1 j_2}=0$.
\end{lemma}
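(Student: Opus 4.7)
The proof is essentially immediate from the multiplication rule in a $0$-Rees matrix semigroup, so the plan is just to unpack the definition. The key observation is that, in $\Reeszero{G}{I}{\Lambda}{P}$, no triple $(i,x,\lambda)\in I\times G\times\Lambda$ is equal to $0$, so a product of two triples evaluates to $0$ if and only if the relevant matrix entry is $0$.

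More concretely, I would argue as follows. By the definition of the multiplication in $\Reeszero{G}{I}{\Lambda}{P}$, the product $\parens{i_1,x_1,\lambda_1}\parens{i_2,x_2,\lambda_2}$ equals $0$ if $p_{\lambda_1 i_2}=0$, and equals the triple $\parens{i_1, x_1 p_{\lambda_1 i_2} x_2, \lambda_2}$ otherwise. An analogous statement holds for $\parens{j_1,y_1,\mu_1}\parens{j_2,y_2,\mu_2}$ with $p_{\mu_1 j_2}$ in place of $p_{\lambda_1 i_2}$. Since these two products are assumed to be equal, and since an element of $\Reeszero{G}{I}{\Lambda}{P}$ cannot simultaneously be the zero and a triple in $I\times G\times\Lambda$, the two products are either both equal to $0$ or both nonzero triples. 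In the first case $p_{\lambda_1 i_2}=0$ and $p_{\mu_1 j_2}=0$; in the second case both entries are nonzero. This gives the required equivalence.

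There is essentially no obstacle here: the statement is a direct consequence of the definition of multiplication together with the fact that $0\notin I\times G\times \Lambda$. The only thing to be careful about is to state explicitly the dichotomy ``product is zero versus product is a triple'' before concluding, so that the equivalence is visibly forced by comparing the two cases on each side of the given equation.
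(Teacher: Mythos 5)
Your proof is correct and follows essentially the same route as the paper: both arguments reduce the claim to the observation that a product of two triples is $0$ if and only if the relevant matrix entry is $0$, and then use the hypothesis that the two products coincide. Your explicit remark that $0\notin I\times G\times\Lambda$ just makes visible a step the paper leaves implicit.
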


\begin{proof}
	It follows from the definition of multiplication in $\Reeszero{G}{I}{\Lambda}{P}$ that
	\begin{align*}
		p_{\lambda_1 i_2}=0 &\iff \parens{i_1,x_1,\lambda_1}\parens{i_2,x_2,\lambda_2}=0\\
		& \iff \parens{j_1,y_1,\mu_1}\parens{j_2,y_2,\mu_2}=0\\
		& \iff p_{\mu_1 j_2}=0.\qedhere
	\end{align*}
\end{proof}

\begin{lemma}\label{0Rees: commutativity}
	Let $i,j\in I$ and $\lambda,\mu\in\Lambda$ and $x,y\in G$. Then $\parens{i,x,\lambda}\parens{j,y,\mu}=\parens{j,y,\mu}\parens{i,x,\lambda}$ if and only if one of the following conditions is satisfied:
	\begin{enumerate}
		\item $i=j$, $\lambda=\mu$ and $xp_{\lambda i}y=yp_{\lambda i}x$.
		\item $p_{\lambda j}=p_{\mu i}=0$.
	\end{enumerate}
\end{lemma}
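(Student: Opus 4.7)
The plan is to compute both products directly from the multiplication rule of $\Reeszero{G}{I}{\Lambda}{P}$ and then read off the equivalence by a case split on whether the relevant sandwich entries are zero, using Lemma~\ref{0Rees: p=0 <=> p'=0} as the link.

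By definition, $\parens{i,x,\lambda}\parens{j,y,\mu}$ equals $\parens{i, xp_{\lambda j}y, \mu}$ if $p_{\lambda j}\neq 0$ and equals $0$ otherwise; symmetrically, $\parens{j,y,\mu}\parens{i,x,\lambda}$ equals $\parens{j, yp_{\mu i}x, \lambda}$ if $p_{\mu i}\neq 0$ and equals $0$ otherwise. These two explicit expressions are the only computation needed.

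For the forward direction, assume the two products are equal. Lemma~\ref{0Rees: p=0 <=> p'=0} (with the obvious matching of indices) says that $p_{\lambda j}=0$ if and only if $p_{\mu i}=0$. Hence either both are zero, in which case both products are $0$ and condition~(2) holds; or both are non-zero, in which case the equality becomes $\parens{i, xp_{\lambda j}y, \mu} = \parens{j, yp_{\mu i}x, \lambda}$ in $I\times G\times \Lambda$, forcing $i=j$, $\lambda=\mu$, and $xp_{\lambda i}y=yp_{\lambda i}x$, which is condition~(1).

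For the converse, if condition~(2) holds then both products are $0$ by definition. If condition~(1) holds, then in particular $p_{\lambda i}\in G$ (so that the equation $xp_{\lambda i}y=yp_{\lambda i}x$ is meaningful in $G$), and then with $j=i$ and $\mu=\lambda$ both products reduce to $\parens{i, xp_{\lambda i}y, \lambda}=\parens{i, yp_{\lambda i}x, \lambda}$.

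There is no real obstacle here; the argument is a routine unpacking of the definition. The only point worth being careful about is the implicit assumption in condition~(1) that $p_{\lambda i}\neq 0$ (otherwise $xp_{\lambda i}y$ is not an element of $G$), so I will note that if $i=j$, $\lambda=\mu$, and $p_{\lambda i}=0$ then condition~(2) already applies, which is why the two conditions together exactly cover all cases.
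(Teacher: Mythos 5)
Your proposal is correct and follows essentially the same route as the paper: compute both products from the multiplication rule, invoke Lemma~\ref{0Rees: p=0 <=> p'=0} to see that $p_{\lambda j}$ and $p_{\mu i}$ are either both zero or both in $G$, and read off the two conditions from the resulting case split. Your closing remark about the case $i=j$, $\lambda=\mu$, $p_{\lambda i}=0$ being absorbed by condition~(2) is a sensible clarification of a point the paper handles by treating $xp_{\lambda i}y$ as a product in $G^0$; either reading gives a complete proof.
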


\begin{proof} 
	We begin by proving the forward implication. Suppose that $\parens{i,x,\lambda}\parens{j,y,\mu}=\parens{j,y,\mu}\parens{i,x,\lambda}$. It follows from Lemma~\ref{0Rees: p=0 <=> p'=0} that $p_{\lambda j}=p_{\mu i}=0$ or $p_{\lambda j},p_{\mu i}\in G$. If $p_{\lambda j}=p_{\mu i}=0$, then condition 2 is satisfied. If $p_{\lambda j},p_{\mu i}\in G$, then
	\begin{displaymath}
		\parens{i,xp_{\lambda j}y,\mu}=\parens{i,x,\lambda}\parens{j,y,\mu}=\parens{j,y,\mu}\parens{i,x,\lambda}=\parens{j,yp_{\mu i}x,\lambda},
	\end{displaymath}
	which implies that $i=j$, $\lambda=\mu$ and $xp_{\lambda i}y=xp_{\lambda j}y=yp_{\mu i}x=yp_{\lambda i}x$ and, consequently, condition 1 is satisfied.
	
	Now we prove the reverse implication. Assume that $i=j$, $\lambda=\mu$ and $xp_{\lambda i}y=yp_{\lambda i}x$. Hence $xp_{\lambda j}y=yp_{\mu i}x$ and, consequently, we must have $p_{\lambda j},p_{\mu i}\in G$ or $p_{\lambda j}=p_{\mu i}=0$. If $p_{\lambda j},p_{\mu i}\in G$ then
	\begin{displaymath}
		\parens{i,x,\lambda}\parens{j,y,\mu}=\parens{i,xp_{\lambda j}y,\mu}=\parens{j,yp_{\mu i}x,\lambda}=\parens{j,y,\mu}\parens{i,x,\lambda}.
	\end{displaymath}
	and, if $p_{\lambda j}=p_{\mu i}=0$, then
	\begin{displaymath}
		\parens{i,x,\lambda}\parens{j,y,\mu}=0=\parens{j,y,\mu}\parens{i,x,\lambda}. \qedhere
	\end{displaymath}
\end{proof}

\begin{proposition}\label{0Rees: center}
	We have that $\centre{\Reeszero{G}{I}{\Lambda}{P}}=\set{0}$. Moreover, $\Reeszero{G}{I}{\Lambda}{P}$ is not commutative.
\end{proposition}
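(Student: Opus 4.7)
The plan is to prove both assertions simultaneously: for every non-zero element $(i,x,\lambda)$ of $\Reeszero{G}{I}{\Lambda}{P}$ I will exhibit an element with which it does not commute. Since $0$ is obviously central, this yields $\centre{\Reeszero{G}{I}{\Lambda}{P}} = \set{0}$, and the witness pairs produced in the process immediately show that $\Reeszero{G}{I}{\Lambda}{P}$ is not commutative.

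Fix a non-zero element $(i,x,\lambda)$. The argument is a short case analysis driven by two facts: $P$ has at least one zero entry (by the standing assumption of this section) and every row and every column of $P$ contains a non-zero entry (by regularity of $P$). Suppose first that column $i$ of $P$ contains a zero entry, say $p_{\mu i} = 0$. Regularity of row $\lambda$ yields some $j \in I$ with $p_{\lambda j} \neq 0$, and a direct computation gives
\begin{displaymath}
(i,x,\lambda)(j,1_G,\mu) = (i, x p_{\lambda j}, \mu) \neq 0 = (j,1_G,\mu)(i,x,\lambda),
\end{displaymath}
so these two elements do not commute. The case where row $\lambda$ contains a zero entry is symmetric.

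The remaining case is that column $i$ and row $\lambda$ are both zero-free. The hypothesis that $P$ has at least one zero entry $p_{\mu_0 k_0}$ then forces $\mu_0 \neq \lambda$ and $k_0 \neq i$. For this choice,
\begin{displaymath}
(i,x,\lambda)(k_0,1_G,\mu_0) = (i, x p_{\lambda k_0}, \mu_0) \quad\text{and}\quad (k_0,1_G,\mu_0)(i,x,\lambda) = (k_0, p_{\mu_0 i} x, \lambda)
\end{displaymath}
are both non-zero (since $p_{\lambda k_0} \neq 0$ and $p_{\mu_0 i} \neq 0$), but they have different first coordinates and are therefore unequal. I do not foresee any substantial obstacle; the only genuinely delicate observation is that when column $i$ and row $\lambda$ are zero-free, any zero of $P$ must lie outside both of them, which is precisely what keeps the two products in the final case from collapsing to the same element.
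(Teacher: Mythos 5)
Your proof is correct: the three cases are exhaustive, the products are computed correctly from the definition of multiplication, and the key observation in the final case --- that a zero of $P$ must lie outside a zero-free row $\lambda$ and a zero-free column $i$, so that both products are non-zero but have different first coordinates --- is sound. The route differs mildly from the paper's. The paper splits only on whether $p_{\lambda i}$ is zero or not, and in both cases takes a witness of the form $(i',x,\lambda)$ with $i'\neq i$ in the \emph{same} row $\lambda$: if $p_{\lambda i}\in G$ any $i'\neq i$ works, and if $p_{\lambda i}=0$ regularity supplies $i'$ with $p_{\lambda i'}\in G$; non-commutation is then read off from the characterization in Lemma~\ref{0Rees: commutativity} rather than recomputed. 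Your decomposition --- on whether row $\lambda$ and column $i$ contain zeros --- costs one extra case and redoes the multiplication by hand, but it is self-contained (no appeal to Lemma~\ref{0Rees: commutativity}) and produces fully explicit witness pairs; the paper's choice of witness keeps everything inside a single row and so gets away with two cases. Both arguments rest on exactly the same two inputs: $P$ has at least one zero entry, and $P$ is regular.
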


\begin{proof}
	It is clear that $0\in\centre{\Reeszero{G}{I}{\Lambda}{P}}$. Now we establish that for each $i\in I$, $\lambda\in\Lambda$ and $x\in G$ we have $\parens{i,x,\lambda}\notin\centre{\Reeszero{G}{I}{\Lambda}{P}}$, which is enough to conclude that $\centre{\Reeszero{G}{I}{\Lambda}{P}}\subseteq\set{0}$. Let $i\in I$, $\lambda\in\Lambda$ and $x\in G$.
	
	\smallskip
	
	\textit{Case 1:} Assume that $p_{\lambda i}\in G$. Due to the fact that $P$ contains at least one zero entry, we have that there exist $j\in I$ and $\mu\in\Lambda$ such that $p_{\mu j}=0$. In addition, $P$ is regular, which implies that row $\mu$ and column $i$ contains at least one non-zero entry. Hence $\abs{I}>1$ and $\abs{\Lambda}>1$. Consequently, there exists $i'\in I$ such that $i'\neq i$. Since $p_{\lambda i}\neq 0$ and $i\neq i'$, then Lemma~\ref{0Rees: commutativity} implies that $\parens{i,x,\lambda}\parens{i',x,\lambda}\neq\parens{i',x,\lambda}\parens{i,x,\lambda}$. Consequently, $\parens{i,x,\lambda}$ is not a central element of $\Reeszero{G}{I}{\Lambda}{P}$.
	
	\smallskip
	
	\textit{Case 2:} Assume that $p_{\lambda i}=0$. As a consequence of the fact that $P$ is regular, we have that row $\lambda$ contains a non-zero entry, which implies that there exists $i'\in I$ such that $i'\neq i$ and $p_{\lambda i'}\in G$. Then, by Lemma~\ref{0Rees: commutativity}, we have that $\parens{i,x,\lambda}\parens{i',x,\lambda}\neq\parens{i',x,\lambda}\parens{i,x,\lambda}$ and, consequently, $\parens{i,x,\lambda}$ is not a central element of $\Reeszero{G}{I}{\Lambda}{P}$.
	
	\smallskip
	
	Therefore $\centre{\Reeszero{G}{I}{\Lambda}{P}}=\set{0}\neq\Reeszero{G}{I}{\Lambda}{P}$ and, consequently, $\Reeszero{G}{I}{\Lambda}{P}$ is not commutative.
\end{proof}

Proposition~\ref{0Rees: center} guarantees that we can always talk about the commuting graph of $\Reeszero{G}{I}{\Lambda}{P}$ whenever $P$ contains at least one zero entry. Furthermore, it also implies that the vertices of $\commgraph{\Reeszero{G}{I}{\Lambda}{P}}$ are all the non-zero elements of $\Reeszero{G}{I}{\Lambda}{P}$; that is, the vertex set of $\commgraph{\Reeszero{G}{I}{\Lambda}{P}}$ is $I\times G\times\Lambda$.

\begin{lemma}\label{0Rees: lemma (0,X)-representation}
	Let $Q$ be a regular $\Lambda \times I$ matrix with entries from $G^0$. If $\timeszero{P}=\timeszero{Q}$, then the graphs $\commgraph{\Rees{G}{I}{\Lambda}{P}}$ and $\commgraph{\Rees{G}{I}{\Lambda}{Q}}$ are isomorphic.
\end{lemma}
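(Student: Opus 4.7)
The plan is to construct an explicit graph isomorphism. By Proposition~\ref{0Rees: center}, both graphs share the vertex set $I \times G \times \Lambda$, so I only need a bijection of this set that preserves adjacency. Lemma~\ref{0Rees: commutativity} tells me that the adjacency of distinct vertices $(i,x,\lambda)$ and $(j,y,\mu)$ either depends purely on the zero pattern of the matrix (case~(2) of that lemma, which is the only relevant condition when $(i,\lambda) \neq (j,\mu)$), or reduces to the single equation $xp_{\lambda i}y = yp_{\lambda i}x$ in $G$ (the case $i=j$, $\lambda=\mu$). The first type of adjacency will be preserved by any bijection that fixes the first and third coordinates, since $\timeszero{P}=\timeszero{Q}$; the second type requires a carefully chosen twist of the group coordinate.

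The twist I would use is multiplication on the right by $\alpha_{\lambda i} := p_{\lambda i} q_{\lambda i}^{-1}$ when $p_{\lambda i} \neq 0$ (equivalently $q_{\lambda i} \neq 0$, using $\timeszero{P}=\timeszero{Q}$), and by $\alpha_{\lambda i} := 1_G$ otherwise. Define
\[
\varphi(i,x,\lambda) := (i, x\alpha_{\lambda i}, \lambda).
\]
Each $\alpha_{\lambda i}$ is invertible in $G$, so the map $x \mapsto x\alpha_{\lambda i}$ is a bijection of $G$ for every fixed $(i,\lambda)$, and hence $\varphi$ is a bijection of $I \times G \times \Lambda$.

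To verify that $\varphi$ preserves adjacency, I would split into the cases of Lemma~\ref{0Rees: commutativity}. The case $(i,\lambda) \neq (j,\mu)$ is immediate from $\timeszero{P}=\timeszero{Q}$ and the fact that $\varphi$ does not alter the first and third coordinates. The case $i=j$, $\lambda=\mu$, $p_{\lambda i}=0$ is also immediate, since then any two distinct such vertices are adjacent in both graphs (via condition~(2)). The substantive case is $i=j$, $\lambda=\mu$ with $p_{\lambda i}\neq 0$; here a short group computation gives
\[
(x\alpha_{\lambda i})\,q_{\lambda i}\,(y\alpha_{\lambda i}) \;=\; x p_{\lambda i} y \cdot \alpha_{\lambda i},
\]
and symmetrically with $x,y$ interchanged, so right-cancelling $\alpha_{\lambda i}$ shows that adjacency of $\varphi(i,x,\lambda)$ and $\varphi(i,y,\lambda)$ in $\commgraph{\Reeszero{G}{I}{\Lambda}{Q}}$ is equivalent to adjacency of $(i,x,\lambda)$ and $(i,y,\lambda)$ in $\commgraph{\Reeszero{G}{I}{\Lambda}{P}}$. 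The main (mild) obstacle is simply spotting the right $\alpha_{\lambda i}$; once one notices that $xp_{\lambda i}y = yp_{\lambda i}x$ is equivalent to commutativity of the shifted elements $xp_{\lambda i}$ and $yp_{\lambda i}$ in $G$, the choice $\alpha_{\lambda i} = p_{\lambda i} q_{\lambda i}^{-1}$ essentially designs itself.
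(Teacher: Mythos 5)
Your proposal is correct and is essentially the paper's own proof: the map $\varphi(i,x,\lambda)=(i,x\alpha_{\lambda i},\lambda)$ with $\alpha_{\lambda i}=p_{\lambda i}q_{\lambda i}^{-1}$ (and $\alpha_{\lambda i}=1_G$ when $p_{\lambda i}=0$) coincides with the bijection $\psi$ used there, and your case analysis via Lemma~\ref{0Rees: commutativity}, including the cancellation computation in the case $i=j$, $\lambda=\mu$, $p_{\lambda i}\in G$, matches the paper's argument.
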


Note that Lemma~\ref{0Rees: lemma (0,X)-representation} only concerns isomorphism of the graphs; in general, the semigroups will be non-isomorphic.

\begin{proof}
	In the course of this proof, whenever we write $\parens{i,x,\lambda}_P$ we will be referring to an element of $\Reeszero{G}{I}{\Lambda}{P}$ (or a vertex of $\commgraph{\Reeszero{G}{I}{\Lambda}{P}}$) and whenever we write $\parens{i,x,\lambda}_Q$ we will be referring to an element of $\Reeszero{G}{I}{\Lambda}{Q}$ (or a vertex of $\commgraph{\Reeszero{G}{I}{\Lambda}{Q}}$).
	
	Let $\psi:I\times G\times\Lambda\to I\times G\times\Lambda$ be the map defined by
	\begin{displaymath}
		\parens{i,x,\lambda}_P\psi=\begin{cases}
			\parens{i,x,\lambda}_Q& \text{ if } p_{\lambda i}=q_{\lambda i}=0,\\
			\parens{i,xp_{\lambda i}q_{\lambda i}^{-1},\lambda}_Q& \text{ if } p_{\lambda i},q_{\lambda i}\in G
		\end{cases}
	\end{displaymath}
	for all $i\in I$, $\lambda\in\Lambda$ and $x\in G$. We have that $\psi$ is a map from the set of vertices of $\commgraph{\Reeszero{G}{I}{\Lambda}{P}}$ to the set of vertices of $\commgraph{\Reeszero{G}{I}{\Lambda}{Q}}$.
	
	First, we are going to see that $\psi$ is a bijection. Since $I \times G\times \Lambda$ is finite, it is enough to see that $\psi$ is surjective. Let $i\in I$, $\lambda\in\Lambda$ and $x\in G$. We have $p_{\lambda i}=q_{\lambda i}=0$ or $p_{\lambda i},q_{\lambda i}\in G$. If $p_{\lambda i}=q_{\lambda i}=0$, then $\parens{i,x,\lambda}_Q=\parens{i,x,\lambda}_P\psi$. If $p_{\lambda i},q_{\lambda i}\in G$, then $\parens{i,x,\lambda}_Q=\parens{i,\parens{xq_{\lambda i}p_{\lambda i}^{-1}}p_{\lambda i}q_{\lambda i}^{-1},\lambda}_Q=\parens{i,xq_{\lambda i}p_{\lambda i}^{-1},\lambda}_P\psi$.
	
	Now we are going to prove that, for all $i,j\in I$, $\lambda,\mu\in\Lambda$ and $x,y\in G$, the vertices $\parens{i,x,\lambda}_P$ and $\parens{j,y,\mu}_P$ are adjacent in $\commgraph{\Reeszero{G}{I}{\Lambda}{P}}$ if and only if the vertices $\parens{i,x,\lambda}_P\psi$ and $\parens{j,y,\mu}_P\psi$ are adjacent in $\commgraph{\Reeszero{G}{I}{\Lambda}{Q}}$. It is enough to show that for all $i,j\in I$, $\lambda,\mu\in\Lambda$ and $x,y\in G$, $\parens{i,x,\lambda}_P$ and $\parens{j,y,\mu}_P$ commute if and only if $\parens{i,x,\lambda}_P\psi$ and $\parens{j,y,\mu}_P\psi$ commute. Let $i,j\in I$, $\lambda,\mu\in\Lambda$ and $x,y\in G$. Let $x',y'\in G$ be such that
	\begin{displaymath}
		x'=\begin{cases}
			x& \text{if } p_{\lambda i}=q_{\lambda i}=0,\\
			xp_{\lambda i}q_{\lambda i}^{-1}& \text{if } p_{\lambda i},q_{\lambda i}\in G;
		\end{cases}\quad
		y'=\begin{cases}
			y& \text{if } p_{\mu j}=q_{\mu j}=0,\\
			yp_{\mu j}q_{\mu j}^{-1}& \text{if } p_{\mu j},q_{\mu j}\in G.
		\end{cases}
	\end{displaymath}
	Then $\parens{i,x,\lambda}_P\psi=\parens{i,x',\lambda}_Q$ and $\parens{j,y,\mu}_P\psi=\parens{j,y',\mu}_Q$.
	
	
	
	\smallskip
	
	\textit{Case 1:} Suppose that $p_{\lambda j}=q_{\lambda j}=0$. Then we have
	\begin{align*}
		&\parens{i,x,\lambda}_P\parens{j,y,\mu}_P=\parens{j,y,\mu}_P\parens{i,x,\lambda}_P \\
		\iff{} & p_{\mu i}=0 & \smash{\begin{minipage}[c]{55mm}\raggedleft [since 
				$p_{\lambda j}=0$ and by Lemmata~\ref{0Rees: p=0 <=> p'=0} and \ref{0Rees: commutativity}]\end{minipage}}\\
		\iff{} & q_{\mu i}=0\\
		\iff{} &\parens{i,x',\lambda}_Q\parens{j,y',\mu}_Q=\parens{j,y',\mu}_Q\parens{i,x',\lambda}_Q & \smash{\begin{minipage}[c]{50mm}\raggedleft [since 
				$q_{\lambda j}=0$ and by Lemmata~\ref{0Rees: p=0 <=> p'=0} and \ref{0Rees: commutativity}]\end{minipage}}\\
		\iff{} &\parens{i,x,\lambda}_P\psi\parens{j,y,\mu}_P\psi=\parens{j,y,\mu}_P\psi\parens{i,x,\lambda}_P\psi.
	\end{align*}
	
	
	\smallskip
	
	\textit{Case 2:} Now suppose that $p_{\lambda j},q_{\lambda j}\in G$. Then we have
	\begin{align*}
		&\parens{i,x,\lambda}_P\parens{j,y,\mu}_P=\parens{j,y,\mu}_P\parens{i,x,\lambda}_P\\
		\iff{} & xp_{\lambda i}y=yp_{\lambda i}x \text{ and } \parens{i,\lambda}=\parens{j,\mu} &\kern -55mm\bracks{\text{since } p_{\lambda j}\in G \text{ and by Lemma~\ref{0Rees: commutativity}}}\\
		\iff{} & xp_{\lambda i}y=yp_{\lambda i}x \text{ and } \parens{i,\lambda}=\parens{j,\mu} \text{ and } p_{\lambda i},q_{\lambda i}\in G &\kern -55mm\bracks{\text{since } p_{\lambda j},q_{\lambda j}\in G}\\
		\iff{} & \parens{xp_{\lambda i}y}\parens{p_{\lambda i}q_{\lambda i}^{-1}}=\parens{yp_{\lambda i}x}\parens{p_{\lambda i}q_{\lambda i}^{-1}} \\
		&\qquad\text{and } \parens{i,\lambda}=\parens{j,\mu} \text{ and } p_{\lambda i},q_{\lambda i}\in G\\
		\iff{} & xp_{\lambda i}\parens{q_{\lambda i}^{-1}q_{\lambda i}}yp_{\lambda i}q_{\lambda i}^{-1}=yp_{\lambda i}\parens{q_{\lambda i}^{-1}q_{\lambda i}}xp_{\lambda i}q_{\lambda i}^{-1} \\
		&\qquad\qquad\qquad\text{and } \parens{i,\lambda}=\parens{j,\mu} \text{ and } p_{\lambda i},q_{\lambda i}\in G \\
		\iff{} & \parens{xp_{\lambda i}q_{\lambda i}^{-1}}q_{\lambda j}\parens{yp_{\mu j}q_{\mu j}^{-1}}=\parens{yp_{\mu j}q_{\mu j}^{-1}}q_{\mu i}\parens{xp_{\lambda i}q_{\lambda i}^{-1}} &\kern -55mm \smash{\begin{minipage}[t]{40mm}\raggedleft [since 
				$q_{\mu j}=q_{\lambda i}\in G$ and $p_{\lambda i}=p_{\lambda j}\in G$]\end{minipage}}\\
		&\qquad\qquad\qquad\text{and } \parens{i,\lambda}=\parens{j,\mu} \text{ and } q_{\lambda i},q_{\mu j}\in G\\
		\iff{} &\parens{i,xp_{\lambda i}q_{\lambda i}^{-1},\lambda}_Q\parens{j,yp_{\mu j}q_{\mu j}^{-1},\mu}_Q=\parens{j,yp_{\mu j}q_{\mu j}^{-1},\mu}_Q\parens{i,xp_{\lambda i}q_{\lambda i}^{-1},\lambda}_Q\\
		&\qquad\qquad\qquad\qquad\qquad\qquad\qquad\qquad\qquad\qquad\quad \text{and } q_{\lambda i},q_{\mu j}\in G\\
		&&\kern -55mm\bracks{\text{since } q_{\lambda j}\in G \text{ and by Lemma~\ref{0Rees: commutativity}}}\\
		\iff{} &\parens{i,x',\lambda}_Q\parens{j,y',\mu}_Q=\parens{j,y',\mu}_Q\parens{i,x',\lambda}_Q\\
		&& \kern -55mm \text{[since $q_{\lambda j}\in G$ and, by Lemma~\ref{0Rees: commutativity}, $\parens{i,\lambda}=\parens{j,\mu}$]}\\
		\iff{} &\parens{i,x,\lambda}_P\psi\parens{j,y,\mu}_P\psi=\parens{j,y,\mu}_P\psi\parens{i,x,\lambda}_P\psi. &\qedhere
	\end{align*}
\end{proof}

Lemma~\ref{0Rees: lemma (0,X)-representation} reveals that, whenever an entry $\parens{\lambda,i}$ of $P$ is such that $p_{\lambda i}\in G$, it is not important which element of the group $p_{\lambda i}$ is, but rather the fact that it is not a zero. This way, we only need to distinguish between zero and non-zero entries in $P$, which justifies the introduction of the matrix $\timeszero{P}$, defined in the previous section. As a result of Lemma~\ref{0Rees: lemma (0,X)-representation}, we will often think of $\timeszero{P}$ instead of $P$.

\section{Connectedness and diameter of the commuting graph of a 0-Rees matrix semigroup over a group}\label{sec: connectedness diameter 0Rees}

Let $G$ be a finite group, let $I$ and $\Lambda$ be finite index sets and let $P$ be a regular $\Lambda \times I$ matrix whose entries are elements of $G^0$. Assume that $P$ contains at least one zero entry.

In this section we will ascertain when $\commgraph{\Reeszero{G}{I}{\Lambda}{P}}$ is connected and determine its diameter. Moreover, when $\commgraph{\Reeszero{G}{I}{\Lambda}{P}}$ is not connected we will present a way to identify its connected components, as well as a way to determine their diameter.

We start by determining when $\commgraph{\Reeszero{G}{I}{\Lambda}{P}}$ is connected. Lemma~\ref{0Rees: subgraph induced by ixGxlambda} provides a first step in that direction by characterizing the subgraphs of $\commgraph{\Reeszero{G}{I}{\Lambda}{P}}$ induced by $\set{i}\times G\times\set{\lambda}$ (where $i\in I$ and $\lambda\in\Lambda$).

\begin{lemma}\label{0Rees: subgraph induced by ixGxlambda}
	Let $i\in I$ and $\lambda\in \Lambda$. We have that the subgraph of $\commgraph{\Reeszero{G}{I}{\Lambda}{P}}$ induced by $\set{i}\times G\times\set{\lambda}$ is isomorphic to
	\begin{displaymath}
		\begin{cases}
			K_{\abs{G}}& \text{if } G \text{ is abelian or } p_{\lambda i}=0,\\
			K_{\abs{\centre{G}}}\graphjointwo\commgraph{G}& \text{if } G \text{ is not abelian and } p_{\lambda i}\in G.
		\end{cases}
	\end{displaymath}
	
\end{lemma}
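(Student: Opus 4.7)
The plan is to use Lemma~\ref{0Rees: commutativity} to translate adjacency inside $\set{i}\times G\times\set{\lambda}$ into an equation in $G$, and then handle the three possibilities case by case.

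First, I would specialise Lemma~\ref{0Rees: commutativity} to two vertices $(i,x,\lambda)$ and $(i,y,\lambda)$ of the induced subgraph. Condition 1 of that lemma becomes $xp_{\lambda i}y=yp_{\lambda i}x$, while condition 2 becomes $p_{\lambda i}=0$ (since $j=i$, $\mu=\lambda$). So two such vertices are adjacent in $\commgraph{\Reeszero{G}{I}{\Lambda}{P}}$ if and only if either $p_{\lambda i}=0$ or $xp_{\lambda i}y=yp_{\lambda i}x$.

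From here the first two cases are immediate. If $p_{\lambda i}=0$, every pair of distinct vertices of $\set{i}\times G\times\set{\lambda}$ is adjacent, so the induced subgraph is complete on $\abs{G}$ vertices, giving $K_{\abs{G}}$. If $G$ is abelian and $p_{\lambda i}\in G$, then for all $x,y\in G$ we have $xp_{\lambda i}y=p_{\lambda i}xy=p_{\lambda i}yx=yp_{\lambda i}x$, so again every pair of distinct vertices is adjacent and we obtain $K_{\abs{G}}$.

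The main step, and the only one requiring a little work, is the remaining case where $G$ is non-abelian and $p_{\lambda i}\in G$. Here I would write $p=p_{\lambda i}$ and introduce the map
\begin{displaymath}
\psi\colon\set{i}\times G\times\set{\lambda}\to G,\qquad (i,x,\lambda)\psi=xp.
\end{displaymath}
Since right multiplication by $p$ is a bijection of $G$, $\psi$ is a bijection onto $G$. For $x,y\in G$, the images $xp$ and $yp$ commute in $G$ precisely when $xp\cdot yp=yp\cdot xp$, and cancelling $p$ on the right turns this into $xpy=ypx$, which is exactly the adjacency condition established above. Hence $\psi$ is an isomorphism from the induced subgraph onto $\extendedcommgraph{G}$. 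Since $G$ is non-abelian, Lemma~\ref{preli: commgraph and extended commgraph}(2) then identifies $\extendedcommgraph{G}$ with $K_{\abs{\centre{G}}}\graphjointwo\commgraph{G}$, which finishes the proof. The only subtle point in the whole argument is spotting that a single right-translation by $p$ suffices to turn the twisted condition $xpy=ypx$ into ordinary commutativity in $G$; after that the result is just a reference to Lemma~\ref{preli: commgraph and extended commgraph}.
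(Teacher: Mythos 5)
Your proof is correct and follows essentially the same route as the paper: both arguments reduce adjacency inside $\set{i}\times G\times\set{\lambda}$ to the condition $xp_{\lambda i}y=yp_{\lambda i}x$ via Lemma~\ref{0Rees: commutativity}, and in the non-abelian case both use a translation by $p_{\lambda i}$ (the paper via the group isomorphism $x\mapsto(i,p_{\lambda i}^{-1}x,\lambda)$, you via the bijection $(i,x,\lambda)\mapsto xp_{\lambda i}$) to identify the induced subgraph with $\extendedcommgraph{G}$ and then invoke Lemma~\ref{preli: commgraph and extended commgraph}.
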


\begin{proof}
	Let $\mathcal{C}$ be the subgraph of $\commgraph{\Reeszero{G}{I}{\Lambda}{P}}$ induced by $\set{i}\times G\times\set{\lambda}$. We consider two cases.
	
	\smallskip
	
	\textit{Case 1:} Assume that $G$ is abelian or $p_{\lambda i}=0$. Let $x,y\in G$. Then we have $xp_{\lambda i}y=yp_{\lambda i}x$, which implies, by Lemma~\ref{0Rees: commutativity}, that $\parens{i,x,\lambda}\parens{i,y,\lambda}=\parens{i,y,\lambda}\parens{i,x,\lambda}$. Since $x$ and $y$ are arbitrary elements of $G$, then this means we proved that all the distinct vertices of $\mathcal{C}$ are adjacent to each other. Hence $\mathcal{C}$ is isomorphic to $K_{\abs{G}}$.
	
	\smallskip
	
	\textit{Case 2:} Assume that $G$ is not abelian and $p_{\lambda i}\in G$. Let $\varphi: G \to \braces{i}\times G \times \braces{\lambda}$ be the map defined by $x\varphi=\parens{i,p_{\lambda i}^{-1}x, \lambda}$ for all $x\in G$. It is easy to verify that $\varphi$ is a group isomorphism. This implies that $\varphi$ preserves adjacency. Since $G$ and $\set{i}\times G\times\set{\lambda}$ are the sets of vertices of $\extendedcommgraph{G}$ and $\mathcal{C}$, respectively, then the graphs $\extendedcommgraph{G}$ and $\mathcal{C}$ are isomorphic. Furthermore, it follows from Lemma~\ref{preli: commgraph and extended commgraph} that $\extendedcommgraph{G}$ is isomorphic to $K_{\abs{\centre{G}}}\graphjointwo\commgraph{G}$, which concludes the proof.
\end{proof}

Lemma~\ref{0Rees: subgraph induced by ixGxlambda} shows that the subgraphs of $\commgraph{\Reeszero{G}{I}{\Lambda}{P}}$ induced by $\set{i}\times G\times\set{\lambda}$ (where $i\in I$ and $\lambda\in\Lambda$) are all connected. This means that, in order to ascertain if $\commgraph{\Reeszero{G}{I}{\Lambda}{P}}$ is connected, we only need to verify if, for all $i,j\in I$ and $\lambda,\mu\in\Lambda$ such that $\parens{i,\lambda}\neq\parens{j,\mu}$, there is a path from $\parens{i,1_G,\lambda}$ to $\parens{j,1_G,\mu}$. In order to do this we create a new graph which ignores the elements of $G$ and where adjacency is determined solely by condition 2 of Lemma~\ref{0Rees: commutativity} (which characterizes adjacency in $\commgraph{\Reeszero{G}{I}{\Lambda}{P}}$). Condition 1 will be excluded in the new graph since we no longer have to determine adjacency when the first and last components of the vertices of $\commgraph{\Reeszero{G}{I}{\Lambda}{P}}$ coincide).

\begin{definition}\label{0Rees: definition G(I,^,P)}
	Let $\simplifiedgraph{I}{\Lambda}{P}$ be the simple graph whose set of vertices is $I\times \Lambda$ and where two distinct vertices $\parens{i,\lambda},\parens{j,\mu}\in I\times \Lambda$ are adjacent if and only if $p_{\lambda j}=p_{\mu i}=0$.
\end{definition}

In Lemma~\ref{0Rees: adjacency in G(I,^) and G(M0[...])} we describe a relationship between $\simplifiedgraph{I}{\Lambda}{P}$ and $\commgraph{\Reeszero{G}{I}{\Lambda}{P}}$. Moreover, it follows from this lemma that $\simplifiedgraph{I}{\Lambda}{P}$ is isomorphic to $\commgraph{\Reeszero{G}{I}{\Lambda}{P}}$ if and only if $G$ is trivial. Furthermore, (the succeeding) Lemma~\ref{0Rees: connected component G(I,^) => connected component G(M0[...])} justifies the choice of $\simplifiedgraph{I}{\Lambda}{P}$ in determining whether $\commgraph{\Reeszero{G}{I}{\Lambda}{P}}$ is connected or not, and in the determination of the diameter of $\commgraph{\Reeszero{G}{I}{\Lambda}{P}}$.



\begin{lemma}\label{0Rees: adjacency in G(I,^) and G(M0[...])}
	Let $i,j\in I$ and $\lambda,\mu\in\Lambda$ be such that $\parens{i,\lambda}\neq\parens{j,\mu}$. The following statements are equivalent:
	\begin{enumerate}
		\item $\parens{i,\lambda}$ and $\parens{j,\mu}$ are adjacent in $\simplifiedgraph{I}{\Lambda}{P}$.
		\item For all $x,y\in G$ we have that $\parens{i,x,\lambda}$ and $\parens{j,y,\mu}$ are adjacent in $\commgraph{\Reeszero{G}{I}{\Lambda}{P}}$.
		\item There exist $x,y\in G$ such that $\parens{i,x,\lambda}$ and $\parens{j,y,\mu}$ are adjacent in $\commgraph{\Reeszero{G}{I}{\Lambda}{P}}$.
	\end{enumerate}
\end{lemma}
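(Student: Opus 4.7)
The plan is to observe that all three statements are equivalent to a single condition on $P$, namely $p_{\lambda j}=p_{\mu i}=0$, and then the equivalence becomes automatic.

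First, by Definition~\ref{0Rees: definition G(I,^,P)}, statement (1) is simply a restatement of $p_{\lambda j}=p_{\mu i}=0$. Next, fix arbitrary $x,y\in G$ and note that, by Lemma~\ref{0Rees: commutativity}, $(i,x,\lambda)$ and $(j,y,\mu)$ commute if and only if one of the two conditions listed there holds. Since we have assumed $(i,\lambda)\neq (j,\mu)$, condition~1 of Lemma~\ref{0Rees: commutativity} (which requires $i=j$ and $\lambda=\mu$) fails, so commutativity reduces to condition~2: $p_{\lambda j}=p_{\mu i}=0$. Crucially, this condition is independent of the choice of $x$ and $y$.

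From this observation the three-way equivalence is immediate. Statement (1) is equivalent to $p_{\lambda j}=p_{\mu i}=0$; the previous paragraph shows that this is exactly equivalent to statement (2) (since the condition for a particular $x,y$ coincides with the condition for all $x,y$); and (2)$\Rightarrow$(3) is trivial. Finally, if (3) holds, then for some particular $x,y\in G$ adjacency in $\commgraph{\Reeszero{G}{I}{\Lambda}{P}}$ gives $p_{\lambda j}=p_{\mu i}=0$, which is (1).

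There is no genuine obstacle here; the only thing to be careful about is to explicitly invoke the hypothesis $(i,\lambda)\neq(j,\mu)$ in order to discard condition~1 of Lemma~\ref{0Rees: commutativity}, which is the step that turns the $x,y$-dependent commutativity criterion into the $x,y$-free condition appearing in the definition of $\simplifiedgraph{I}{\Lambda}{P}$.
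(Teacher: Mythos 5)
Your proof is correct and follows essentially the same route as the paper: both arguments reduce each statement to the single $x,y$-independent condition $p_{\lambda j}=p_{\mu i}=0$ via Lemma~\ref{0Rees: commutativity}, using the hypothesis $\parens{i,\lambda}\neq\parens{j,\mu}$ to rule out condition~1 of that lemma. The only cosmetic difference is that the paper organizes this as a cycle of implications $1\Rightarrow 2\Rightarrow 3\Rightarrow 1$ rather than noting the common equivalent condition up front.
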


\begin{proof}
	We are going to prove that $1 \implies 2 \implies 3 \implies 1$.
	
	\medskip
	
	\textbf{Part 1} $[1\implies 2]$. Suppose that $\parens{i,\lambda}$ and $\parens{j,\mu}$ are adjacent in $\simplifiedgraph{I}{\Lambda}{P}$. Then $p_{\lambda j}=p_{\mu i}=0$, which implies, by Lemma~\ref{0Rees: commutativity}, that $\parens{i,x,\lambda}\parens{j,y,\mu}=\parens{j,y,\mu}\parens{i,x,\lambda}$ for all $x,y\in G$. Therefore for all $x,y\in G$ we have that $\parens{i,x,\lambda}$ and $\parens{j,y,\mu}$ are adjacent in $\commgraph{\Reeszero{G}{I}{\Lambda}{P}}$.
	
	\medskip
	
	\textbf{Part 2} $[2 \implies 3]$. Statement $3$ is an immediate consequence of statement $2$.
	
	\medskip
	
	\textbf{Part 3} $[3 \implies 1]$. Suppose that there exist $x,y\in G$ such that $\parens{i,x,\lambda}$ and $\parens{j,y,\mu}$ are adjacent in $\commgraph{\Reeszero{G}{I}{\Lambda}{P}}$. Then $\parens{i,x,\lambda}\parens{j,y,\mu}=\parens{j,y,\mu}\parens{i,x,\lambda}$. Additionally, we have that $\parens{i,\lambda}\neq\parens{j,\mu}$, which implies, by Lemma~\ref{0Rees: commutativity}, that $p_{\lambda j}=p_{\mu i}=0$. Thus $\parens{i,\lambda}$ and $\parens{j,\mu}$ are adjacent in $\simplifiedgraph{I}{\Lambda}{P}$.
\end{proof}

\begin{lemma}\label{0Rees: connected component G(I,^) => connected component G(M0[...])}
	Let $\mathcal{D}$ be a connected component of $\simplifiedgraph{I}{\Lambda}{P}$ and let $D$ be its vertex set. Then the subgraph $\mathcal{C}$ of $\commgraph{\Reeszero{G}{I}{\Lambda}{P}}$ induced by $\bigcup_{\parens{i,\lambda}\in D}\set{i}\times G\times\set{\lambda}$ is a connected component of $\commgraph{\Reeszero{G}{I}{\Lambda}{P}}$. Furthermore, if $\diam{\mathcal{D}}\geqslant 1$, then $\diam{\mathcal{D}}=\diam{\mathcal{C}}$.   
\end{lemma}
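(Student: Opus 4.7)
The plan is to decompose the proof into three stages: first, that $\mathcal{C}$ is connected; second, that no edges of $\commgraph{\Reeszero{G}{I}{\Lambda}{P}}$ leave $\mathcal{C}$, so $\mathcal{C}$ is actually an entire connected component; and third, the diameter equality, via two matching inequalities.

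For connectedness, I would take two vertices $(i,x,\lambda)$ and $(j,y,\mu)$ of $\mathcal{C}$ and exhibit a path between them. If $(i,\lambda)=(j,\mu)$, then Lemma~\ref{0Rees: subgraph induced by ixGxlambda} already provides a connected induced subgraph on $\set{i}\times G\times\set{\lambda}$. Otherwise $(i,\lambda)$ and $(j,\mu)$ both lie in $D$, so the connectedness of $\mathcal{D}$ supplies a path $(i,\lambda)=(i_0,\lambda_0)-\cdots-(i_n,\lambda_n)=(j,\mu)$ in $\mathcal{D}$; the lift
\[
(i,x,\lambda)=(i_0,x,\lambda_0)-(i_1,1_G,\lambda_1)-\cdots-(i_{n-1},1_G,\lambda_{n-1})-(i_n,y,\lambda_n)=(j,y,\mu)
\]
is a path in $\mathcal{C}$, each adjacency coming from the $1\Rightarrow 2$ direction of Lemma~\ref{0Rees: adjacency in G(I,^) and G(M0[...])}. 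For maximality, if $(i,x,\lambda)\in\mathcal{C}$ is adjacent to $(j,y,\mu)$ in $\commgraph{\Reeszero{G}{I}{\Lambda}{P}}$, then either $(i,\lambda)=(j,\mu)\in D$ directly, or the $3\Rightarrow 1$ direction of Lemma~\ref{0Rees: adjacency in G(I,^) and G(M0[...])} makes $(i,\lambda)$ and $(j,\mu)$ adjacent in $\simplifiedgraph{I}{\Lambda}{P}$, placing $(j,\mu)$ in the same component $D$; either way $(j,y,\mu)\in\mathcal{C}$.

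For the diameter part I would establish both inequalities. For $\diam{\mathcal{D}}\leqslant\diam{\mathcal{C}}$, pick $(i,\lambda),(j,\mu)\in D$ attaining $\diam{\mathcal{D}}$ and compare distances: any shortest $\mathcal{C}$-path from $(i,1_G,\lambda)$ to $(j,1_G,\mu)$ projects, on the first and third coordinates, to a walk in $\simplifiedgraph{I}{\Lambda}{P}$ from $(i,\lambda)$ to $(j,\mu)$, because Lemma~\ref{0Rees: commutativity} forces each step in the lifted path either to preserve the projection or, by the $3\Rightarrow 1$ direction of Lemma~\ref{0Rees: adjacency in G(I,^) and G(M0[...])}, to jump to a $\simplifiedgraph{I}{\Lambda}{P}$-neighbour; compressing this walk to a path does not increase its length, so $\diam{\mathcal{D}}\leqslant\dist{\mathcal{C}}{(i,1_G,\lambda)}{(j,1_G,\mu)}\leqslant\diam{\mathcal{C}}$. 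For the reverse inequality, given $(i,x,\lambda),(j,y,\mu)\in\mathcal{C}$, I split on whether their projections coincide: when they differ, the lifting construction from the connectedness step produces a $\mathcal{C}$-path of length at most $\diam{\mathcal{D}}$; when they coincide, the hypothesis $\diam{\mathcal{D}}\geqslant 1$ supplies some $(i',\lambda')\in D$ adjacent in $\mathcal{D}$ to $(i,\lambda)$, and the detour $(i,x,\lambda)-(i',1_G,\lambda')-(i,y,\lambda)$ has length $2$.

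The principal obstacle is the reverse diameter inequality when the two endpoints share a projection. Lemma~\ref{0Rees: subgraph induced by ixGxlambda} ensures that the distance inside $\set{i}\times G\times\set{\lambda}$ is at most $2$ (since $K_{\abs{\centre{G}}}\graphjointwo\commgraph{G}$ has diameter at most $2$), but to bound this by $\diam{\mathcal{D}}$ one must route through a neighbour in $\mathcal{D}$, which is exactly what the hypothesis $\diam{\mathcal{D}}\geqslant 1$ makes available. Some care is also needed in the projection argument to handle $\mathcal{C}$-paths whose consecutive vertices share the $(i,\lambda)$-projection, as these contribute stationary steps in the walk that must be collapsed before extracting a genuine $\simplifiedgraph{I}{\Lambda}{P}$-path.
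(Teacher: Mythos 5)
Your overall architecture --- connectivity by lifting paths from $\mathcal{D}$, maximality by projecting, and the diameter equality via two inequalities --- is the same as the paper's, and your first two stages together with the inequality $\diam{\mathcal{D}}\leqslant\diam{\mathcal{C}}$ are sound. The gap is in the reverse inequality, at exactly the point you single out as the principal obstacle, and the fix you propose does not close it. When the pair realizing $\diam{\mathcal{C}}$ has a common projection $\parens{i,\lambda}$, your detour through a neighbour $\parens{i',\lambda'}$ of $\parens{i,\lambda}$ in $\mathcal{D}$ yields $\dist{\mathcal{C}}{\parens{i,x,\lambda}}{\parens{i,y,\lambda}}\leqslant 2$, and you then need $2\leqslant\diam{\mathcal{D}}$. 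The hypothesis $\diam{\mathcal{D}}\geqslant 1$ only guarantees that the neighbour exists; it does not rule out $\diam{\mathcal{D}}=1$, in which case the bound of $2$ is strictly worse than the bound $\diam{\mathcal{D}}=1$ you must prove. In that situation the only way to conclude is to show that $\parens{i,x,\lambda}$ and $\parens{i,y,\lambda}$ are \emph{directly adjacent}, which by Lemma~\ref{0Rees: commutativity} means $xp_{\lambda i}y=yp_{\lambda i}x$ for all $x,y\in G$ --- in effect, that $\diam{\mathcal{D}}=1$ forces $p_{\lambda i}=0$ for every $\parens{i,\lambda}\in D$ (unless $G$ is abelian). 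That is an additional claim about the matrix $P$, not a consequence of anything in your write-up; the paper devotes a separate case (its Case 2 of Part 3) precisely to extracting $p_{\lambda i}=0$ from $\diam{\mathcal{D}}=1$ before concluding.

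The case is not vacuous: if $p_{\lambda i}\in G$ and $G$ is non-abelian, Lemma~\ref{0Rees: subgraph induced by ixGxlambda} says the induced subgraph on $\set{i}\times G\times\set{\lambda}$ is $K_{\abs{\centre{G}}}\graphjointwo\commgraph{G}$, whose diameter is exactly $2$, so $\diam{\mathcal{C}}$ would be $2$ against $\diam{\mathcal{D}}=1$. This is where all the real content of the ``furthermore'' clause sits, and it is also the most delicate step of the paper's own argument (one must justify that the auxiliary vertices $\parens{i',\lambda}$ and $\parens{i,\lambda'}$ used there can be compared within $\mathcal{D}$). As written, your proof establishes the component statement and the equality only under the stronger hypothesis $\diam{\mathcal{D}}\geqslant 2$.
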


\begin{proof}
	Let $C=\bigcup_{\parens{i,\lambda}\in D}\set{i}\times G\times\set{\lambda}$. In order to show that $\mathcal{C}$ is a connected component of $\commgraph{\Reeszero{G}{I}{\Lambda}{P}}$ we need to prove two things: first we are going to see that for any $\parens{i,x,\lambda},\parens{j,y,\mu}\in C$ there exists a path from $\parens{i,x,\lambda}$ to $\parens{j,y,\mu}$ in $\commgraph{\Reeszero{G}{I}{\Lambda}{P}}$; and then we are going to see that if $\parens{i,x,\lambda}\in C$ and there is a path from $\parens{i,x,\lambda}$ to $\parens{j,y,\mu}$ in $\commgraph{\Reeszero{G}{I}{\Lambda}{P}}$, then $\parens{j,y,\mu}\in C$.
	
	\medskip
	
	\textbf{Part 1.} Let $\parens{i,x,\lambda},\parens{j,y,\mu}\in C$. We are going to see that there is a path from $\parens{i,x,\lambda}$ to $\parens{j,y,\mu}$ in $\commgraph{\Reeszero{G}{I}{\Lambda}{P}}$. It follows from the definition of $C$ that $\parens{i,\lambda},\parens{j,\mu}\in D$. We consider two cases: $\parens{i,\lambda}\neq\parens{j,\mu}$ and $\parens{i,\lambda}=\parens{j,\mu}$.
	
	\smallskip
	
	\textit{Case 1}: Assume that $\parens{i,\lambda}\neq\parens{j,\mu}$. Since $\mathcal{D}$ is a connected component of $\simplifiedgraph{I}{\Lambda}{P}$, then there exists a path from $\parens{i,\lambda}$ to $\parens{j,\mu}$ in $\simplifiedgraph{I}{\Lambda}{P}$. Let
	\begin{displaymath}
		\parens{i,\lambda}=\parens{i_1,\lambda_1}-\parens{i_2,\lambda_2}-\cdots-\parens{i_k,\lambda_k}=\parens{j,\mu}
	\end{displaymath}
	be one of those paths and assume that $k-1=\dist{\simplifiedgraph{I}{\Lambda}{P}}{\parens{i,\lambda}}{\parens{j,\mu}}$. (We observe that $k\geqslant 2$ because $\parens{i,\lambda}\neq\parens{j,\mu}$.) Then, by Lemma~\ref{0Rees: adjacency in G(I,^) and G(M0[...])}, $\parens{i_m,z,\lambda_m}$ is adjacent to $\parens{i_{m+1},w,\lambda_{m+1}}$ for all $z,w\in G$ and $m\in\X{k-1}$. Hence
	\begin{displaymath}
		\parens{i,x,\lambda}=\parens{i_1,x,\lambda_1}\sim \parens{i_2,y,\lambda_2}\sim\cdots\sim \parens{i_k,y,\lambda_k}=\parens{j,y,\mu}
	\end{displaymath}
	(in $\commgraph{\Reeszero{G}{I}{\Lambda}{P}}$), which implies that there exists a path from $\parens{i,x,\lambda}$ to $\parens{j,y,\mu}$ in $\commgraph{\Reeszero{G}{I}{\Lambda}{P}}$. Moreover, we have
	\begin{displaymath}
		\dist{\commgraph{\Reeszero{G}{I}{\Lambda}{P}}}{\parens{i,x,\lambda}}{\parens{j,y,\mu}}\leqslant k-1 =\dist{\simplifiedgraph{I}{\Lambda}{P}}{\parens{i,\lambda}}{\parens{j,\mu}}.
	\end{displaymath}
	
	\smallskip
	
	\textit{Case 2:} Assume that $\parens{i,\lambda}=\parens{j,\mu}$. We have $p_{\lambda i}=0$ or $p_{\lambda i}\in G$. 
	
	If $p_{\lambda i}=0$, then Lemma~\ref{0Rees: commutativity} ensures that $\parens{i,x,\lambda}\parens{i,y,\lambda}=\parens{i,y,\lambda}\parens{i,x,\lambda}$. Thus
	\begin{displaymath}
		\parens{i,x,\lambda}\sim\parens{i,y,\lambda}=\parens{j,y,\mu}
	\end{displaymath}
	(in $\commgraph{\Reeszero{G}{I}{\Lambda}{P}}$), which implies that there exists a path from $\parens{i,x,\lambda}$ to $\parens{j,y,\mu}$ in $\commgraph{\Reeszero{G}{I}{\Lambda}{P}}$ and that $\dist{\commgraph{\Reeszero{G}{I}{\Lambda}{P}}}{\parens{i,x,\lambda}}{\parens{j,y,\mu}}\leqslant 1$. 
	
	If $p_{\lambda i}\in G$, then $xp_{\lambda i}p_{\lambda i}^{-1}=x=p_{\lambda i}^{-1}p_{\lambda i}x$ and $yp_{\lambda i}p_{\lambda i}^{-1}=y=p_{\lambda i}^{-1}p_{\lambda i}y$, which implies, by Lemma~\ref{0Rees: commutativity}, that $\parens{i,p_{\lambda i}^{-1},\lambda}$ commutes with $\parens{i,x,\lambda}$ and $\parens{i,y,\lambda}$. Hence
	\begin{displaymath}
		\parens{i,x,\lambda}\sim\parens{i,p_{\lambda i}^{-1},\lambda}\sim\parens{i,y,\lambda}=\parens{j,y,\mu}
	\end{displaymath}
	(in $\commgraph{\Reeszero{G}{I}{\Lambda}{P}}$) and, consequently, there exists a path from $\parens{i,x,\lambda}$ to $\parens{j,y,\mu}$ in $\commgraph{\Reeszero{G}{I}{\Lambda}{P}}$. In addition, we have $\dist{\commgraph{\Reeszero{G}{I}{\Lambda}{P}}}{\parens{i,x,\lambda}}{\parens{j,y,\mu}}\leqslant 2$.
	
	\medskip
	
	\textbf{Part 2.} Let $\parens{i,x,\lambda}\in C$. Then $\parens{i,\lambda}\in D$. Let
	\begin{displaymath}
		\parens{i,x,\lambda}=\parens{i_1,x_1,\lambda_1}-\parens{i_2,x_2,\lambda_2}-\cdots-\parens{i_k,x_k,\lambda_k}
	\end{displaymath}
	be a path in $\commgraph{\Reeszero{G}{I}{\Lambda}{P}}$ and assume that $k-1=\dist{\commgraph{\Reeszero{G}{I}{\Lambda}{P}}}{\parens{i,x,\lambda}}{\parens{i_k,x_k,\lambda_k}}$. Our aim is to prove that $\parens{i_k,x_k,\lambda_k}\in C$. As a consequence of Lemma~\ref{0Rees: adjacency in G(I,^) and G(M0[...])} we have
	\begin{displaymath}
		\parens{i,\lambda}=\parens{i_1,\lambda_1}\sim\parens{i_2,\lambda_2}\sim\cdots\sim\parens{i_k,\lambda_k}
	\end{displaymath}
	(in $\simplifiedgraph{I}{\Lambda}{P}$), which implies that there is a path from $\parens{i,\lambda}=\parens{i_1,\lambda_1}$ to $\parens{i_k,\lambda_k}$ in $\simplifiedgraph{I}{\Lambda}{P}$. Since $\mathcal{D}$ is a connected component of $\simplifiedgraph{I}{\Lambda}{P}$ and $\parens{i,\lambda}\in D$, then we have $\parens{i_k,\lambda_k}\in D$. Thus $\parens{i_k,x_k,\lambda_k}\in C$. Additionally, we have
	\begin{displaymath}
		\dist{\simplifiedgraph{I}{\Lambda}{P}}{\parens{i,\lambda}}{\parens{i_k,\lambda_k}}\leqslant k-1=\dist{\commgraph{\Reeszero{G}{I}{\Lambda}{P}}}{\parens{i,x,\lambda}}{\parens{i_k,x_k,\lambda_k}}.
	\end{displaymath}
	
	We just proved (through part 1 and part 2) that $\mathcal{C}$ is a connected component of $\commgraph{\Reeszero{G}{I}{\Lambda}{P}}$.
	
	\medskip
	
	\textbf{Part 3.} Suppose that $\diam{\mathcal{D}}\geqslant 1$. We are going to demonstrate that $\diam{\mathcal{D}}=\diam{\mathcal{C}}$.
	
	First, we are going to verify that $\diam{\mathcal{D}}\leqslant\diam{\mathcal{C}}$. Let $\parens{i,\lambda},\parens{j,\mu}\in D$ be such that $\dist{\simplifiedgraph{I}{\Lambda}{P}}{\parens{i,\lambda}}{\parens{j,\mu}}=\diam{\mathcal{D}}$. Then $\parens{i,1_G,\lambda},\parens{j,1_G,\mu}\in C$ and, since $\mathcal{C}$ is a connected component of $\commgraph{\Reeszero{G}{I}{\Lambda}{P}}$, there is a path from $\parens{i,1_G,\lambda}$ to $\parens{j,1_G,\mu}$ in $\commgraph{\Reeszero{G}{I}{\Lambda}{P}}$. By part 2 of the proof we have
	\begin{displaymath}
		\diam{\mathcal{D}}=\dist{\simplifiedgraph{I}{\Lambda}{P}}{\parens{i,\lambda}}{\parens{j,\mu}}\leqslant\dist{\commgraph{\Reeszero{G}{I}{\Lambda}{P}}}{\parens{i,1_G,\lambda}}{\parens{j,1_G,\mu}}\leqslant\diam{\mathcal{C}}.
	\end{displaymath}
	
	Now we are going to show that $\diam{\mathcal{D}}\geqslant\diam{\mathcal{C}}$. Let $\parens{i,x,\lambda},\parens{j,y,\mu}\in C$ be such that $\dist{\commgraph{\Reeszero{G}{I}{\Lambda}{P}}}{\parens{i,x,\lambda}}{\parens{j,y,\mu}}=\diam{\mathcal{C}}$. We divide the proof into three cases.
	
	\smallskip
	
	\textit{Case 1:} Assume that $\parens{i,\lambda}\neq\parens{j,\mu}$. It follows from case 1 of part 1 of the proof that
	\begin{displaymath}
		\diam{\mathcal{C}}=\dist{\commgraph{\Reeszero{G}{I}{\Lambda}{P}}}{\parens{i,x,\lambda}}{\parens{j,y,\mu}}\leqslant
		\dist{\simplifiedgraph{I}{\Lambda}{P}}{\parens{i,\lambda}}{\parens{j,\mu}}\leqslant\diam{\mathcal{D}}.
	\end{displaymath}
	
	\smallskip
	
	\textit{Case 2:} Assume that $\parens{i,\lambda}=\parens{j,\mu}$ and $\diam{\mathcal{D}}=1$. This implies that $\mathcal{D}$ contains at least two vertices, that is, $\abs{D}>1$. Let $\parens{i',\lambda'}\in D$ be such that $\parens{i',\lambda'}\neq\parens{i,\lambda}$. Then we also have $\parens{i',\lambda}\neq\parens{i,\lambda'}$ and, consequently, $0\neq\dist{\simplifiedgraph{I}{\Lambda}{P}}{\parens{i',\lambda}}{\parens{i,\lambda'}}\leqslant\diam{\mathcal{D}}=1$, that is, $\dist{\simplifiedgraph{I}{\Lambda}{P}}{\parens{i',\lambda}}{\parens{i,\lambda'}}=1$. Hence $\parens{i',\lambda}$ and $\parens{i,\lambda'}$ are adjacent and, consequently, we have $p_{\lambda i}=p_{\lambda' i'}=0$. Since $p_{\lambda i}=0$, then it follows from Lemma~\ref{0Rees: commutativity} that $\parens{i,x,\lambda}\parens{i,y,\lambda}=\parens{i,y,\lambda}\parens{i,x,\lambda}$; that is, we have $\parens{i,x,\lambda}\sim\parens{i,y,\lambda}=\parens{j,y,\mu}$ (in $\commgraph{\Reeszero{G}{I}{\Lambda}{P}}$). Thus
	\begin{displaymath}
		\diam{\mathcal{C}}=\dist{\commgraph{\Reeszero{G}{I}{\Lambda}{P}}}{\parens{i,x,\lambda}}{\parens{j,y,\mu}}\leqslant 1=\diam{\mathcal{D}}.
	\end{displaymath}
	
	\smallskip
	
	\textit{Case 3:} Assume that $\parens{i,\lambda}=\parens{j,\mu}$ and $\diam{\mathcal{D}}\geqslant 2$. It follows from case 2 of part 1 of the proof that
	\begin{displaymath}
		\diam{\mathcal{C}}=\dist{\commgraph{\Reeszero{G}{I}{\Lambda}{P}}}{\parens{i,x,\lambda}}{\parens{j,y,\mu}}\leqslant 2\leqslant\diam{\mathcal{D}}.\qedhere
	\end{displaymath}
\end{proof}

Next we define what we will call $0$-closure method (and we will see an illustration of the method in Example~\ref{0Rees: example 0-closure method}). We will see later its importance: for instance, the method allows us to ascertain if $\commgraph{\Reeszero{G}{I}{\Lambda}{P}}$ is connected (Theorem~\ref{0Rees: connectedness}) and it can be used to find the connected components of $\commgraph{\Reeszero{G}{I}{\Lambda}{P}}$ when it is not connected (Theorem~\ref{0Rees: finding connected components}). Furthermore, it can also be used for determining the diameter of $\commgraph{\Reeszero{G}{I}{\Lambda}{P}}$ (respectively, diameters of some connected components of $\commgraph{\Reeszero{G}{I}{\Lambda}{P}}$) when it is connected (respectively, not connected) (Theorem~\ref{0Rees: diameter}).

\begin{definition}[$0$-closure method]\label{0Rees: definition 0-closure method}
	We call \defterm{$0$-closure method} the procedure described below:
	\begin{enumerate}
		\item Choose a zero entry from the matrix $P$. Assume that the chosen entry is the $\parens{\lambda,i}$-th entry of $P$ and let $Q_0=\Psubmatrix{P}{i}{\lambda}=\begin{bmatrix}
			p_{\lambda i}
		\end{bmatrix}=\begin{bmatrix}
			0
		\end{bmatrix}$.
		
		\item Let $k\geqslant 0$ and suppose that the submatrices $Q_0,Q_1,\ldots,Q_k$ have already been constructed. Assume that $Q_k=\Psubmatrix{P}{I_k}{\Lambda_k}$. We are going to construct a new matrix $Q_{k+1}$, if possible.
		\begin{enumerate}
			\item If there are zero entries in the rows and/or columns intersecting $Q_k$ that are not in $Q_k$ itself, then we mark all those zero entries and we define $Q_{k+1}$ as the smallest submatrix of $P$ such that:
			\begin{itemize}
				\item $Q_k$ is a submatrix of $Q_{k+1}$.
				\item The marked zero entries of $P$ are entries of $Q_{k+1}$.
			\end{itemize}
			That is, if $\Lambda'\subseteq\Lambda\setminus\Lambda_k$ is the set formed by the indices of the rows that contain marked zero entries and $I'\subseteq I\setminus I_k$ is the set formed by the indices of the columns that contain marked zero entries, then $Q_{k+1}=\Psubmatrix{P}{I_k\cup I'}{\Lambda_k\cup \Lambda'}$. Go to 2.
			
			\item Otherwise, halt and set $\zeroindex{i}{\lambda}=k$.
		\end{enumerate}
	\end{enumerate}
	
	The result of the $0$-closure method is the sequence $\parens{Q_0,Q_1,\ldots,Q_{\zeroindex{i}{\lambda}}}$. We call $Q_{\zeroindex{i}{\lambda}}$ the \defterm{\zeroclosuresubmatrix{i}{\lambda}} and we say that a matrix $Q$ is a \defterm{\zeroclosuresub} if there exist $i\in I$ and $\lambda\in\Lambda$ such that $p_{\lambda i}=0$ and $Q$ is the \zeroclosuresubmatrix{i}{\lambda}. Moreover, we denote by \defterm{step $0$} the part of the $0$-closure method where we choose an initial zero entry of $P$ and construct $Q_0$, and for each $k\in\X{\zeroindex{i}{\lambda}}$ we denote by \defterm{step $k$} the part of the $0$-closure method which starts with the (already constructed) matrix $Q_{k-1}$ and ends with the construction of the matrix $Q_k$. The unique \defterm{entry selected at step $0$} is the $\parens{\lambda, i}$-th entry of $P$, and for each $k\in\X{\zeroindex{i}{\lambda}}$ we call \defterm{entries selected at step $k$} the entries of $P$ that are entries of $Q_k$, but are not entries of $Q_{k-1}$.
\end{definition}

Note that, given a chosen initial zero entry, the behaviour of the algorithm is fixed and, consequently, the sequence of matrices it produces is also fixed. Thus the algorithm is deterministic.

In the $0$-closure method the only thing that matters is the positions of the zeros in the matrix $P$. Since $P$ and $\timeszero{P}$ have zeros in the exact same positions, we will often do the $0$-closure method in the matrix $\timeszero{P}$ instead.

\begin{example}\label{0Rees: example 0-closure method}
	Let $I=\set{1,2,3,4,5,6,7,8}$, $\Lambda=\set{1,2,3,4,5,6}$ and assume that $P$ is such that
	\begin{displaymath}
		\timeszero{P}=\begin{bNiceMatrix}[first-row,first-col]
			& 1 & 2 & 3 & 4 & 5 & 6 & 7 & 8 \\
			1 & 0 & \times & 0 & 0 & \times & \times & \times & \times \\
			2 & \times & 0 & \times & \times & 0 & \times & \times & 0 \\
			3 & 0 & \times & 0 & \times & \times & 0 & 0 & \times \\
			4 & 0 & \times & 0 & \times & \times & 0 & 0 & \times \\
			5 & \times & 0 & \times & \times & \times & \times & \times & \times \\ 
			6 & \times & \times & \times & \times & \times & \times & \times & \times
		\end{bNiceMatrix}.
	\end{displaymath}
	We are going to demonstrate how the $0$-closure method works.
	
	\begin{itemize}
		\item Step $0$: We choose (in red) the $\parens{4,6}$-th entry of $P$ to start the $0$-closure method and form the submatrix $Q_0=\Psubmatrix{P}{6}{4}$.
		\begin{displaymath}
			\begin{bNiceMatrix}[first-row,first-col]
				& 1 & 2 & 3 & 4 & 5 & 6 & 7 & 8 \\
				1 & 0 & \times & 0 & 0 & \times & \times & \times & \times \\
				2 & \times & 0 & \times & \times & 0 & \times & \times & 0 \\
				3 & 0 & \times & 0 & \times & \times & 0 & 0 & \times \\
				4 & 0 & \times & 0 & \times & \times & \cellcolor{Red} 0 & 0 & \times \\
				5 & \times & 0 & \times & \times & \times & \times & \times & \times \\ 
				6 & \times & \times & \times & \times & \times & \times & \times & \times
			\end{bNiceMatrix}
		\end{displaymath}
		
		\item Since row $4$ and column $6$ contain more zero entries, the $0$-closure method continues to step $1$.
		
		\item Step $1$: We mark (in yellow) all the uncoloured zero entries of $P$ that are in row $4$ or column $6$ (see matrix below on the left). We form the smallest submatrix of $P$ which contains the entries in red and in yellow. That matrix is $Q_1=\Psubmatrix{P}{\set{1,3,6,7}}{\set{3,4}}$ (which corresponds to the submatrix whose entries are coloured in red and yellow on the right matrix below).
		\begin{displaymath}
			\begin{bNiceMatrix}[first-row,first-col]
				& 1 & 2 & 3 & 4 & 5 & 6 & 7 & 8 \\
				1 & 0 & \times & 0 & 0 & \times & \times & \times & \times \\
				2 & \times & 0 & \times & \times & 0 & \times & \times & 0 \\
				3 & 0 & \times & 0 & \times & \times & \cellcolor{Goldenrod} 0 & 0 & \times \\
				4 & \cellcolor{Goldenrod} 0 & \times & \cellcolor{Goldenrod} 0 & \times & \times & \cellcolor{Red} 0 & \cellcolor{Goldenrod} 0 & \times \\
				5 & \times & 0 & \times & \times & \times & \times & \times & \times \\ 
				6 & \times & \times & \times & \times & \times & \times & \times & \times
			\end{bNiceMatrix}\quad
			\begin{bNiceMatrix}[first-row,first-col]
				& 1 & 2 & 3 & 4 & 5 & 6 & 7 & 8 \\
				1 & 0 & \times & 0 & 0 & \times & \times & \times & \times \\
				2 & \times & 0 & \times & \times & 0 & \times & \times & 0 \\
				3 & \cellcolor{Goldenrod} 0 & \times & \cellcolor{Goldenrod} 0 & \times & \times & \cellcolor{Goldenrod} 0 & \cellcolor{Goldenrod} 0 & \times \\
				4 & \cellcolor{Goldenrod} 0 & \times & \cellcolor{Goldenrod} 0 & \times & \times & \cellcolor{Red} 0 & \cellcolor{Goldenrod} 0 & \times \\
				5 & \times & 0 & \times & \times & \times & \times & \times & \times \\ 
				6 & \times & \times & \times & \times & \times & \times & \times & \times
			\end{bNiceMatrix}
		\end{displaymath}
		
		\item Since columns $1$ and $3$ contain more zero entries (besides the ones in yellow), the $0$-closure method continues to step $2$.
		
		\item Step $2$: We mark (in green) all the uncoloured zero entries of $P$ that are in rows $3$ or $4$ or in columns $1$, $3$, $6$ or $7$ (see matrix below on the left). We form the smallest submatrix of $P$ that contains the entries in red, yellow and green, which corresponds to the matrix $Q_2=\Psubmatrix{P}{\set{1,3,6,7}}{\set{1,3,4}}$ (see the matrix below on the right).
		\begin{displaymath}
			\begin{bNiceMatrix}[first-row,first-col]
				& 1 & 2 & 3 & 4 & 5 & 6 & 7 & 8 \\
				1 & \cellcolor{Green} 0 & \times & \cellcolor{Green} 0 & 0 & \times & \times & \times & \times \\
				2 & \times & 0 & \times & \times & 0 & \times & \times & 0 \\
				3 & \cellcolor{Goldenrod} 0 & \times & \cellcolor{Goldenrod} 0 & \times & \times & \cellcolor{Goldenrod} 0 & \cellcolor{Goldenrod} 0 & \times \\
				4 & \cellcolor{Goldenrod} 0 & \times & \cellcolor{Goldenrod} 0 & \times & \times & \cellcolor{Red} 0 & \cellcolor{Goldenrod} 0 & \times \\
				5 & \times & 0 & \times & \times & \times & \times & \times & \times \\ 
				6 & \times & \times & \times & \times & \times & \times & \times & \times
			\end{bNiceMatrix}\quad
			\begin{bNiceMatrix}[first-row,first-col]
				& 1 & 2 & 3 & 4 & 5 & 6 & 7 & 8 \\
				1 & \cellcolor{Green} 0 & \times & \cellcolor{Green} 0 & 0 & \times & \cellcolor{Green} \times & \cellcolor{Green} \times & \times \\
				2 & \times & 0 & \times & \times & 0 & \times & \times & 0 \\
				3 & \cellcolor{Goldenrod} 0 & \times & \cellcolor{Goldenrod} 0 & \times & \times & \cellcolor{Goldenrod} 0 & \cellcolor{Goldenrod} 0 & \times \\
				4 & \cellcolor{Goldenrod} 0 & \times & \cellcolor{Goldenrod} 0 & \times & \times & \cellcolor{Red} 0 & \cellcolor{Goldenrod} 0 & \times \\
				5 & \times & 0 & \times & \times & \times & \times & \times & \times \\ 
				6 & \times & \times & \times & \times & \times & \times & \times & \times
			\end{bNiceMatrix}
		\end{displaymath}
		
		\item Row $1$ of $P$ contains an uncoloured zero entry. Consequently, the $0$-closure method continues to step $3$.
		
		\item Step $3$: We mark (in blue) all the uncoloured zero entries of $P$ that are in rows $1$, $3$ or $4$ or in columns $1$, $3$, $6$ or $7$ (see the left matrix below). We notice that there are no uncoloured zero entries of $P$ in columns $1$, $3$, $6$ or $7$. We form the smallest submatrix of $P$ that contains the entries in red, yellow, green and blue, which corresponds to the matrix $Q_3=\Psubmatrix{P}{\set{1,3,4,6,7}}{\set{1,3,4}}$ (see the right matrix below).
		\begin{displaymath}
			\begin{bNiceMatrix}[first-row,first-col]
				& 1 & 2 & 3 & 4 & 5 & 6 & 7 & 8 \\
				1 & \cellcolor{Green} 0 & \times & \cellcolor{Green} 0 & \cellcolor{cyan!40} 0 & \times & \cellcolor{Green} \times & \cellcolor{Green} \times & \times \\
				2 & \times & 0 & \times & \times & 0 & \times & \times & 0 \\
				3 & \cellcolor{Goldenrod} 0 & \times & \cellcolor{Goldenrod} 0 & \times & \times & \cellcolor{Goldenrod} 0 & \cellcolor{Goldenrod} 0 & \times \\
				4 & \cellcolor{Goldenrod} 0 & \times & \cellcolor{Goldenrod} 0 & \times & \times & \cellcolor{Red} 0 & \cellcolor{Goldenrod} 0 & \times \\
				5 & \times & 0 & \times & \times & \times & \times & \times & \times \\ 
				6 & \times & \times & \times & \times & \times & \times & \times & \times
			\end{bNiceMatrix}\quad
			\begin{bNiceMatrix}[first-row,first-col]
				& 1 & 2 & 3 & 4 & 5 & 6 & 7 & 8 \\
				1 & \cellcolor{Green} 0 & \times & \cellcolor{Green} 0 & \cellcolor{cyan!40} 0 & \times & \cellcolor{Green} \times & \cellcolor{Green} \times & \times \\
				2 & \times & 0 & \times & \times & 0 & \times & \times & 0 \\
				3 & \cellcolor{Goldenrod} 0 & \times & \cellcolor{Goldenrod} 0 & \cellcolor{cyan!40} \times & \times & \cellcolor{Goldenrod} 0 & \cellcolor{Goldenrod} 0 & \times \\
				4 & \cellcolor{Goldenrod} 0 & \times & \cellcolor{Goldenrod} 0 & \cellcolor{cyan!40} \times & \times & \cellcolor{Red} 0 & \cellcolor{Goldenrod} 0 & \times \\
				5 & \times & 0 & \times & \times & \times & \times & \times & \times \\ 
				6 & \times & \times & \times & \times & \times & \times & \times & \times
			\end{bNiceMatrix}
		\end{displaymath}
		
		\item There are no uncoloured zero entries of $P$ in rows $1$, $3$ and $4$ and in columns $1$, $3$, $4$, $6$ and $7$. This implies that the $0$-closure method ends at step $3$. Then $\zeroindex{6}{4}=3$ and $Q_3=\Psubmatrix{P}{\set{1,3,4,6,7}}{\set{1,3,4}}$ is the \zeroclosuresubmatrix{6}{4}.
	\end{itemize}
\end{example}

The next two lemmata describe properties of the matrices belonging to the sequence of submatrices of $P$ constructed in the $0$-closure method.

\begin{lemma}\label{0Rees: every row/column of 0-closure submatrix has a 0}
	Let $i\in I$ and $\lambda\in\Lambda$ be such that $p_{\lambda i}=0$. Suppose that we start the $0$-closure method with the $\parens{\lambda, i}$-th entry of $P$. Let $\parens{Q_0,\ldots,Q_{\zeroindex{i}{\lambda}}}$ be the sequence of submatrices of $P$ obtained from the $0$-closure method. Then for each $k\in\set{0,\ldots,\zeroindex{i}{\lambda}}$ we have that every row and every column of $Q_k$ contains at least one zero entry.
\end{lemma}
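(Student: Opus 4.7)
The plan is to prove the statement by induction on $k \in \{0, 1, \ldots, \zeroindex{i}{\lambda}\}$.

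For the base case $k=0$, the matrix $Q_0 = \Psubmatrix{P}{i}{\lambda} = \begin{bmatrix} 0 \end{bmatrix}$ is a $1 \times 1$ matrix whose unique entry is $0$, so the claim holds trivially: the single row and single column each contain a zero.

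For the inductive step, I would suppose that $k \geqslant 1$ and that every row and every column of $Q_{k-1}$ contains at least one zero entry. Writing $Q_{k-1} = \Psubmatrix{P}{I_{k-1}}{\Lambda_{k-1}}$ and $Q_k = \Psubmatrix{P}{I_{k-1} \cup I'}{\Lambda_{k-1} \cup \Lambda'}$ as in Definition~\ref{0Rees: definition 0-closure method}, I would analyse the rows and columns of $Q_k$ in two groups. For rows indexed by $\Lambda_{k-1}$ (respectively, columns indexed by $I_{k-1}$), the inductive hypothesis supplies a zero entry in $Q_{k-1}$, and this entry persists in $Q_k$ since $Q_{k-1}$ is a submatrix of $Q_k$. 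For the newly added rows indexed by $\Lambda'$, the construction of $Q_k$ in step $k$ of the $0$-closure method guarantees that each $\mu \in \Lambda'$ was included precisely because row $\mu$ contains a marked zero in some column $j \in I_{k-1}$; this zero entry of $P$ lies in both a row and a column of $Q_k$, so it is an entry of $Q_k$ and witnesses that row $\mu$ contains a zero in $Q_k$. The symmetric argument handles the newly added columns indexed by $I'$.

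There is essentially no obstacle here: once one unpacks what "smallest submatrix containing the marked entries" means in step $k$, every newly added row (respectively, column) is added precisely \emph{because} it contains a zero in some previously present column (respectively, row), and that witnessing zero remains an entry of $Q_k$. The proof is therefore a direct bookkeeping argument following the construction in Definition~\ref{0Rees: definition 0-closure method}.
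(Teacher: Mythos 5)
Your proof is correct and follows essentially the same argument as the paper, which compresses the whole thing into one sentence: $Q_0$ is a single zero entry, and every row or column added at step $k$ was added precisely because it contains a zero marked at the start of that step, which then lies in $Q_k$. Your induction merely makes explicit the bookkeeping that the paper leaves implicit.
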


\begin{proof}

	The result follows from the fact that $Q_0$ consists only of a zero entry, and that every row and column added at each step $k$ of the $0$-closure method (to construct $Q_k$) contains at least one zero which was previously marked at the beginning of that step.
\end{proof}

We observe that Lemma~\ref{0Rees: every row/column of 0-closure submatrix has a 0} implies, in particular, that every row and every column of a \zeroclosuresub\ contains at least one zero entry.

Let $i\in I$ and $\lambda\in\Lambda$ be such that $p_{\lambda i}=0$. The following result characterizes the entries $\parens{\mu,j}$ of the \zeroclosuresubmatrix{i}{\lambda} in terms of the distance (in $\simplifiedgraph{I}{\Lambda}{P}$) between the vertices $\parens{i,\lambda}$ and $\parens{j,\mu}$. 

\begin{lemma}\label{0Rees: d(0,?) in G(I,^)}
	Let $i\in I$ and $\lambda\in\Lambda$ be such that $p_{\lambda i}=0$. Suppose that we start the $0$-closure method with the $\parens{\lambda, i}$-th entry of $P$. Then for all $j\in I$, $\mu\in\Lambda$ and $k\in\set{0,\ldots,\zeroindex{i}{\lambda}}$ we have that the $\parens{\mu,j}$-th entry of $P$ is selected at step $k$ of the $0$-closure method if and only if $\dist{\simplifiedgraph{I}{\Lambda}{P}}{\parens{i,\lambda}}{\parens{j,\mu}}=k$.
	
	
\end{lemma}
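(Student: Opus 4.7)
The plan is to reduce the statement to a cleaner equality by introducing two auxiliary parameters, and then verify that equality by two short inductions. For each $\mu \in \Lambda$, let $r(\mu)$ be the smallest $k$ for which $\mu$ is a row-index of $Q_k$, and for each $j \in I$ let $c(j)$ be the smallest $k$ for which $j$ is a column-index of $Q_k$. Then $r(\lambda) = c(i) = 0$, and the recursive rule of the $0$-closure method gives, for $\mu \neq \lambda$ and $j \neq i$,
\begin{align*}
	r(\mu) &= 1 + \min\set{c(j') : j' \in I,\ p_{\mu j'} = 0},\\
	c(j) &= 1 + \min\set{r(\mu') : \mu' \in \Lambda,\ p_{\mu' j} = 0}.
\end{align*}
Since the set of positions of $Q_k$ is $\Lambda_k \times I_k$, the entry $(\mu, j)$ is selected at step $k$ if and only if $\max(r(\mu), c(j)) = k$. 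Hence it suffices to prove
\[
	\dist{\simplifiedgraph{I}{\Lambda}{P}}{\parens{i, \lambda}}{\parens{j, \mu}} = \max\parens{r(\mu), c(j)}.
\]

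The inequality $\max(r(\mu), c(j)) \leqslant \dist{\simplifiedgraph{I}{\Lambda}{P}}{\parens{i, \lambda}}{\parens{j, \mu}}$ will follow by induction on the distance $n$. The base $n = 0$ is immediate. For $n \geqslant 1$, pick a neighbor $(j_1, \mu_1)$ of $(j, \mu)$ at distance $n - 1$ from $(i, \lambda)$. The adjacency condition in $\simplifiedgraph{I}{\Lambda}{P}$ gives $p_{\mu j_1} = p_{\mu_1 j} = 0$, and the inductive hypothesis applied to $(j_1, \mu_1)$ gives $r(\mu_1), c(j_1) \leqslant n - 1$. Combined with the recursive formulas above, this forces $r(\mu), c(j) \leqslant n$.

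The reverse inequality $\dist{\simplifiedgraph{I}{\Lambda}{P}}{\parens{i, \lambda}}{\parens{j, \mu}} \leqslant \max(r(\mu), c(j))$ will be proved by induction on $k = \max(r(\mu), c(j))$. The base $k = 0$ forces $(j, \mu) = (i, \lambda)$. For $k \geqslant 1$, assume without loss of generality that $r(\mu) = k$. Then $\mu \neq \lambda$, and the recursive formula gives some $j_1$ with $p_{\mu j_1} = 0$ and $c(j_1) = k - 1$. To build a neighbor $(j_1, \mu_1)$ of $(j, \mu)$ with $\max(r(\mu_1), c(j_1)) \leqslant k - 1$, choose $\mu_1$ so that $p_{\mu_1 j} = 0$: if $c(j) \geqslant 1$, the recursive formula for $c(j)$ supplies some $\mu_1$ with $r(\mu_1) = c(j) - 1 \leqslant k - 1$; if $c(j) = 0$, that is $j = i$, take $\mu_1 = \lambda$, using $p_{\lambda i} = 0$. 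The inductive hypothesis then supplies a walk from $(i, \lambda)$ to $(j_1, \mu_1)$ of length at most $k - 1$, and appending the edge to $(j, \mu)$ yields a walk of length at most $k$, hence a path of length at most $k$.

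The main obstacle is the reverse inequality: one must exhibit a neighbor $(j_1, \mu_1)$ with both parameters strictly smaller than $k$ and verify that $(j_1, \mu_1) \sim (j, \mu)$. The fallback choice $\mu_1 = \lambda$ when $j = i$ is the essential observation that closes the induction in the asymmetric subcase where only $r(\mu)$ attains $k$; the dual situation $c(j) = k$ is handled by a symmetric argument using the recursive formula for $r$.
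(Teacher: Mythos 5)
Your proposal is correct. It proves the same statement as the paper but organizes the argument differently. The paper works directly with the sets $A_k$ (entries selected at step $k$) and $B_k$ (vertices at distance $k$) and proves $A_k=B_k$ by a single induction on the step $k$, which forces a three-way case analysis on whether the new entry lies in $I'\times\Lambda_{k-1}$, $I_{k-1}\times\Lambda'$ or $I'\times\Lambda'$, each case requiring an explicit construction of an adjacent vertex inside $Q_{k-1}$ via the fact that every row and column of $Q_{k-1}$ contains a zero. You instead factor the two-dimensional quantity into the row- and column-potentials $r(\mu)$, $c(j)$, observe that membership in $Q_k$ is exactly $\max(r(\mu),c(j))\leqslant k$, derive the mutual recursion for $r$ and $c$ from the update rule of the method, and then prove $\dist{\simplifiedgraph{I}{\Lambda}{P}}{\parens{i,\lambda}}{\parens{j,\mu}}=\max\parens{r(\mu),c(j)}$ by two short inductions. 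This buys a cleaner bookkeeping: the recursion encapsulates the breadth-first-search structure once and for all, the ``selected at step $k$ iff $\max=k$'' reduction replaces the paper's three-region analysis, and the infinite case (entries never selected versus vertices at infinite distance) is handled uniformly by the same identity. The only points you leave implicit are routine: the degenerate cases $\mu=\lambda$ (where $r(\mu)=0$ and the recursion is not invoked) and $j=i$ in the forward inequality, and the verification that $(j_1,\mu_1)\neq(j,\mu)$ in the reverse inequality, which follows from $r(\mu_1)<r(\mu)$. Neither is a gap.
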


\begin{proof}
	For each $k\in\set{0,\ldots,\zeroindex{i}{\lambda}}$ let
	\begin{align*}
		A_k&=\gset{\parens{j,\mu}\in I\times\Lambda}{\text{the } \parens{\mu,j}\text{-th entry of } P \text{ is selected at step } k}\\
		\shortintertext{and}
		B_k&=\gset{\parens{j,\mu}\in I\times\Lambda}{\dist{\simplifiedgraph{I}{\Lambda}{P}}{\parens{i,\lambda}}{\parens{j,\mu}}=k}.
	\end{align*}
	It is clear that proving Lemma~\ref{0Rees: d(0,?) in G(I,^)} is equivalent to proving that $A_k=B_k$ for all $k\in\set{0,\ldots,\zeroindex{i}{\lambda}}$. We are going to prove this by induction on the step $k$ of the $0$-closure method.
	
	Suppose that $k=0$. Then it is immediate that $A_0=\set{\parens{i,\lambda}}=B_0$.
	
	Now suppose that $0<k\leqslant \zeroindex{i}{\lambda}$ and that for all $l\in\set{0,\ldots,k-1}$ we have $A_l=B_l$. Let $Q_{k-1}=\Psubmatrix{P}{I_{k-1}}{\Lambda_{k-1}}$ be the submatrix of $P$ constructed in step $k-1$. Since $Q_{k-1}$ corresponds to the submatrix of $P$ formed by the entries of $P$ selected from step $0$ to step $k-1$, then, by the induction hypothesis,
	\begin{displaymath}
		I_{k-1}\times\Lambda_{k-1}=\bigcup_{l=0}^{k-1} A_l=\bigcup_{l=0}^{k-1} B_l=\gset{\parens{j,\mu}\in I\times \Lambda}{\dist{\simplifiedgraph{I}{\Lambda}{P}}{\parens{i,\lambda}}{\parens{j,\mu}}\leqslant k-1}.
	\end{displaymath}
	
	Let
	\begin{align*}
		I'&=\gset{j\in I\setminus I_{k-1}}{p_{\mu j}=0 \text{ for some } \mu \in \Lambda_{k-1}}\\
		\shortintertext{and}
		\Lambda'&=\gset{\mu\in \Lambda\setminus \Lambda_{k-1}}{p_{\mu j}=0 \text{ for some } j \in I_{k-1}},
	\end{align*}
	that is, $I'$ (respectively, $\Lambda'$) corresponds to the set of indices of the columns (respectively, rows) of $P$ that contain a zero entry that appears in a row (respectively, column) that intersects $Q_{k-1}$, but that is not an entry of $Q_{k-1}$. Since $k\leqslant \zeroindex{i}{\lambda}$, then this means that there are more zero entries in the rows and/or columns whose indices belong to $\Lambda_{k-1}$ and $I_{k-1}$, respectively, that are not entries of $Q_{k-1}$; that is, $I'\cup \Lambda'\neq\emptyset$. (Note that we can have $I'=\emptyset$ or $\Lambda'=\emptyset$.) Let $I_k=I_{k-1}\cup I'$ and $\Lambda_k=\Lambda_{k-1}\cup\Lambda'$. Then the new submatrix of $P$ that we construct in step $k$ is $Q_k=\Psubmatrix{P}{I_k}{\Lambda_k}$.
	
	We begin by proving that $A_k\subseteq B_k$. Let $\parens{j,\mu}\in A_k$. Then the $\parens{\mu,j}$-th entry of $P$ is selected at step $k$. This implies that $\parens{\mu,j}$ is an entry of $Q_k$ and, consequently, $\parens{j,\mu}\in I_k\times\Lambda_k$. Furthermore, it also implies that $\parens{j,\mu}\notin \bigcup_{l=0}^{k-1} A_l=I_{k-1}\times\Lambda_{k-1}$. Hence $\parens{j,\mu}\in \parens{I_k\times\Lambda_k}\setminus\parens{I_{k-1}\times\Lambda_{k-1}}$ and, consequently, $\parens{j,\mu}\in I' \times\Lambda_{k-1}$ or $\parens{j,\mu}\in I_{k-1}\times\Lambda'$ or $\parens{j,\mu}\in I'\times\Lambda'$.

	\smallskip
	
	\textit{Case 1:} Assume that $\parens{j,\mu}\in I' \times\Lambda_{k-1}$. It follows from the definition of $I'$ that there exists $\mu'\in\Lambda_{k-1}$ such that $p_{\mu' j}=0$. In addition, Lemma~\ref{0Rees: every row/column of 0-closure submatrix has a 0} guarantees the existence of a zero entry in row $\mu$ of $Q_{k-1}$. Hence there exists $j'\in I_{k-1}$ such that $p_{\mu j'}=0$.
	
	The following diagram illustrates the indices mentioned so far. In the diagram we have an arrow from entry $\parens{\mu,j}$ to entry $\parens{\mu',j'}$, and an arrow from entry $\parens{\mu',j'}$ to entry $\parens{\lambda,i}$. These arrows indicate that we are going to show that there are paths (in $\simplifiedgraph{I}{\Lambda}{P}$) from $\parens{j,\mu}$ to $\parens{j',\mu'}$, and from $\parens{j',\mu'}$ to $\parens{i,\lambda}$; and that we will use them to prove the existence of a path from $\parens{j,\mu}$ to $\parens{i,\lambda}$ and to determine the distance between $\parens{j,\mu}$ and $\parens{i,\lambda}$.
	\begin{displaymath}
		\begin{bNiceArray}{cccccc|ccccccccc}[first-row,first-col,margin]
			\rule{0pt}{22pt} & & i & & j' & & &&  & & j &&&&&\\
			\\
			\mu & & & & 0  &  &  &  &  & & p_{\mu j} &&&&&  \\
			\\
			\mu' & & & & p_{\mu' j'} & & & & && 0 &&&&& \\
			& & & & & & & & & &&&&&& \\
			\lambda & & 0 & & & & & & & & & & & & & \\
			\\
			\hline
			& & & & & & & \Block{2-9}{\Psubmatrix{P}{I\setminus I_{k-1}}{\Lambda\setminus\Lambda_{k-1}}} &&&&&&&& \\
			\\
			\CodeAfter
			\begin{tikzpicture}
				\node[node font = \large] at (7-|5) {$\mathbf{Q_{k-1}}$};
				\node[xshift=-1.09cm] at (4.5-|1) {$\Lambda_{k-1}\sizeddelimiter{7}{\{}$};
				\node[xshift=-1.39cm] at (9-|1) {$\Lambda\setminus\Lambda_{k-1}\sizeddelimiter{2}{\{}$};
				\begin{scope}[dash pattern=on 0pt off 2.3pt,line cap=round, thick]
					\draw (2.5-|1) -- ($(2.5-|4)+(3mm,0mm)$);
					\draw ($(2.5-|5)+(-2mm,0mm)$) -- ($(2.5-|10)+(0mm,0mm)$);
					\draw (4.5-|1) -- (4.5-|4);
					\draw (4.5-|5) -- ($(4.5-|10)+(1.8mm,0mm)$);
					\draw (6.5-|1) -- ($(6.5-|2)+(0.5mm,0mm)$);
					\draw (2.5|-1) -- ($(2.5|-6)+(0mm,0.5mm)$);
					\draw (4.5|-1) -- ($(4.5|-2)+(0mm,0.5mm)$);
					\draw (4.5|-3) -- (4.5|-4);
					\draw (10.5|-1) -- (10.5|-2);
					\draw ($(10.5|-3.5)+(0mm,0.7mm)$) -- ($(10.5|-4)+(0mm,0.5mm)$);
				\end{scope}
				
				\begin{scope}[decoration = {snake, amplitude = 0.7mm}, ->, RoyalBlue]
					\draw[decorate] ($(4.5|-5)+(-1mm,0mm)$) -- ($(6.5-|3)+(-0.5mm,1mm)$);
					\draw[decorate] ($(3-|10.5)+(-1mm,0mm)$) -- ($(4.5|-4.5)+(1mm,1.5mm)$);
				\end{scope}
			\end{tikzpicture}
			\OverBrace[yshift=15pt]{1-1}{1-6}{I_{k-1}}
			\OverBrace[yshift=15pt]{1-7}{1-15}{I\setminus I_{k-1}}
		\end{bNiceArray}
	\end{displaymath}
	
	Since $p_{\mu j'}=p_{\mu' j}=0$, then $\parens{j,\mu}-\parens{j',\mu'}$ is a path in $\simplifiedgraph{I}{\Lambda}{P}$. Furthermore, since $\parens{j',\mu'}\in I_{k-1}\times\Lambda_{k-1}$, then $\dist{\simplifiedgraph{I}{\Lambda}{P}}{\parens{i,\lambda}}{\parens{j',\mu'}}\leqslant k-1$. Thus $\dist{\simplifiedgraph{I}{\Lambda}{P}}{\parens{i,\lambda}}{\parens{j,\mu}}\leqslant k$. Moreover, $\parens{j,\mu}\notin I_{k-1}\times\Lambda_{k-1}$, which implies that $\dist{\simplifiedgraph{I}{\Lambda}{P}}{\parens{i,\lambda}}{\parens{j,\mu}}>k-1$. Therefore $\dist{\simplifiedgraph{I}{\Lambda}{P}}{\parens{i,\lambda}}{\parens{j,\mu}}=k$ and, consequently, $\parens{j,\mu}\in B_k$.
	
	\smallskip

	\textit{Case 2:} Assume that $\parens{j,\mu}\in I_{k-1}\times\Lambda'$. We can show, in a similar way to case 1, that $\parens{j,\mu}\in B_k$.
	
	\smallskip
	
	\textit{Case 3:} Assume that $\parens{j,\mu}\in I'\times\Lambda'$. It follows from the definitions of $I'$ and $\Lambda'$ that there exist $\mu'\in\Lambda_{k-1}$ and $j'\in I_{k-1}$ such that $p_{\mu' j}=p_{\mu j'}=0$.
	
	We can see an illustration of these indices in the diagram below. The arrows from entry $\parens{\mu,j}$ to entry $\parens{\mu',j'}$, and from entry $\parens{\mu',j'}$ to entry $\parens{\lambda,i}$, indicate that we are going to establish the existence of paths (in $\simplifiedgraph{I}{\Lambda}{P}$) from $\parens{j,\mu}$ to $\parens{j',\mu'}$, and from $\parens{j',\mu'}$ to $\parens{i,\lambda}$. These paths will be used to construct a path from $\parens{j,\mu}$ to $\parens{i,\lambda}$ and to determine the distance between $\parens{j,\mu}$ and $\parens{i,\lambda}$.
	\begin{displaymath}
		\begin{bNiceArray}{ccccccc|ccccc}[first-row,first-col,margin]
			\rule{0pt}{22pt} & & i & & j' & & & &&  & j & \\
			\\
			\mu' & & & & p_{\mu' j'} & & & & && 0 &&\\
			\\
			\lambda & & 0 & & & &  &  &  &  & && \\
			\\
			\hline
			\\
			\mu & & & & 0 & & & & && p_{\mu j} && \\
			\\
			\CodeAfter
			\begin{tikzpicture}
				\node[node font = \large] at (5-|6) {$\mathbf{Q_{k-1}}$};
				\node[xshift=-1.09cm] at (3.5-|1) {$\Lambda_{k-1}\sizeddelimiter{5}{\{}$};
				\node[xshift=-1.39cm] at (7.5-|1) {$\Lambda\setminus\Lambda_{k-1}\sizeddelimiter{3}{\{}$};
				\begin{scope}[dash pattern=on 0pt off 2.3pt,line cap=round, thick]
					\draw (2.5-|1) -- (2.5-|4);
					\draw (2.5-|5) -- ($(2.5-|10)+(2mm,0mm)$);
					\draw (4.5-|1) -- ($(4.5-|2)+(0.5mm,0mm)$);
					\draw (7.5-|1) -- ($(7.5-|4)+(3mm,0mm)$);
					\draw ($(7.5-|5)+(-2mm,0mm)$) -- (7.5-|10);
					\draw (2.5|-1) -- ($(2.5|-4)+(0mm,0.6mm)$);
					\draw (4.5|-1) -- (4.5|-2);
					\draw ($(4.5|-3.5)+(0mm,0.7mm)$) -- ($(4.5|-7)+(0mm,1mm)$);
					\draw (10.5|-1) -- ($(10.5|-2)+(0mm,0.5mm)$);
					\draw ($(10.5|-3)+(0mm,-0.5mm)$) -- (10.5|-7);
				\end{scope}
				
				\begin{scope}[decoration = {snake, amplitude = 0.7mm}, ->, RoyalBlue]
					\draw[decorate] ($(2.5-|4.5)+(-2.5mm,-2.5mm)$) -- ($(4.5-|2.5)+(2mm,1.5mm)$);
					\draw[decorate] ($(7-|10.5)+(-2mm,0mm)$) -- ($(2.5-|4.5)+(4mm,-1.5mm)$);
				\end{scope}
			\end{tikzpicture}
			\OverBrace[yshift=15pt]{1-1}{1-7}{I_{k-1}}
			\OverBrace[yshift=15pt]{1-8}{1-12}{I\setminus I_{k-1}}
		\end{bNiceArray}
	\end{displaymath}
	
	We have that $\parens{j',\mu'}-\parens{j,\mu}$ is a path in $\simplifiedgraph{I}{\Lambda}{P}$. Since $\parens{j',\mu'}\in I_{k-1}\times\Lambda_{k-1}$, then $\dist{\simplifiedgraph{I}{\Lambda}{P}}{\parens{i,\lambda}}{\parens{j',\mu'}}\leqslant k-1$, which implies that $\dist{\simplifiedgraph{I}{\Lambda}{P}}{\parens{i,\lambda}}{\parens{j,\mu}}\leqslant k$. Due to the fact that $\parens{j,\mu}\notin I_{k-1}\times\Lambda_{k-1}$, we also have $\dist{\simplifiedgraph{I}{\Lambda}{P}}{\parens{i,\lambda}}{\parens{j,\mu}}>k-1$. Therefore $\dist{\simplifiedgraph{I}{\Lambda}{P}}{\parens{i,\lambda}}{\parens{j,\mu}}=k$, which implies that $\parens{j,\mu}\in B_k$.
	
	\smallskip
	
	Now we are going to see that $B_k\subseteq A_k$. Let $\parens{j,\mu}\in B_k$. Then $\dist{\simplifiedgraph{I}{\Lambda}{P}}{\parens{i,\lambda}}{\parens{j,\mu}}=k$. Then there exists a path from $\parens{i,\lambda}$ to $\parens{j,\mu}$ in $\simplifiedgraph{I}{\Lambda}{P}$ of length $k$. Let
	\begin{displaymath}
		\parens{i,\lambda}=\parens{i_1,\lambda_1}-\parens{i_2,\lambda_2}-\cdots-\parens{i_{k+1},\lambda_{k+1}}=\parens{j,\mu}
	\end{displaymath}
	be one of those paths. Then we have $\dist{\simplifiedgraph{I}{\Lambda}{P}}{\parens{i,\lambda}}{\parens{i_k,\lambda_k}}=k-1$, which implies that $\parens{i_k,\lambda_k}\in I_{k-1}\times\Lambda_{k-1}$. We also have $\parens{j,\mu}\notin I_{k-1}\times \Lambda_{k-1}$ because $\dist{\simplifiedgraph{I}{\Lambda}{P}}{\parens{i,\lambda}}{\parens{j,\mu}}=k$. In addition, we have $p_{\lambda_k j}=p_{\mu i_k}=0$ because $\parens{i_k,\lambda_k}$ is adjacent to $\parens{i_{k+1},\lambda_{k+1}}=\parens{j,\mu}$. Since $\parens{j,\lambda_k}\in \parens{I\setminus I_{k-1}}\times\Lambda_{k-1}$ and $p_{\lambda_k j}=0$, then $j\in I'\subseteq I_k$; and since $\parens{i_k,\mu}\in I_{k-1}\times \Lambda\setminus\Lambda_{k-1}$ and $p_{\mu i_k}=0$, then $\mu\in\Lambda'\subseteq \Lambda_k$. Hence $\parens{j,\mu}\in I_k\times\Lambda_k$, that is, $\parens{\mu,j}$ is an entry of $Q_k$. Additionally, we have that $\parens{\mu,j}$ is not an entry of $Q_{k-1}$ because $\parens{j,\mu}\notin I_{k-1}\times \Lambda_{k-1}$. Thus the $\parens{\mu,j}$-th entry of $P$ must have been selected at step $k$ of the $0$-closure method, which implies that $\parens{j,\mu}\in A_k$.
\end{proof}

Let $i\in I$ and $\lambda\in\Lambda$ be such that $p_{\lambda i}=0$ and let $Q=\Psubmatrix{P}{I_Q}{\Lambda_Q}$ be the \zeroclosuresubmatrix{i}{\lambda}. We observe that Lemma~\ref{0Rees: d(0,?) in G(I,^)} implies that $\dist{\simplifiedgraph{I}{\Lambda}{P}}{\parens{i,\lambda}}{\parens{j,\mu}}\leqslant \zeroindex{i}{\lambda}$ for all $j\in I_Q$ and $\lambda\in\Lambda_Q$. Furthermore, it also guarantees that there exist $j\in I_Q$ and $\mu\in\Lambda_Q$ such that $\dist{\simplifiedgraph{I}{\Lambda}{P}}{\parens{i,\lambda}}{\parens{j,\mu}}=\zeroindex{i}{\lambda}$.

Moreover, it follows from Lemma~\ref{0Rees: d(0,?) in G(I,^)} that performing the $0$-closure method is equivalent to drawing $\simplifiedgraph{I}{\Lambda}{P}$, choosing a vertex $\parens{i,\lambda}$ such that $p_{\lambda i}=0$ (step $0$), selecting the vertices of $\simplifiedgraph{I}{\Lambda}{P}$ adjacent to $\parens{i,\lambda}$ (step $1$), selecting the vertices of $\simplifiedgraph{I}{\Lambda}{P}$ adjacent to the ones selected in the previous step (step $2$), and so on, until we select all the vertices whose distance to $\parens{i,\lambda}$ is at most $\zeroindex{i}{\lambda}$. It follows from Lemma~\ref{0Rees: connected component G(I,^,P) G(M0[...])} below that the vertices at distance $\zeroindex{i}{\lambda}$ are precisely the ones that are furthest away from $\parens{i,\lambda}$ (and in its connected component). This means that the procedure we described for the graph $\simplifiedgraph{I}{\Lambda}{P}$ ends when no new vertices are selected; that is, when all the vertices of the connected component of $\parens{i,\lambda}$ have been selected.

\begin{example}
	In Example~\ref{0Rees: example 0-closure method} the entries of $P$ coloured in red, yellow, green and blue are the entries selected at steps $0$, $1$, $2$ and $3$, respectively, of the $0$-closure method. It follows from Lemma~\ref{0Rees: d(0,?) in G(I,^)} that the entries $\parens{\mu,j}$ of $P$ in red, blue, yellow, green and blue are precisely the ones such that $\dist{\simplifiedgraph{I}{\Lambda}{P}}{\parens{4,6}}{\parens{j,\mu}}$ is equal to $0$, $1$, $2$ and $3$, respectively.
\end{example}

Lemma~\ref{0Rees: connected component G(I,^,P) G(M0[...])} provides a first step in showing the importance of the $0$-closure method for studying connectedness in $\commgraph{\Reeszero{G}{I}{\Lambda}{P}}$. More precisely, it highlights the significance of identifying $0$-closure submatrices of $P$ as a method for finding connected components of $\commgraph{\Reeszero{G}{I}{\Lambda}{P}}$.

\begin{lemma}\label{0Rees: connected component G(I,^,P) G(M0[...])}
	Let $Q=\Psubmatrix{P}{I_Q}{\Lambda_Q}$ be a \zeroclosuresub. Then
	\begin{enumerate}
		\item The subgraph of $\simplifiedgraph{I}{\Lambda}{P}$ induced by $I_Q\times\Lambda_Q$ is a connected component of $\simplifiedgraph{I}{\Lambda}{P}$.
		
		\item The subgraph of $\commgraph{\Reeszero{G}{I}{\Lambda}{P}}$ induced by $I_Q\times G\times \Lambda_Q$ is a connected component of $\commgraph{\Reeszero{G}{I}{\Lambda}{P}}$.
	\end{enumerate}
\end{lemma}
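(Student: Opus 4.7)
Since $Q$ is a $0$-closure submatrix, there exist $i\in I$ and $\lambda\in\Lambda$ with $p_{\lambda i}=0$ such that $Q=Q_{\zeroindex{i}{\lambda}}=\Psubmatrix{P}{I_Q}{\Lambda_Q}$ is obtained by running the $0$-closure method from the $(\lambda,i)$-th entry. Statement~$2$ will follow from statement~$1$ combined with Lemma~\ref{0Rees: connected component G(I,^) => connected component G(M0[...])}, so the bulk of the work is to prove that the subgraph $\mathcal{D}$ of $\simplifiedgraph{I}{\Lambda}{P}$ induced by $I_Q\times\Lambda_Q$ is a connected component.

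To see that $\mathcal{D}$ is connected, I would invoke Lemma~\ref{0Rees: d(0,?) in G(I,^)}: the entries of $P$ selected by the $0$-closure method (over all steps $0,\ldots,\zeroindex{i}{\lambda}$) are exactly the pairs $(\mu,j)$ with $\dist{\simplifiedgraph{I}{\Lambda}{P}}{(i,\lambda)}{(j,\mu)}\leqslant\zeroindex{i}{\lambda}$, and these are exactly the entries $(\mu,j)\in\Lambda_Q\times I_Q$ of $Q$. Hence for every $(j,\mu)\in I_Q\times\Lambda_Q$ there is a path of length at most $\zeroindex{i}{\lambda}$ from $(i,\lambda)$ to $(j,\mu)$; moreover, a shortest such path passes only through vertices at strictly smaller distance from $(i,\lambda)$, all of which lie in $I_Q\times\Lambda_Q$ by the same lemma. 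Therefore the path lies entirely inside $\mathcal{D}$, so every pair of vertices of $\mathcal{D}$ is joined by a path through $(i,\lambda)$.

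The main point, and where I expect the argument to require care, is that $\mathcal{D}$ has no edges leaving $I_Q\times\Lambda_Q$. The key input is the halt condition of the $0$-closure method: when the method terminates at step $\zeroindex{i}{\lambda}$, there are no zero entries in the rows indexed by $\Lambda_Q$ or the columns indexed by $I_Q$ that lie outside $Q$. Explicitly, $p_{\mu j'}\neq 0$ whenever $\mu\in\Lambda_Q$ and $j'\in I\setminus I_Q$, and $p_{\mu' j}\neq 0$ whenever $j\in I_Q$ and $\mu'\in\Lambda\setminus\Lambda_Q$. Now suppose for contradiction that $(j,\mu)\in I_Q\times\Lambda_Q$ is adjacent in $\simplifiedgraph{I}{\Lambda}{P}$ to $(j',\mu')\notin I_Q\times\Lambda_Q$. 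By Definition~\ref{0Rees: definition G(I,^,P)}, this requires $p_{\mu j'}=p_{\mu' j}=0$. Since either $j'\notin I_Q$ or $\mu'\notin\Lambda_Q$, one of these two equalities directly contradicts the halt condition recalled above. Hence no such edge exists, which shows that $\mathcal{D}$ is a connected component of $\simplifiedgraph{I}{\Lambda}{P}$ and completes statement~$1$. Statement~$2$ is then immediate from Lemma~\ref{0Rees: connected component G(I,^) => connected component G(M0[...])}, since $\bigcup_{(j,\mu)\in I_Q\times\Lambda_Q}\set{j}\times G\times\set{\mu}=I_Q\times G\times\Lambda_Q$.
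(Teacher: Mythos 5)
Your proposal is correct and follows essentially the same route as the paper: both use Lemma~\ref{0Rees: d(0,?) in G(I,^)} to identify $I_Q\times\Lambda_Q$ with the set of vertices at distance at most $\zeroindex{i}{\lambda}$ from $\parens{i,\lambda}$ (giving connectivity), and both use the halt condition of the $0$-closure method --- that $p_{\nu t}\neq 0$ for all $\parens{t,\nu}\in\parens{\parens{I\setminus I_Q}\times\Lambda_Q}\cup\parens{I_Q\times\parens{\Lambda\setminus\Lambda_Q}}$ --- to show no edge leaves $I_Q\times\Lambda_Q$, then deduce statement~2 from Lemma~\ref{0Rees: connected component G(I,^) => connected component G(M0[...])}. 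The only cosmetic difference is that you rule out edges leaving from every vertex of $I_Q\times\Lambda_Q$ at once, while the paper phrases the same check in terms of the vertices at maximal distance; the substance is identical.
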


\begin{proof}
	\textbf{Part 1.} Since $Q$ is a \zeroclosuresub, then there exist $i\in I$ and $\lambda\in\Lambda$ such that $p_{\lambda i}=0$ and $Q$ is the \zeroclosuresubmatrix{i}{\lambda}. Let $\mathcal{C}$ be the subgraph of $\simplifiedgraph{I}{\Lambda}{P}$ induced by $I_Q\times \Lambda_Q$. Our goal is to prove that $\mathcal{C}$ is a connected component of $\simplifiedgraph{I}{\Lambda}{P}$. It follows from Lemma~\ref{0Rees: d(0,?) in G(I,^)} that $I_Q\times\Lambda_Q$ comprises all the vertices of $\simplifiedgraph{I}{\Lambda}{P}$ whose distance to $\parens{i,\lambda}$ is at most $\zeroindex{i}{\lambda}$. In order to demonstrate that $\mathcal{C}$ is a connected component of $\simplifiedgraph{I}{\Lambda}{P}$, it is enough to show that the vertices of $\simplifiedgraph{I}{\Lambda}{P}$ which are furthest away from $\parens{i,\lambda}$ (and in the same connected component as $\parens{i,\lambda}$) are precisely the ones at distance $\zeroindex{i}{\lambda}$. This can be done by analysing the vertices which are adjacent to the ones whose distance to $\parens{i,\lambda}$ is $\zeroindex{i}{\lambda}$.

	Let $\parens{j,\mu}$ be a vertex of $\simplifiedgraph{I}{\Lambda}{P}$ such that $\dist{\simplifiedgraph{I}{\Lambda}{P}}{\parens{i,\lambda}}{\parens{j,\mu}}=\zeroindex{i}{\lambda}$ and let $\parens{j',\mu'}$ be a vertex of $\simplifiedgraph{I}{\Lambda}{P}$ adjacent to $\parens{j,\mu}$. Then $p_{\mu j'}=p_{\mu' j}=0$. Due to the fact that $Q=\Psubmatrix{P}{I_Q}{\Lambda_Q}$ is the submatrix of $P$ obtained at the end of $0$-closure method, we have that all the zero entries located in the rows (respectively, columns) whose indices belong to $\Lambda_Q$ (respectively, $I_Q$) are entries of $Q=\Psubmatrix{P}{I_Q}{\Lambda_Q}$. This implies that $p_{\nu t}\neq 0$ for all $\parens{t,\nu}\in \parens{\parens{I\setminus I_Q}\times\Lambda_Q}\cup\parens{I_Q\times\parens{\Lambda\setminus\Lambda_Q}}$. Then, since $\parens{j,\mu}\in I_Q\times\Lambda_Q$ (because $\dist{\simplifiedgraph{I}{\Lambda}{P}}{\parens{i,\lambda}}{\parens{j,\mu}}=\zeroindex{i}{\lambda}$), we must have $\parens{j',\mu'}\in I_Q\times\Lambda_Q$, which implies that $\dist{\simplifiedgraph{I}{\Lambda}{P}}{\parens{i,\lambda}}{\parens{j',\mu'}}\leqslant\zeroindex{i}{\lambda}$.
	
	We just proved that the vertices of $\simplifiedgraph{I}{\Lambda}{P}$ whose distance to $\parens{i,\lambda}$ is $\zeroindex{i}{\lambda}$ are only adjacent to vertices of $\simplifiedgraph{I}{\Lambda}{P}$ whose distance to $\parens{i,\lambda}$ is at most $\zeroindex{i}{\lambda}$. This implies that all the vertices of $\simplifiedgraph{I}{\Lambda}{P}$ which are in the same connected component as $\parens{i,\lambda}$ are precisely the ones whose distance to $\parens{i,\lambda}$ is at most $\zeroindex{i}{\lambda}$; that is, the vertices belonging to $I_Q\times\Lambda_Q$. Therefore $\mathcal{C}$ is a connected component of $\simplifiedgraph{I}{\Lambda}{P}$.

	\medskip

	\textbf{Part 2.} By part 1 of the theorem, we have that $\mathcal{C}$ is a connected component of $\simplifiedgraph{I}{\Lambda}{P}$. Hence it follows from Lemma~\ref{0Rees: connected component G(I,^) => connected component G(M0[...])} that the subgraph of $\commgraph{\Reeszero{G}{I}{\Lambda}{P}}$ induced by
	\begin{displaymath}
		\bigcup_{\parens{j,\mu}\in C} \set{j}\times G\times \set{\mu}=\bigcup_{\parens{j,\mu}\in I_Q\times \Lambda_Q} \set{j}\times G\times \set{\mu} = I_Q\times G\times \Lambda_Q
	\end{displaymath}
	is a connected component of $\commgraph{\Reeszero{G}{I}{\Lambda}{P}}$.
\end{proof}


	

The following lemma allows us to compare $0$-closure submatrices of $P$ constructed from distinct zero entries of $P$.

\begin{lemma}\label{0Rees: comparing 0-closure submatrices}
	Let $Q=\Psubmatrix{P}{I_Q}{\Lambda_Q}$ be a \zeroclosuresub.
	\begin{enumerate}
		
		\item If $i\in I_Q$ and $\lambda\in\Lambda_Q$ are such that $p_{\lambda i}=0$, and $M=\Psubmatrix{P}{I_M}{\Lambda_M}$ is the \zeroclosuresubmatrix{i}{\lambda}, then $I_Q=I_M$ and $\Lambda_Q=\Lambda_M$.
		
		\item If $i\in I$ and $\lambda\in \Lambda$ are such that $\parens{i,\lambda}\in\parens{I\times\Lambda}\setminus\parens{I_Q\times\Lambda_Q}$ and $p_{\lambda i}=0$, and $M=\Psubmatrix{P}{I_M}{\Lambda_M}$ is the \zeroclosuresubmatrix{i}{\lambda}, then $I_Q\cap I_M=\Lambda_Q\cap\Lambda_M=\emptyset$.
	\end{enumerate}
\end{lemma}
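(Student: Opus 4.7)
The plan is to view both statements as consequences of the identification, from Lemma~\ref{0Rees: connected component G(I,^,P) G(M0[...])}, of $0$-closure submatrices of $P$ with connected components of $\simplifiedgraph{I}{\Lambda}{P}$. Under that identification, the \zeroclosuresub\ built from a starting zero entry $\parens{\lambda,i}$ corresponds to the connected component of $\simplifiedgraph{I}{\Lambda}{P}$ containing the vertex $\parens{i,\lambda}$. Part 1 should therefore reduce to the uniqueness of the connected component through a vertex, and Part 2 to the pairwise disjointness of distinct connected components (together with a little extra to upgrade disjointness of products to disjointness of factors).

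For Part 1, observe that $\parens{i,\lambda}\in I_Q\times\Lambda_Q$ by hypothesis, while $\parens{i,\lambda}\in I_M\times\Lambda_M$ because it is the entry selected at step $0$ of the $0$-closure method producing $M$. By Lemma~\ref{0Rees: connected component G(I,^,P) G(M0[...])}, the subgraphs induced by $I_Q\times\Lambda_Q$ and by $I_M\times\Lambda_M$ are both connected components of $\simplifiedgraph{I}{\Lambda}{P}$ containing $\parens{i,\lambda}$. Since connected components partition the vertex set, this forces $I_Q\times\Lambda_Q=I_M\times\Lambda_M$, and nonemptiness of both factors then yields $I_Q=I_M$ and $\Lambda_Q=\Lambda_M$ by projecting onto each coordinate.

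For Part 2, the hypothesis says $\parens{i,\lambda}\notin I_Q\times\Lambda_Q$ but (as before) $\parens{i,\lambda}\in I_M\times\Lambda_M$, so the connected components induced by $I_Q\times\Lambda_Q$ and by $I_M\times\Lambda_M$ are distinct and hence disjoint as vertex sets. However, disjointness of Cartesian products does not directly deliver disjointness of the factors, so an additional argument is needed. I would proceed by contradiction: suppose $j\in I_Q\cap I_M$. By Lemma~\ref{0Rees: every row/column of 0-closure submatrix has a 0} applied to $Q$, column $j$ of $Q$ contains a zero entry $p_{\mu j}=0$ with $\mu\in\Lambda_Q$. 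The stopping condition of the $0$-closure method applied to $M$ ensures that every zero entry lying in a column whose index belongs to $I_M$ is itself an entry of $M$; since $j\in I_M$, this yields $\mu\in\Lambda_M$. Hence $\parens{j,\mu}\in\parens{I_Q\times\Lambda_Q}\cap\parens{I_M\times\Lambda_M}$, contradicting the disjointness above. A symmetric argument, swapping the roles of rows and columns and starting from an element of $\Lambda_Q\cap\Lambda_M$, gives $\Lambda_Q\cap\Lambda_M=\emptyset$.

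The main obstacle is the gap in Part 2 between disjointness of the two Cartesian products and disjointness of each pair of factors; the stopping condition of the $0$-closure method, together with the guaranteed zero in every row and column of a \zeroclosuresub\ provided by Lemma~\ref{0Rees: every row/column of 0-closure submatrix has a 0}, is precisely what bridges that gap.
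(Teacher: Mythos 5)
Your proof is correct and takes essentially the same approach as the paper: both parts rest on identifying $0$-closure submatrices with connected components via Lemma~\ref{0Rees: connected component G(I,^,P) G(M0[...])} (you use the $\simplifiedgraph{I}{\Lambda}{P}$ version where the paper uses the $\commgraph{\Reeszero{G}{I}{\Lambda}{P}}$ version, which is immaterial), and both bridge the gap from disjointness of products to disjointness of factors using the stopping condition of the $0$-closure method together with Lemma~\ref{0Rees: every row/column of 0-closure submatrix has a 0}. The only difference is cosmetic: the paper deduces ``$I_Q\cap I_M=\emptyset$ or $\Lambda_Q\cap\Lambda_M=\emptyset$'' and then upgrades one to the other, whereas you run a symmetric contradiction argument directly on each factor.
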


\begin{proof}
	\textbf{Part 1.} Let $i\in I_Q$ and $\lambda\in\Lambda_Q$ be such that $p_{\lambda i}=0$. Assume that $M=\Psubmatrix{P}{I_M}{\Lambda_M}$ is the \zeroclosuresubmatrix{i}{\lambda}. It follows from part 2 of Lemma~\ref{0Rees: connected component G(I,^,P) G(M0[...])} that the subgraphs of $\commgraph{\Reeszero{G}{I}{\Lambda}{P}}$ induced by $I_Q\times G\times\Lambda_Q$ and $I_M\times G \times \Lambda_M$ are connected components of $\commgraph{\Reeszero{G}{I}{\Lambda}{P}}$. We have that $\parens{i,1_G,\lambda}\in \parens{I_Q\times G\times\Lambda_Q}\cap\parens{I_M\times G\times\Lambda_M}$ and, since $I_Q\times G\times\Lambda_Q$ and $I_M\times G\times\Lambda_M$ are the vertex sets of connected components of $\commgraph{\Reeszero{G}{I}{\Lambda}{P}}$, then we must have $I_Q\times G\times\Lambda_Q=I_M\times G\times\Lambda_M$. Thus $I_Q=I_M$ and $\Lambda_Q=\Lambda_M$.
	
	\medskip
	
	\textbf{Part 2.} Let $i\in I$ and $\lambda\in \Lambda$ be such that $\parens{i,\lambda}\in\parens{I\times\Lambda}\setminus\parens{I_Q\times\Lambda_Q}$ and $p_{\lambda i}=0$, and let $M=\Psubmatrix{P}{I_M}{\Lambda_M}$ be the \zeroclosuresubmatrix{i}{\lambda}. It follows from part 2 of Lemma~\ref{0Rees: connected component G(I,^,P) G(M0[...])} that the subgraphs of $\commgraph{\Reeszero{G}{I}{\Lambda}{P}}$ induced by $I_Q\times G\times\Lambda_Q$ and $I_M\times G\times \Lambda_M$ are connected components of $\commgraph{\Reeszero{G}{I}{\Lambda}{P}}$. Additionally, we have $\parens{i,1_G,\lambda}\in \parens{I_M\times G\times \Lambda_M}\setminus\parens{I_Q\times G\times \Lambda_Q}$. Due to the fact that $I_Q\times G\times \Lambda_Q$ and $I_M\times G\times \Lambda_M$ are the vertex sets of connected components of $\commgraph{\Reeszero{G}{I}{\Lambda}{P}}$, then we have $\parens{I_Q\times G\times \Lambda_Q}\cap\parens{I_M\times G\times \Lambda_M}=\emptyset$. Hence $\parens{I_Q\times\Lambda_Q}\cap\parens{I_M\times\Lambda_M}=\emptyset$, which implies that $I_Q\cap I_M=\emptyset$ or $\Lambda_Q\cap\Lambda_M=\emptyset$. Assume, without loss of generality, that $I_Q\cap I_M=\emptyset$. We are going to see that we also have $\Lambda_Q\cap\Lambda_M=\emptyset$. The fact that $Q$ is the \zeroclosuresubmatrix{i}{\lambda} implies that there are no zero entries in the rows of $P$ intersecting $Q$ that are not entries of $Q$. Consequently, $p_{\lambda' i'}\neq 0$ for all $i'\in I\setminus I_Q$ and $\lambda'\in\Lambda_Q$. In particular, we have $p_{\lambda' i'}\neq 0$ for all $i'\in I_M$ and $\lambda'\in\Lambda_Q$ (because $I_M\subseteq I\setminus I_Q$). Furthermore, it follows from Lemma~\ref{0Rees: every row/column of 0-closure submatrix has a 0} that every row of $M$ has a zero entry, which implies that for all $\mu\in \Lambda_M$ there exists $j\in I_M$ such that $p_{\mu j}=0$. Therefore $\Lambda_M\subseteq\Lambda\setminus\Lambda_Q$, that is, $\Lambda_Q\cap\Lambda_M=\emptyset$.
\end{proof}

Let $Q=\Psubmatrix{P}{I_Q}{\Lambda_Q}$ be a \zeroclosuresub. We observe that part 1 of Lemma~\ref{0Rees: comparing 0-closure submatrices} implies that, regardless of the zero entry of $Q$ that we choose to start the $0$-closure method, the \zeroclosuresub\ we construct at the end of the method is always the same, namely, $Q$. Moreover, if $M=\Psubmatrix{P}{I_M}{\Lambda_M}$ is also a \zeroclosuresub, then it also follows from Lemma~\ref{0Rees: comparing 0-closure submatrices} that we have $I_Q=I_M$ and $\Lambda_Q=\Lambda_M$, or we have $I_Q\cap I_M=\Lambda_Q\cap\Lambda_M=\emptyset$. With this in mind, from now onwards we will say that $Q$ and $M$ are \defterm{distinct $0$-closure submatrices of $P$} if and only if $I_Q\cap I_M=\Lambda_Q\cap\Lambda_M=\emptyset$.

Lemmata~\ref{0Rees: lemma connected component determined Q M} and \ref{0Rees: two paths zeros --> path non-zeros} add information about adjacency and distance in $\simplifiedgraph{I}{\Lambda}{P}$, respectively.

\begin{lemma}\label{0Rees: lemma connected component determined Q M}
	Let $Q=\Psubmatrix{P}{I_Q}{\Lambda_Q}$ and $M=\Psubmatrix{P}{I_M}{\Lambda_M}$ be distinct $0$-closure submatrices of $P$. Then
	\begin{enumerate}
		\item Let $\parens{i,\lambda}\in I_Q\times\Lambda_M$. If there exists $\parens{j,\mu}\in I\times\Lambda$ such that $\parens{j,\mu}$ is adjacent to $\parens{i,\lambda}$ (in $\simplifiedgraph{I}{\Lambda}{P}$), then $\parens{j,\mu}\in I_M\times\Lambda_Q$.
		
		\item Let $\parens{i,\lambda}\in I_M\times\Lambda_Q$. If there exists $\parens{j,\mu}\in I\times\Lambda$ such that $\parens{j,\mu}$ is adjacent to $\parens{i,\lambda}$ (in $\simplifiedgraph{I}{\Lambda}{P}$), then $\parens{j,\mu}\in I_Q\times\Lambda_M$.
	\end{enumerate}
\end{lemma}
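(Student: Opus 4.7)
The plan is to exploit the \emph{closure property} of a $0$-closure submatrix, which is precisely what guarantees that the $0$-closure method terminates: if $R = \Psubmatrix{P}{I_R}{\Lambda_R}$ is any $0$-closure submatrix of $P$, then every zero entry of $P$ lying in a row indexed by $\Lambda_R$ must lie in a column indexed by $I_R$, and every zero entry of $P$ lying in a column indexed by $I_R$ must lie in a row indexed by $\Lambda_R$. Otherwise, the method would mark such a zero and extend $R$ at the next step, contradicting the fact that $R$ is a $0$-closure submatrix. I will use this fact as the only structural ingredient; adjacency in $\simplifiedgraph{I}{\Lambda}{P}$ is then unpacked via Definition~\ref{0Rees: definition G(I,^,P)}.

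For part 1, suppose $\parens{i,\lambda}\in I_Q\times\Lambda_M$ and $\parens{j,\mu}$ is adjacent to $\parens{i,\lambda}$ in $\simplifiedgraph{I}{\Lambda}{P}$; by definition this means $p_{\lambda j}=p_{\mu i}=0$. The entry $p_{\mu i}=0$ sits in column $i\in I_Q$, so by the closure property applied to $Q$ we have $\mu\in\Lambda_Q$. The entry $p_{\lambda j}=0$ sits in row $\lambda\in\Lambda_M$, so by the closure property applied to $M$ we have $j\in I_M$. Hence $\parens{j,\mu}\in I_M\times\Lambda_Q$, as required.

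Part 2 is proved by the completely symmetric argument: the roles of $Q$ and $M$ are swapped, $p_{\mu i}=0$ now lies in column $i\in I_M$ (so $\mu\in\Lambda_M$ by closure of $M$), and $p_{\lambda j}=0$ lies in row $\lambda\in\Lambda_Q$ (so $j\in I_Q$ by closure of $Q$), giving $\parens{j,\mu}\in I_Q\times\Lambda_M$.

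The only subtle point — not really an obstacle but worth flagging — is justifying the closure property cleanly. One can either read it directly off the halting clause 2(b) of Definition~\ref{0Rees: definition 0-closure method} (if a zero in a row/column of $Q_{\zeroindex{i}{\lambda}}$ were outside $Q_{\zeroindex{i}{\lambda}}$, the method would have continued), or cite the proof of Lemma~\ref{0Rees: connected component G(I,^,P) G(M0[...])} where exactly this observation was already made: ``$p_{\nu t}\neq 0$ for all $\parens{t,\nu}\in\parens{\parens{I\setminus I_Q}\times\Lambda_Q}\cup\parens{I_Q\times\parens{\Lambda\setminus\Lambda_Q}}$.'' I would simply invoke this and write the four-line argument above.
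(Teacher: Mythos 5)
Your proof is correct and is essentially the same argument as the paper's: both unpack adjacency as $p_{\lambda j}=p_{\mu i}=0$ and then invoke the terminating (closure) property of a $0$-closure submatrix --- all zeros in a row of $\Lambda_M$ lie in columns of $I_M$, and all zeros in a column of $I_Q$ lie in rows of $\Lambda_Q$ --- to conclude $j\in I_M$ and $\mu\in\Lambda_Q$. Your flagged justification of the closure property via the halting clause of Definition~\ref{0Rees: definition 0-closure method} is exactly the observation the paper relies on.
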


\begin{proof}
	We are only going to prove statement 1 (statement 2 can be proved analogously). Let $\parens{i,\lambda}\in I_Q\times\Lambda_M$. Let $\parens{j,\mu}\in I\times\Lambda$ such that $\parens{j,\mu}$ is adjacent to $\parens{i,\lambda}$ (in $\simplifiedgraph{I}{\Lambda}{P}$). Then $p_{\lambda j}=p_{\mu i}=0$. Moreover, since $Q$ and $M$ are $0$-closure submatrices of $P$, then all the zero entries of row $\lambda$ (respectively, column $i$) of $P$ are entries of $M$ (respectively, $Q$). Thus $p_{\lambda i'}\neq 0$ for all $i'\in I\setminus I_M$ and $p_{\lambda' i}\neq 0$ for all $\lambda'\in\Lambda\setminus\Lambda_Q$, which implies that $j\in I_M$ and $\mu\in\Lambda_Q$ (that is, $\parens{j,\mu}\in I_M\times\Lambda_Q$).
\end{proof}

\begin{lemma}\label{0Rees: two paths zeros --> path non-zeros}
	Let $i,i',j,j\in I$ and $\lambda,\lambda',\mu,\mu'\in\Lambda$ and assume that $p_{\mu j'}=p_{\mu' j}=0$. Then
	\begin{enumerate}
		\item If there exists a path from $\parens{i,\lambda'}$ to $\parens{j,\mu'}$ in $\simplifiedgraph{I}{\Lambda}{P}$ and a path from $\parens{i',\lambda}$ to $\parens{j',\mu}$ in $\simplifiedgraph{I}{\Lambda}{P}$, then
		\begin{displaymath}
			\dist{\simplifiedgraph{I}{\Lambda}{P}}{\parens{i,\lambda}}{\parens{j,\mu}}\leqslant\begin{cases}
				\max\set{n,m}& \text{if } \max\set{n,m} \text{ is even,}\\
				1+\max\set{n,m}& \text{if } \max\set{n,m} \text{ is odd,}
			\end{cases}
		\end{displaymath}
		where $n=\dist{\simplifiedgraph{I}{\Lambda}{P}}{\parens{i,\lambda'}}{\parens{j,\mu'}}$ and $m=\dist{\simplifiedgraph{I}{\Lambda}{P}}{\parens{i',\lambda}}{\parens{j',\mu}}$.
		
		\item If there exists a path from $\parens{i,\lambda'}$ to $\parens{j',\mu}$ in $\simplifiedgraph{I}{\Lambda}{P}$ and a path from $\parens{i',\lambda}$ to $\parens{j,\mu'}$ in $\simplifiedgraph{I}{\Lambda}{P}$, then
		\begin{displaymath}
			\dist{\simplifiedgraph{I}{\Lambda}{P}}{\parens{i,\lambda}}{\parens{j,\mu}}\leqslant\begin{cases}
				1+\max\set{n,m}& \text{if } \max\set{n,m} \text{ is even,}\\
				\max\set{n,m}& \text{if } \max\set{n,m} \text{ is odd,}
			\end{cases}
		\end{displaymath}
		where $n=\dist{\simplifiedgraph{I}{\Lambda}{P}}{\parens{i,\lambda'}}{\parens{j',\mu}}$ and $m=\dist{\simplifiedgraph{I}{\Lambda}{P}}{\parens{i',\lambda}}{\parens{j,\mu'}}$.
	\end{enumerate}
\end{lemma}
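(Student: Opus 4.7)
My plan is to translate the problem into an auxiliary bipartite graph $B$ with parts $I$ and $\Lambda$, where $i\in I$ and $\lambda\in\Lambda$ are adjacent iff $p_{\lambda i}=0$. Unpacking Definition~\ref{0Rees: definition G(I,^,P)}, adjacency $\parens{a,\alpha}\sim\parens{b,\beta}$ in $\simplifiedgraph{I}{\Lambda}{P}$ is equivalent to the two pairs $\alpha b$ and $\beta a$ both being edges of $B$. In particular, the hypothesis $p_{\mu j'}=p_{\mu' j}=0$ says exactly that $\mu j'$ and $\mu' j$ are edges of $B$.

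The key observation (to be verified directly from the adjacency criterion) is a correspondence between walks of length $k$ in $\simplifiedgraph{I}{\Lambda}{P}$ and suitable pairs of $B$-walks of length $k$. From a walk $\parens{a_0,\alpha_0}-\cdots-\parens{a_k,\alpha_k}$ in $\simplifiedgraph{I}{\Lambda}{P}$ one extracts the two interleaved $B$-walks $a_0-\alpha_1-a_2-\alpha_3-\cdots$ and $\alpha_0-a_1-\alpha_2-a_3-\cdots$, and this construction is reversible (``zipping''). Consequently, producing a walk of \emph{even} length $k$ from $\parens{i,\lambda}$ to $\parens{j,\mu}$ in $\simplifiedgraph{I}{\Lambda}{P}$ is equivalent to producing $B$-walks of length $k$ from $i$ to $j$ and from $\lambda$ to $\mu$; producing one of \emph{odd} length $k$ is equivalent to producing $B$-walks of length $k$ from $i$ to $\mu$ and from $\lambda$ to $j$. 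Since shortest walks and shortest paths in a simple graph have equal length, the lemma reduces to building $B$-walks of suitable length.

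In case~1, path~1 provides a $B$-walk of length $n$ from $i$ to either $j$ (if $n$ is even) or $\mu'$ (if $n$ is odd); in the odd case, appending the $B$-edge $\mu' j$ gives a $B$-walk from $i$ to $j$ of even length $n+1$. Path~2 similarly produces a $B$-walk from $\lambda$ to $\mu$ of even length $m$ or $m+1$, appending the $B$-edge $\mu j'$ if necessary. Since both $j$ and $\mu$ have a neighbour in $B$ (namely $\mu'$ and $j'$), any such walk can be padded by an arbitrary multiple of $2$ by traversing that edge and returning. Taking $k$ to be the smallest even integer at least $\max\set{n,m}$ (that is, $k=\max\set{n,m}$ if this is even and $k=\max\set{n,m}+1$ otherwise), I therefore obtain $B$-walks of common length $k$ from $i$ to $j$ and from $\lambda$ to $\mu$; zipping them yields the required walk in $\simplifiedgraph{I}{\Lambda}{P}$ and the stated distance bound. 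Case~2 is handled identically, except that the given paths now produce $B$-walks whose parities correspond to a walk of \emph{odd} length in $\simplifiedgraph{I}{\Lambda}{P}$, so $k$ becomes the smallest odd integer at least $\max\set{n,m}$, flipping the placement of the $+1$. The main subtlety is the parity bookkeeping; the two edges furnished by the hypothesis play the double role of bridging the parity mismatches in the walks extracted from the given paths and of ensuring that padding at the endpoints is always possible.
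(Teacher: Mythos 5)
Your proof is correct and is essentially the same argument as the paper's: the paper directly builds the chain $\parens{i_1,\mu_1}\sim\parens{j_2,\lambda_2}\sim\parens{i_3,\mu_3}\sim\cdots$ by interleaving the components of the two given paths after padding the shorter one with the vertex $\parens{j,\mu'}$ and fixing parity with the edge supplied by $p_{\mu j'}=p_{\mu' j}=0$, which is precisely your ``unzip, pad, re-zip'' construction in the bipartite graph $B$. Your bipartite reformulation is a tidy way to organize the same parity bookkeeping, but it is not a different route.
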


The following two diagrams illustrate parts 1 and 2 of Lemma~\ref{0Rees: two paths zeros --> path non-zeros}, respectively. In the first diagram we have two solid arrows: one from entry $\parens{\lambda,i'}$ to entry $\parens{\mu, j'}$, and another from entry $\parens{\lambda',i}$ to entry $\parens{\mu',j}$; and we also have a dashed arrow from entry $\parens{\lambda,i}$ to entry $\parens{\mu,j}$. The solid arrows represent minimal paths (in $\simplifiedgraph{I}{\Lambda}{P}$) from $\parens{i',\lambda}$ to $\parens{j',\mu}$, and from $\parens{i,\lambda'}$ to $\parens{j,\mu'}$, that we will use to construct a path from $\parens{i,\lambda}$ to $\parens{j,\mu}$ (which is represented by the dashed arrow) and to determine an upper bound for $\dist{\simplifiedgraph{I}{\Lambda}{P}}{\parens{i,\lambda}}{\parens{j,\mu}}$. The second diagram can be interpreted analogously. 
\begin{displaymath}
	\begin{bNiceMatrix}[first-row,first-col,margin]
		& & i' & & j' & & i & & j &\\
		\\
		\lambda & & p_{\lambda i'} & & & & p_{\lambda i} & & &\\
		\\
		\mu & & & & 0 & &  &  & p_{\mu j} &  \\
		\\
		\lambda' & & & & & & p_{\lambda' i} &  &  &  \\
		\\
		\mu' & & & & & & & & 0 & \\
		\\
		\CodeAfter
		\begin{tikzpicture}
			\begin{scope}[dash pattern=on 0pt off 2.3pt,line cap=round, thick]
				\draw (2.5-|1) -- (2.5-|2);
				\draw (2.5-|3) -- ($(2.5-|6)+(0mm,0mm)$);
				\draw (4.5-|1) -- ($(4.5-|4)+(0.5mm,0mm)$);
				\draw (4.5-|5) -- ($(4.5-|8)+(0mm,0mm)$);
				\draw ($(6.5-|1)+(0mm,0mm)$) -- (6.5-|6);
				\draw (8.5-|1) -- ($(8.5-|8)+(1.5mm,0mm)$);
				\draw (6.5|-1) -- (6.5|-2);
				\draw ($(6.5|-3)+(0mm,-1mm)$) -- ($(6.5|-6)+(0mm,0mm)$);
				\draw (8.5|-1) -- ($(8.5|-4)+(0mm,0mm)$);
				\draw ($(8.5|-5)+(0mm,-1.5mm)$) -- ($(8.5|-8)+(0mm,1mm)$);
				\draw ($(2.5|-1)+(0mm,0mm)$) -- (2.5|-2);
				\draw ($(4.5|-1)+(0mm,0mm)$) -- ($(4.5|-4)+(0mm,1mm)$);
			\end{scope}
			
			\begin{scope}[decoration = {snake, amplitude = 0.7mm}, ->, RoyalBlue]
				\draw[decorate, dash pattern=on 1pt off 1.8pt] ($(2.5-|6.5)+(1mm,-2.5mm)$) -- ($(4.5-|8.5)+(-2mm,2mm)$);
				\draw[decorate] ($(6.5-|6.5)+(1mm,-2.5mm)$) -- ($(8.5-|8.5)+(-1.5mm,1.5mm)$);
				\draw[decorate] ($(2.5-|2.5)+(1mm,-2.5mm)$) -- ($(4.5-|4.5)+(-2mm,2mm)$);
			\end{scope}
		\end{tikzpicture}
	\end{bNiceMatrix}\quad
	\begin{bNiceMatrix}[first-row,first-col,margin]
		& & i' & & j' & & i & & j &\\
		\\
		\lambda & & p_{\lambda i'} & & & & p_{\lambda i} & & &\\
		\\
		\mu & & & & 0 & &  &  & p_{\mu j} &  \\
		\\
		\lambda' & & & & & & p_{\lambda' i} &  &  &  \\
		\\
		\mu' & & & & & & & & 0 & \\
		\\
		\CodeAfter
		\begin{tikzpicture}
			\begin{scope}[dash pattern=on 0pt off 2.3pt,line cap=round, thick]
				\draw (2.5-|1) -- (2.5-|2);
				\draw (2.5-|3) -- ($(2.5-|6)+(0mm,0mm)$);
				\draw (4.5-|1) -- ($(4.5-|4)+(0.5mm,0mm)$);
				\draw (4.5-|5) -- ($(4.5-|8)+(0mm,0mm)$);
				\draw ($(6.5-|1)+(0mm,0mm)$) -- (6.5-|6);
				\draw (8.5-|1) -- ($(8.5-|8)+(1.5mm,0mm)$);
				\draw (6.5|-1) -- (6.5|-2);
				\draw ($(6.5|-3)+(0mm,-1mm)$) -- ($(6.5|-6)+(0mm,0mm)$);
				\draw (8.5|-1) -- ($(8.5|-4)+(0mm,0mm)$);
				\draw ($(8.5|-5)+(0mm,-1.5mm)$) -- ($(8.5|-8)+(0mm,1mm)$);
				\draw ($(2.5|-1)+(0mm,0mm)$) -- (2.5|-2);
				\draw ($(4.5|-1)+(0mm,0mm)$) -- ($(4.5|-4)+(0mm,1mm)$);
			\end{scope}
			
			\begin{scope}[decoration = {snake, amplitude = 0.7mm}, ->, RoyalBlue]
				\draw[decorate, dash pattern=on 1pt off 1.8pt] ($(2.5-|6.5)+(1mm,-2.5mm)$) -- ($(4.5-|8.5)+(-2mm,2mm)$);
				\draw[decorate] ($(6.5-|6.5)+(-2mm,2mm)$) -- ($(4.5-|4.5)+(1.5mm,-1.5mm)$);
				\draw[decorate] ($(2.5-|2.5)+(0mm,-2.5mm)$) to [out=-100,in=-160] ($(8.5-|8.5)+(-2.5mm,-1mm)$);
			\end{scope}
		\end{tikzpicture}
	\end{bNiceMatrix}
\end{displaymath}

\begin{proof}
	We are only going to demonstrate part 1 of the lemma --- part 2 can be proved analogously to part 1.
	
	Suppose that there exists a path from $\parens{i,\lambda'}$ to $\parens{j,\mu'}$ in $\simplifiedgraph{I}{\Lambda}{P}$ and a path from $\parens{i',\lambda}$ to $\parens{j',\mu}$ in $\simplifiedgraph{I}{\Lambda}{P}$. Let $n=\dist{\simplifiedgraph{I}{\Lambda}{P}}{\parens{i,\lambda'}}{\parens{j,\mu'}}$ and $m=\dist{\simplifiedgraph{I}{\Lambda}{P}}{\parens{i',\lambda}}{\parens{j',\mu}}$. Assume, without loss of generality, that $n\leqslant m$. Let
	\begin{gather*}
		\parens{i,\lambda'}=\parens{i_1,\lambda_1}-\parens{i_2,\lambda_2}-\cdots-\parens{i_{n+1},\lambda_{n+1}}=\parens{j,\mu'}\\
		\shortintertext{and}
		\parens{i',\lambda}=\parens{j_1,\mu_1}-\parens{j_2,\mu_2}-\cdots-\parens{j_{m+1},\mu_{m+1}}=\parens{j',\mu}
	\end{gather*}
	be paths in $\simplifiedgraph{I}{\Lambda}{P}$ from $\parens{i,\lambda'}$ to $\parens{j,\mu'}$ and from $\parens{i',\lambda}$ to $\parens{j',\mu}$, respectively. Let $i_l=i_{n+1}=j$ and $\lambda_l=\lambda_{n+1}=\mu'$ for all $l\in\set{n+2,\ldots,m+1}$. Then $p_{\lambda_l i_{l+1}}=p_{\lambda_{l+1} i_l}=0$ for all $l\in\set{n+1,\ldots,m}$ (because $p_{\mu' j}=0$). Additionally, we have $p_{\lambda_l i_{l+1}}=p_{\lambda_{l+1} i_l}=0$ for all $l\in\Xn$ (because $\parens{i_l,\lambda_l}$ is adjacent to $\parens{i_{l+1},\lambda_{l+1}}$ for all $l\in\Xn$), and we have $p_{\mu_l j_{l+1}}=p_{\mu_{l+1} j_l}=0$ for all $l\in\X{m}$ (because $\parens{j_l,\mu_l}$ is adjacent to $\parens{j_{l+1},\mu_{l+1}}$ for all $l\in\X{m}$). Therefore $\parens{i_l,\mu_l}\sim\parens{j_{l+1},\lambda_{l+1}}$ and $\parens{j_l,\lambda_l}\sim\parens{i_{l+1},\mu_{l+1}}$ for all $l\in\X{m}$.
	
	\smallskip
	
	\textit{Case 1:} Suppose that $m=\max\set{n,m}$ is even. We have
	\begin{displaymath}
		\parens{i,\lambda}=\parens{i_1,\mu_1}\sim\parens{j_2,\lambda_2}\sim\parens{i_3,\mu_3}\sim\cdots\sim\parens{j_m,\lambda_m}\sim\parens{i_{m+1},\mu_{m+1}}=\parens{j,\mu},
	\end{displaymath}
	which implies that there is a path from $\parens{i,\lambda}$ to $\parens{j,\mu}$ in $\simplifiedgraph{I}{\Lambda}{P}$ whose length is at most $m$. Hence
	\begin{displaymath}
		\dist{\simplifiedgraph{I}{\Lambda}{P}}{\parens{i,\lambda}}{\parens{j,\mu}}\leqslant m=\max\set{n,m}.
	\end{displaymath}
	
	\smallskip
	
	\textit{Case 2:} Suppose that $m=\max\set{n,m}$ is odd. We have
	\begin{displaymath}
		\parens{i,\lambda}=\parens{i_1,\mu_1}\sim\parens{j_2,\lambda_2}\sim\parens{i_3,\mu_3}\sim\cdots\sim\parens{i_m,\mu_m}\sim\parens{j_{m+1},\lambda_{m+1}}=\parens{j',\mu'}
	\end{displaymath}
	and $\parens{j',\mu'}\sim\parens{j,\mu}$ (because $p_{\mu' j}=p_{\mu j'}=0$). Hence there is a path from $\parens{i,\lambda}$ to $\parens{j,\mu}$ in $\simplifiedgraph{I}{\Lambda}{P}$ whose length is at most $1+m$. Consequently,
	\begin{displaymath}
		\dist{\simplifiedgraph{I}{\Lambda}{P}}{\parens{i,\lambda}}{\parens{j,\mu}}\leqslant 1+m=1+\max\set{n,m}.\qedhere
	\end{displaymath}
\end{proof}

Theorem~\ref{0Rees: connectedness} provides a few ways to check if $\commgraph{\Reeszero{G}{I}{\Lambda}{P}}$ is connected. The most important is the one that uses the $0$-closure method (and $0$-closure submatrices of $P$). We will also see that connectedness of $\commgraph{\Reeszero{G}{I}{\Lambda}{P}}$ only depends on the matrix $P$.

\begin{theorem}\label{0Rees: connectedness}
	The following statements are equivalent:
	\begin{enumerate}
		\item $\commgraph{\Rees{G}{I}{\Lambda}{P}}$ is connected.
		
		\item $\simplifiedgraph{I}{\Lambda}{P}$ is connected.
		
		\item $\timeszero{P}$ is not \equivalent\ to any of the following matrices
		\begin{gather*}
			\begin{bNiceArray}{cc}[margin]
				\Block{2-2}{\timeszero{A}}&\\
				\\
				\hline
				\Block{1-2}{\rowtimes{0.7em}{0.45em}}&
			\end{bNiceArray},\quad
			\begin{bNiceArray}{cc|c}[margin]
				\Block{2-2}{\timeszero{A}}&&\Block{2-}{\columntimes}\\
				\\
			\end{bNiceArray},\quad
			\begin{bNiceArray}{cc|cc}[margin]
				\Block{2-2}{\timeszero{A}} & & \Block{2-2}{\blocktimes{0.61em}{0.61em}{-0.25em}} & \\
				\\
				\hline
				\Block{2-2}{\blocktimes{0.61em}{0.61em}{-0.25em}} & & \Block{2-2}{\timeszero{B}} & \\
				\\
			\end{bNiceArray},
		\end{gather*}
		where $A$ and $B$ are submatrices of $P$.
		
		\item For all $i\in I$ and $\lambda\in\Lambda$ such that $p_{\lambda i}=0$, the \zeroclosuresubmatrix{i}{\lambda} is $P$.
		
		\item There exist $i\in I$ and $\lambda\in\Lambda$ such that $p_{\lambda i}=0$ and the \zeroclosuresubmatrix{i}{\lambda} is $P$.
	\end{enumerate}
\end{theorem}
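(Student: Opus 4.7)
The plan is to establish the five conditions as equivalent via the chain $(1)\Leftrightarrow(2)$, $(2)\Rightarrow(4)\Rightarrow(5)\Rightarrow(2)$ and $(2)\Leftrightarrow(3)$. Most of these implications are fairly direct consequences of the lemmata already established in this section, and the substantive work will be in $(2)\Leftrightarrow(3)$.

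For $(1)\Leftrightarrow(2)$, I would use Lemma~\ref{0Rees: connected component G(I,^) => connected component G(M0[...])}, which sends each connected component of $\simplifiedgraph{I}{\Lambda}{P}$ with vertex set $D$ to a connected component of $\commgraph{\Reeszero{G}{I}{\Lambda}{P}}$ whose vertex set is $\bigcup_{(i,\lambda)\in D}\{i\}\times G\times\{\lambda\}$; since the original components partition $I\times\Lambda$, these images partition $I\times G\times\Lambda$, so the two graphs have the same number of components. For $(2)\Rightarrow(4)$, given $p_{\lambda i}=0$, Lemma~\ref{0Rees: connected component G(I,^,P) G(M0[...])} says the $0$-closure submatrix $Q=\Psubmatrix{P}{I_Q}{\Lambda_Q}$ induces a connected component $I_Q\times\Lambda_Q$ of $\simplifiedgraph{I}{\Lambda}{P}$; connectedness forces $I_Q\times\Lambda_Q=I\times\Lambda$, hence $Q=P$. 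The step $(4)\Rightarrow(5)$ is immediate from the standing hypothesis that $P$ contains at least one zero entry, and $(5)\Rightarrow(2)$ is again immediate from Lemma~\ref{0Rees: connected component G(I,^,P) G(M0[...])}: a $0$-closure submatrix equal to $P$ yields a single connected component covering all of $I\times\Lambda$.

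For $\neg(3)\Rightarrow\neg(2)$, I would dispatch the three schematic forms directly. If $\timeszero{P}$ is \equivalent\ to the first form, then some row $\lambda_0$ of $P$ consists entirely of non-zero entries, so every vertex $(i,\lambda_0)$ is isolated in $\simplifiedgraph{I}{\Lambda}{P}$ (adjacency would require some $p_{\lambda_0 j}=0$); the second form is analogous with a column of non-zeros. For the third form, I would use the block structure to partition $I=I_1\sqcup I_2$ and $\Lambda=\Lambda_1\sqcup\Lambda_2$, and observe that since both off-diagonal blocks are entirely non-zero, a vertex in $I_1\times\Lambda_1$ can only be adjacent to vertices in $I_1\times\Lambda_1$, which is disjoint from (hence disconnected from) the non-empty $I_2\times\Lambda_2$.

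The main obstacle is $\neg(2)\Rightarrow\neg(3)$, which I would prove by case analysis. If some row of $\timeszero{P}$ has no zeros, permuting it to the bottom yields form (i); if some column has no zeros, we similarly obtain form (ii). Otherwise every row and every column contains a zero, and I would pick any zero entry $(\lambda,i)$ and compute its $0$-closure submatrix $Q=\Psubmatrix{P}{I_Q}{\Lambda_Q}$. Disconnectedness, via Lemma~\ref{0Rees: connected component G(I,^,P) G(M0[...])}, forces $I_Q\times\Lambda_Q\subsetneq I\times\Lambda$, while the termination condition of the $0$-closure method guarantees that no zero of $P$ lies in a row of $\Lambda_Q$ outside the columns $I_Q$, nor in a column of $I_Q$ outside the rows $\Lambda_Q$; thus the off-diagonal blocks of the partition $\{I_Q,I\setminus I_Q\}\times\{\Lambda_Q,\Lambda\setminus\Lambda_Q\}$ are entirely non-zero. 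The delicate point is to verify that both $I\setminus I_Q$ and $\Lambda\setminus\Lambda_Q$ are non-empty so that form (iii) genuinely applies: disconnectedness forces at least one of them to be non-empty, and if say $\Lambda\setminus\Lambda_Q\neq\emptyset$, any row $\mu$ in it must contain a zero by the case hypothesis, whose column cannot lie in $I_Q$ (else the $0$-closure would have added $\mu$), so $I\setminus I_Q\neq\emptyset$ as well, and symmetrically in the reverse direction. Permuting rows and columns to bring $\Lambda_Q,I_Q$ to the top-left then places $\timeszero{P}$ in form (iii).
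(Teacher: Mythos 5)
Your proposal is correct and follows essentially the same route as the paper: the correspondence between connected components of $\simplifiedgraph{I}{\Lambda}{P}$ and of $\commgraph{\Reeszero{G}{I}{\Lambda}{P}}$ via Lemma~\ref{0Rees: connected component G(I,^) => connected component G(M0[...])}, the identification of $0$-closure submatrices with components of $\simplifiedgraph{I}{\Lambda}{P}$ via Lemma~\ref{0Rees: connected component G(I,^,P) G(M0[...])}, and the same block-form analysis (isolated vertices for the first two forms, the closure/termination property of the $0$-closure method for the third, including the check that both complements $I\setminus I_Q$ and $\Lambda\setminus\Lambda_Q$ are non-empty). The only difference is the organization of the implication chain — the paper runs a single cycle $1\Rightarrow 2\Rightarrow 3\Rightarrow 4\Rightarrow 5\Rightarrow 1$ while you prove $(1)\Leftrightarrow(2)$, $(2)\Leftrightarrow(3)$ and $(2)\Rightarrow(4)\Rightarrow(5)\Rightarrow(2)$ — which is logically equivalent and uses the same ingredients.
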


\begin{proof}
	\textbf{Part 1} $\bracks{1 \implies 2}$. Suppose that $\commgraph{\Reeszero{G}{I}{\Lambda}{P}}$ is connected. Then $\commgraph{\Reeszero{G}{I}{\Lambda}{P}}$ has only one connected component, which is itself. Let $C$ be the vertex set of a connected component of $\simplifiedgraph{I}{\Lambda}{P}$. It follows from Lemma~\ref{0Rees: connected component G(I,^) => connected component G(M0[...])} that the subgraph of $\commgraph{\Reeszero{G}{I}{\Lambda}{P}}$ induced by $\bigcup_{\parens{i,\lambda}\in C} \set{i}\times G\times\set{\lambda}$ is a connected component of $\commgraph{\Reeszero{G}{I}{\Lambda}{P}}$. Since the only connected component of $\commgraph{\Reeszero{G}{I}{\Lambda}{P}}$ is itself (and its vertex set is $I\times G\times \Lambda$), then we must have $\bigcup_{\parens{i,\lambda}\in C} \set{i}\times G\times\set{\lambda}=I\times G\times \Lambda$. Thus $C=I\times \Lambda$, which implies that the only connected component of $\simplifiedgraph{I}{\Lambda}{P}$ is itself; that is, $\simplifiedgraph{I}{\Lambda}{P}$ is connected.
	
	\medskip
	
	\textbf{Part 2} $\bracks{\neg 3 \implies \neg 2}$. Suppose that $\timeszero{P}$ is \equivalent\ to at least one of the matrices described in 3. We consider three cases.
	
	\smallskip
	
	\textit{Case 1:} Suppose that there exists a submatrix $A$ of $P$ such that the matrix
	\begin{displaymath}
		\begin{bNiceArray}{cc}[margin]
			\Block{2-2}{\timeszero{A}}&\\
			\\
			\hline
			\Block{1-2}{\rowtimes{0.7em}{0.45em}}&
		\end{bNiceArray}
	\end{displaymath}
	is \equivalent\ to $\timeszero{P}$. Then $P$ has at least one row with no zero entries. Let $\lambda\in\Lambda$ be the index of one of those rows and let $i\in I$. We have $p_{\lambda j}\neq 0$ for all $j\in I$, which implies that there is no vertex adjacent to the vertex $\parens{i,\lambda}$. Hence $\simplifiedgraph{I}{\Lambda}{P}$ is not connected.
	
	\smallskip
	
	\textit{Case 2:} Suppose that there exists a submatrix $A$ of $P$ such that the matrix
	\begin{displaymath}
		\begin{bNiceArray}{cc|c}[margin]
			\Block{2-2}{\timeszero{A}}&&\Block{2-}{\columntimes}\\
			\\
		\end{bNiceArray}
	\end{displaymath}
	is \equivalent\ to $\timeszero{P}$. The proof of this case is similar to the one of case 1 and is omitted.
	
	\smallskip
	
	\textit{Case 3:} Suppose that there exist submatrices $A$ and $B$ of $P$ such that the matrix
	\begin{displaymath}
		\begin{bNiceArray}{cc|cc}[margin]
			\Block{2-2}{\timeszero{A}} & & \Block{2-2}{\blocktimes{0.61em}{0.61em}{-0.25em}} & \\
			\\
			\hline
			\Block{2-2}{\blocktimes{0.61em}{0.61em}{-0.25em}} & & \Block{2-2}{\timeszero{B}} & \\
			\\
		\end{bNiceArray}
	\end{displaymath}
	is \equivalent\ to $\timeszero{P}$. Then there exist $I'\subseteq I$ and $\Lambda'\subseteq \Lambda$ such that $A=\Psubmatrix{P}{I'}{\Lambda'}$ and $B=\Psubmatrix{P}{I\setminus I'}{\Lambda\setminus\Lambda'}$. Let $\parens{j,\mu}\in I'\times\Lambda'$ and $\parens{j',\mu'}\in I\times\Lambda$ be adjacent vertices. Then $p_{\mu j'}=p_{\mu' j}=0$ and, consequently, $\parens{j',\mu},\parens{j,\mu'}\in\parens{I'\times\Lambda'}\cup\parens{\parens{I\setminus I'}\times\parens{\Lambda\setminus\Lambda'}}$ (note that $p_{\lambda i}\neq 0$ if we have either $i\in I\setminus I'$ and $\lambda\in\Lambda'$ or $i\in I'$ and $\lambda\in\Lambda\setminus\Lambda'$). Since $j\in I'$ and $\mu\in\Lambda'$, then we must have $\mu'\in\Lambda'$ and $j'\in I'$; that is, $\parens{j',\mu'}	\in I'\times \Lambda'$. This proves that all vertices of $I'\times \Lambda'$ are only adjacent to vertices of $I'\times\Lambda'$. Hence there exists a connected component of $\simplifiedgraph{I}{\Lambda}{P}$ whose vertex set is contained in $I'\times\Lambda'$. Therefore $\simplifiedgraph{I}{\Lambda}{P}$ must contain more than one connected component and, thus, $\simplifiedgraph{I}{\Lambda}{P}$ is not connected.

	\medskip
	
	\textbf{Part 3} $\bracks{\neg 4\implies\neg 3}$. Suppose that there exist $i\in I$ and $\lambda\in\Lambda$ such that $p_{\lambda i}=0$ and the \zeroclosuresubmatrix{i}{\lambda} is not $P$. Let $Q=\Psubmatrix{P}{I_Q}{\Lambda_Q}$ be the \zeroclosuresubmatrix{i}{\lambda}. Since $Q\neq P$, then we have $\abs{I_Q}<\abs{I}$ or $\abs{\Lambda_Q}<\abs{\Lambda}$.
	
	\smallskip
	
	\textit{Case 1:} Suppose that $\abs{I_Q}=\abs{I}$ and $\abs{\Lambda_Q}<\abs{\Lambda}$. The fact that $Q=\Psubmatrix{P}{I_Q}{\Lambda_Q}$ is the \zeroclosuresubmatrix{i}{\lambda} implies that the zero entries in the rows of $P$ that intersect $Q$ are all entries of $Q$ and, consequently, we have $p_{\mu j}\neq 0$ for all $j\in I_Q=I$ and $\mu\in\Lambda\setminus\Lambda_Q$. Let $\mu\in\Lambda\setminus\Lambda_Q$. We have that row $\mu$ of $P$ has no zero entries. Hence row $\mu$ of $\timeszero{P}$ has no zero entries, and if we move row $\mu$ to the bottom, we obtain the matrix
	\begin{displaymath}
		\begin{bNiceArray}{ccccccc}[margin]
			\Block{2-7}{\timeszero{\Psubmatrix{P}{I}{\Lambda\setminus\set{\mu}}}}&&&&&&\\
			\\
			\hline
			\Block{1-7}{\rowtimes{2.95em}{2.65em}}&&&&&&
		\end{bNiceArray}
	\end{displaymath}
	which is \equivalent\ to $\timeszero{P}$.
	
	\smallskip
	
	\textit{Case 2:} Suppose that $\abs{I_Q}<\abs{I}$ and $\abs{\Lambda_Q}=\abs{\Lambda}$. If we use an argument symmetrical to the one used in case 1, then we can see that there exists $j\in I$ such that $\timeszero{P}$ is \equivalent\ to
	\begin{displaymath}
		\begin{bNiceArray}{ccccccc|c}[margin]
			\Block{2-7}{\timeszero{\Psubmatrix{P}{I\setminus\set{j}}{\Lambda}}}&&&&&&&\Block{2-}{\columntimes}\\
			\\
		\end{bNiceArray}.
	\end{displaymath}
	
	\smallskip
	
	\textit{Case 3:} Suppose that $\abs{I_Q}<\abs{I}$ and $\abs{\Lambda_Q}<\abs{\Lambda}$. Due to the fact that $Q=\Psubmatrix{P}{I_Q}{\Lambda_Q}$ is the \zeroclosuresubmatrix{i}{\lambda}, then $Q$ contains all the zero entries of the rows (respectively, columns) of $P$ whose indices belong to $\Lambda_Q$ (respectively, $I_Q$). Hence $p_{\mu j}\neq 0$ for all $\parens{j,\mu}\in \parens{\parens{I\setminus I_Q}\times \Lambda_Q}\cup \parens{I_Q\times\parens{\Lambda\setminus\Lambda_Q}}$. If we move the rows of $\timeszero{P}$ whose indices belong to $\Lambda_Q$ to the top of the matrix (without changing the order in which the rows indexed by $\Lambda_Q$ appear in $\timeszero{P}$), and then move the columns of the resulting matrix whose indices belong to $I_Q$ to the left of the matrix (without changing the order in which the columns indexed by $I_Q$ appear in $\timeszero{P}$), then we obtain the matrix
	\begin{displaymath}
		\begin{bNiceArray}{cc|ccccccccc}[first-row,first-col,margin]
			& \Block{1-2}{I_Q} & & \Block{1-9}{I\setminus I_Q} &&&&&&&&\\
			\Block{2-1}{\Lambda_Q} & \Block{2-2}{\timeszero{Q}} & & \Block{2-9}{\blocktimes{0.61em}{3.7em}{0.15em}} &&&&&&&& \\
			\\
			\hline
			\Block{2-1}{\Lambda\setminus\Lambda_Q} & \Block{2-2}{\blocktimes{0.61em}{0.61em}{-0.25em}} & & \Block{2-9}{\timeszero{\Psubmatrix{P}{I\setminus I_Q}{\Lambda\setminus\Lambda_Q}}} &&&&&&&& \\
			\\
		\end{bNiceArray},
	\end{displaymath}
	which is \equivalent\ to $\timeszero{P}$.
	
	\medskip
	
	\textbf{Part 4} $\bracks{4 \implies 5}$. Statement 5 is an immediate consequence of statement 4 and the fact that $P$ contains at least one zero entry.
	
	\medskip
	
	\textbf{Part 5} $\bracks{5 \implies 1}$. Suppose that there exist $i\in I$ and $\lambda\in\Lambda$ such that $p_{\lambda i}=0$ and the \zeroclosuresubmatrix{i}{\lambda} is $P$. We have $P=\Psubmatrix{P}{I}{\Lambda}$ and it follows from Lemma~\ref{0Rees: connected component G(I,^,P) G(M0[...])} that the subgraph of $\commgraph{\Reeszero{G}{I}{\Lambda}{P}}$ induced by $I\times G\times\Lambda$ is a connected component of $\commgraph{\Reeszero{G}{I}{\Lambda}{P}}$. Since $\commgraph{\Reeszero{G}{I}{\Lambda}{P}}$ is the subgraph of $\commgraph{\Reeszero{G}{I}{\Lambda}{P}}$ induced by $I\times G\times\Lambda$, then $\commgraph{\Reeszero{G}{I}{\Lambda}{P}}$ is a connected component of $\commgraph{\Reeszero{G}{I}{\Lambda}{P}}$, that is, $\commgraph{\Reeszero{G}{I}{\Lambda}{P}}$ is connected.
\end{proof}

It follows from Theorem~\ref{0Rees: connectedness} that, regardless of the zero entry we choose to start the $0$-closure method, if $\commgraph{\Reeszero{G}{I}{\Lambda}{P}}$ is connected, then the \zeroclosuresub\ constructed is always going to be $P$ and, if $\commgraph{\Reeszero{G}{I}{\Lambda}{P}}$ is not connected, then the \zeroclosuresub\ constructed is never equal to $P$. This implies that, in order to see if $\commgraph{\Reeszero{G}{I}{\Lambda}{P}}$ is connected, we only need to choose one zero entry, start the $0$-closure method with that entry and verify if the \zeroclosuresub\ obtained is $P$. 

\begin{example}\label{0Rees: example connectedness}
	In Example~\ref{0Rees: example 0-closure method} we showed that the \zeroclosuresubmatrix{6}{4} is $\Psubmatrix{P}{\set{1,3,4,6,7}}{\set{1,3,4}}$, which is not $P$. Thus Theorem~\ref{0Rees: connectedness} implies that $\commgraph{\Reeszero{G}{I}{\Lambda}{P}}$ is not connected.
\end{example}

Now we are going to see how we can identify the connected components of $\commgraph{\Reeszero{G}{I}{\Lambda}{P}}$ (when it is not connected) by analyzing $P$.

\begin{theorem}\label{0Rees: finding connected components}
	Suppose that $\commgraph{\Reeszero{G}{I}{\Lambda}{P}}$ is not connected. Then $\timeszero{P}$ is \equivalent\ to one of the following matrices
	\begin{displaymath}
		\underbrace{\begin{bNiceArray}{cc|cc|cc}[margin]
				\Block{2-2}{\timeszero{A_1}} & & \Block{2-2}{\blocktimes{0.61em}{0.61em}{-0.25em}} & & \Block{2-2}{\blocktimes{0.61em}{0.61em}{-0.25em}} & \\
				\\
				\hline
				\Block{2-2}{\blocktimes{0.61em}{0.61em}{-0.25em}} & & \Block{2-2}{\ddots} & & \Block{2-2}{\blocktimes{0.61em}{0.61em}{-0.25em}} & \\
				\\
				\hline
				\Block{2-2}{\blocktimes{0.61em}{0.61em}{-0.25em}} & & \Block{2-2}{\blocktimes{0.61em}{0.61em}{-0.25em}} & & \Block{2-2}{\timeszero{A_n}}& \\
				\\
		\end{bNiceArray}}_{\text{\footnotesize $n\geqslant 2$}},\quad
		\underbrace{\begin{bNiceArray}{cc|cc|cc|cc}[margin]
				\Block{2-2}{\timeszero{A_1}} & & \Block{2-2}{\blocktimes{0.61em}{0.61em}{-0.25em}} & & \Block{2-2}{\blocktimes{0.61em}{0.61em}{-0.25em}} & & \Block{2-2}{\blocktimes{0.61em}{0.61em}{-0.25em}} & \\
				\\
				\hline
				\Block{2-2}{\blocktimes{0.61em}{0.61em}{-0.25em}} & & \Block{2-2}{\ddots} & & \Block{2-2}{\blocktimes{0.61em}{0.61em}{-0.25em}} & & \Block{2-2}{\blocktimes{0.61em}{0.61em}{-0.25em}} & \\
				\\
				\hline
				\Block{2-2}{\blocktimes{0.61em}{0.61em}{-0.25em}} & & \Block{2-2}{\blocktimes{0.61em}{0.61em}{-0.25em}} & & \Block{2-2}{\timeszero{A_n}}& & \Block{2-2}{\blocktimes{0.61em}{0.61em}{-0.25em}} & \\
				\\
		\end{bNiceArray}}_{\text{\footnotesize $n\geqslant 1$}},\quad
		\underbrace{\begin{bNiceArray}{cc|cc|cc}[margin]
				\Block{2-2}{\timeszero{A_1}} & & \Block{2-2}{\blocktimes{0.61em}{0.61em}{-0.25em}} & & \Block{2-2}{\blocktimes{0.61em}{0.61em}{-0.25em}} & \\
				\\
				\hline
				\Block{2-2}{\blocktimes{0.61em}{0.61em}{-0.25em}} & & \Block{2-2}{\ddots} & & \Block{2-2}{\blocktimes{0.61em}{0.61em}{-0.25em}} & \\
				\\
				\hline
				\Block{2-2}{\blocktimes{0.61em}{0.61em}{-0.25em}} & & \Block{2-2}{\blocktimes{0.61em}{0.61em}{-0.25em}} & & \Block{2-2}{\timeszero{A_n}}& \\
				\\
				\hline
				\Block{2-2}{\blocktimes{0.61em}{0.61em}{-0.25em}} & & \Block{2-2}{\blocktimes{0.61em}{0.61em}{-0.25em}} & & \Block{2-2}{\blocktimes{0.61em}{0.61em}{-0.25em}} & \\
				\\
		\end{bNiceArray}}_{\text{\footnotesize $n\geqslant 1$}},\quad
		\underbrace{\begin{bNiceArray}{cc|cc|cc|cc}[margin]
				\Block{2-2}{\timeszero{A_1}} & & \Block{2-2}{\blocktimes{0.61em}{0.61em}{-0.25em}} & & \Block{2-2}{\blocktimes{0.61em}{0.61em}{-0.25em}} & & \Block{2-2}{\blocktimes{0.61em}{0.61em}{-0.25em}} & \\
				\\
				\hline
				\Block{2-2}{\blocktimes{0.61em}{0.61em}{-0.25em}} & & \Block{2-2}{\ddots} & & \Block{2-2}{\blocktimes{0.61em}{0.61em}{-0.25em}} & & \Block{2-2}{\blocktimes{0.61em}{0.61em}{-0.25em}} & \\
				\\
				\hline
				\Block{2-2}{\blocktimes{0.61em}{0.61em}{-0.25em}} & & \Block{2-2}{\blocktimes{0.61em}{0.61em}{-0.25em}} & & \Block{2-2}{\timeszero{A_n}}& & \Block{2-2}{\blocktimes{0.61em}{0.61em}{-0.25em}} & \\
				\\
				\hline
				\Block{2-2}{\blocktimes{0.61em}{0.61em}{-0.25em}} & & \Block{2-2}{\blocktimes{0.61em}{0.61em}{-0.25em}} & & \Block{2-2}{\blocktimes{0.61em}{0.61em}{-0.25em}} & & \Block{2-2}{\blocktimes{0.61em}{0.61em}{-0.25em}} & \\
				\\
		\end{bNiceArray}}_{\text{\footnotesize $n\geqslant 1$}},
	\end{displaymath}
	where, for each $k\in\Xn$, $A_k$ is a \zeroclosuresub. Moreover, we can identify the connected components of $\commgraph{\Reeszero{G}{I}{\Lambda}{P}}$ the following way:
	\begin{enumerate}
		\item Let $k\in\Xn$ and suppose that $A_k=\Psubmatrix{P}{I_k}{\Lambda_k}$. Then the subgraph of $\commgraph{\Reeszero{G}{I}{\Lambda}{P}}$ induced by $I_k\times G\times \Lambda_k$ is a connected component of $\commgraph{\Reeszero{G}{I}{\Lambda}{P}}$.
		
		\item Let $k,m\in\Xn$ be such that $k\neq m$ and suppose that $A_k=\Psubmatrix{P}{I_k}{\Lambda_k}$ and $A_m=\Psubmatrix{P}{I_m}{\Lambda_m}$. Then the subgraph of $\commgraph{\Reeszero{G}{I}{\Lambda}{P}}$ induced by $\parens{I_k\times G\times \Lambda_m}\cup\parens{I_m\times G\times\Lambda_k}$ is a connected component of $\commgraph{\Reeszero{G}{I}{\Lambda}{P}}$.
		
		\item Let $i \in I$ and let $\lambda\in\Lambda$ be such that row $\lambda$ has no zero entries. Then the subgraph of $\commgraph{\Reeszero{G}{I}{\Lambda}{P}}$ induced by $\set{i}\times G\times \set{\lambda}$ is a connected component of $\commgraph{\Reeszero{G}{I}{\Lambda}{P}}$.
		
		\item Let $\lambda \in \Lambda$ and let $i \in I$ be such that column $i$ has no zero entries. Then the subgraph of $\commgraph{\Reeszero{G}{I}{\Lambda}{P}}$ induced by $\set{i}\times G\times \set{\lambda}$ is a connected component of $\commgraph{\Reeszero{G}{I}{\Lambda}{P}}$.
	\end{enumerate}
\end{theorem}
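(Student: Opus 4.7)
The plan is to enumerate the distinct $0$-closure submatrices of $P$, show they induce the claimed block decomposition of $\timeszero{P}$, and then identify each connected component of $\commgraph{\Reeszero{G}{I}{\Lambda}{P}}$ using Lemmata~\ref{0Rees: commutativity}, \ref{0Rees: connected component G(I,^) => connected component G(M0[...])} and \ref{0Rees: connected component G(I,^,P) G(M0[...])}.

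First I list the pairwise distinct $0$-closure submatrices of $P$: $A_k = \Psubmatrix{P}{I_k}{\Lambda_k}$ for $k \in \Xn$. By Lemma~\ref{0Rees: comparing 0-closure submatrices}, the sets $I_1, \ldots, I_n$ are pairwise disjoint and so are $\Lambda_1, \ldots, \Lambda_n$. Set $I_0 = I \setminus \parens{I_1 \cup \cdots \cup I_n}$ and $\Lambda_0 = \Lambda \setminus \parens{\Lambda_1 \cup \cdots \cup \Lambda_n}$. Every zero of $P$ must lie in some $A_k$: if $p_{\lambda i} = 0$, then the \zeroclosuresubmatrix{i}{\lambda} contains $\parens{i,\lambda}$ and, by Lemma~\ref{0Rees: comparing 0-closure submatrices}, coincides with some $A_k$. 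Hence $p_{\lambda i} \neq 0$ whenever $\parens{i, \lambda} \in I_k \times \Lambda_m$ with $k \neq m$, or $\parens{i, \lambda} \in \parens{I_0 \times \Lambda} \cup \parens{I \times \Lambda_0}$. Reordering the rows of $\timeszero{P}$ by the sequence $\Lambda_1, \ldots, \Lambda_n, \Lambda_0$ and the columns by $I_1, \ldots, I_n, I_0$ now yields one of the four claimed forms, the case depending on which of $I_0$ and $\Lambda_0$ is empty. In the purely block-diagonal case ($I_0 = \Lambda_0 = \emptyset$), Theorem~\ref{0Rees: connectedness} forces $n \geqslant 2$: otherwise $P = A_1$ is itself a \zeroclosuresub\ and $\commgraph{\Reeszero{G}{I}{\Lambda}{P}}$ would be connected, contrary to hypothesis.

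For part (1), Lemma~\ref{0Rees: connected component G(I,^,P) G(M0[...])} directly gives that $I_k \times G \times \Lambda_k$ induces a connected component. For part (3), take $\lambda \in \Lambda_0$ and $i \in I$; Lemma~\ref{0Rees: commutativity} shows that any $\parens{j, y, \mu}$ with $\parens{j, \mu} \neq \parens{i, \lambda}$ commuting with some $\parens{i, x, \lambda}$ would force $p_{\lambda j} = 0$, impossible because row $\lambda$ has no zeros. Hence $\set{i} \times G \times \set{\lambda}$ is a union of connected components of $\commgraph{\Reeszero{G}{I}{\Lambda}{P}}$, and by Lemma~\ref{0Rees: subgraph induced by ixGxlambda} the subgraph it induces is connected, so it is a single component. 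Part (4) is symmetric.

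The hardest step is part (2). Fix $k \neq m$ in $\Xn$. Using Lemma~\ref{0Rees: commutativity} together with the fact that the zeros in row $\lambda \in \Lambda_m$ lie only in columns indexed by $I_m$ and the zeros in column $i \in I_k$ lie only in rows indexed by $\Lambda_k$, any vertex commuting with some $\parens{i, x, \lambda} \in I_k \times G \times \Lambda_m$ must lie in $\parens{I_k \times G \times \Lambda_m} \cup \parens{I_m \times G \times \Lambda_k}$, so this set is a union of components. For connectedness, introduce the bipartite graph $H_r$ with vertex classes $I_r$ and $\Lambda_r$ and an edge between $i \in I_r$ and $\lambda \in \Lambda_r$ whenever $p_{\lambda i} = 0$; an easy induction on the steps of the $0$-closure method (each newly added row or column is attached via a marked zero) shows that $H_r$ is connected for $r = k, m$. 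Given $\parens{i, \lambda}, \parens{i', \lambda'} \in I_k \times \Lambda_m$, take a path $i = a_0, \mu_1, a_1, \ldots, \mu_s, a_s = i'$ in $H_k$ and a path $\lambda = b_0, j_1, b_1, \ldots, j_s, b_s = \lambda'$ in $H_m$, equalising their cycle counts by stalling the shorter path at its endpoint (possible since Lemma~\ref{0Rees: every row/column of 0-closure submatrix has a 0} supplies a zero in every column of $A_k$ and every row of $A_m$). Interleaving produces the walk
\begin{displaymath}
	\parens{a_0, b_0} - \parens{j_1, \mu_1} - \parens{a_1, b_1} - \cdots - \parens{a_s, b_s}
\end{displaymath}
in $\simplifiedgraph{I}{\Lambda}{P}$ from $\parens{i, \lambda}$ to $\parens{i', \lambda'}$, each edge having its two required zeros supplied one by each path. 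Lemma~\ref{0Rees: connected component G(I,^) => connected component G(M0[...])} then lifts this to connectivity in $\commgraph{\Reeszero{G}{I}{\Lambda}{P}}$, completing the argument.
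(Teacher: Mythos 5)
Your proposal is correct and follows the same overall architecture as the paper: enumerate the pairwise distinct $0$-closure submatrices, observe that every zero of $P$ lies in exactly one of them (so the reordering gives the block forms, with $n\geqslant 2$ in the purely block-diagonal case by Theorem~\ref{0Rees: connectedness}), and then identify the three kinds of components. Parts (1), (3) and (4) are handled exactly as in the paper. Where you genuinely diverge is part (2). The paper proves connectedness of the subgraph of $\simplifiedgraph{I}{\Lambda}{P}$ on $\parens{I_k\times\Lambda_m}\cup\parens{I_m\times\Lambda_k}$ by invoking Lemma~\ref{0Rees: two paths zeros --> path non-zeros}, which splices together two shortest paths already living inside the components $\mathcal{D}_k$ and $\mathcal{D}_m$ of $\simplifiedgraph{I}{\Lambda}{P}$ and, crucially, tracks the parity and length of the result; the paper deliberately records the resulting bound $\diam{\mathcal{D}_{km}}\leqslant 1+\max\set{\diam{\mathcal{D}_k},\diam{\mathcal{D}_m}}$ because it is reused verbatim in the proof of Proposition~\ref{0Rees: diameter connected component Q M}. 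You instead introduce auxiliary bipartite graphs $H_k$, $H_m$ on $I_r\sqcup\Lambda_r$ whose connectedness follows from the $0$-closure method itself, and interleave a path in each. Your interleaving is valid (each edge of the resulting walk gets one zero from each bipartite path, and the stalling trick is justified by Lemma~\ref{0Rees: every row/column of 0-closure submatrix has a 0}), and it is arguably more self-contained for establishing connectedness alone; what it buys less of is quantitative control on distances, which the paper's route provides for free and needs later.

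Two small points to tighten. First, you only connect pairs of vertices lying in the same half $I_k\times\Lambda_m$; you should also say a word about joining a vertex of $I_k\times\Lambda_m$ to one of $I_m\times\Lambda_k$ (immediate, since by Lemma~\ref{0Rees: every row/column of 0-closure submatrix has a 0} any $\parens{i,\lambda}\in I_k\times\Lambda_m$ is adjacent to some $\parens{j,\mu}\in I_m\times\Lambda_k$, or simply observe that your interleaved walk already passes through $I_m\times\Lambda_k$ whenever it is non-trivial). Second, in part (3) the closure argument needs $\parens{j,\mu}\neq\parens{i,\lambda}$ to split into the two alternatives of Lemma~\ref{0Rees: commutativity}; the case $\parens{j,\mu}=\parens{i,\lambda}$ stays inside $\set{i}\times G\times\set{\lambda}$ anyway, so nothing breaks, but the phrasing ``would force $p_{\lambda j}=0$'' silently assumes the second alternative. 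Neither point is a genuine gap.
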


In the course of the proof of Theorem~\ref{0Rees: finding connected components} we will make some reasonings regarding distances between vertices. Although these are not necessary for this proof, since we are only establishing what the connected components of $\commgraph{\Reeszero{G}{I}{\Lambda}{P}}$ are, they will be useful to demonstrate Proposition~\ref{0Rees: diameter connected component Q M}.

\begin{proof}
	We start by showing that $P$ is \equivalent\ to one of the matrices described in the theorem statement.
	
	Since $P$ contains zero entries, then we can choose one of them to start the $0$-closure method and construct a \zeroclosuresub. Let $A_1$ be that submatrix. If there are other zero entries in $P$ that are not entries of $A_1$, then we choose one of them to start (a second time) the $0$-closure method and construct a (second) \zeroclosuresub. Let $A_2$ be that submatrix. If there are other zero entries in $P$ that are entries of neither $A_1$ nor $A_2$, then we choose one of them to start (a third time) the $0$-closure method and construct a (third) \zeroclosuresub.
	
	Since $I$ and $\Lambda$ are finite, then this process eventually stops. Let $n$ be the number of times the $0$-closure method was performed and let $A_1,\ldots,A_n$ be the $0$-closure submatrices of $P$ obtained in each application of the method. For each $k\in\Xn$ assume that $A_k=\Psubmatrix{P}{I_k}{\Lambda_k}$. We observe that, due to the fact that for each $k\in\Xn$ we have that $A_k$ is a \zeroclosuresub, then this implies that for each $k\in\Xn$ there are no zero entries in the rows and columns of $P$ intersecting $A_k$ that are not entries of $A_k$, that is, for each $k\in\Xn$ we have $p_{\lambda i}\neq 0$ for all $\parens{i,\lambda}\in\parens{I_k\times\parens{\Lambda\setminus\Lambda_k}}\cup\parens{\parens{I\setminus I_k}\times \Lambda_k}$. Thus $\timeszero{P}$ is \equivalent\ to
	\begin{displaymath}
		\begin{bNiceArray}{cc|ccccccccc}[first-row,first-col,margin]
			& \Block{1-2}{I_k} & & \Block{1-9}{I\setminus I_k} &&&&&&&&\\
			\Block{2-1}{\Lambda_k} & \Block{2-2}{\timeszero{A_k}} & & \Block{2-9}{\blocktimes{0.61em}{3.7em}{0.15em}} &&&&&&&& \\
			\\
			\hline
			\Block{2-1}{\Lambda\setminus\Lambda_k} & \Block{2-2}{\blocktimes{0.61em}{0.61em}{-0.25em}} & & \Block{2-9}{\timeszero{\Psubmatrix{P}{I\setminus I_k}{\Lambda\setminus\Lambda_k}}} &&&&&&&& \\
			\\
		\end{bNiceArray}
	\end{displaymath}
	for all $k\in\Xn$.
	
	In order to conclude the proof, we only need to see that for all distinct $k,m\in\Xn$ we have $I_k\cap I_m=\Lambda_k\cap\Lambda_m=\emptyset$. Let $k,m\in\Xn$ be such that $k<m$. Let $i\in I$ and $\lambda\in\Lambda$ be such that $p_{\lambda i}=0$ and $A_m$ is the \zeroclosuresubmatrix{i}{\lambda}. Due to the fact that each application of the $0$-closure method starts with an entry that does not belong to any of the $0$-closure submatrices of $P$ constructed in the preceding applications of the method, then the $\parens{\lambda,i}$-th entry of $P$ is not an entry of any of the submatrices $A_1,\ldots,A_{m-1}$, that is, $\parens{i,\lambda}\in\parens{I\times\Lambda}\setminus\bigcup_{l=1}^{m-1} \parens{I_l\times\Lambda_l}\subseteq\parens{I\times\Lambda}\setminus\parens{I_k\times\Lambda_k}$. Then, by part 2 of Lemma~\ref{0Rees: comparing 0-closure submatrices}, we have $I_k\cap I_m=\Lambda_k\cap\Lambda_m=\emptyset$.
	
	We observe that, as a consequence of Theorem~\ref{0Rees: connectedness} and the fact that $\commgraph{\Reeszero{G}{I}{\Lambda}{P}}$ is not connected, then we have $A_1\neq P$. Hence, if $n=1$, $P$ is \equivalent\ to one of the following matrices
	\begin{displaymath}
		\begin{bNiceArray}{cc|cc}[margin]
			\Block{2-2}{\timeszero{A_1}} & & \Block{2-2}{\blocktimes{0.61em}{0.61em}{-0.25em}} & \\
			\\
		\end{bNiceArray},\quad
		\begin{bNiceArray}{cc}[margin]
			\Block{2-2}{\timeszero{A_1}} & \\
			\\
			\hline
			\Block{2-2}{\blocktimes{0.61em}{0.61em}{-0.25em}} & \\
			\\
		\end{bNiceArray},\quad
		\begin{bNiceArray}{cc|cc}[margin]
			\Block{2-2}{\timeszero{A_1}} & & \Block{2-2}{\blocktimes{0.61em}{0.61em}{-0.25em}} & \\
			\\
			\hline
			\Block{2-2}{\blocktimes{0.61em}{0.61em}{-0.25em}} & & \Block{2-2}{\blocktimes{0.61em}{0.61em}{-0.25em}} & \\
			\\
		\end{bNiceArray}.
	\end{displaymath}

	\medskip
	
	\textbf{Part 1.} Let $k\in\Xn$ and assume that $A_k=\Psubmatrix{P}{I_k}{\Lambda_k}$. Since $A_k$ is a \zeroclosuresub, then part 2 of Lemma~\ref{0Rees: connected component G(I,^,P) G(M0[...])} guarantees that the subgraph of $\commgraph{\Reeszero{G}{I}{\Lambda}{P}}$ induced by $I_k\times G\times\Lambda_k$ is a connected component of $\commgraph{\Reeszero{G}{I}{\Lambda}{P}}$.
	
	\medskip
	
	\textbf{Part 2.} Let $k,m\in\Xn$ be such that $k\neq m$ and assume that $A_k=\Psubmatrix{P}{I_k}{\Lambda_k}$ and $A_m=\Psubmatrix{P}{I_m}{\Lambda_m}$. Let $\mathcal{C}_k$ and $\mathcal{C}_m$ be the subgraphs of $\commgraph{\Reeszero{G}{I}{\Lambda}{P}}$ induced by $I_k\times G\times\Lambda_k$ and $I_m\times G\times \Lambda_m$, respectively. By part 1, $\mathcal{C}_k$ and $\mathcal{C}_m$ are connected components of $\commgraph{\Reeszero{G}{I}{\Lambda}{P}}$. (We observe that $\mathcal{C}_k$ and $\mathcal{C}_m$ are distinct because $I_k\cap I_m=\Lambda_k\cap\Lambda_m=\emptyset$.) Let $\mathcal{D}_k$ and $\mathcal{D}_m$ be the (distinct) subgraphs of $\simplifiedgraph{I}{\Lambda}{P}$ induced by $I_k\times\Lambda_k$ and $I_m\times\Lambda_m$, respectively. We have that $\mathcal{D}_k$ and $\mathcal{D}_m$ are connected components of $\simplifiedgraph{I}{\Lambda}{P}$ (by part 1 of Lemma~\ref{0Rees: connected component G(I,^,P) G(M0[...])}). Let $\mathcal{D}_{km}$ be the subgraph of $\simplifiedgraph{I}{\Lambda}{P}$ induced by $\parens{I_k\times\Lambda_m}\cup\parens{I_m\times\Lambda_k}$. We want to see that $\mathcal{D}_{km}$ is a connected component of $\simplifiedgraph{I}{\Lambda}{P}$.
	
	Our first goal is to show that $\mathcal{D}_{km}$ is connected. Let $\parens{i,\lambda},\parens{j,\mu}\in\parens{I_k\times\Lambda_m}\cup\parens{I_m\times\Lambda_k}$. We consider the following two cases:
	
	\smallskip
	
	\textit{Case 1:} Assume that $\parens{i,\lambda},\parens{j,\mu}\in I_k\times \Lambda_m$. (The proof is identical if we assume that $\parens{i,\lambda},\parens{j,\mu}\in I_m\times \Lambda_k$.) It follows from Lemma~\ref{0Rees: every row/column of 0-closure submatrix has a 0} that columns $i$ and $j$ of $A_k$ and rows $\lambda$ and $\mu$ of $A_m$ contain at least one zero entry. Hence there exist $\lambda',\mu'\in\Lambda_k$ and $i',j'\in I_m$ such that $p_{\lambda' i}=p_{\mu' j}=p_{\lambda i'}=p_{\mu j'}=0$. As a consequence of the fact that $\mathcal{D}_k$ and $\mathcal{D}_m$ are connected components of $\simplifiedgraph{I}{\Lambda}{P}$, we have that there exists a path from $\parens{i,\lambda'}$ to $\parens{j,\mu'}$ in $\mathcal{D}_k$ and there exists a path from $\parens{i',\lambda}$ to $\parens{j',\mu}$ in $\mathcal{D}_m$. Then part 1 of Lemma~\ref{0Rees: two paths zeros --> path non-zeros} ensures that there is a path from $\parens{i,\lambda}$ to $\parens{j,\mu}$ in $\simplifiedgraph{I}{\Lambda}{P}$ and that
	\begin{align*}
		\dist{\simplifiedgraph{I}{\Lambda}{P}}{\parens{i,\lambda}}{\parens{j,\mu}}&\leqslant 1+\max\set[\big]{\dist{\simplifiedgraph{I}{\Lambda}{P}}{\parens{i,\lambda'}}{\parens{j,\mu'}},\dist{\simplifiedgraph{I}{\Lambda}{P}}{\parens{i',\lambda}}{\parens{j',\mu}}}\\
		&\leqslant 1+\max\set[\big]{\diam{\mathcal{D}_k},\diam{\mathcal{D}_m}}.
	\end{align*}

	\smallskip
	
	\textit{Case 2:} Assume that $\parens{i,\lambda}\in I_k\times \Lambda_m$ and $\parens{j,\mu}\in I_m\times \Lambda_k$. (The proof is similar if we have $\parens{i,\lambda}\in I_m\times \Lambda_k$ and $\parens{j,\mu}\in I_k\times \Lambda_m$.) Lemma~\ref{0Rees: every row/column of 0-closure submatrix has a 0} ensures the existence of at least one zero entry in column $i$ of $A_k$, row $\lambda$ of $A_m$, column $j$ of $A_m$ and row $\mu$ of $A_k$, that is, it ensures the existence of $\lambda'\in\Lambda_k$, $i'\in I_m$, $\mu'\in\Lambda_m$ and $j'\in I_k$ such that $p_{\lambda' i}=p_{\lambda i'}=p_{\mu' j}=p_{\mu j'}=0$. Since $\mathcal{D}_k$ and $\mathcal{D}_m$ are connected components of $\simplifiedgraph{I}{\Lambda}{P}$, then there exists a path from $\parens{i,\lambda'}$ to $\parens{j',\mu}$ in $\mathcal{D}_k$ and there exists a path from $\parens{i',\lambda}$ to $\parens{j,\mu'}$ in $\mathcal{D}_m$. Thus, by part 2 of Lemma~\ref{0Rees: two paths zeros --> path non-zeros}, there is a path from $\parens{i,\lambda}$ to $\parens{j,\mu}$ in $\simplifiedgraph{I}{\Lambda}{P}$ and we have
	\begin{align*}
		\dist{\simplifiedgraph{I}{\Lambda}{P}}{\parens{i,\lambda}}{\parens{j,\mu}}&\leqslant 1+\max\set[\big]{\dist{\simplifiedgraph{I}{\Lambda}{P}}{\parens{i,\lambda'}}{\parens{j',\mu}},\dist{\simplifiedgraph{I}{\Lambda}{P}}{\parens{i',\lambda}}{\parens{j,\mu'}}}\\
		&\leqslant 1+\max\set[\big]{\diam{\mathcal{D}_k},\diam{\mathcal{D}_m}}.
	\end{align*}

	\smallskip
	
	This concludes the proof that $\mathcal{D}_{km}$ is connected. Moreover, we have $\diam{\mathcal{D}_{km}}\leqslant1+\max\set[\big]{\diam{\mathcal{D}_k},\diam{\mathcal{D}_m}}$. In order to conclude that $\mathcal{D}_{km}$ is a connected component of $\simplifiedgraph{I}{\Lambda}{P}$, we only need to observe that it follows from Lemma~\ref{0Rees: lemma connected component determined Q M} that the vertices of $\simplifiedgraph{I}{\Lambda}{P}$ that belong to $I_k\times\Lambda_m$ (respectively, $I_m\times\Lambda_k$) are only adjacent to vertices that belong to $I_m\times\Lambda_k$ (respectively, $I_k\times\Lambda_m$). Then Lemma~\ref{0Rees: connected component G(I,^) => connected component G(M0[...])} guarantees that the subgraph of $\commgraph{\Reeszero{G}{I}{\Lambda}{P}}$ induced by
	\begin{displaymath}
		\bigcup_{\parens{i,\lambda}\in \parens{I_k\times\Lambda_m}\cup\parens{I_m\times\Lambda_k}}\set{i}\times G\times\set{\lambda}=\parens{I_k\times G\times\Lambda_m}\cup\parens{I_m\times G\times\Lambda_k}.
	\end{displaymath}
	is a connected component of $\commgraph{\Reeszero{G}{I}{\Lambda}{P}}$.
	
	\medskip
	
	\textbf{Part 3.} Let $i \in I$ and let $\lambda\in\Lambda$ be such that row $\lambda$ has no zero entries. Then $p_{\lambda i}\neq 0$. Let $\mathcal{C}$ be the subgraph of $\commgraph{\Reeszero{G}{I}{\Lambda}{P}}$ induced by $\set{i}\times G\times\set{\lambda}$. It follows from Lemma~\ref{0Rees: subgraph induced by ixGxlambda} that $\mathcal{C}$ is isomorphic to $K_{\abs{G}}$, if $G$ is abelian, or $\mathcal{C}$ is isomorphic to $K_{\abs{\centre{G}}}\graphjointwo\commgraph{G}$, if $G$ is not abelian. In both cases we have that $\mathcal{C}$ is connected. In order to see that $\mathcal{C}$ is a connected component of $\commgraph{\Reeszero{G}{I}{\Lambda}{P}}$, it is enough to see that the vertices of $\commgraph{\Reeszero{G}{I}{\Lambda}{P}}$ belonging to $\set{i}\times G\times\set{\lambda}$ are only adjacent to vertices belonging to $\set{i}\times G\times\set{\lambda}$. Let $j\in I$, $\mu\in\Lambda$ and $x,y\in G$ be such that $\parens{i,x,\lambda}\parens{j,y,\mu}=\parens{j,y,\mu}\parens{i,x,\lambda}$. Since row $\lambda$ has no zero entries, then $p_{\lambda j}\neq0$, which implies, by Lemma~\ref{0Rees: commutativity}, that $i=j$ and $\lambda=\mu$. Thus $\parens{j,y,\mu}\in\set{i}\times G\times\set{\lambda}$, that is, $\parens{j,y,\mu}$ is a vertex of $\mathcal{C}$.
	
	\medskip
	
	\textbf{Part 4.} Let $\lambda \in \Lambda$ and let $i \in I$ be such that column $i$ has no zero entries. We can prove, in a similar way to part 3, that the subgraph of $\commgraph{\Reeszero{G}{I}{\Lambda}{P}}$ induced by $\set{i}\times G\times \set{\lambda}$ is a connected component of $\commgraph{\Reeszero{G}{I}{\Lambda}{P}}$.
\end{proof}

The proof of the previous theorem provides a way to extract from $P$ the connected components of $\commgraph{\Reeszero{G}{I}{\Lambda}{P}}$. The only thing we need to do is execute the $0$-closure method sequentially. In each new application of the $0$-closure method we make sure to choose a zero entry that does not belong to any of the $0$-closure submatrices of $P$ already formed. We stop when there are no more zero entries to choose, that is, when all the zero entries of $P$ lie in some \zeroclosuresub\ already constructed. (Example~\ref{0Rees: example finding connected components} illustrates how to do this with a particular matrix.)

\begin{definition}
	We define the following terminology:
	\begin{description}[style = unboxed]
		{\sloppy \item[Connected component determined by $Q$] the connected component of $\commgraph{\Reeszero{G}{I}{\Lambda}{P}}$ whose set of vertices is $I_Q\times G\times\Lambda_Q$, where $Q=\Psubmatrix{P}{I_Q}{\Lambda_Q}$ is a \zeroclosuresub.\par} 
		
		\item[Connected component determined by $Q$ and $M$] the connected component of $\commgraph{\Reeszero{G}{I}{\Lambda}{P}}$ whose set of vertices is $\parens{I_Q\times G\times\Lambda_M}\cup\parens{I_M\times G\times\Lambda_Q}$, where $Q=\Psubmatrix{P}{I_Q}{\Lambda_Q}$ and $M=\Psubmatrix{P}{I_M}{\Lambda_M}$ are distinct $0$-closure submatrices of $P$. 
		
		\item[Connected component determined by $\parens{\lambda,i}$] the connected component of $\commgraph{\Reeszero{G}{I}{\Lambda}{P}}$ whose set of vertices is $\set{i}\times G\times\set{\lambda}$, where $i\in I$ and $\lambda\in\Lambda$ are such that column $i$ has no zero entries or row $\lambda$ has no zero entries.
	\end{description}
\end{definition}

We observe that, when $\commgraph{\Reeszero{G}{I}{\Lambda}{P}}$ is connected, then $I\times G\times \Lambda$ is the vertex set of a connected component of $\commgraph{\Reeszero{G}{I}{\Lambda}{P}}$. Moreover, in this case we have that $P=\Psubmatrix{P}{I}{\Lambda}$ is a $0$-closure submatrix of $P$ (see Theorem~\ref{0Rees: connectedness}). This means that, when $\commgraph{\Reeszero{G}{I}{\Lambda}{P}}$ is connected, $\commgraph{\Reeszero{G}{I}{\Lambda}{P}}$ is a connected component determined by $P$.

In light of the previous comment, we observe that whenever we are referring to a connected components determined by a \zeroclosuresub, we are also considering the case where $\commgraph{\Reeszero{G}{I}{\Lambda}{P}}$ is connected.

\begin{example}\label{0Rees: example finding connected components}
	In Example~\ref{0Rees: example 0-closure method} we saw that $Q=\Psubmatrix{P}{\set{1,3,4,6,7}}{\set{1,3,4}}$ is a \zeroclosuresub. Since $P$ contains zero entries that are not entries of $Q$, then we need to perform the $0$-closure method again. We choose the entry $\parens{2,2}$ of $P$. Then $\zeroindex{2}{2}=1$ and $M=\Psubmatrix{P}{\set{2,5,8}}{\set{2,5}}$ is the \zeroclosuresubmatrix{2}{2}. We observe that all the zero entries of $P$ are entries of $Q$ or $M$ and, consequently, we do not need to start the $0$-closure method again. We have that $\timeszero{P}$ is \equivalent\ to
	\begin{displaymath}
		\begin{bNiceArray}{cccccc|cccc}[first-row, first-col, margin]
			& \Block{1-6}{\set{1,3,4,6,7}} & & &&&& \Block{1-4}{\set{2,5,8}} &&& \\
			\Block{2-1}{\set{1,3,4}} & \Block{2-6}{\timeszero{Q}} & & &&&& \Block{2-4}{\blocktimes{0.61em}{1.5em}{0.2em}} &&& \\
			\\
			\hline
			\Block{2-1}{\set{2,5}} & \Block{2-6}{\blocktimes{0.61em}{2.45em}{-0.25em}} & & &&&& \Block{2-4}{\timeszero{M}} &&& \\
			\\
			\hline
			\set{6} & \Block{1-10}{\rowtimes{4.3em}{4em}} & & & &&&&&&
		\end{bNiceArray}.
	\end{displaymath}
	
	Now we can use Theorem~\ref{0Rees: finding connected components} to identify the connected components of $\commgraph{\Reeszero{G}{I}{\Lambda}{P}}$:
	\begin{itemize}
		\item Connected components determined by (a unique) \zeroclosuresub: since there are only two $0$-closure submatrices of $P$, we only have two connected components of this type, namely the ones whose vertex sets are $\set{1,3,4,6,7}\times G\times \set{1,3,4}$ and $\set{2,5,8}\times G\times\set{2,5}$.
		\item Connected components determined by two distinct $0$-closure submatrices of $P$: there are only two $0$-closure submatrices of $P$, which implies that there is only one connected component of this type, whose vertex comprises $\set{1,3,4,6,7}\times G\times \set{2,5}$ and $\set{2,5,8}\times G\times\set{1,3,4}$.
		\item Connected components determined by entries of $P$: due to the fact that there is only one row with no zero entries (row $6$) and no columns with no zero entries, then we have eight connected components of this type. Their vertex sets are $\set{i}\times G\times\set{6}$, where $i\in\set{1,2,3,4,5,6,7,8}$.
	\end{itemize}
\end{example}

The succeeding results focus on obtaining the diameter of the connected components of $\commgraph{\Reeszero{G}{I}{\Lambda}{P}}$: results~\ref{0Rees: diameter 0 or 1 or 2}--\ref{0Rees: connected component determined by Q, simplification repeated rows/columns} concern connected components determined by (a unique) \zeroclosuresub, result~\ref{0Rees: diameter connected component Q M} concerns connected components determined by two distinct $0$-closure submatrices of $P$ and result~\ref{0Rees: diameter connected component (i,lambda)} concerns connected components determined by an entry of $P$.

\begin{proposition} \label{0Rees: diameter 0 or 1 or 2}
	Let $Q=\Psubmatrix{P}{I_Q}{\Lambda_Q}$ be a \zeroclosuresub\ and let $\mathcal{C}$ be the connected component of $\commgraph{\Reeszero{G}{I}{\Lambda}{P}}$ determined by $Q$. Then
	\begin{enumerate}
		\item $\diam{\mathcal{C}}=0$ if and only if $\abs{I_Q}=\abs{\Lambda_Q}=1$ and $G$ is trivial.
		
		\item $\diam{\mathcal{C}}=1$ if and only if the following conditions are satisfied:
		\begin{enumerate}
			\item $\abs{I_Q}>1$ or $\abs{\Lambda_Q}>1$ or $G$ is not trivial.
			
			\item $Q=O_{\abs{\Lambda_Q}\times\abs{I_Q}}$.
		\end{enumerate}
		
		\item $\diam{\mathcal{C}}=2$ if and only if the following conditions are satisfied:
		\begin{enumerate}
			\item $Q$ contains a non-zero entry.
			
			\item For all distinct $\lambda,\mu\in\Lambda_Q$ there exists $i\in I_Q$ such that $p_{\lambda i}=p_{\mu i}=0$.
			
			\item For all distinct $i,j\in I_Q$ there exists $\lambda\in\Lambda_Q$ such that $p_{\lambda i}=p_{\lambda j}=0$.
		\end{enumerate}
	\end{enumerate}
\end{proposition}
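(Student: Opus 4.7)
My plan is to handle the three statements in order, using that by part~2 of Lemma~\ref{0Rees: connected component G(I,^,P) G(M0[...])} the vertex set of $\mathcal{C}$ is $I_Q\times G\times\Lambda_Q$, together with Lemma~\ref{0Rees: subgraph induced by ixGxlambda} (which describes the induced subgraphs on the ``blocks'' $\set{i}\times G\times\set{\lambda}$) and Lemma~\ref{0Rees: commutativity} (which characterizes adjacency). Statement~1 is immediate: $\diam{\mathcal{C}}=0$ iff $\mathcal{C}$ has a single vertex iff $\abs{I_Q\times G\times\Lambda_Q}=1$ iff $\abs{I_Q}=\abs{\Lambda_Q}=\abs{G}=1$.

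For Statement~2 I would observe that $\diam{\mathcal{C}}=1$ is equivalent to $\mathcal{C}$ being complete and having at least two vertices (the count condition is precisely~(a)). Completeness, via Lemma~\ref{0Rees: subgraph induced by ixGxlambda}, forces each block $\set{i}\times G\times\set{\lambda}$ to be complete (so $G$ abelian or $p_{\lambda i}=0$); via Lemma~\ref{0Rees: commutativity}, it also forces $p_{\lambda j}=p_{\mu i}=0$ for all distinct $(i,\lambda),(j,\mu)\in I_Q\times\Lambda_Q$. When $\abs{I_Q}\geqslant 2$ or $\abs{\Lambda_Q}\geqslant 2$, the latter alone yields $Q=O_{\abs{\Lambda_Q}\times\abs{I_Q}}$; when $\abs{I_Q}=\abs{\Lambda_Q}=1$, Lemma~\ref{0Rees: every row/column of 0-closure submatrix has a 0} already forces the single entry of $Q$ to be $0$. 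The converse follows directly from the same two lemmata.

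For the forward direction of Statement~3 I assume (a), (b), (c). The bound $\diam{\mathcal{C}}\geqslant 2$ is immediate from Statements~1 and~2, since (a) excludes $Q=O$. To show $\diam{\mathcal{C}}\leqslant 2$ I take any non-adjacent pair $u=(i,x,\lambda)$, $v=(j,y,\mu)$ in $\mathcal{C}$ and construct a common neighbour $w=(i',1_G,\lambda')$, splitting into four sub-cases. If $(i,\lambda)=(j,\mu)$, non-adjacency forces $p_{\lambda i}\neq 0$; by Lemma~\ref{0Rees: every row/column of 0-closure submatrix has a 0} together with part~1 of Lemma~\ref{0Rees: connected component G(I,^,P) G(M0[...])} the vertex $(i,\lambda)$ cannot be isolated in $\simplifiedgraph{I}{\Lambda}{P}$, and any neighbour $(i',\lambda')$ of it supplies $w$ via Lemma~\ref{0Rees: adjacency in G(I,^) and G(M0[...])}. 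If $i=j$ and $\lambda\neq\mu$, condition~(b) applied to $\lambda,\mu$ gives $i'\in I_Q$ with $p_{\lambda i'}=p_{\mu i'}=0$ (non-adjacency forces $i'\neq i$), and Lemma~\ref{0Rees: every row/column of 0-closure submatrix has a 0} supplies $\lambda'\in\Lambda_Q$ with $p_{\lambda' i}=0$; Lemma~\ref{0Rees: commutativity} then verifies the two adjacencies. The case $i\neq j$, $\lambda=\mu$ is symmetric using~(c). In the last case $i\neq j$, $\lambda\neq\mu$, both (b) and (c) supply candidates $i'$ and $\lambda'$; I argue by contradiction that these can always be chosen so that $(i',\lambda')\notin\set{(i,\lambda),(j,\mu)}$, because the equalities $(i',\lambda')=(i,\lambda)$ or $(j,\mu)$, combined with the zeros supplied by~(b) and~(c), would force both $p_{\lambda j}=0$ and $p_{\mu i}=0$, contradicting non-adjacency.

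The reverse direction of Statement~3 is short: $\diam{\mathcal{C}}=2$ yields~(a) through Statement~2 (which rules out $Q=O$). For~(b), given distinct $\lambda,\mu\in\Lambda_Q$ and any $i_0\in I_Q$, the pair $(i_0,1_G,\lambda)$, $(i_0,1_G,\mu)$ is at distance at most~$2$ in $\mathcal{C}$; if the distance is~$1$, Lemma~\ref{0Rees: commutativity} immediately yields $p_{\lambda i_0}=p_{\mu i_0}=0$, and if it is~$2$, a case analysis on a middle vertex $w=(i',z,\lambda')$ (applying Lemma~\ref{0Rees: commutativity} to both incident edges) extracts the required index (equal to $i_0$ or to $i'$ depending on the sub-case). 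Symmetric reasoning gives~(c). The main obstacle is the fourth sub-case in the forward direction of Statement~3: the combined use of (b), (c) and the non-adjacency hypothesis to guarantee $(i',\lambda')\notin\set{(i,\lambda),(j,\mu)}$ is delicate and genuinely needs both conditions simultaneously.
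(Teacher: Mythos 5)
Your proposal is correct and follows essentially the same route as the paper's proof: the counting argument for part 1, the coordinate-swapping trick (plus the singleton case via Lemma~\ref{0Rees: every row/column of 0-closure submatrix has a 0}) for part 2, and the middle-vertex analysis in both directions for part 3, including the key observation that the candidate middle vertex supplied by conditions (b) and (c) cannot coincide with either endpoint of a non-adjacent pair. The only difference is organizational: where you split the construction of a common neighbour into four sub-cases according to which coordinates of the endpoints coincide, the paper gives a single uniform chain $\parens{i,x,\lambda}\sim\parens{i',x,\lambda'}\sim\parens{j,y,\mu}$ whose $\sim$ notation tolerates the degenerate cases automatically.
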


\begin{proof}
	\textbf{Part 1.} We have
	\begin{align*}
		\diam{\mathcal{C}}=0 & \iff \mathcal{C} \text{ has only one vertex}\\
		& \iff \abs{I_Q\times G\times \Lambda_Q}=1\\
		& \iff \abs{I_Q}=\abs{\Lambda_Q}=1 \text{ and } G \text{ is trivial.}
	\end{align*}
	
	\medskip
	
	\textbf{Part 2.} We are going to start by proving the reverse implication. Suppose that $\abs{I_Q}>1$ or $\abs{\Lambda_Q}>1$ or $G$ is not trivial, and that $Q=O_{\abs{\Lambda_Q}\times\abs{I_Q}}$. First we are going to see that $\diam{\mathcal{C}}\leqslant 1$. Let $i,j\in I_Q$, $\lambda,\mu\in\Lambda_Q$ and $x,y\in G$. Due to the fact that $Q=O_{\abs{\Lambda_Q}\times\abs{I_Q}}$, we have $p_{\lambda j}=p_{\mu i}=0$, which implies, by Lemma~\ref{0Rees: commutativity}, that $\parens{i,x,\lambda}\parens{j,y,\mu}=\parens{j,y,\mu}\parens{i,x,\lambda}$ and, consequently, $\parens{i,x,\lambda}\sim\parens{j,y,\mu}$. Therefore $\diam{\mathcal{C}}\leqslant 1$. Additionally, the fact that $\abs{I_Q}>1$ or $\abs{\Lambda_Q}>1$ or $G$ is not trivial implies (by part 1) that $\diam{\mathcal{C}}\neq 0$, which concludes the proof.
	
	Now we prove the forward implication. Suppose that $\diam{\mathcal{C}}=1$. Then $\diam{\mathcal{C}}\neq 0$, which implies, by part 1, that $\abs{I_Q}>1$ or $\abs{\Lambda_Q}>1$ or $G$ is not trivial. The only thing left to do is to see that $Q=O_{\abs{\Lambda_Q}\times\abs{I_Q}}$. Let $i\in I_Q$ and $\lambda\in\Lambda_Q$. We consider two cases.
	
	\smallskip
	
	\textit{Case 1:} Assume that $\abs{I_Q}=\abs{\Lambda_Q}=1$. Then $I_Q=\set{i}$ and $\Lambda_Q=\set{\lambda}$, which implies that the $0$-closure method yielding $Q$ must have started with the $\parens{\lambda, i}$-th entry of $P$. Since the $0$-closure method starts with a zero entry, then we must have $p_{\lambda i}=0$.
	
	\smallskip
	
	\textit{Case 2:} Assume that $\abs{I_Q}>1$ or $\abs{\Lambda_Q}>1$. Hence $\abs{I_Q\times\Lambda_Q}> 1$ and, consequently, there exist $j\in I_Q$ and $\mu\in\Lambda_Q$ such that $\parens{j,\mu}\neq\parens{i,\lambda}$. Let $x\in G$. Due to the fact that $\diam{\mathcal{C}}=1$ and $\parens{j,\lambda}\neq\parens{i,\mu}$, we have that $\parens{j,x,\lambda}$ is adjacent to $\parens{i,x,\mu}$ and, by Lemma~\ref{0Rees: commutativity}, we have $p_{\lambda i}=p_{\mu j}=0$.
	
	\smallskip
	
	In both cases we saw that $p_{\lambda i}=0$. Since $i$ and $\lambda$ are arbitrary elements of $I_Q$ and $\Lambda_Q$, respectively, then this means we have $Q=O_{\abs{\Lambda_Q}\times\abs{I_Q}}$.
	
	\medskip
	
	\textbf{Part 3.} First we prove the forward implication. Suppose that $\diam{\mathcal{C}}=2$. Since $\diam{\mathcal{C}}\neq 0$, then (by part 1) $\abs{I_Q}>1$ or $\abs{\Lambda_Q}>1$ or $G$ is not trivial. Additionally, $\diam{\mathcal{C}}\neq 1$, which implies (by part 2) that $Q\neq O_{\abs{\Lambda_Q}\times\abs{I_Q}}$. Hence $Q$ contains a non-zero entry and, consequently, condition a) holds.
	
	Now we are going to see that condition b) is also satisfied. We can prove in a symmetrical way that condition c) is satisfied. Let $\lambda,\mu\in\Lambda_Q$ be such that $\lambda\neq\mu$. Let $i\in I_Q$ and $x\in G$. We have $\parens{i,x,\lambda}\neq\parens{i,x,\mu}$ and $\diam{\mathcal{C}}=2$, which implies that $\dist{\commgraph{\Reeszero{G}{I}{\Lambda}{P}}}{\parens{i,x,\lambda}}{\parens{i,x,\mu}}\in\set{1,2}$.
	
	\smallskip
	
	\textit{Case 1:} Assume that $\dist{\commgraph{\Reeszero{G}{I}{\Lambda}{P}}}{\parens{i,x,\lambda}}{\parens{i,x,\mu}}=1$. Then $\parens{i,x,\lambda}$ is adjacent to $\parens{i,x,\mu}$ (that is, $\parens{i,x,\lambda}\parens{i,x,\mu}=\parens{i,x,\mu}\parens{i,x,\lambda}$). As a consequence of Lemma~\ref{0Rees: commutativity}, and the fact that $\lambda\neq\mu$, we have $p_{\lambda i}=p_{\mu i}=0$.
	
	\smallskip
	
	\textit{Case 2:} Assume that $\dist{\commgraph{\Reeszero{G}{I}{\Lambda}{P}}}{\parens{i,x,\lambda}}{\parens{i,x,\mu}}=2$. Then there exist $i'\in I$, $\lambda'\in\Lambda$ and $y\in G$ such that
	\begin{displaymath}
		\parens{i,x,\lambda}-\parens{i',y,\lambda'}-\parens{i,x,\mu}
	\end{displaymath}
	is a path from $\parens{i,x,\lambda}$ to $\parens{i,x,\mu}$ in $\commgraph{\Reeszero{G}{I}{\Lambda}{P}}$. We observe that, due to the fact that $\mathcal{C}$ is a connected component of $\commgraph{\Reeszero{G}{I}{\Lambda}{P}}$ whose set of vertices is $I_Q\times G\times\Lambda_Q$, then $i'\in I_Q$ and $\lambda'\in\Lambda_Q$. Furthermore, we have that $\parens{i,x,\lambda}$ is not adjacent to $\parens{i,x,\mu}$, which implies, by Lemma~\ref{0Rees: adjacency in G(I,^) and G(M0[...])}, that $\parens{i,x,\lambda}$ is not adjacent to $\parens{i,y,\mu}$ and $\parens{i,x,\mu}$ is not adjacent to $\parens{i,y,\lambda}$. Then we have $\parens{i',\lambda'}\neq\parens{i,\lambda}$ and $\parens{i',\lambda'}\neq\parens{i,\mu}$ (because $\parens{i,x,\lambda}$ and $\parens{i,x,\mu}$ are both adjacent to $\parens{i',y,\lambda'}$). Consequently, Lemma~\ref{0Rees: adjacency in G(I,^) and G(M0[...])} guarantees that $\parens{i,x,\lambda}$ and $\parens{i,x,\mu}$ are both adjacent to $\parens{i',x,\lambda'}$. Therefore, by Lemma~\ref{0Rees: commutativity}, $p_{\lambda i'}=p_{\lambda' i}=p_{\mu i'}=p_{\lambda' i}=0$. Thus $p_{\lambda i'}=p_{\mu i'}=0$.
	
	\smallskip
	
	Now we are going to prove the reverse implication. Suppose that conditions a), b) and c) are satisfied. We start by proving that $\diam{\mathcal{C}}\leqslant 2$. Let $i,j\in I_Q$, $\lambda,\mu\in\Lambda_Q$ and $x,y\in G$. In the following two cases we prove that there exists $\lambda'\in\Lambda_Q$ such that $p_{\lambda' i}=p_{\lambda' j}=0$. We can prove in a similar way that there exists $i'\in I_Q$ such that $p_{\lambda i'}=p_{\mu i'}=0$.
	
	\smallskip
	
	\textit{Case 1:} Assume that $i=j$. It follows from Lemma~\ref{0Rees: every row/column of 0-closure submatrix has a 0} that column $i=j$ of $Q$ contains a zero entry. Hence there exists $\lambda'\in\Lambda_Q$ such that $p_{\lambda' i}=p_{\lambda' j}=0$.
	
	\smallskip
	
	\textit{Case 2:} Assume that $i\neq j$. It follows from condition c) that there exists $\lambda'\in\Lambda_Q$ such that $p_{\lambda' i}=p_{\lambda' j}=0$.
	
	\smallskip
	
	Due to the fact that $p_{\lambda i'}=p_{\lambda' i}=0$ and $p_{\lambda' j}=p_{\mu i'}=0$ (and by Lemma~\ref{0Rees: commutativity}), we have $\parens{i,x,\lambda}\parens{i',x,\lambda'}=\parens{i',x,\lambda'}\parens{i,x,\lambda}$ and $\parens{i',x,\lambda'}\parens{j,y,\mu}=\parens{j,y,\mu}\parens{i',x,\lambda'}$, respectively. Thus
	\begin{displaymath}
		\parens{i,x,\lambda}\sim\parens{i',x,\lambda'}\sim\parens{j,y,\mu}.
	\end{displaymath}
	and, consequently, $\dist{\commgraph{\Reeszero{G}{I}{\lambda}{P}}}{\parens{i,x,\lambda}}{\parens{j,y,\mu}}\leqslant 2$. Since $\parens{i,x,\lambda}$ and $\parens{j,y,\mu}$ are arbitrary vertices of $\mathcal{C}$, this implies that $\diam{\mathcal{C}}\leqslant 2$. 
	
	We only need to verify that $\diam{\mathcal{C}}\geqslant 2$. Condition a) guarantees that $Q$ contains a non-zero entry. Thus $Q\neq O_{\abs{\lambda_Q}\times\abs{I_Q}}$ and, consequently, $\diam{\mathcal{C}}\neq 1$ (by part 2). Furthermore, $Q$ contains at least one zero entry (because $Q$ is a \zeroclosuresub). Hence $Q$ contains at least two entries (that is, $\abs{I_Q}>1$ or $\abs{\lambda_Q}>1$), which implies that $\diam{\mathcal{C}}\neq 0$ (by part 1).
\end{proof}

We observe that in part 1 of Theorem~\ref{0Rees: diameter 0 or 1 or 2} we have $Q=O_{1 \times 1}=O_{\abs{\Lambda_Q}\times\abs{I_Q}}$.

We also observe that it is only possible to have connected components determined by $0$-closure submatrices of $P$ with diameter equal to $0$ or $1$ when $\commgraph{\Reeszero{G}{I}{\Lambda}{P}}$ is not connected. In fact, when $\commgraph{\Reeszero{G}{I}{\Lambda}{P}}$ is connected, we have that the unique connected component determined by a \zeroclosuresub\ is $\commgraph{\Reeszero{G}{I}{\Lambda}{P}}$ and we have $\diam{\commgraph{\Reeszero{G}{I}{\Lambda}{P}}}\geqslant 2$.

The theorem that follows shows the significance of the $0$-closure method in determining the diameter of a connected component determined by a \zeroclosuresub. We will see that, in most cases, the diameter of such connected components can be obtained by applying the $0$-closure method for each zero entry of the \zeroclosuresub.

\begin{theorem}\label{0Rees: diameter}
	Let $Q=\Psubmatrix{P}{I_Q}{\Lambda_Q}$ be a \zeroclosuresub\ and let $\mathcal{C}$ be the connected component of $\commgraph{\Reeszero{G}{I}{\Lambda}{P}}$ determined by $Q$. If $\abs{I_Q}>1$ or $\abs{\Lambda_Q}>1$ or $G$ is trivial, then
	\begin{displaymath}
		\diam{\mathcal{C}}=\max\gset{\zeroindex{i}{\lambda}}{i\in I_Q,\text{ } \lambda\in\Lambda_Q \text{ and } p_{\lambda i}=0}.
	\end{displaymath}
\end{theorem}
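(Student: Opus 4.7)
The plan is to transfer the computation to the graph $\simplifiedgraph{I}{\Lambda}{P}$. Let $\mathcal{D}$ denote the subgraph of $\simplifiedgraph{I}{\Lambda}{P}$ induced by $I_Q\times\Lambda_Q$, which by Lemma~\ref{0Rees: connected component G(I,^,P) G(M0[...])} is a connected component. I would first dispose of the boundary case $\abs{I_Q}=\abs{\Lambda_Q}=1$: under the hypothesis this forces $G$ to be trivial, so $\mathcal{C}$ consists of a single vertex, the unique entry of $Q$ is a zero, and both sides of the claimed equality vanish. In the remaining case $\abs{I_Q}>1$ or $\abs{\Lambda_Q}>1$, the component $\mathcal{D}$ has at least two vertices and so $\diam{\mathcal{D}}\geqslant 1$; Lemma~\ref{0Rees: connected component G(I,^) => connected component G(M0[...])} then yields $\diam{\mathcal{C}}=\diam{\mathcal{D}}$. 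By Lemma~\ref{0Rees: d(0,?) in G(I,^)}, for each zero entry $(i,\lambda)\in I_Q\times\Lambda_Q$ the quantity $\zeroindex{i}{\lambda}$ is precisely the eccentricity of $(i,\lambda)$ inside $\mathcal{D}$. Writing $d^{*}$ for the maximum of these eccentricities over zero vertices of $\mathcal{D}$, the claim reduces to $\diam{\mathcal{D}}=d^{*}$, and the inequality $d^{*}\leqslant\diam{\mathcal{D}}$ is immediate.

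For the reverse inequality I would fix a diametral pair $v=(i,\lambda)$, $w=(j,\mu)\in\mathcal{D}$. If either is a zero vertex then its eccentricity already realises $\diam{\mathcal{D}}$ and we are done. Otherwise both are non-zero; since $\mathcal{D}$ has more than one vertex, $v$ admits a neighbour $v'=(i',\lambda')\in\mathcal{D}$ and $w$ admits a neighbour $w'=(j',\mu')\in\mathcal{D}$, giving the adjacency relations $p_{\lambda i'}=p_{\lambda' i}=0$ and $p_{\mu j'}=p_{\mu' j}=0$. Consequently the four \emph{swap vertices} $(i,\lambda')$, $(i',\lambda)$, $(j,\mu')$, $(j',\mu)$ are zero vertices lying in $\mathcal{D}$, so every pairwise distance between them is at most $d^{*}$. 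Feeding these data into Lemma~\ref{0Rees: two paths zeros --> path non-zeros} (whose hypotheses are exactly the zero relations produced by $w\sim w'$) then bounds $\dist{\simplifiedgraph{I}{\Lambda}{P}}{v}{w}$ by the distances among these four zero vertices.

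The main obstacle I anticipate is the parity clause in Lemma~\ref{0Rees: two paths zeros --> path non-zeros}: a single application of either part in isolation may leave an additive slack of one in the unfavourable parity. My plan for closing the gap is to apply parts 1 and 2 in tandem -- they have opposite parity preferences, so at least one of them returns the sharp bound -- and, if both still leave slack, to exploit the freedom in the choice of $v'$ or $w'$. Concretely, every zero entry in row $\lambda$ provides an alternative legal value of $i'$, and every zero entry in column $i$ an alternative legal value of $\lambda'$; re-picking these shifts the parities of the auxiliary distances and should always let us match parities. Formalising this parity-matching is the only delicate step; once it is in place the equality $\diam{\mathcal{D}}=d^{*}$ follows from a direct assembly of the earlier lemmas.
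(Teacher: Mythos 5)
Your architecture coincides with the paper's: both reduce the computation to the induced component $\mathcal{D}$ of $\simplifiedgraph{I}{\Lambda}{P}$ via Lemmata~\ref{0Rees: connected component G(I,^,P) G(M0[...])} and \ref{0Rees: connected component G(I,^) => connected component G(M0[...])}, interpret $\zeroindex{i}{\lambda}$ as the eccentricity of a zero vertex via Lemma~\ref{0Rees: d(0,?) in G(I,^)}, dispose of the case $\abs{I_Q}=\abs{\Lambda_Q}=1$ separately, and attack a diametral pair through the four swap vertices and Lemma~\ref{0Rees: two paths zeros --> path non-zeros}. The lower bound $d^{*}\leqslant\diam{\mathcal{D}}$ and the case where an endpoint of the diametral pair is itself a zero vertex are handled correctly.

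The gap is exactly where you flag it, and your proposed repair does not work. In the bad parity configuration --- $\max\set{n,m}$ odd and $\max\set{n',m'}$ even, where $n=\dist{\simplifiedgraph{I}{\Lambda}{P}}{\parens{i,\lambda'}}{\parens{j,\mu'}}$, $m=\dist{\simplifiedgraph{I}{\Lambda}{P}}{\parens{i',\lambda}}{\parens{j',\mu}}$, and $n',m'$ are the crossed distances --- neither part of Lemma~\ref{0Rees: two paths zeros --> path non-zeros} is sharp, and ``re-picking $v'$ or $w'$'' is not available in general: Lemma~\ref{0Rees: every row/column of 0-closure submatrix has a 0} guarantees only \emph{one} zero per row and column of $Q$, so row $\lambda$ and column $i$ may each contain a unique zero and there may be no alternative neighbour to choose; moreover, changing one index perturbs two of the four auxiliary distances by amounts you do not control, so even when alternatives exist there is no guarantee the parities can be matched. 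The paper closes this case without any re-picking, by a sandwich on $\diam{\mathcal{D}}$ itself: since all four swap vertices lie in $\mathcal{D}$, one has $\diam{\mathcal{D}}\geqslant\max\set{n,m,n',m'}$, while parts 1 and 2 of the lemma give $\max\set{n,m}\leqslant\diam{\mathcal{D}}\leqslant 1+\max\set{n,m}$ and $\max\set{n',m'}\leqslant\diam{\mathcal{D}}\leqslant 1+\max\set{n',m'}$. If $\diam{\mathcal{D}}$ is odd it must equal the odd number $\max\set{n,m}$; if it is even it must equal the even number $\max\set{n',m'}$. Either way $\diam{\mathcal{D}}=\max\set{n,m,n',m'}$, and since each of these four distances has a zero vertex as an endpoint, each is bounded by $d^{*}$. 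This parity argument applied to $\diam{\mathcal{D}}$ itself is the missing idea; with it in place the rest of your outline goes through.
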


We observe that if $\abs{I_Q}=\abs{\Lambda_Q}=1$ and $G$ is not trivial, then the result is not true. In fact, if $I_Q=\set{i}$ and $\Lambda_Q=\set{\lambda}$, then Lemma~\ref{0Rees: every row/column of 0-closure submatrix has a 0} guarantees that column $i$ of $Q$ contains a zero entry, and consequently, we have that $Q=\begin{bmatrix}0\end{bmatrix}$. Consequently, part 2 of Proposition~\ref{0Rees: diameter 0 or 1 or 2} implies that $\diam{\mathcal{C}}=1$. In addition, it is easy to see that the $0$-closure method started with the $\parens{\lambda,i}$-th entry of $P$ (the unique zero entry of $Q$) yields $Q$ and it stops at step $0$, which implies that $\zeroindex{i}{\lambda}=0$. Hence
\begin{displaymath}
	\diam{\mathcal{C}}=1\neq 0=\max\gset{\zeroindex{i}{\lambda}}{i\in I_Q,\text{ } \lambda\in\Lambda_Q \text{ and } p_{\lambda i}=0}.
\end{displaymath}

\begin{proof}
	Suppose that $\abs{I_Q}>1$ or $\abs{\Lambda_Q}>1$ or $G$ is trivial. Let $\zeta=\max\gset{\zeroindex{i}{\lambda}}{i\in I_Q,\text{ } \lambda\in\Lambda_Q \text{ and } p_{\lambda i}=0}$.
	
	\smallskip
	
	\textit{Case 1:} Suppose that ${I_Q}=\set{i}$ and $\Lambda_Q=\set{\lambda}$. Then $\abs{I_Q}=\abs{\Lambda_Q}=1$ and, consequently, $G$ must be trivial. Hence, by part 1 of Proposition~\ref{0Rees: diameter 0 or 1 or 2}, $\diam{\mathcal{C}}=0$. Furthermore, $Q=\begin{bmatrix}0\end{bmatrix}$ because, by Lemma~\ref{0Rees: every row/column of 0-closure submatrix has a 0}, every column of a \zeroclosuresub\ contains at least one zero entry, which implies that the $0$-closure method started with the unique entry of $Q$ (which is a zero) stops at step $0$. Consequently, $\zeroindex{i}{\lambda}=0$. Thus $\diam{\mathcal{C}}=0=\max\set{\zeroindex{i}{\lambda}}=\zeta$, which concludes the proof of this case.
	
	\smallskip
	
	\textit{Case 2:} Suppose that $\abs{I_Q}>1$ or $\abs{\Lambda_Q}>1$. Let $\mathcal{D}$ be the subgraph of $\simplifiedgraph{I}{\Lambda}{P}$ induced by $I_Q\times\Lambda_Q$. (We observe that, by part 1 of Lemma~\ref{0Rees: connected component G(I,^,P) G(M0[...])}, $\mathcal{D}$ is a connected component of $\simplifiedgraph{I}{\Lambda}{P}$.) Due to the fact that $\abs{I_Q}>1$ or $\abs{\Lambda_Q}>1$, then we have that $\abs{I_Q\times\Lambda_Q}>1$; that is, $\mathcal{D}$ contains more than one vertex. Thus $\diam{\mathcal{D}}\geqslant 1$ and, consequently, Lemma~\ref{0Rees: connected component G(I,^) => connected component G(M0[...])} guarantees that $\diam{\mathcal{C}}=\diam{\mathcal{D}}$. This equality shows that, in order to prove that $\diam{\mathcal{C}}=\zeta$, we just need to see that $\diam{\mathcal{D}}=\zeta$, which is done in the remainder of the proof.
	
	First, we prove that $\zeta\leqslant\diam{\mathcal{D}}$. Let $i\in I_Q$ and $\lambda\in\Lambda_Q$ be such that $p_{\lambda i}=0$ and $\zeroindex{i}{\lambda}=\zeta$. Let $M=\Psubmatrix{P}{I_M}{\Lambda_M}$ be the \zeroclosuresubmatrix{i}{\lambda}. Then, by part 1 of Lemma~\ref{0Rees: comparing 0-closure submatrices}, $I_Q=I_M$ and $\Lambda_Q=\Lambda_M$. Furthermore, by Lemma~\ref{0Rees: d(0,?) in G(I,^)}, there exist $j\in I_M=I_Q$ and $\mu\in\Lambda_M=\Lambda_Q$ such that $\dist{\simplifiedgraph{I}{\Lambda}{P}}{\parens{i,\lambda}}{\parens{j,\mu}}=\zeroindex{i}{\lambda}$. Thus
	\begin{displaymath}
		\zeta=\zeroindex{i}{\lambda}=\dist{\simplifiedgraph{I}{\Lambda}{P}}{\parens{i,\lambda}}{\parens{j,\mu}}\leqslant \diam{\mathcal{D}}
	\end{displaymath}
	(because $\parens{i,\lambda},\parens{j,\mu}\in I_Q\times\Lambda_Q$, the set of vertices of $\mathcal{D}$).

	Now we demonstrate that $\diam{\mathcal{D}}\leqslant \zeta$. Let $i,j\in I_Q$ and $\lambda,\mu\in\Lambda_Q$ be such that $\dist{\simplifiedgraph{I}{\Lambda}{P}}{\parens{i,\lambda}}{\parens{j,\mu}}=\diam{\mathcal{D}}$. Since $Q$ is a \zeroclosuresub, then Lemma~\ref{0Rees: every row/column of 0-closure submatrix has a 0} guarantees that rows $\lambda$ and $\mu$ of $Q$ and columns $i$ and $j$ of $Q$ contain at least one zero entry. Hence there exist $i',j'\in I_Q$ and $\lambda',\mu'\in\Lambda_Q$ such that $p_{\lambda i'}=p_{\mu j'}=p_{\lambda' i}=p_{\mu' j}=0$. Furthermore, $\mathcal{D}$ is a connected component of $\simplifiedgraph{I}{\Lambda}{P}$ and $\parens{i',\lambda},\parens{j',\mu},\parens{i,\lambda'},\parens{j,\mu'}\in I_Q\times\Lambda_Q$, the vertex set of $\mathcal{D}$, which implies that there exists a path from $\parens{i,\lambda'}$ to $\parens{j,\mu'}$ in $\simplifiedgraph{I}{\Lambda}{P}$, a path from $\parens{i',\lambda}$ to $\parens{j',\mu}$ in $\simplifiedgraph{I}{\Lambda}{P}$, a path from $\parens{i,\lambda'}$ to $\parens{j',\mu}$ in $\simplifiedgraph{I}{\Lambda}{P}$ and a path from $\parens{i',\lambda}$ to $\parens{j,\mu'}$ in $\simplifiedgraph{I}{\Lambda}{P}$. Let
	\begin{align*}
		n&=\dist{\simplifiedgraph{I}{\Lambda}{P}}{\parens{i,\lambda'}}{\parens{j,\mu'}},& m=\dist{\simplifiedgraph{I}{\Lambda}{P}}{\parens{i',\lambda}}{\parens{j',\mu}},\\
		n'&=\dist{\simplifiedgraph{I}{\Lambda}{P}}{\parens{i,\lambda'}}{\parens{j',\mu}},& m'=\dist{\simplifiedgraph{I}{\Lambda}{P}}{\parens{i',\lambda}}{\parens{j,\mu'}}.
	\end{align*}
	We observe that we have $\diam{\mathcal{D}}\geqslant \max\set{n,m,n',m'}$. In the next three cases we are going to see that $\diam{\mathcal{D}}=\max\set{n,m,n',m'}$.
	
	\smallskip
	
	\textsc{Sub-case 1:} Assume that $\max\set{n,m}$ is even. Then, by part 1 of Lemma~\ref{0Rees: two paths zeros --> path non-zeros}, we have
	\begin{displaymath}
		\diam{\mathcal{D}}=\dist{\simplifiedgraph{I}{\Lambda}{P}}{\parens{i,\lambda}}{\parens{j,\mu}}\leqslant \max\set{n,m}\leqslant\max\set{n,m,n',m'}\leqslant \diam{\mathcal{D}},
	\end{displaymath}
	which implies that $\diam{\mathcal{D}}=\max\set{n,m,n',m'}$.
	
	\smallskip
	
	\textsc{Sub-case 2:} Assume that $\max\set{n',m'}$ is odd. Then, by part 2 of Lemma~\ref{0Rees: two paths zeros --> path non-zeros}, we have
	\begin{displaymath}
		\diam{\mathcal{D}}=\dist{\simplifiedgraph{I}{\Lambda}{P}}{\parens{i,\lambda}}{\parens{j,\mu}}\leqslant \max\set{n',m'}\leqslant\max\set{n,m,n',m'}\leqslant \diam{\mathcal{D}},
	\end{displaymath}
	which implies that $\diam{\mathcal{D}}=\max\set{n,m,n',m'}$.
	
	\smallskip
	
	\textsc{Sub-case 3:} Assume that $\max\set{n,m}$ is odd and $\max\set{n',m'}$ is even. It follows from part 1 of Lemma~\ref{0Rees: two paths zeros --> path non-zeros} that
	\begin{displaymath}
		\underbrace{\max\set{n,m}}_{\text{\small odd}}\leqslant\diam{\mathcal{D}}=\dist{\simplifiedgraph{I}{\Lambda}{P}}{\parens{i,\lambda}}{\parens{j,\mu}}\leqslant \underbrace{1+\max\set{n,m}}_{\text{\small even}},
	\end{displaymath}
	and it follows from part 2 of Lemma~\ref{0Rees: two paths zeros --> path non-zeros} that
	\begin{displaymath}
		\underbrace{\max\set{n',m'}}_{\text{\small even}}\leqslant\diam{\mathcal{D}}=\dist{\simplifiedgraph{I}{\Lambda}{P}}{\parens{i,\lambda}}{\parens{j,\mu}}\leqslant \underbrace{1+\max\set{n',m'}}_{\text{\small odd}}.
	\end{displaymath}
	Consequently, if $\diam{\mathcal{D}}$ is odd, then $\diam{\mathcal{D}}=\max\set{n,m}=\max\set{n,m,n',m'}$; and if $\diam{\mathcal{D}}$ is even, then $\diam{\mathcal{D}}=\max\set{n',m'}=\max\set{n,m,n',m'}$.
	
	
	
	\smallskip
	
	We just finished proving that $\diam{\mathcal{D}}=\max\set{n,m,n',m'}$. Now we are going to see that $\diam{\mathcal{D}}\leqslant\zeta$. Assume, without loss of generality, that $\max\set{n,n',m,m'}=n$. We have $p_{\lambda' i}=0$. Let $M=\Psubmatrix{P}{I_M}{\Lambda_M}$ be the \zeroclosuresubmatrix{i}{\lambda'}. It follows from part 1 of Lemma~\ref{0Rees: comparing 0-closure submatrices} that $I_Q=I_M$ and $\Lambda_Q=\Lambda_M$. Thus $i,j\in I_M$ and $\lambda',\mu'\in\Lambda_M$, which implies, by Lemma~\ref{0Rees: d(0,?) in G(I,^)}, that
	\begin{displaymath}
		\diam{\mathcal{D}}=\max\set{n,n',m,m'}=n=\dist{\simplifiedgraph{I}{\Lambda}{P}}{\parens{i,\lambda'}}{\parens{j,\mu'}}\leqslant \zeroindex{i}{\lambda'}\leqslant \zeta.\qedhere
	\end{displaymath} 
\end{proof}

Theorem~\ref{0Rees: diameter} provides a method for obtaining the diameter of a connected component of $\commgraph{\Reeszero{G}{I}{\Lambda}{P}}$ determined by a \zeroclosuresub. However, in order to do so, we have to execute the $0$-closure method a considerable number of times --- namely the number of zeros of $Q$. The purpose of the next proposition, as well as the one that succeeds it, is to simplify this process by reducing the number of zero entries of $Q$ used to start the $0$-closure method.

\begin{proposition}\label{0Rees: connected components determined by Q, simplification >=3}
	Let $Q=\Psubmatrix{P}{I_Q}{\Lambda_Q}$ be a \zeroclosuresub\ and let $\mathcal{C}$ be the connected component of $\commgraph{\Reeszero{G}{I}{\Lambda}{P}}$ determined by $Q$. Let $M=\Psubmatrix{Q}{I_M}{\Lambda_M}=\Psubmatrix{P}{I_M}{\Lambda_M}$ be a submatrix of $Q$ that contains a row and/or a column of zeros. Then, if $\diam{\mathcal{C}}\geqslant 3$, we have
	\begin{displaymath}
		\diam{\mathcal{C}}=\max\parens[\big]{\set{3}\cup\gset{\zeroindex{i}{\lambda}}{i\in I_Q\setminus I_M \text{ or } \lambda\in \Lambda_Q\setminus\Lambda_M, \text{ and } p_{\lambda i}=0}}.
	\end{displaymath}
	
		
\end{proposition}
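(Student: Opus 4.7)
The plan is to invoke Theorem~\ref{0Rees: diameter}, which expresses $\diam{\mathcal{C}}$ as the maximum of $\zeroindex{i}{\lambda}$ over the zero entries of $Q$, and then to bound each such eccentricity by $\max\parens{3,N}$, where $N$ denotes $\max\gset{\zeroindex{i}{\lambda}}{i\in I_Q\setminus I_M \text{ or } \lambda\in \Lambda_Q\setminus\Lambda_M, \text{ and } p_{\lambda i}=0}$. The inequality $\diam{\mathcal{C}}\geqslant\max\parens{3,N}$ is immediate, since $\diam{\mathcal{C}}\geqslant 3$ by hypothesis and every $\zeroindex{i}{\lambda}$ is an eccentricity, hence bounded above by $\diam{\mathcal{C}}$. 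Moreover, the hypothesis combined with Proposition~\ref{0Rees: diameter 0 or 1 or 2} rules out $\abs{I_Q}=\abs{\Lambda_Q}=1$, so Theorem~\ref{0Rees: diameter} does apply, and the whole task reduces to showing $\diam{\mathcal{C}}\leqslant\max\parens{3,N}$.

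Assume without loss of generality that $M$ contains a row of zeros indexed by some $\lambda_0\in\Lambda_M$, so that $p_{\lambda_0 k}=0$ for every $k\in I_M$; the case of a column of zeros is symmetric. The key combinatorial observation is that for any two zero entries $(i,\lambda),(j,\mu)$ of $M$, each of the three adjacencies
\begin{displaymath}
	(i,\lambda)\sim(i,\lambda_0),\quad(i,\lambda_0)\sim(j,\lambda_0),\quad(j,\lambda_0)\sim(j,\mu)
\end{displaymath}
reduces by Lemma~\ref{0Rees: commutativity} to requiring two of the zeros $p_{\lambda i},p_{\lambda_0 i},p_{\lambda_0 j},p_{\mu j}$, all of which hold. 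So this walk (contracted if any adjacent vertices coincide) witnesses $\dist{\simplifiedgraph{I}{\Lambda}{P}}{\parens{i,\lambda}}{\parens{j,\mu}}\leqslant 3$ for any two zero entries of $M$.

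Now fix a zero entry $(i,\lambda)$ of $Q$ with $i\in I_M$ and $\lambda\in\Lambda_M$, and let $(j,\mu)\in I_Q\times\Lambda_Q$ be arbitrary. If $(j,\mu)$ is itself a zero of $M$, the preceding paragraph gives distance at most $3$. If $(j,\mu)$ is a zero with $j\in I_Q\setminus I_M$ or $\mu\in\Lambda_Q\setminus\Lambda_M$, symmetry of the distance yields $\dist{\simplifiedgraph{I}{\Lambda}{P}}{\parens{i,\lambda}}{\parens{j,\mu}}\leqslant\zeroindex{j}{\mu}\leqslant N$. Otherwise $(j,\mu)$ is non-zero; by Lemma~\ref{0Rees: every row/column of 0-closure submatrix has a 0} applied to $Q$ we pick $\mu^*\in\Lambda_Q$ with $p_{\mu^* j}=0$ and $j^*\in I_Q$ with $p_{\mu j^*}=0$, so that $(j,\mu^*)$ and $(j^*,\mu)$ are zero vertices satisfying $p_{\mu j^*}=p_{\mu^* j}=0$. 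Applying both parts of Lemma~\ref{0Rees: two paths zeros --> path non-zeros} with $i'=i$, $\lambda'=\lambda$, $j'=j^*$, $\mu'=\mu^*$ yields two bounds on $\dist{\simplifiedgraph{I}{\Lambda}{P}}{\parens{i,\lambda}}{\parens{j,\mu}}$ whose $+1$-terms occur in opposite parities; selecting the tighter bound in each parity collapses the parity dependence and gives
\begin{displaymath}
	\dist{\simplifiedgraph{I}{\Lambda}{P}}{\parens{i,\lambda}}{\parens{j,\mu}}\leqslant\max\set[\big]{\dist{\simplifiedgraph{I}{\Lambda}{P}}{\parens{i,\lambda}}{\parens{j,\mu^*}}, \dist{\simplifiedgraph{I}{\Lambda}{P}}{\parens{i,\lambda}}{\parens{j^*,\mu}}},
\end{displaymath}
and each of the two distances on the right is from $(i,\lambda)$ to a zero vertex, hence at most $\max\parens{3,N}$ by the two earlier cases.

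Combining the three cases, every zero entry $(i,\lambda)$ of $Q$ satisfies $\zeroindex{i}{\lambda}\leqslant\max\parens{3,N}$ --- automatically when $(i,\lambda)$ lies outside $M$, and by the above analysis when $(i,\lambda)\in M$. Feeding this into Theorem~\ref{0Rees: diameter} gives $\diam{\mathcal{C}}\leqslant\max\parens{3,N}$, completing the proof. The principal obstacle is the non-zero case for $(j,\mu)$: using only one part of Lemma~\ref{0Rees: two paths zeros --> path non-zeros} would leave an unwanted $+1$ term that would spoil the sharp equality, and it is essential to combine the two parts to obtain a parity-free bound.
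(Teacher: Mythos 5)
Your proof is correct. Its skeleton matches the paper's: both directions rest on Theorem~\ref{0Rees: diameter} (applicable because $\diam{\mathcal{C}}\geqslant 3$ rules out the exceptional case via Proposition~\ref{0Rees: diameter 0 or 1 or 2}), the lower bound is immediate, and the upper bound reduces to the same two facts you use --- any two zero entries of $M$ are within distance $3$ in $\simplifiedgraph{I}{\Lambda}{P}$ via the all-zero row, and a zero entry outside $M$ has eccentricity $\zeroindex{j}{\mu}$ already lying in the set over which the maximum is taken. Where you genuinely diverge is in how the reduction to zero entries is carried out. The paper examines only the single pair realizing the diameter: it notes that the final step of the $0$-closure method started at the extremal zero entry $\parens{i,\lambda}$ selects at least one \emph{zero} entry $\parens{i',\lambda'}$ at distance exactly $\zeroindex{i}{\lambda}$ (Lemma~\ref{0Rees: d(0,?) in G(I,^)}), so the diameter is always realized between two zero entries and the case analysis ends there. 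You instead bound the full eccentricity of every zero entry of $M$, which forces you to handle non-zero targets $\parens{j,\mu}$; you do this by choosing zeros $\parens{j,\mu^*}$ and $\parens{j^*,\mu}$ in the same column and row (Lemma~\ref{0Rees: every row/column of 0-closure submatrix has a 0}) and playing the two parts of Lemma~\ref{0Rees: two paths zeros --> path non-zeros} against each other with $i'=i$ and $\lambda'=\lambda$ so that the parity-dependent $+1$ cancels --- the same trick the paper deploys inside the proof of Theorem~\ref{0Rees: diameter} itself, but which its proof of this proposition avoids. Both routes are sound: the paper's is shorter, while yours is more uniform and does not depend on the observation that each step of the $0$-closure method selects a fresh zero entry.
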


We observe that the presence of the constant term $3$ in this result shows why the characterization in Proposition~\ref{0Rees: diameter 0 or 1 or 2} of connected components (determined by some \zeroclosuresub) whose diameter is either $0$, $1$ or $2$ is important.

\begin{proof}
	Suppose that $M$ contains a row of zeros. (If $M$ contains a column of zeros, then the proof is symmetrical.) Let $\mu\in\Lambda_M$ be such that row $\mu$ of $M$ is a row of zeros. Let $X=\gset{\zeroindex{i}{\lambda}}{i\in I_Q\setminus I_M \text{ or } \lambda\in \Lambda_Q\setminus\Lambda_M, \text{ and } p_{\lambda i}=0}$.
	
	It follows from Theorem~\ref{0Rees: diameter} that there exist $i\in I_Q$ and $\lambda\in\Lambda_Q$ such that $p_{\lambda i}=0$ and $\diam{\mathcal{C}}=\zeroindex{i}{\lambda}$. Furthermore, since $\diam{\mathcal{C}}\geqslant 3$, we have $\diam{\mathcal{C}}\geqslant\max\parens{\set{3}\cup X}$. Our aim is to see that we also have $\diam{\mathcal{C}}\leqslant\max\parens{\set{3}\cup X}$.
	
	\smallskip
	
	\textit{Case 1:} Suppose that $i\in I_Q\setminus I_M$ or $\lambda\in\Lambda_Q\setminus\Lambda_M$. Then $\zeroindex{i}{\lambda}\in X$ and, consequently,
	\begin{displaymath}
		\diam{\mathcal{C}}=\zeroindex{i}{\lambda}\leqslant\max X\leqslant\max\parens{\set{3}\cup X}.
	\end{displaymath}
	
	\smallskip
	
	\textit{Case 2:} Suppose that $i\in I_M$ and $\lambda\in\Lambda_M$. Since $I_M\subseteq I_Q$ and $\Lambda_M\subseteq\Lambda_Q$, then  part 1 of Lemma~\ref{0Rees: comparing 0-closure submatrices} ensures that $Q$ is the \zeroclosuresubmatrix{i}{\lambda}. We have that at each step of the $0$-closure method (started with the $\parens{\lambda,i}$-th entry of $P$) we select at least one zero entry of $P$. In particular, at step $\zeroindex{i}{\lambda}$ we also select a zero entry of $P$. Let $i'\in I$ and $\lambda'\in\Lambda$ be such that $p_{\lambda' i'}=0$ and the $\parens{\lambda',i'}$-th entry of $P$ is selected at step $\zeroindex{i}{\lambda}$ of the $0$-closure method (started with the $\parens{\lambda,i}$-th entry of $P$). It follows from Lemma~\ref{0Rees: d(0,?) in G(I,^)} that $\dist{\simplifiedgraph{I}{\Lambda}{P}}{\parens{i,\lambda}}{\parens{i',\lambda'}}=\zeroindex{i}{\lambda}$, and it follows from the fact that $Q$ is the \zeroclosuresubmatrix{i}{\lambda}, that $i'\in I_Q$ and $\lambda'\in\Lambda_Q$.
	
	\smallskip
	
	\textsc{Sub-case 1:} Assume that $i'\in I_Q\setminus I_M$ or $\lambda'\in\Lambda_Q\setminus\Lambda_M$. Then $\zeroindex{i'}{\lambda'}\in X$. Furthermore, $Q$ is the \zeroclosuresubmatrix{i'}{\lambda'} (by part 1 of Lemma~\ref{0Rees: comparing 0-closure submatrices}) and, consequently,
	\begin{align*}
		\diam{\mathcal{C}}&=\zeroindex{i}{\lambda}\\
		&=\dist{\simplifiedgraph{I}{\Lambda}{P}}{\parens{i,\lambda}}{\parens{i',\lambda'}}\\
		&=\dist{\simplifiedgraph{I}{\Lambda}{P}}{\parens{i',\lambda'}}{\parens{i,\lambda}}\\
		&\leqslant\zeroindex{i'}{\lambda'}& \bracks{\text{by Lemma~\ref{0Rees: d(0,?) in G(I,^)}}}\\
		&\leqslant \max X\\
		&\leqslant\max\parens{\set{3}\cup X}.
	\end{align*}
	
	\smallskip
	
	\textsc{Sub-case 2:} Assume that $i'\in I_M$ and $\lambda'\in\Lambda_M$. Since row $\mu$ of $M$ is a row of zeros, we have $p_{\mu i}=p_{\mu i'}=0$. We also have $p_{\lambda i}=p_{\lambda' i'}=0$. Thus
	\begin{displaymath}
		\parens{i,\lambda}\sim\parens{i,\mu}\sim\parens{i',\mu}\sim\parens{i',\lambda'},
	\end{displaymath}
	which implies that $\dist{\simplifiedgraph{I}{\Lambda}{P}}{\parens{i,\lambda}}{\parens{i',\lambda'}}\leqslant 3$ and, consequently,
	\begin{displaymath}
		\diam{\mathcal{C}}=\zeroindex{i}{\lambda}=\dist{\simplifiedgraph{I}{\Lambda}{P}}{\parens{i,\lambda}}{\parens{i',\lambda'}}\leqslant 3\leqslant\max\parens{\set{3}\cup X}.\qedhere
	\end{displaymath}
\end{proof}

\begin{proposition}\label{0Rees: connected component determined by Q, simplification repeated rows/columns}
	Let $Q=\Psubmatrix{P}{I_Q}{\Lambda_Q}$ be a \zeroclosuresub. Then
	\begin{enumerate}
		\item If there exist distinct $\lambda,\mu\in\Lambda_Q$ such that rows $\lambda$ and $\mu$ of $\timeszero{Q}$ are equal, then $\zeroindex{i}{\lambda}=\zeroindex{i}{\mu}$ for all $i\in I_Q$ such that $p_{\lambda i}=p_{\mu i}=0$.
		
		\item If there exist distinct $i,j\in I_Q$ such that columns $i$ and $j$ of $\timeszero{Q}$ are equal, then $\zeroindex{i}{\lambda}=\zeroindex{j}{\lambda}$ for all $\lambda\in \Lambda_Q$ such that $p_{\lambda i}=p_{\lambda j}=0$.
	\end{enumerate}
\end{proposition}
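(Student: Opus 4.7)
The plan is to interpret $\zeroindex{i}{\lambda}$ as the eccentricity of the vertex $(i,\lambda)$ inside its connected component of $\simplifiedgraph{I}{\Lambda}{P}$, which is precisely what Lemma~\ref{0Rees: d(0,?) in G(I,^)} records: the entries selected at step $k$ of the $0$-closure method starting at $(\lambda,i)$ are exactly the vertices at distance $k$ from $(i,\lambda)$ in $\simplifiedgraph{I}{\Lambda}{P}$. Under this reading, it suffices to prove that $(i,\lambda)$ and $(i,\mu)$ have the same eccentricity in the common connected component, and I would deduce this via a twin-vertex argument: produce an automorphism of $\simplifiedgraph{I}{\Lambda}{P}$ that swaps $(i,\lambda)$ and $(i,\mu)$ and fixes every other vertex, then appeal to the fact that automorphisms preserve distances.

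The crucial observation is that the hypothesis that rows $\lambda$ and $\mu$ of $\timeszero{Q}$ coincide can be upgraded to: $p_{\lambda j}=0$ if and only if $p_{\mu j}=0$ for every $j\in I$. For $j\in I_Q$ this is exactly the hypothesis, while for $j\in I\setminus I_Q$ it follows from the $0$-closure property, since every zero entry of $P$ lying in a row indexed by $\Lambda_Q$ must appear in $Q$, and hence must lie in a column indexed by $I_Q$; so both $p_{\lambda j}$ and $p_{\mu j}$ are non-zero. Combined with $p_{\lambda i}=p_{\mu i}=0$, this gives the adjacency $(i,\lambda)\sim(i,\mu)$ in $\simplifiedgraph{I}{\Lambda}{P}$ and, for any third vertex $(j,\nu)$, the chain of equivalences
\begin{displaymath}
(j,\nu)\sim(i,\lambda) \iff p_{\lambda j}=0 \text{ and } p_{\nu i}=0 \iff p_{\mu j}=0 \text{ and } p_{\nu i}=0 \iff (j,\nu)\sim(i,\mu).
\end{displaymath}

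It follows that the transposition $\sigma$ exchanging $(i,\lambda)$ and $(i,\mu)$ (and fixing every other vertex) is an automorphism of $\simplifiedgraph{I}{\Lambda}{P}$: edges disjoint from $\set{(i,\lambda),(i,\mu)}$ are preserved trivially, edges between a third vertex and $(i,\lambda)$ are sent to edges between that vertex and $(i,\mu)$ and conversely by the equivalence above, and the edge $\set{(i,\lambda),(i,\mu)}$ is fixed. Since automorphisms preserve distances, the eccentricities of $(i,\lambda)$ and $(i,\mu)$ within their connected component agree, and Lemma~\ref{0Rees: d(0,?) in G(I,^)} yields $\zeroindex{i}{\lambda}=\zeroindex{i}{\mu}$. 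Part~2 follows by an entirely symmetric argument with the roles of rows and columns of $P$ exchanged.

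I do not expect any serious obstacle; the only subtle point is the extension from equality of rows in $\timeszero{Q}$ to equality of rows in $\timeszero{P}$, which is precisely the step that genuinely uses that $Q$ is a $0$-closure submatrix rather than an arbitrary submatrix of $P$.
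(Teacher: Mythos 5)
Your proposal is correct, and it takes a genuinely different route from the paper. The paper argues directly on the algorithm: it shows that the two runs of the $0$-closure method, started at the $\parens{\lambda,i}$-th and $\parens{\mu,i}$-th entries, produce the same submatrix $M_1=N_1$ after step $1$ (because the columns brought in at step $1$ are determined by which columns meet row $\lambda$, respectively row $\mu$, in a zero, and these rows have zeros in the same positions); since the subsequent steps depend only on the current submatrix, both runs coincide from then on and halt after the same number of steps. You instead pass to $\simplifiedgraph{I}{\Lambda}{P}$, identify $\zeroindex{i}{\lambda}$ with the eccentricity of $\parens{i,\lambda}$ in its connected component via Lemma~\ref{0Rees: d(0,?) in G(I,^)} (together with Lemma~\ref{0Rees: connected component G(I,^,P) G(M0[...])}, which guarantees that the selected vertices exhaust the component), observe that $\parens{i,\lambda}$ and $\parens{i,\mu}$ are adjacent twins, and conclude by a distance-preserving automorphism. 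Both arguments hinge on the same key upgrade — equality of rows of $\timeszero{Q}$ implies equality of the corresponding rows of $\timeszero{P}$, which genuinely uses that $Q$ is a \zeroclosuresub\ — and you are right to single that out as the only delicate point; the paper uses it only implicitly. Your route is shorter, exploits machinery the paper has already built, and yields slightly more (the entire distance profiles from $\parens{i,\lambda}$ and $\parens{i,\mu}$ agree up to the swap); the paper's route is more self-contained at the level of the algorithm itself. No gaps.
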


\begin{proof}

	We are going to demonstrate part 1 --- part 2 can be proved symmetrically. Let $\lambda,\mu\in\Lambda_Q$ be such that $\lambda\neq\mu$ and rows $\lambda$ and $\mu$ of $\timeszero{Q}$ are equal. This means that for all $i\in I_Q$ we have $p_{\lambda i}=p_{\mu i}=0$ or $p_{\lambda i},p_{\mu i}\in G$. Let $i\in I_Q$ be such that $p_{\lambda i}=p_{\mu i}=0$. (We note that such an $i$ exists by Lemma~\ref{0Rees: every row/column of 0-closure submatrix has a 0}.) Let $\parens{M_0,M_1,\ldots,M_{\zeroindex{i}{\lambda}}}$ and $\parens{N_0,N_1,\ldots,N_{\zeroindex{i}{\mu}}}$ be the sequence of matrices obtained from the $0$-closure method when we start with the $\parens{\lambda,i}$-th entry of $P$ and the $\parens{\mu,i}$-th entry of $P$, respectively. We are going to prove that the matrices $M_1$ and $N_1$ are formed by exactly the same entries.
	
	The following diagrams illustrate how the $0$-closure method processes when we start with the $\parens{\lambda,i}$-th entry of $P$ (left) and the $\parens{\mu,i}$-th entry of $P$ (right). The entry in red corresponds to the one we chose to start the $0$-closure method with, the entries surrounded by a red circle are the zeros marked at the beginning of step $1$, and the entries in yellow are the entries selected at step $1$.
	\begin{displaymath}
		\begin{bNiceArray}{ccccccccccc}[first-row,first-col,margin]
			& & & & i & & &&  & & \\
			\\
			\lambda & & \cellcolor{Goldenrod} 0 & & \cellcolor{Red} 0  &  &  & \cellcolor{Goldenrod} 0 &&& \cellcolor{Goldenrod} 0 & \\
			\\
			\mu & & \cellcolor{Goldenrod} 0 & & \cellcolor{Goldenrod} 0 & & & \cellcolor{Goldenrod} 0 &&& \cellcolor{Goldenrod} 0 & \\
			& & & & & & & & & && \\
			& & \cellcolor{Goldenrod} & & \cellcolor{Goldenrod} 0 & & & \cellcolor{Goldenrod} & & & \cellcolor{Goldenrod} & \\
			& & \cellcolor{Goldenrod} & & \cellcolor{Goldenrod} 0 & & & \cellcolor{Goldenrod} & & & \cellcolor{Goldenrod} & \\
			\CodeAfter
			\begin{tikzpicture}
				\begin{scope}[dash pattern=on 0pt off 2.3pt,line cap=round, thick]
					\draw (2.5-|1) -- ($(2.5-|2)+(1mm,0mm)$);
					\draw ($(2.5-|3)+(-0.5mm,0mm)$) -- ($(2.5-|4)+(1mm,0mm)$);
					\draw ($(2.5-|5)+(-0.5mm,0mm)$) -- ($(2.5-|7)+(1mm,0mm)$);
					\draw ($(2.5-|8)+(-0.5mm,0mm)$) -- ($(2.5-|10)+(1mm,0mm)$);
					\draw ($(2.5-|11)+(-0.5mm,0mm)$) -- ($(2.5-|12)+(1mm,0mm)$);
					\draw (4.5-|1) -- ($(4.5-|2)+(1mm,0mm)$);
					\draw ($(4.5-|3)+(-0.5mm,0mm)$) -- ($(4.5-|4)+(1mm,0mm)$);
					\draw ($(4.5-|5)+(-0.5mm,0mm)$) -- ($(4.5-|7)+(1mm,0mm)$);
					\draw ($(4.5-|8)+(-0.5mm,0mm)$) -- ($(4.5-|10)+(1mm,0mm)$);
					\draw ($(4.5-|11)+(-0.5mm,0mm)$) -- ($(4.5-|12)+(1mm,0mm)$);
					\draw (2.5|-1) -- (2.5|-2);
					\draw ($(2.5|-3)+(0mm,0mm)$) -- (2.5|-4);
					\draw (2.5|-5) -- (2.5|-6);
					\draw (4.5|-1) -- (4.5|-2);
					\draw (4.5|-3) -- (4.5|-4);
					\draw (4.5|-5) -- (4.5|-6);
					\draw (7.5|-1) -- (7.5|-2);
					\draw (7.5|-3) -- (7.5|-4);
					\draw (7.5|-5) -- (7.5|-6);
					\draw (10.5|-1) -- (10.5|-2);
					\draw (10.5|-3) -- (10.5|-4);
					\draw (10.5|-5) -- (10.5|-6);
				\end{scope}
				
				\draw[thick, color=Red] (2-2) circle (2.1mm);
				\draw[thick, color=Red] (2-7) circle (2.1mm);
				\draw[thick, color=Red] (2-10) circle (2.1mm);
				\draw[thick, color=Red] (4-4) circle (2.1mm);
				\draw[thick, color=Red] (6-4) circle (2.1mm);
				\draw[thick, color=Red] (7-4) circle (2.1mm);
			\end{tikzpicture}
		\end{bNiceArray}\quad
		\begin{bNiceArray}{ccccccccccc}[first-row,first-col,margin]
			& & & & i & & &&  & & \\
			\\
			\lambda & & \cellcolor{Goldenrod} 0 & & \cellcolor{Goldenrod} 0  &  &  & \cellcolor{Goldenrod} 0 &&& \cellcolor{Goldenrod} 0 & \\
			\\
			\mu & & \cellcolor{Goldenrod} 0 & & \cellcolor{Red} 0 & & & \cellcolor{Goldenrod} 0 &&& \cellcolor{Goldenrod} 0 & \\
			& & & & & & & & & && \\
			& & \cellcolor{Goldenrod} & & \cellcolor{Goldenrod} 0 & & & \cellcolor{Goldenrod} & & & \cellcolor{Goldenrod} & \\
			& & \cellcolor{Goldenrod} & & \cellcolor{Goldenrod} 0 & & & \cellcolor{Goldenrod} & & & \cellcolor{Goldenrod} & \\
			\CodeAfter
			\begin{tikzpicture}
				\begin{scope}[dash pattern=on 0pt off 2.3pt,line cap=round, thick]
					\draw (2.5-|1) -- ($(2.5-|2)+(1mm,0mm)$);
					\draw ($(2.5-|3)+(-0.5mm,0mm)$) -- ($(2.5-|4)+(1mm,0mm)$);
					\draw ($(2.5-|5)+(-0.5mm,0mm)$) -- ($(2.5-|7)+(1mm,0mm)$);
					\draw ($(2.5-|8)+(-0.5mm,0mm)$) -- ($(2.5-|10)+(1mm,0mm)$);
					\draw ($(2.5-|11)+(-0.5mm,0mm)$) -- ($(2.5-|12)+(1mm,0mm)$);
					\draw (4.5-|1) -- ($(4.5-|2)+(1mm,0mm)$);
					\draw ($(4.5-|3)+(-0.5mm,0mm)$) -- ($(4.5-|4)+(1mm,0mm)$);
					\draw ($(4.5-|5)+(-0.5mm,0mm)$) -- ($(4.5-|7)+(1mm,0mm)$);
					\draw ($(4.5-|8)+(-0.5mm,0mm)$) -- ($(4.5-|10)+(1mm,0mm)$);
					\draw ($(4.5-|11)+(-0.5mm,0mm)$) -- ($(4.5-|12)+(1mm,0mm)$);
					\draw (2.5|-1) -- (2.5|-2);
					\draw ($(2.5|-3)+(0mm,0mm)$) -- (2.5|-4);
					\draw (2.5|-5) -- (2.5|-6);
					\draw (4.5|-1) -- (4.5|-2);
					\draw (4.5|-3) -- (4.5|-4);
					\draw (4.5|-5) -- (4.5|-6);
					\draw (7.5|-1) -- (7.5|-2);
					\draw (7.5|-3) -- (7.5|-4);
					\draw (7.5|-5) -- (7.5|-6);
					\draw (10.5|-1) -- (10.5|-2);
					\draw (10.5|-3) -- (10.5|-4);
					\draw (10.5|-5) -- (10.5|-6);
				\end{scope}
				
				\draw[thick, color=Red] (4-2) circle (2.1mm);
				\draw[thick, color=Red] (4-7) circle (2.1mm);
				\draw[thick, color=Red] (4-10) circle (2.1mm);
				\draw[thick, color=Red] (2-4) circle (2.1mm);
				\draw[thick, color=Red] (6-4) circle (2.1mm);
				\draw[thick, color=Red] (7-4) circle (2.1mm);
			\end{tikzpicture}
		\end{bNiceArray}
	\end{displaymath}
	
	Matrix $M_1$ is yielded at the end of step $1$ of the $0$-closure method started with the $\parens{\lambda,i}$-th entry of $P$. Hence $M_1$ is the smallest submatrix of $P$ that contain the $\parens{\lambda,i}$-th entry of $P$, the zero entries of column $i$ and the zero entries of row $\lambda$. This matrix is precisely the one formed by all the entries of $P$ that lie in a row whose intersection with column $i$ is a zero entry, and that lie in a column whose intersection with row $\lambda$ is also a zero entry. Analogously, $N_1$ is the matrix formed by all the entries of $P$ that lie in a row whose intersection with column $i$ is a zero entry, and that lie in a column whose intersection with row $\mu$ is also a zero entry. Since rows $\lambda$ and $\mu$ have zeros in the same positions, then the intersection of a column with row $\lambda$ is a zero entry if and only if the intersection of that column with row $\mu$ is a zero entry. Therefore, an entry of $P$ is an entry of $M_1$ if and only if it is an entry of $N_1$; that is, at the end of step 1 of both $0$-closure methods, the entries of $P$ selected so far are exactly the same. This means that the number of steps required to finish each method is exactly the same. Therefore we must have $\zeroindex{i}{\lambda}=\zeroindex{i}{\mu}$.
\end{proof}

In the procedure we describe below we show how we can use a combination of the results presented above to determine efficiently the diameter of $\mathcal{C}$, a connected component determined by the $0$-closure submatrix $Q$ of $P$. (See Example~\ref{0Rees: example diameter} for an illustration of how to determine the diameter of a connected component determined by a \zeroclosuresub.)
\begin{enumerate}
	\item We use Proposition~\ref{0Rees: diameter 0 or 1 or 2} to check if $\diam{\mathcal{C}}$ is $0$, $1$ or $2$.
	
	\item Assume that $\diam{\mathcal{C}}\notin\set{0,1,2}$. We start by choosing one of the following strategies:
	\begin{enumerate}
		\item We pick a row $\lambda$ of $Q$ and take the biggest subset $I'$ of $I_Q$ such that $\Psubmatrix{Q}{I'}{\lambda}=O_{1\times \abs{I'}}$. It follows from Proposition~\ref{0Rees: connected components determined by Q, simplification >=3} that we only need to do the $0$-closure method with the zero entries of $Q$ that are entries of $M=\Psubmatrix{Q}{I_Q\setminus I'}{\Lambda_Q}$.
		
		\item We pick a column $i$ of $Q$ and take the biggest subset $\Lambda'$ of $\Lambda$ such that $\Psubmatrix{Q}{i}{\Lambda'}=O_{\abs{\Lambda'}\times 1}$. It follows from Proposition~\ref{0Rees: connected components determined by Q, simplification >=3} that we only need to do the $0$-closure method with the zero entries of $Q$ that are entries of $M=\Psubmatrix{Q}{I_Q}{\Lambda_Q\setminus\Lambda'}$.
	\end{enumerate}
	After obtaining the matrix $M$, we can repeat the procedure described ahead several times, so we can exclude more zero entries. If $M$ contains two zero entries located in equal (but not the same) rows (respectively, columns) of $\timeszero{Q}$, then, by Proposition~\ref{0Rees: connected component determined by Q, simplification repeated rows/columns}, we only need to perform the $0$-closure method starting at one of them.
\end{enumerate}

Now we are going to determine the diameter of the connected components of $\commgraph{\Reeszero{G}{I}{\Lambda}{P}}$ determined by either two $0$-closure matrices or an entry of $P$. Proposition~\ref{0Rees: diameter connected component Q M} concerns the first type of connected component, and Proposition~\ref{0Rees: diameter connected component (i,lambda)} concerns the second type. 

\begin{proposition}\label{0Rees: diameter connected component Q M}
	Suppose that $\commgraph{\Reeszero{G}{I}{\Lambda}{P}}$ is not connected. Let $Q=\Psubmatrix{P}{I_Q}{\Lambda_Q}$ and $M=\Psubmatrix{P}{I_M}{\Lambda_M}$ be distinct $0$-closure submatrices of $P$. Let $\mathcal{C}_Q$ and $\mathcal{C}_M$ be the (distinct) connected components of $\commgraph{\Reeszero{G}{I}{\Lambda}{P}}$ determined by $Q$ and $M$, respectively, and let $\mathcal{C}_{QM}$ be the connected component  of $\commgraph{\Reeszero{G}{I}{\Lambda}{P}}$ determined by $Q$ and $M$. Then
	\begin{enumerate}
		\item If $\abs{I_Q}=\abs{\Lambda_Q}=\abs{I_M}=\abs{\Lambda_M}=1$ and $G$ is a non-trivial abelian group, then $\diam{\mathcal{C}_{QM}}=1$.
		
		\item Otherwise, $\diam{\mathcal{C}_{QM}}=1+\max\set[\big]{\diam{\mathcal{C}_Q},\diam{\mathcal{C}_M}}$.
	\end{enumerate}
\end{proposition}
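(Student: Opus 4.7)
My plan is to prove Part~1 by a direct adjacency check, handle the upper bound in Part~2 by reducing to the simplified graph $\simplifiedgraph{I}{\Lambda}{P}$, and prove the lower bound in Part~2 by a case analysis culminating in a walk-decomposition argument tied to the $0$-closure method.

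For Part~1, the vertex set of $\mathcal{C}_{QM}$ is $\parens{\set{i_0}\times G\times\set{\mu_0}}\cup\parens{\set{j_0}\times G\times\set{\lambda_0}}$ with $p_{\lambda_0 i_0}=p_{\mu_0 j_0}=0$, and I would apply Lemma~\ref{0Rees: commutativity}: vertices in different cells commute via condition~(2), and vertices in the same cell commute via condition~(1) since $G$ is abelian; hence $\mathcal{C}_{QM}$ is a complete graph on $2\abs{G}$ vertices and $\diam{\mathcal{C}_{QM}}=1$. For the upper bound in Part~2, let $\mathcal{D}_{QM}$ be the subgraph of $\simplifiedgraph{I}{\Lambda}{P}$ induced by $\parens{I_Q\times\Lambda_M}\cup\parens{I_M\times\Lambda_Q}$; it has at least two vertices, so Lemma~\ref{0Rees: connected component G(I,^) => connected component G(M0[...])} yields $\diam{\mathcal{C}_{QM}}=\diam{\mathcal{D}_{QM}}$. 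The estimate $\diam{\mathcal{D}_{QM}}\leqslant 1+\max\set{\diam{\mathcal{D}_Q},\diam{\mathcal{D}_M}}$ was already derived in the proof of Theorem~\ref{0Rees: finding connected components} through two applications of Lemma~\ref{0Rees: two paths zeros --> path non-zeros}, and combining this with the inequality $\diam{\mathcal{D}_?}\leqslant\diam{\mathcal{C}_?}$ for $?\in\set{Q,M}$ gives the desired upper bound.

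For the lower bound in Part~2, write $d=\max\set{\diam{\mathcal{C}_Q},\diam{\mathcal{C}_M}}$ and assume WLOG $d=\diam{\mathcal{C}_Q}$. The cases $d=0$ and $d=1$ are handled by directly exhibiting a pair of vertices in $\mathcal{C}_{QM}$ at distance at least $d+1$: for $d=0$ all index sets are singletons and $G$ is trivial, so $\mathcal{C}_{QM}$ has two vertices and diameter $1$; for $d=1$ being outside Case~1 forces either some index set to have cardinality greater than one---so that two same-side vertices at distinct positions fail both conditions of Lemma~\ref{0Rees: commutativity}, since all cross-entries of $P$ are non-zero by the $0$-closure property---or $G$ to be non-abelian, in which case same-cell vertices $(i_0,x,\mu_0)$ and $(i_0,y,\mu_0)$ with $xp_{\mu_0 i_0}y\neq yp_{\mu_0 i_0}x$ are non-adjacent.

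The main case $d\geqslant 2$ will be the hard part. Here Proposition~\ref{0Rees: diameter 0 or 1 or 2} yields $\diam{\mathcal{D}_Q}=d$, and Theorem~\ref{0Rees: diameter} produces a zero $(i_0,\lambda_0)$ of $Q$ with $\zeroindex{i_0}{\lambda_0}=d$. By Lemma~\ref{0Rees: d(0,?) in G(I,^)} there is $(j,\mu)\in I_Q\times\Lambda_Q$ realised as an entry of $P$ selected at step~$d$ of the $0$-closure method started from $(i_0,\lambda_0)$, hence at $\mathcal{D}_Q$-distance exactly $d$ from $(i_0,\lambda_0)$. Fixing $\tilde\lambda\in\Lambda_M$ arbitrarily, I would analyse the pair $u=(i_0,\tilde\lambda)$ and $v=(j,\tilde\lambda)$, both lying in the side $I_Q\times\Lambda_M$. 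Any $u$-to-$v$ path has even length $k$ and alternates sides by Lemma~\ref{0Rees: lemma connected component determined Q M}; writing the intermediate vertices as $(a_t,b_t)$, the adjacency conditions $p_{b_{2t+1}a_{2t}}=p_{b_{2t+1}a_{2t+2}}=0$ yield a walk $(i_0,b_1)-(a_2,b_1)-(a_2,b_3)-\cdots-(j,b_{k-1})$ in $\mathcal{D}_Q$ of length $k-1$ between zero-entries of $Q$ in column $i_0$ and column $j$. The main obstacle is ruling out adversarial shortcuts coming from alternative zero rows: this is where one must exploit the fine structure of the $0$-closure procedure---for instance, that every zero in column $i_0$ lies at $\mathcal{D}_Q$-distance at most one from $(i_0,\lambda_0)$, and that step-$d$ entries sit at distance exactly $d$ from $(i_0,\lambda_0)$ by Lemma~\ref{0Rees: d(0,?) in G(I,^)}---to force the walk length $k-1$ to be at least $d$, and thus $k\geqslant d+1$.
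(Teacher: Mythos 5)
Your Part~1, your upper bound in Part~2, and your treatment of the cases $d\in\set{0,1}$ of the lower bound are all fine and essentially match the paper. The genuine gap is exactly where you flag it, in the main case $d\geqslant 2$ of the lower bound --- but the problem is worse than a missing computation: the witness pair $u=\parens{i_0,\tilde\lambda}$, $v=\parens{j,\tilde\lambda}$ that you fix is simply the wrong pair, so no refinement of the walk-decomposition or of the $0$-closure bookkeeping can force $\dist{}{u}{v}\geqslant d+1$. Concretely, take $G$ trivial, $I=\set{1,2,3}$, $\Lambda=\set{1,2,3,4,5}$ and
\begin{displaymath}
	\timeszero{P}=\begin{bmatrix}
		0&\times&\times\\
		0&0&\times\\
		0&0&\times\\
		\times&0&\times\\
		\times&\times&0
	\end{bmatrix},
\end{displaymath}
so that $Q=\Psubmatrix{P}{\set{1,2}}{\set{1,2,3,4}}$ and $M=\Psubmatrix{P}{3}{5}$ are the two $0$-closure submatrices. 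Here $\zeroindex{1}{1}=3=\diam{\mathcal{D}_Q}=d$, realised by $\dist{\simplifiedgraph{I}{\Lambda}{P}}{\parens{1,1}}{\parens{2,4}}=3$, so in your notation $i_0=1$, $\lambda_0=1$, $j=2$, $\mu=4$, $\tilde\lambda=5$. But $\parens{1,5}-\parens{3,2}-\parens{2,5}$ is a path in $\mathcal{D}_{QM}$ (since $p_{5\,3}=p_{2\,1}=p_{2\,2}=0$), so your pair is at distance $2$, not at distance $\geqslant 4$. The diameter $4=1+d$ of $\mathcal{D}_{QM}$ is attained instead by the pair $\parens{3,1},\parens{3,4}$ lying on the \emph{other} side $I_M\times\Lambda_Q$, obtained by varying the $\Lambda_Q$-coordinate between $\lambda_0$ and $\mu$ rather than the $I_Q$-coordinate between $i_0$ and $j$. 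Neither single choice of side works in general.

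What the paper does instead is to avoid committing to one witness pair: it considers \emph{both} pairs $\parens{j',\mu},\parens{i',\lambda}\in I_M\times\Lambda_Q$ and $\parens{j,\mu'},\parens{i,\lambda'}\in I_Q\times\Lambda_M$, takes shortest paths for each in $\mathcal{D}_{QM}$ (which have even length, by the alternation you correctly observed), interleaves them via the mechanism of Lemma~\ref{0Rees: two paths zeros --> path non-zeros} to produce a path from $\parens{j,\mu}$ to $\parens{i,\lambda}$ of length at most the larger of the two, and concludes $d\leqslant\max\set{k-1,m-1}$. The punchline is a parity mismatch: when $d$ is odd this maximum is even, so in fact $d+1\leqslant\max\set{k-1,m-1}\leqslant\diam{\mathcal{D}_{QM}}$ (and symmetrically with the two ``crossed'' pairs when $d$ is even). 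Your triangle-inequality estimate only yields $k\geqslant d-1$, and the example above shows the missing $2$ cannot be recovered for your fixed pair; the parity argument applied to the maximum over both pairs is what closes the gap.
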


\begin{proof}
	Let $\mathcal{D}_Q$, $\mathcal{D}_M$ and $\mathcal{D}_{QM}$ be the subgraphs of $\simplifiedgraph{I}{\Lambda}{P}$ induced by $I_Q\times\Lambda_Q$, $I_M\times\Lambda_M$ and $\parens{I_Q\times\Lambda_M}\cup\parens{I_M\times\Lambda_Q}$, respectively. We have that $\mathcal{D}_Q$ and $\mathcal{D}_M$ are connected components of $\simplifiedgraph{I}{\Lambda}{P}$ (by part 1 of Lemma~\ref{0Rees: connected component G(I,^,P) G(M0[...])}) and that $\mathcal{D}_{QM}$ is a connected component of $\simplifiedgraph{I}{\Lambda}{P}$ (by part 2 of the proof of Theorem~\ref{0Rees: finding connected components}). We partition the proof into two parts. In part 1 we are going to prove that $\diam{\mathcal{D}_{QM}}= 1+\max\set[\big]{\diam{\mathcal{D}_Q},\diam{\mathcal{D}_M}}$, and in part 2 we are going to use the former equality to prove the statement of Proposition~\ref{0Rees: diameter connected component Q M}.
	
	\medskip
	
	\textbf{Part 1.} We are going to see that $\diam{\mathcal{D}_{QM}}= 1+\max\set[\big]{\diam{\mathcal{D}_Q},\diam{\mathcal{D}_M}}$. We recall that in part 2 of the proof of Theorem~\ref{0Rees: finding connected components} we saw that $\diam{\mathcal{D}_{QM}}\leqslant 1+\max\set[\big]{\diam{\mathcal{D}_Q},\diam{\mathcal{D}_M}}$. Now we are going to establish that $\diam{\mathcal{D}_{QM}}\geqslant 1+\max\set[\big]{\diam{\mathcal{D}_Q},\diam{\mathcal{D}_M}}$.
	
	Assume, without loss of generality, that $\diam{\mathcal{D}_Q}\geqslant\diam{\mathcal{D}_M}$. As a consequence of Theorem~\ref{0Rees: diameter}, there exist $i\in I_Q$ and $\lambda\in\Lambda_Q$ such that $p_{\lambda i}=0$ and $\diam{\mathcal{D}_Q}=\zeroindex{i}{\lambda}$ and, as a consequence of Lemma~\ref{0Rees: d(0,?) in G(I,^)}, there exist $j\in I_Q$ and $\mu\in\Lambda_Q$ such that $\dist{\simplifiedgraph{I}{\Lambda}{P}}{\parens{i,\lambda}}{\parens{j,\mu}}=\zeroindex{i}{\lambda}$. Let $i',j'\in I_M$ and $\lambda',\mu'\in\Lambda_M$. We consider two cases:

	
	\smallskip
	
	\textit{Case 1:} Suppose that $\max\set[\big]{\diam{\mathcal{D}_Q},\diam{\mathcal{D}_M}}=\diam{\mathcal{D}_Q}$ is odd.
	
	The diagram below provides an illustration of the proof of case 1. In the diagram we have two solid arrows: one from entry $\parens{\mu,j'}$ to entry $\parens{\lambda, i'}$, and another from entry $\parens{\mu',j}$ to entry $\parens{\lambda',i}$; and we also have a dashed arrow from entry $\parens{\mu,j}$ to entry $\parens{\lambda,i}$. The solid arrows represent paths (whose existence we will prove) of minimum length (in $\simplifiedgraph{I}{\Lambda}{P}$) from $\parens{j',\mu}$ to $\parens{i',\lambda}$, and from $\parens{j,\mu'}$ to $\parens{i,\lambda'}$. We will then use these paths to construct a path from $\parens{j,\mu}$ to $\parens{i,\lambda}$ (which is represented by the dashed arrow) and to determine an upper bound for the distance between $\parens{j,\mu}$ and $\parens{i,\lambda}$.
	\begin{displaymath}
		\begin{bNiceArray}{ccccc|ccccc|ccccccc}[first-row,first-col,margin]
			\rule{0pt}{30pt} && i && j &&& i' &&j'&&&&&&& \\
			\\
			\lambda &&0&&&&& p_{\lambda i'} &&&&&&&&&& \\
			\\
			\mu &&&&p_{\mu j}&&&&& p_{\mu j'} &&&&&&&& \\
			\\
			\hline
			\\
			\lambda' && p_{\lambda' i} &&&&&&&&&&&&&&& \\
			\\
			\mu' &&&& p_{\mu' j} &&&&&&&&&&&&& \\
			\\
			\hline
			\\
			\\
			\CodeAfter
			\begin{tikzpicture}
				\node[xshift=-0.95cm] at (3.5-|1) {$\Lambda_Q\sizeddelimiter{5}{\{}$};
				\node[xshift=-1cm] at (8.5-|1) {$\Lambda_M\sizeddelimiter{5}{\{}$};
				\node[xshift=-1.92cm] at (12-|1) {$\Lambda\setminus\parens{\Lambda_Q\cup\Lambda_M}\sizeddelimiter{2}{\{}$};
				\begin{scope}[dash pattern=on 0pt off 2.3pt,line cap=round, thick]
					\draw (2.5-|1) -- ($(2.5-|2)+(1.8mm,0mm)$);
					\draw ($(2.5-|3)+(-1.8mm,0mm)$) -- ($(2.5-|7)+(0mm,0mm)$);
					\draw ($(4.5-|1)+(0mm,0mm)$) -- ($(4.5-|4)+(0mm,0mm)$);
					\draw ($(4.5-|5)+(0mm,0mm)$) -- ($(4.5-|9)+(0mm,0mm)$);
					\draw ($(7.5-|1)+(0mm,0mm)$) -- ($(7.5-|2)+(0mm,0mm)$);
					\draw ($(9.5-|1)+(0mm,0mm)$) -- ($(9.5-|4)+(0mm,0mm)$);
					\draw ($(2.5|-1)+(0mm,0mm)$) -- ($(2.5|-2)+(0mm,0mm)$);
					\draw ($(2.5|-3)+(0mm,0mm)$) -- ($(2.5|-7)+(0mm,0mm)$);
					\draw ($(4.5|-1)+(0mm,0mm)$) -- ($(4.5|-4)+(0mm,0mm)$);
					\draw ($(4.5|-5)+(0mm,-1.5mm)$) -- ($(4.5|-9)+(0mm,0mm)$);
					\draw ($(7.5|-1)+(0mm,0mm)$) -- ($(7.5|-2)+(0mm,0mm)$);
					\draw ($(9.5|-1)+(0mm,0mm)$) -- ($(9.5|-4)+(0mm,0mm)$);
				\end{scope}
				
				\begin{scope}[decoration = {snake, amplitude = 0.7mm}, ->, RoyalBlue]
					\draw[decorate] ($(9.5|-4)+(-2mm,-0.5mm)$) -- ($(7.5|-3)+(2mm,0mm)$);
					\draw[decorate] ($(4.5|-9)+(-2mm,-0.5mm)$) -- ($(2.5|-8)+(2mm,0mm)$);
					\draw[decorate, dash pattern=on 1pt off 1.8pt] ($(4.5|-4)+(-2mm,-0.5mm)$) -- ($(2.5|-3)+(1mm,1mm)$);
				\end{scope}
			\end{tikzpicture}
			\OverBrace[yshift=15pt]{1-1}{1-5}{I_Q}
			\OverBrace[yshift=15pt]{1-6}{1-10}{I_M}
			\OverBrace[yshift=15pt]{1-11}{1-17}{I\setminus\parens{I_Q\cup I_M}}
		\end{bNiceArray}
	\end{displaymath}
	
	Since $\mathcal{D}_{QM}$ is a connected component of $\simplifiedgraph{I}{\Lambda}{P}$ and $\parens{j',\mu},\parens{i',\lambda},\parens{j,\mu'},\parens{i,\lambda'}\in \parens{I_Q\times\Lambda_M}\cup\parens{I_M\times\Lambda_Q}$, then there exists a path from $\parens{j',\mu}$ to $\parens{i',\lambda}$ in $\simplifiedgraph{I}{\Lambda}{P}$ and a path from $\parens{j,\mu'}$ to $\parens{i,\lambda'}$ in $\simplifiedgraph{I}{\Lambda}{P}$. Let
	\begin{gather*}
		\parens{j',\mu}=\parens{i_1,\lambda_1}-\parens{i_2,\lambda_2}-\cdots-\parens{i_k,\lambda_k}=\parens{i',\lambda}\\
		\shortintertext{and}
		\parens{j,\mu'}=\parens{j_1,\mu_1}-\parens{j_2,\mu_2}-\cdots-\parens{j_m,\mu_m}=\parens{i,\lambda'}
	\end{gather*}
	be paths of minimum length in $\simplifiedgraph{I}{\Lambda}{P}$ from $\parens{j',\mu}$ to $\parens{i',\lambda}$ and from $\parens{j,\mu'}$ to $\parens{i,\lambda'}$, respectively, so that $\dist{\simplifiedgraph{I}{\Lambda}{P}}{\parens{j',\mu}}{\parens{i',\lambda}}=k-1$ and $\dist{\simplifiedgraph{I}{\Lambda}{P}}{\parens{j,\mu'}}{\parens{i,\lambda'}}=m-1$. It follows from Lemma~\ref{0Rees: lemma connected component determined Q M} that the vertices of $\mathcal{D}_{QM}$ that belong to $I_Q\times\Lambda_M$ (respectively, $I_M\times\Lambda_Q$) are only adjacent to vertices that belong to $I_M\times\Lambda_Q$ (respectively, $I_Q\times\Lambda_M$). This implies that any path from $\parens{j',\mu}$ to $\parens{i',\lambda}$ has even length and any path from $\parens{j,\mu'}$ to $\parens{i,\lambda'}$ has even length (because $\parens{j',\mu},\parens{i',\lambda}\in I_M\times\Lambda_Q$ and $\parens{j,\mu'},\parens{i,\lambda'}\in I_Q\times\Lambda_M$). Thus $\dist{\simplifiedgraph{I}{\Lambda}{P}}{\parens{j',\mu}}{\parens{i',\lambda}}$ and $\dist{\simplifiedgraph{I}{\Lambda}{P}}{\parens{j,\mu'}}{\parens{i,\lambda'}}$ are even and, consequently, $k$ and $m$ are odd. Assume, without loss of generality, that $k\leqslant m$. Let $n\in\mathbb{Z}$ be such that $m=k+2n$. It is clear that $n\geqslant 0$.
	
	Due to the fact that $\parens{i_l,\lambda_l}$ is adjacent to $\parens{i_{l+1},\lambda_{l+1}}$ for all $l\in\X{k-1}$, we have $p_{\lambda_l i_{l+1}}=p_{\lambda_{l+1} i_l}=0$ for all $l\in\X{k-1}$ and, due to the fact that $\parens{j_l,\mu_l}$ is adjacent to $\parens{j_{l+1},\mu_{l+1}}$ for all $l\in\X{m-1}$, we have $p_{\mu_l j_{l+1}}=p_{\mu_{l+1} j_l}=0$ for all $l\in\X{m-1}$. Therefore, for each $l\in\X{k-1}$ we have that $\parens{j_l,\lambda_l}\sim\parens{i_{l+1},\mu_{l+1}}$ and $\parens{i_l,\mu_l}\sim\parens{j_{l+1},\lambda_{l+1}}$ (because for each $l\in\X{k-1}$ we have $p_{\lambda_l i_{l+1}}=p_{\mu_{l+1} j_l}=0$ and $p_{\mu_l j_{l+1}}=p_{\lambda_{l+1} i_l}=0$, respectively). Thus, since $k$ is odd, we have
	\begin{displaymath}
		\parens{j,\mu}=\parens{j_1,\lambda_1}\sim\parens{i_2,\mu_2}\sim\parens{j_3,\lambda_3}\sim\cdots\sim\parens{i_{k-1},\mu_{k-1}}\sim\parens{j_k,\lambda_k}=\parens{j_k,\lambda},
	\end{displaymath}
	which implies that there is a path from $\parens{j,\mu}$ to $\parens{j_k,\lambda}$ whose length is at most $k-1$. Additionally, for each $l\in\set{k,\ldots,m-1}$ we have that $\parens{j_l,\lambda}\sim\parens{i,\mu_{l+1}}$ and $\parens{i,\mu_l}\sim\parens{j_{l+1},\lambda}$ (because for each $l\in\set{k,\ldots,m-1}$ we have $p_{\lambda i}=p_{\mu_{l+1} j_l}=0$ and $p_{\mu_l j_{l+1}}=p_{\lambda i}=0$, respectively). Hence
	\begin{displaymath}
		\parens{j_k,\lambda}\sim\parens{i,\mu_{k+1}}\sim\parens{j_{k+2},\lambda}\sim\cdots\sim\parens{i,\mu_{k+2n-1}}\sim\parens{j_{k+2n},\lambda}=\parens{j_m,\lambda}=\parens{i,\lambda},
	\end{displaymath}
	which implies that there is a path from $\parens{j_k,\lambda}$ to $\parens{i,\lambda}$ whose length is at most $2n=m-k$. Thus there is a path from $\parens{j,\mu}$ to $\parens{i,\lambda}$ whose length is at most $m-1$. Therefore
	\begin{align*}
		\smash{\underbrace{\max\set[\big]{\diam{\mathcal{D}_Q},\diam{\mathcal{D}_M}}}_{\text{\small odd}}}&=\diam{\mathcal{D}_Q}\\
		&=\zeroindex{i}{\lambda}\\
		&=\dist{\simplifiedgraph{I}{\Lambda}{P}}{\parens{i,\lambda}}{\parens{j,\mu}}\\
		&=\dist{\simplifiedgraph{I}{\Lambda}{P}}{\parens{j,\mu}}{\parens{i,\lambda}}\\
		&\leqslant m-1\\
		&=\underbrace{\dist{\simplifiedgraph{I}{\Lambda}{P}}{\parens{j,\mu'}}{\parens{i,\lambda'}}}_{\text{\small even}},
	\end{align*}
	and, consequently,
	\begin{displaymath}
		1+\max\set[\big]{\diam{\mathcal{D}_Q},\diam{\mathcal{D}_M}}\leqslant \dist{\simplifiedgraph{I}{\Lambda}{P}}{\parens{j,\mu'}}{\parens{i,\lambda'}}\leqslant \diam{\mathcal{D}_{QM}}.
	\end{displaymath}
	
	\smallskip
	
	\textit{Case 2:} Suppose that $\max\set[\big]{\diam{\mathcal{D}_Q},\diam{\mathcal{D}_M}}=\diam{\mathcal{D}_Q}$ is even. We can prove this case in a similar way to the previous one. Instead of using a shortest path from $\parens{j',\mu}$ to $\parens{i',\lambda}$ and a shortest path from $\parens{j,\mu'}$ to $\parens{i,\lambda'}$ (which have an even length), we select a shortest path from $\parens{j,\mu'}$ to $\parens{i',\lambda}$ and a shortest path from $\parens{j',\mu}$ to $\parens{i,\lambda'}$. Since $\parens{j,\mu'},\parens{i,\lambda'}\in I_Q\times\Lambda_M$ and $\parens{i',\lambda},\parens{j',\mu}\in I_M\times\Lambda_Q$ and since the vertices of $\mathcal{D}_{QM}$ that belong to $I_Q\times\Lambda_M$ (respectively, $I_M\times\Lambda_Q$) are only adjacent to vertices that belong to $I_M\times\Lambda_Q$ (respectively, $I_Q\times\Lambda_M$), then these paths will have an odd length. We then use the paths to see that
	\begin{displaymath}
		\dist{\simplifiedgraph{I}{\Lambda}{P}}{\parens{j,\mu}}{\parens{i,\lambda}}\leqslant \max\set[\big]{\dist{\simplifiedgraph{I}{\Lambda}{P}}{\parens{j,\mu'}}{\parens{i',\lambda}}, \dist{\simplifiedgraph{I}{\Lambda}{P}}{\parens{j',\mu}}{\parens{i,\lambda'}}}
	\end{displaymath}
	and, consequently, that
	\begin{displaymath}
		\smash{\underbrace{\max\set[\big]{\diam{\mathcal{D}_Q},\diam{\mathcal{D}_M}}}_{\text{\small even}}}\leqslant \underbrace{\max\set[\big]{\dist{\simplifiedgraph{I}{\Lambda}{P}}{\parens{j,\mu'}}{\parens{i',\lambda}}, \dist{\simplifiedgraph{I}{\Lambda}{P}}{\parens{j',\mu}}{\parens{i,\lambda'}}}}_{\text{\small odd}}.
	\end{displaymath}
	This allow us to conclude that 
	\begin{displaymath}
		1+\max\set[\big]{\diam{\mathcal{D}_Q},\diam{\mathcal{D}_M}}\leqslant \diam{\mathcal{D}_{QM}}.
	\end{displaymath}
	
	\medskip
	
	\textbf{Part 2.} This part of the proof is dedicated to demonstrate the statement of Proposition~\ref{0Rees: diameter connected component Q M}.
	
	\smallskip
	
	\textit{Case 1:} Suppose that $\abs{I_Q}=\abs{\Lambda_Q}=\abs{I_M}=\abs{\Lambda_M}=1$. Assume that $I_Q=\set{i}$, $\Lambda_Q=\set{\lambda}$, $I_M=\set{j}$ and $\Lambda_M=\set{\mu}$. Then the set of vertices of $\mathcal{C}_{QM}$ is $\parens{\set{i}\times G\times \set{\mu}}\cup\parens{\set{j}\times G\times\set{\lambda}}$ and we have $p_{\lambda j},p_{\mu i}\in G$ and $p_{\lambda i}=p_{\mu j}=0$.
	
	\smallskip
	
	\textsc{Sub-case 1:} Suppose that $G$ is a non-trivial abelian group. This implies that $xp_{\mu i}y=yp_{\mu i}x$ and $xp_{\lambda j}y=yp_{\lambda j}x$ for all $x,y \in G$. Hence, by Lemma~\ref{0Rees: commutativity}, $\parens{i,x,\mu}\parens{i,y,\mu}=\parens{i,y,\mu}\parens{i,x,\mu}$ and $\parens{j,x,\lambda}\parens{j,y,\lambda}=\parens{j,y,\lambda}\parens{j,x,\lambda}$ for all $x,y \in G$. Moreover, $p_{\lambda i}=p_{\mu j}=0$ and, as a consequence of Lemma~\ref{0Rees: commutativity}, we also have $\parens{i,x,\mu}\parens{j,y,\lambda}=\parens{j,y,\lambda}\parens{i,x,\mu}$ for all $x,y \in G$. Thus $\diam{\mathcal{C}_{QM}}\leqslant 1$. Furthermore, since $\abs{G}\geqslant 2$, then $\mathcal{C}_{QM}$ contains at least two vertices, which implies that $\diam{\mathcal{C}_{QM}}\geqslant 1$. Therefore $\diam{\mathcal{C}_{QM}}=1$.
	
	\smallskip
	
	\textsc{Sub-case 2:} Suppose that $G$ is trivial. Then $\mathcal{C}_{QM}$ is a connected component with exactly two vertices, which implies that $\diam{\mathcal{C}_{QM}}=1$. Furthermore, we have $\abs{I_Q\times G\times\Lambda_Q}=\abs{I_M\times G\times\Lambda_M}=1$, that is, both $\mathcal{C}_Q$ and $\mathcal{C}_M$ contain exactly one vertex, which implies that $\diam{\mathcal{C}_Q}=\diam{\mathcal{C}_M}=0$. Thus
	\begin{displaymath}
		\diam{\mathcal{C}_{QM}}=1=1+0=1+\max\set{\diam{\mathcal{C}_Q},\diam{\mathcal{C}_M}}.
	\end{displaymath}
	
	\smallskip
	
	\textsc{Sub-case 3:} Suppose that $G$ is non-abelian. Then there exist $x,y\in G$ such that $xy\neq yx$. Due to Lemma~\ref{0Rees: commutativity}, and the fact $p_{\lambda j}\in G$ and $\parens{p_{\lambda j}^{-1}x}p_{\lambda j}\parens{p_{\lambda j}^{-1}y}=p_{\lambda j}^{-1}xy\neq p_{\lambda j}^{-1}yx=\parens{p_{\lambda j}^{-1}y}p_{\lambda j}\parens{p_{\lambda j}^{-1}x}$, we have $\parens{j,p_{\lambda j}^{-1}x,\lambda}\parens{j,p_{\lambda j}^{-1}y,\lambda}\neq\parens{j,p_{\lambda j}^{-1}y,\lambda}\parens{j,p_{\lambda j}^{-1}x,\lambda}$. Hence the vertices $\parens{j,p_{\lambda j}^{-1}x,\lambda}$ and $\parens{j,p_{\lambda j}^{-1}y,\lambda}$ of $\mathcal{C}_{QM}$ are not adjacent and, as a consequence, $\diam{\mathcal{C}_{QM}}\geqslant 2$. Furthermore, for all $x,y\in G$ we have
	\begin{gather*}
		\parens{i,x,\mu}\sim\parens{j,x,\lambda}\sim\parens{i,y,\mu}\\
		\shortintertext{and}
		\parens{j,x,\lambda}\sim\parens{i,x,\mu}\sim\parens{j,y,\lambda}\\
		\shortintertext{and}
		\parens{i,x,\mu}\sim\parens{j,y,\lambda}
	\end{gather*}
	because $p_{\lambda i}=p_{\mu j}=0$ and by Lemma~\ref{0Rees: commutativity}. Thus $\diam{\mathcal{C}_{QM}}\leqslant 2$ and, consequently, $\diam{\mathcal{C}_{QM}}=2$. In addition, since $p_{\lambda i}=p_{\mu j}=0$, then Lemma~\ref{0Rees: subgraph induced by ixGxlambda} implies that $\mathcal{C}_Q$ and $\mathcal{C}_M$ are both isomorphic to $K_{\abs{G}}$ and, since $G$ is not abelian, then $\abs{G}\geqslant 2$, which implies that $\diam{\mathcal{C}_Q}=\diam{\mathcal{C}_M}=\diam{K_{\abs{G}}}=1$. Therefore
	\begin{displaymath}
		\diam{\mathcal{C}_{QM}}=2=1+1=1+\max\set{\diam{\mathcal{C}_Q},\diam{\mathcal{C}_M}}.
	\end{displaymath}
	
	\smallskip
	
	\textit{Case 2:} Suppose that $\abs{I_Q}>1$ or $\abs{\Lambda_Q}>1$ or $\abs{I_M}>1$ or $\abs{\Lambda_M}>1$. Then $\abs{I_Q\times\Lambda_Q}>1$ or $\abs{I_M\times\Lambda_M}>1$, that is, $\mathcal{D}_Q$ contains more than one vertex or $\mathcal{D}_M$ contains more than one vertex. Hence $\diam{\mathcal{D}_Q}\geqslant 1$ or $\diam{\mathcal{D}_M}\geqslant 1$. Assume, without loss of generality, that $\diam{\mathcal{D}_Q}\geqslant\diam{\mathcal{D}_M}$. Then we must have $\diam{\mathcal{D}_Q}\geqslant 1$.
	
	Before determining $\diam{\mathcal{C}_{QM}}$ we need to check that $\max\set{\diam{\mathcal{D}_Q},\diam{\mathcal{D}_M}}=\max\set{\diam{\mathcal{C}_Q},\diam{\mathcal{C}_M}}$. Since we have $\diam{\mathcal{D}_Q}=\diam{\mathcal{C}_Q}$ (by Lemma~\ref{0Rees: connected component G(I,^) => connected component G(M0[...])}) and $\diam{\mathcal{D}_Q}\geqslant\diam{\mathcal{D}_M}$, it is enough to verify that $\diam{\mathcal{C}_Q}\geqslant\diam{\mathcal{C}_M}$. If $\diam{\mathcal{D}_M}\geqslant 1$, then Lemma~\ref{0Rees: connected component G(I,^) => connected component G(M0[...])} ensures that $\diam{\mathcal{C}_M}=\diam{\mathcal{D}_M}\leqslant \diam{\mathcal{D}_Q}=\diam{\mathcal{C}_Q}$. Now assume that $\diam{\mathcal{D}_M}=0$. Then $\abs{I_M\times\Lambda_M}=1$, that is, $\abs{I_M}=\abs{\Lambda_M}=1$. Assume that $I_M=\set{i}$ and $\Lambda_M=\set{\lambda}$. Then the vertex set of $\mathcal{C}_M$ is $\set{i}\times G\times\set{\lambda}$ and we have $p_{\lambda i}=0$. By Lemma~\ref{0Rees: commutativity} we have that $\parens{i,x,\lambda}\parens{i,y,\lambda}= \parens{i,y,\lambda}\parens{i,x,\lambda}$ for all $x,y\in G$. Hence $\parens{i,x,\lambda}\sim\parens{i,y,\lambda}$ for all $x,y\in G$, which implies that $\dist{\commgraph{\Reeszero{G}{I}{\Lambda}{P}}}{\parens{i,x,\lambda}}{\parens{i,y,\lambda}}\leqslant 1$ for all $x,y\in G$ and, consequently, $\diam{\mathcal{C}_M}\leqslant 1\leqslant \diam{\mathcal{D}_Q}=\diam{\mathcal{C}_Q}$. Hence $\max\set{\diam{\mathcal{D}_Q},\diam{\mathcal{D}_M}}=\max\set{\diam{\mathcal{C}_Q},\diam{\mathcal{C}_M}}$.
	
	Since $\diam{\mathcal{D}_{QM}}=1+\max\set[\big]{\diam{\mathcal{D}_Q},\diam{\mathcal{D}_M}}\geqslant 1$, then Lemma~\ref{0Rees: connected component G(I,^) => connected component G(M0[...])} guarantees that $\diam{\mathcal{D}_{QM}}=\diam{\mathcal{C}_{QM}}$. Therefore we have
	\begin{align*}
		\diam{\mathcal{C}_{QM}}&=\diam{\mathcal{D}_{QM}}\\
		&=1+\max\set{\diam{\mathcal{D}_Q},\diam{\mathcal{D}_M}}\\
		&=1+\max\set{\diam{\mathcal{C}_Q},\diam{\mathcal{C}_M}}.\qedhere
	\end{align*}
\end{proof}

\begin{proposition}\label{0Rees: diameter connected component (i,lambda)}
	Suppose that $\commgraph{\Reeszero{G}{I}{\Lambda}{P}}$ is not connected. Let $i \in I$ and $\lambda\in\Lambda$ be such that row $\lambda$ has no zero entries or column $i$ has no zero entries. Let $\mathcal{C}$ be the connected component of $\commgraph{\Reeszero{G}{I}{\Lambda}{P}}$ determined by $\parens{\lambda,i}$. Then
	\begin{displaymath}
		\diam{\mathcal{C}}=\begin{cases}
			0 & \text{if } G \text{ is trivial},\\
			1 & \text{if } G \text{ is abelian and not trivial},\\
			2 & \text{if } G \text{ is not abelian}.
		\end{cases}
	\end{displaymath}
\end{proposition}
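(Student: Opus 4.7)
The plan is to reduce this to a straightforward case analysis via Lemma~\ref{0Rees: subgraph induced by ixGxlambda}, since the connected component in question is by definition the subgraph of $\commgraph{\Reeszero{G}{I}{\Lambda}{P}}$ induced by $\set{i}\times G\times\set{\lambda}$.

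First I would observe that the hypothesis that row $\lambda$ has no zero entries or column $i$ has no zero entries immediately forces $p_{\lambda i}\in G$. Therefore, by case 2 of Lemma~\ref{0Rees: subgraph induced by ixGxlambda} (when $G$ is non-abelian) and case 1 (when $G$ is abelian), I obtain
\begin{displaymath}
	\mathcal{C}\simeq\begin{cases}
		K_{\abs{G}} & \text{if } G \text{ is abelian,}\\
		K_{\abs{\centre{G}}}\graphjointwo\commgraph{G} & \text{if } G \text{ is not abelian.}
	\end{cases}
\end{displaymath}

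Next I would split into the three stated cases. If $G$ is trivial, then $\mathcal{C}\simeq K_1$ has only one vertex, so $\diam{\mathcal{C}}=0$. If $G$ is abelian but not trivial, then $\mathcal{C}\simeq K_{\abs{G}}$ with $\abs{G}\geqslant 2$, giving $\diam{\mathcal{C}}=1$. If $G$ is not abelian, then by Lemma~\ref{preli: commgraph and extended commgraph} we have $\mathcal{C}\simeq K_{\abs{\centre{G}}}\graphjointwo\commgraph{G}\simeq\extendedcommgraph{G}$, so it suffices to show $\diam{\extendedcommgraph{G}}=2$.

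The remaining step (which is the only slightly non-trivial point, though still routine) is verifying this diameter claim for a non-abelian $G$. For the upper bound, since $1_G\in\centre{G}$, the vertex $1_G$ is adjacent in $\extendedcommgraph{G}$ to every other vertex, so any two distinct vertices are connected by a path of length at most $2$ via $1_G$. For the lower bound, non-abelianness of $G$ yields $x,y\in G$ with $xy\neq yx$, so these two vertices are non-adjacent, forcing $\diam{\extendedcommgraph{G}}\geqslant 2$. Combining, $\diam{\mathcal{C}}=\diam{\extendedcommgraph{G}}=2$, which completes the proof.
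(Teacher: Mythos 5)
Your proposal is correct and follows essentially the same route as the paper: identify $\mathcal{C}$ with the induced subgraph on $\set{i}\times G\times\set{\lambda}$, apply Lemma~\ref{0Rees: subgraph induced by ixGxlambda} (noting the hypothesis forces $p_{\lambda i}\in G$), and in the non-abelian case bound the diameter above via a central vertex and below via a non-commuting pair. The only cosmetic difference is that you pass explicitly through $\extendedcommgraph{G}$ and use $1_G$ as the central vertex, whereas the paper argues directly on $K_{\abs{\centre{G}}}\graphjointwo\commgraph{G}$; this changes nothing of substance.
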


\begin{proof}
	We consider three cases:
	
	\smallskip
	
	\textit{Case 1:} Suppose that $G$ is trivial, then $\abs{\set{i}\times G\times\set{\lambda}}=1$, which implies that $\mathcal{C}$ has only one vertex. Thus $\diam{\mathcal{C}}=0$.
	
	\smallskip
	
	\textit{Case 2:} Suppose that $G$ is abelian and not trivial. It follows from Lemma~\ref{0Rees: subgraph induced by ixGxlambda} that $\mathcal{C}$ is isomorphic to $K_{\abs{G}}$. Since $G$ is not trivial, then $\abs{G}\geqslant 2$ and, consequently, $K_{\abs{G}}$ contains at least two vertices, which implies that $\diam{\mathcal{C}}=\diam{K_{\abs{G}}}=1$.
	
	\smallskip
	
	\textit{Case 3:} Suppose that $G$ is not abelian. According to Lemma~\ref{0Rees: subgraph induced by ixGxlambda} we have that $\mathcal{C}$ is isomorphic to $K_{\abs{\centre{G}}}\graphjointwo\commgraph{G}$. Let $x$ be a vertex of $K_{\abs{\centre{G}}}$. We have that $x$ is adjacent to all the other vertices of $K_{\abs{\centre{G}}}\graphjointwo\commgraph{G}$. Hence the distance between any two vertices of $K_{\abs{\centre{G}}}\graphjointwo\commgraph{G}$ is at most $2$. Thus $\diam{K_{\abs{\centre{G}}}\graphjointwo\commgraph{G}} \leqslant 2$. Furthermore, $G$ is not abelian, which guarantees the existence of $x,y\in G$ such that $xy\neq yx$. Hence $x$ and $y$ are not adjacent in $\commgraph{G}$ (and, consequently, $K_{\abs{\centre{G}}}\graphjointwo\commgraph{G}$), which implies that $\diam{K_{\abs{\centre{G}}}\graphjointwo\commgraph{G}} \geqslant \dist{K_{\abs{\centre{G}}}\graphjointwo\commgraph{G}}{x}{y}\geqslant 2$. Therefore $\diam{\mathcal{C}}=\diam{K_{\abs{\centre{G}}}\graphjointwo\commgraph{G}}=2$.
\end{proof}

\begin{example}\label{0Rees: example diameter}
	We are going to determine the diameter of the several connected components of $\commgraph{\Reeszero{G}{I}{\Lambda}{P}}$ determined in Example~\ref{0Rees: example finding connected components}. The $0$-closure submatrices of $P$ are $Q=\Psubmatrix{P}{\set{1,3,4,6,7}}{\set{1,3,4}}$ and $M=\Psubmatrix{P}{\set{2,5,8}}{\set{2,5}}$. Let $\mathcal{C}_Q$ and $\mathcal{C}_M$ be the connected components of $\commgraph{\Reeszero{G}{I}{\Lambda}{P}}$ determined by $Q$ and $M$, respectively; let $\mathcal{C}_{QM}$ be the connected component of $\commgraph{\Reeszero{G}{I}{\Lambda}{P}}$ determined by $Q$ and $M$; and for each $i\in\set{1,2,3,4,5,6,7,8}$ let $\mathcal{C}_{\parens{6,i}}$ be the connected component of $\commgraph{\Reeszero{G}{I}{\Lambda}{P}}$ determined by $\parens{6,i}$.
	
	We start by determining the diameter of $\mathcal{C}_Q$. In Example~\ref{0Rees: example 0-closure method} we verified that $Q$ is the \zeroclosuresubmatrix{6}{4} and $\zeroindex{6}{4}=3$. Thus, by Theorem~\ref{0Rees: diameter}, we have that $\diam{\mathcal{C}_Q}\geqslant\zeroindex{6}{4}=3$. Then we can use Proposition~\ref{0Rees: connected components determined by Q, simplification >=3}. Let $Q'=\Psubmatrix{Q}{\set{1,3,4}}{\set{1,3,4}}=\Psubmatrix{P}{\set{1,3,4}}{\set{1,3,4}}$ (which is represented in red below).
	\begin{displaymath}
		\timeszero{Q}=\begin{bNiceMatrix}[first-row, first-col]
			& 1 & 3 & 4 & 6 & 7\\
			1 & \cellcolor{Red} 0 & \cellcolor{Red} 0 & \cellcolor{Red} 0 & \times & \times\\
			3 & \cellcolor{Red} 0 & \cellcolor{Red} 0 & \cellcolor{Red} \times & 0 & 0\\
			4 & \cellcolor{Red} 0 & \cellcolor{Red} 0 & \cellcolor{Red} \times & 0 & 0
		\end{bNiceMatrix}.
	\end{displaymath}
	It is straightforward to see that $Q'$ contains a row of zeros. Consequently, Proposition~\ref{0Rees: connected components determined by Q, simplification >=3} guarantees that
	\begin{displaymath}
		\diam{\mathcal{C}_Q}=\max\set{3,\zeroindex{6}{3},\zeroindex{6}{4},\zeroindex{7}{3},\zeroindex{7}{4}}.
	\end{displaymath}
	Furthermore, we have that rows $3$ and $4$ of $\timeszero{Q}$ are equal, which implies, by Proposition~\ref{0Rees: connected component determined by Q, simplification repeated rows/columns}, that $\zeroindex{6}{3}=\zeroindex{6}{4}$ and $\zeroindex{7}{3}=\zeroindex{7}{4}$. Thus
	\begin{displaymath}
		\diam{\mathcal{C}_Q}=\max\set{3,\zeroindex{6}{4},\zeroindex{7}{4}}.
	\end{displaymath}
	Columns $6$ and $7$ of $\timeszero{Q}$ are also equal. Consequently, Proposition~\ref{0Rees: connected component determined by Q, simplification repeated rows/columns} implies that $\zeroindex{6}{4}=\zeroindex{7}{4}$. Therefore
	\begin{displaymath}
		\diam{\mathcal{C}_Q}=\max\set{3,\zeroindex{6}{4}}=3.
	\end{displaymath}
	
	Now we determine the diameter of $\mathcal{C}_M$. We have that
	\begin{displaymath}
		\timeszero{M}=\begin{bNiceMatrix}[first-row, first-col]
			& 2 & 5 & 8\\
			2 & 0 & 0 & 0\\
			5 & 0 & \times & \times
		\end{bNiceMatrix}.
	\end{displaymath}
	Then it is easy to see that $M$ contains a non-zero entry. Additionally, the intersection of column $2$ with rows $2$ and $5$ of $M$ is the matrix $O_{2\times 1}$; and for every two columns of $M$ we have that the intersection of row $2$ of $M$ with those columns is $O_{1\times 2}$. That is, we have $p_{2 2}=p_{5 2}=0$ and $p_{2 2}=p_{2 5}=p_{2 8}=0$. Hence Proposition~\ref{0Rees: diameter 0 or 1 or 2} implies that $\diam{\mathcal{C}_M}=2$.
	
	It follows from the fact that $Q$ contains more than one row, and Proposition~\ref{0Rees: diameter connected component Q M}, that
	\begin{displaymath}
		\diam{\mathcal{C}_{QM}}=1+\max\set[\big]{\diam{\mathcal{C}_Q},\diam{\mathcal{C}_M}}=1+\max\set{3,2}=4.
	\end{displaymath}
	
	Finally, we observe that Proposition~\ref{0Rees: diameter connected component (i,lambda)} ensures that $\diam{\mathcal{C}_{\parens{6,1}}}=\diam{\mathcal{C}_{\parens{6,i}}}$ for all $i\in\set{1,2,3,4,5,6,7,8}$, which is equal to $0$, $1$ or $2$ depending if $G$ is trivial, non-trivial and abelian, or non-abelian, respectively.
\end{example}




\section{Clique number of the commuting graph of a 0-Rees matrix semigroup over a group}\label{sec: clique number 0Rees}

Let $G$ be a finite group, let $I$ and $\Lambda$ be finite index sets and let $P$ be a regular $\Lambda \times I$ matrix whose entries are elements of $G^0$. Throughout this section we are going to assume that $P$ contains at least one zero entry.

The purpose of this section is to determine the clique number of $\commgraph{\Reeszero{G}{I}{\Lambda}{P}}$ --- Theorem~\ref{0Rees: clique number}. In order to achieve this, first we need to prove the following two lemmata.

\begin{lemma}\label{0Rees: clique number, characterization of R}
	Let $C$ be a clique in $\commgraph{\Reeszero{G}{I}{\Lambda}{P}}$ and let $R=\gset{\parens{i,\lambda}\in I\times\Lambda}{\parens{i,x,\lambda}\in C \text{ for some } x\in G}$. Then
	\begin{enumerate}
		\item For all $i,j\in I$ and $\lambda,\mu\in\Lambda$ such that $\parens{i,\lambda}\neq\parens{j,\mu}$ and $\parens{i,\lambda},\allowbreak\parens{j,\mu}\in R$ we have $p_{\lambda j}=p_{\mu i}=0$.
		
		\item If $O_{1\times 3}$, $O_{3\times 1}$ and $O_{2 \times 2}$ are not \submatrices\ of $\timeszero{P}$, then $\abs{R}\leqslant 3$.
			
			
		
		\item If $O_{1\times 2}$ and $O_{2 \times 1}$ are not \submatrices\ of $\timeszero{P}$, then $\abs{R}\leqslant 2$.
	\end{enumerate}
\end{lemma}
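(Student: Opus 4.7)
Part 1 follows directly from Lemma~\ref{0Rees: commutativity}: if $(i,\lambda)$ and $(j,\mu)$ are distinct elements of $R$, pick $x, y \in G$ with $(i,x,\lambda), (j,y,\mu) \in C$; these are distinct vertices of the clique $C$, so they commute. Since the triples differ in their first or third coordinate, condition~1 of Lemma~\ref{0Rees: commutativity} cannot hold, forcing condition~2, namely $p_{\lambda j} = p_{\mu i} = 0$.

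For Parts 2 and 3 the plan is to assume the cardinality conclusion fails ($\abs{R} \geq 4$ for Part 2, $\abs{R} \geq 3$ for Part 3) and to exhibit one of the forbidden zero blocks purely by invoking Part 1 on pairs of elements of $R$. Part 1 plants a zero at $(\lambda, j)$ and at $(\mu, i)$ for every distinct pair $(i,\lambda), (j,\mu) \in R$; these zeros accumulate inside a common row or column when coordinates repeat among the chosen pairs, and they fill an ``off-diagonal'' pattern of a block when the coordinates are pairwise distinct.

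For Part 3, I would split on coordinate repetition among any three chosen pairs. If two of them share a first coordinate $i$, Part 1 yields two zeros in column $i$ at two distinct rows, so $O_{2\times 1}$ is a \submatrix\ of $\timeszero{P}$; symmetrically for a shared second coordinate and $O_{1\times 2}$. Otherwise the three first coordinates are pairwise distinct and so are the three second coordinates, and fixing one pair $(i_1,\lambda_1)$ and applying Part 1 to it against the other two gives $p_{\lambda_1 i_2} = p_{\lambda_1 i_3} = 0$ with $i_2 \neq i_3$, an $O_{1\times 2}$ \submatrix.

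Part 2 needs one extra layer of case analysis. First dispose of the case where some first coordinate $i$ is shared by three pairs of $R$: Part 1 then gives three zeros in column $i$ at three distinct rows, i.e.\ an $O_{3\times 1}$ \submatrix, and symmetrically $O_{1\times 3}$ for a second coordinate shared by three pairs. In the remaining case every coordinate is repeated at most twice. If some first coordinate $i_0$ still appears in two pairs $(i_0,\lambda_1), (i_0,\lambda_2) \in R$, the multiplicity bound together with $\abs{R} \geq 4$ forces a further pair $(i_1,\lambda_3) \in R$ with $i_1 \neq i_0$, and Part 1 applied to each of the shared-coordinate pairs against $(i_1,\lambda_3)$ gives $p_{\lambda_1 i_0} = p_{\lambda_2 i_0} = p_{\lambda_1 i_1} = p_{\lambda_2 i_1} = 0$, an $O_{2\times 2}$ \submatrix; the analogous situation with a shared second coordinate is symmetric. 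Finally, if all four chosen first coordinates are pairwise distinct and all four second coordinates are pairwise distinct, Part 1 forces every off-diagonal entry of the $4\times 4$ block $(p_{\lambda_k i_l})_{k,l}$ to be zero, and its $2\times 2$ sub-block on rows $\lambda_1, \lambda_2$ and columns $i_3, i_4$ is $O_{2\times 2}$. The only real subtlety throughout is ensuring that the row and column indices used to exhibit each forbidden submatrix are genuinely pairwise distinct, which is precisely what the coordinate-multiplicity case split is designed to guarantee.
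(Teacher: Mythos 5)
Your proof is correct and takes essentially the same route as the paper's: part 1 is the same direct appeal to Lemma~\ref{0Rees: commutativity}, and parts 2 and 3 rest, exactly as in the paper, on a case split according to how coordinates repeat among the elements of $R$, with part 1 supplying the zeros that assemble into one of the forbidden blocks. The only noticeable divergence is organizational: you argue by contradiction from $\abs{R}\geqslant 4$ (resp.\ $\abs{R}\geqslant 3$) and, in the all-coordinates-distinct case of part 2, extract an $O_{2\times 2}$ \submatrix\ of $\timeszero{P}$ from the off-diagonal zeros, whereas the paper counts the $n-1$ zeros in each row against the absence of $O_{1\times 3}$; both are valid.
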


\begin{proof}
	\textbf{Part 1.} Let $i,j\in I$ and $\lambda,\mu\in\Lambda$ be such that $\parens{i,\lambda}\neq\parens{j,\mu}$ and $\parens{i,\lambda},\allowbreak\parens{j,\mu}\in R$. There exist $x,y\in G$ such that $\parens{i,x,\lambda},\allowbreak\parens{j,y,\mu}\in C$. Hence $\parens{i,x,\lambda}\parens{j,y,\mu}=\parens{j,y,\mu}\parens{j,x,\mu}$ and, as a consequence of Lemma~\ref{0Rees: commutativity} and the fact that $\parens{i,\lambda}\neq\parens{j,\mu}$, we have $p_{\lambda j}=p_{\mu i}=0$.
	
	\medskip
	
	\textbf{Part 2.} Suppose that $O_{1\times 3}$, $O_{3\times 1}$ and $O_{2 \times 2}$ are not \submatrices\ of $\timeszero{P}$. We divide the proof into the following three cases.
	
	\smallskip
	
	\textit{Case 1:} Suppose that there exist $i\in I$ and distinct $\lambda_1,\lambda_2\in \Lambda$ such that $\parens{i,\lambda_1},\parens{i,\lambda_2}\in R$. Our aim is to prove that $R=\set{\parens{i,\lambda_1},\parens{i,\lambda_2}}$. Let $\parens{j,\mu}\in R$. We have $\parens{j,\mu}\neq\parens{i,\lambda_1}$ or $\parens{j,\mu}\neq\parens{i,\lambda_2}$. Interchanging $\lambda_1$ and $\lambda_2$ if necessary, assume that $\parens{j,\mu}\neq\parens{i,\lambda_1}$.
	
	\smallskip
	
	\textsc{Sub-case 1:} Assume that $p_{\lambda_2 j}\neq 0$. Then, due to the fact that $\parens{j,\mu},\parens{i,\lambda_2}\in R$, and by part 1, we have $\parens{j,\mu}=\parens{i,\lambda_2}$.
	
	\smallskip
	
	\textsc{Sub-case 2:} Assume that $p_{\lambda_2 j}=0$. We have $\parens{j,\mu},\parens{i,\lambda_1},\parens{i,\lambda_2}\in R$, $\parens{j,\mu}\neq\parens{i,\lambda_1}$ and $\parens{i,\lambda_1}\neq\parens{i,\lambda_2}$. Hence, by part 1, we have $p_{\mu i}=p_{\lambda_1 j}=p_{\lambda_1 i}=p_{\lambda_2 i}=0$. Since $O_{2\times 2}$ is not a \submatrix\ of $\timeszero{P}$ and $p_{\lambda_1 i}=p_{\lambda_2 i}=p_{\lambda_1 j}=p_{\lambda_2 j}=0$, then $i=j$ (because $\lambda_1\neq\lambda_2$). Additionally, since $O_{3\times 1}$ is not a \submatrix\ of $\timeszero{P}$ and $p_{\lambda_1 i}=p_{\lambda_2 i}=p_{\mu i}=0$, then we must have $\mu\in\set{\lambda_1,\lambda_2}$ (because $\lambda_1\neq\lambda_2$). As a consequence of the fact that $\parens{i,\mu}=\parens{j,\mu}\neq\parens{i,\lambda_1}$, we must have $\mu=\lambda_2$. Thus $\parens{j,\mu}=\parens{i,\lambda_2}$.
	
	\smallskip
	
	It follows from sub-cases 1 and 2 that $R=\set{\parens{i,\lambda_1},\parens{i,\lambda_2}}$. Thus $\abs{R}\leqslant 3$.
	
	\smallskip
	
	\textit{Case 2:} Suppose that there exist distinct $i_1,i_2\in I$ and $\lambda\in\Lambda$ such that $\parens{i_1,\lambda},\parens{i_2,\lambda}\in R$. If we use a reasoning similar to the one used in case 1, we can see that $R=\set{\parens{i_1,\lambda},\parens{i_2,\lambda}}$. Therefore $\abs{R}\leqslant 3$.
	
	\smallskip
	
	\textit{Case 3:} Suppose that there exist $n\in\mathbb{N}$, pairwise distinct $i_1,\ldots,i_n\in I$ and pairwise distinct $\lambda_1,\ldots,\lambda_n\in\Lambda$ such that $R=\gset{\parens{i_j,\lambda_j}}{j\in\Xn}$. Therefore, by part 1, $p_{\lambda_j i_k}=0$ for all distinct $j,k\in\Xn$, which implies that all the rows of $\timeszero{P}$ contain at least $n-1$ zero entries. Since $O_{1\times 3}$ is not a \submatrix\ of $\timeszero{P}$, then every row of $\timeszero{P}$ contains at most two zero entries. Thus $n-1\leqslant 2$ and, consequently, $\abs{R}=n\leqslant 3$.
	
	\medskip
	
	\textbf{Part 3.} Suppose that $O_{1\times 2}$ and $O_{2\times 1}$ are not \submatrices\ of $\timeszero{P}$. If $\abs{R}=1$, then the result follows. Now assume that $\abs{R}\geqslant 2$. Then there exist $i,j\in I$ and $\lambda_1,\lambda_2\in\Lambda$ such that $\parens{i,\lambda}\neq\parens{j,\mu}$ and $\parens{i,\lambda},\parens{j,\mu}\in R$. By part 1, $p_{\lambda j}=p_{\mu i}=0$. Additionally, $O_{1 \times 2}$ and $O_{2 \times 1}$ are not \submatrices\ of $\timeszero{P}$, which implies that $\lambda\neq\mu$ and $i\neq j$. Due to the fact that $\parens{i,\lambda}$ and $\parens{j,\mu}$ are arbitrary (distinct) elements of $R$, then this implies that there exist $n\in\mathbb{N}$, distinct $i_1,\ldots,i_n\in I$ and distinct $\lambda_1,\ldots,\lambda_n\in\Lambda$ such that $R=\gset{\parens{i_j,\lambda_j}}{j\in\Xn}$. Thus we have that $p_{\lambda_j i_k}=0$ for all distinct $j,k\in\Xn$. Consequently, each row of $\timeszero{P}$ has at least $n-1$ zero entries. Since $O_{1\times 2}$ is not a \submatrix\ of $\timeszero{P}$, then each row of $\timeszero{P}$ contains at most one zero entry. Thus $n-1\leqslant 1$ and, consequently, $\abs{R}=n\leqslant 2$.
\end{proof}

\begin{lemma}\label{0Rees: clique number, Lagrange}
	Suppose that $G$ is not abelian. Then
	$\abs{\centre{G}}+\cliquenumber{\commgraph{G}}\leqslant \abs{G}/2$.
\end{lemma}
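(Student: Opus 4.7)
The plan is to rephrase the quantity $|Z(G)| + \omega(\mathcal{G}(G))$ as the size of a maximum commutative subgroup of $G$ and then apply Lagrange's theorem.

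First I would invoke Lemma~\ref{preli: largest cliques, commutative subsemigroups} applied to $S = G$. Let $K$ be a clique in $\mathcal{G}(G)$ of maximum size. Since the lemma requires $Z(G) \subseteq T$, we consider $T := K \cup Z(G)$; then $T \setminus Z(G) = K$ (because vertices of $\mathcal{G}(G)$ are non-central), and the lemma guarantees that $T$ is a commutative subsemigroup of $G$ of maximum size among those containing $Z(G)$. In particular
\begin{displaymath}
    |T| = |K| + |Z(G)| = \omega(\mathcal{G}(G)) + |Z(G)|.
\end{displaymath}

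Second, I would upgrade $T$ from a subsemigroup to a subgroup: since $G$ is a finite group, every non-empty subsemigroup of $G$ is a subgroup (each element has finite order in $G$, so its inverse is a positive power of itself and therefore lies in the subsemigroup). Hence $T$ is an abelian subgroup of $G$. Because $G$ is non-abelian, $T \neq G$. By Lagrange's theorem, $|T|$ divides $|G|$, and $|T| < |G|$ then forces $|T| \leqslant |G|/2$. Combining with the displayed equality yields the desired inequality.

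There is no real obstacle: the only subtlety is recognising that Lemma~\ref{preli: largest cliques, commutative subsemigroups} promotes the maximum clique to a commutative subsemigroup containing $Z(G)$, and that in the finite group setting this subsemigroup is automatically a subgroup — after which Lagrange does all the work.
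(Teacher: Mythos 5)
Your proof is correct and follows essentially the same route as the paper: both identify $Z(G)\cup K$ (for $K$ a maximum clique) as a maximum commutative subsemigroup via Lemma~\ref{preli: largest cliques, commutative subsemigroups}, upgrade it to an abelian subgroup, and finish with Lagrange. The only cosmetic difference is in the subgroup step, where you use the standard fact that a finite subsemigroup of a group is a subgroup ($x^{-1}=x^{n-1}$), while the paper instead checks directly that $x^{-1}$ commutes with every element of $H$; both are valid.
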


\begin{proof}
	In order to prove this lemma, we require Lagrange's Theorem, which we recall:
	
	\begin{theorem}[Lagrange's Theorem]\label{0Rees: Lagrange}
		If $H$ is a subgroup of $G$, then $\abs{G}=\bracks{G:H}\cdot\abs{H}$.
	\end{theorem}
	
	Now we demonstrate Lemma~\ref{0Rees: clique number, Lagrange}. Let $C$ be a clique of $\commgraph{G}$ such that $\abs{C}=\cliquenumber{\commgraph{G}}$ and let $H=\centre{G}\cup C$. It follows from Lemma~\ref{preli: largest cliques, commutative subsemigroups} that $H$ is a maximum-order commutative subsemigroup of $G$. Consequently, we have $x^{-1}y=x^{-1}yxx^{-1}=x^{-1}xyx^{-1}=yx^{-1}$ for all $x,y\in H$, which implies that $x^{-1}\in H$ for all $x\in H$. In addition, $1_G\in\centre{G}\subseteq H$. Hence $H$ is an abelian subgroup of $G$ and, by Theorem~\ref{0Rees: Lagrange}, we have $\abs{G}=\bracks{G:H}\cdot\abs{H}$. In addition, since $H$ is abelian and $G$ is not abelian, we have $H\neq G$, which implies that $\abs{H}<\abs{G}$. Thus $\bracks{G:H}\geqslant 2$ and, consequently,
	\begin{displaymath}
		\abs{\centre{G}}+\cliquenumber{\commgraph{G}}=\abs{\centre{G}}+\abs{C}=\abs{H}=\abs{G}/\bracks{G:H}\leqslant\abs{G}/2.\qedhere
	\end{displaymath}
\end{proof}

\begin{theorem}\label{0Rees: clique number}
	\begin{enumerate}
		\item Suppose that $G$ is abelian. If $D_3$ is a \submatrix\ of $\timeszero{P}$ and $O_{1\times 3}$, $O_{3\times 1}$ and $O_{2 \times 2}$ are not \submatrices\ of $\timeszero{P}$, then
		\begin{displaymath}
			\cliquenumber{\commgraph{\Rees{G}{I}{\Lambda}{P}}}=3|G|.
		\end{displaymath}
		
		\item Suppose that $G$ is abelian. If $D_2$ is a \submatrix\ of $\timeszero{P}$ and $O_{1\times 2}$ and $O_{2 \times 1}$ are not \submatrices\ of $\timeszero{P}$, then
		\begin{displaymath}
			\cliquenumber{\commgraph{\Rees{G}{I}{\Lambda}{P}}}=2|G|.
		\end{displaymath}
		
		\item For the remaining cases, we have
		\begin{displaymath}
			\cliquenumber{\commgraph{\Rees{G}{I}{\Lambda}{P}}}=\abs{G}\cdot\max\gset{nm}{O_{n\times m} \text{ is a \submatrix\ of } \timeszero{P}}.
		\end{displaymath}
	\end{enumerate}
\end{theorem}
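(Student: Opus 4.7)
The plan is to split the proof, in each of the three cases, into a \emph{lower bound} (exhibiting an explicit clique) and an \emph{upper bound} (bounding an arbitrary clique). Throughout, for any clique $C$ in $\commgraph{\Reeszero{G}{I}{\Lambda}{P}}$ I would use the decomposition
\begin{equation*}
	\abs{C}=\sum_{\parens{i,\lambda}\in R}\abs{C_{i,\lambda}},\qquad R=\gset[\big]{\parens{i,\lambda}\in I\times\Lambda}{\parens{i,x,\lambda}\in C \text{ for some } x\in G},
\end{equation*}
with $C_{i,\lambda}=\gset{x\in G}{\parens{i,x,\lambda}\in C}$. Lemma~\ref{0Rees: subgraph induced by ixGxlambda} gives $\abs{C_{i,\lambda}}\leq\abs{G}$ whenever $p_{\lambda i}=0$ or $G$ is abelian, and $\abs{C_{i,\lambda}}\leq\abs{\centre{G}}+\cliquenumber{\commgraph{G}}\leq\abs{G}/2$ otherwise (the last inequality being Lemma~\ref{0Rees: clique number, Lagrange}).

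For the lower bounds, in case~3 I would take $I'\subseteq I$ and $\Lambda'\subseteq\Lambda$ realising a maximal zero \submatrix\ of $\timeszero{P}$ (so $\Psubmatrix{P}{I'}{\Lambda'}=O_{\abs{\Lambda'}\times\abs{I'}}$ and $\abs{I'}\abs{\Lambda'}=M$): by condition~2 of Lemma~\ref{0Rees: commutativity} every pair of distinct elements of $I'\times G\times\Lambda'$ commutes, producing a clique of size $\abs{G}\cdot M$. In case~1, the witnesses $i_1,i_2,i_3$ and $\lambda_1,\lambda_2,\lambda_3$ of the $D_3$ \submatrix\ yield the clique $\bigcup_{s=1}^{3}\set{i_s}\times G\times\set{\lambda_s}$ of size $3\abs{G}$: pairs from different blocks commute via condition~2 of Lemma~\ref{0Rees: commutativity} (the off-diagonal zeros of $D_3$), while pairs within a block commute via condition~1 and the abelianness of $G$. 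Case~2 is analogous. For the upper bounds in cases~1 and~2, Lemma~\ref{0Rees: clique number, characterization of R}~(2) and~(3) give $\abs{R}\leq 3$ and $\abs{R}\leq 2$ respectively, which combine with $\abs{C_{i,\lambda}}\leq\abs{G}$ to yield the required bounds.

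The main difficulty is the upper bound in case~3, which I would attack as follows. Partition $R=R_0\sqcup R_+$ according to whether $p_{\lambda i}=0$ or $p_{\lambda i}\in G$. Part~(1) of Lemma~\ref{0Rees: clique number, characterization of R} forces every $\parens{i,\lambda}\in R_+$ to be \emph{isolated} in $R$ (no other element of $R$ shares its first or second coordinate): otherwise one would derive $p_{\lambda i}=0$, contradicting $(i,\lambda)\in R_+$. Writing $k=\abs{R_+}$, $R_+=\set{(i_s,\lambda_s):s\in\X{k}}$, $I_0=\pi_1(R_0)$, $\Lambda_0=\pi_2(R_0)$, isolation gives $\abs{I_+}=\abs{\Lambda_+}=k$ together with $I_0\cap I_+=\Lambda_0\cap\Lambda_+=\emptyset$. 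A second application of Lemma~\ref{0Rees: clique number, characterization of R}~(1) shows that for any pair of disjoint $S_I,S_\Lambda\subseteq\X{k}$ the submatrix $\Psubmatrix{P}{I_0\cup\set{i_s:s\in S_I}}{\Lambda_0\cup\set{\lambda_s:s\in S_\Lambda}}$ is entirely zero, whence
\begin{equation*}
	M\geq \parens{\abs{I_0}+a}\parens{\abs{\Lambda_0}+b}\quad\text{for all }a,b\geq 0 \text{ with } a+b\leq k.
\end{equation*}
Since $\abs{C}\leq\abs{G}\parens{\abs{I_0}\abs{\Lambda_0}+\alpha k}$ with $\alpha=1$ in the abelian subcase and $\alpha=1/2$ otherwise, the claim $\abs{C}\leq\abs{G}\cdot M$ reduces to an elementary optimization. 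The choice $(a,b)=(k,0)$ handles the case $\abs{\Lambda_0}\geq 1$ and $(a,b)=(0,k)$ the case $\abs{I_0}\geq 1$, both yielding $M\geq\abs{I_0}\abs{\Lambda_0}+k\geq \abs{I_0}\abs{\Lambda_0}+\alpha k$. When $\abs{I_0}=\abs{\Lambda_0}=0$, a balanced choice $a\approx b\approx k/2$ gives $M\geq\floor{k/2}\ceiling{k/2}\geq\alpha k$ for $k\geq 4$ in the abelian subcase and for $k\geq 2$ in the non-abelian subcase; the degenerate $k\in\set{0,1}$ are immediate. The residual exceptions $k\in\set{2,3}$ with $\abs{I_0}=\abs{\Lambda_0}=0$, which occur only in the abelian subcase, are dispatched by observing that $R_+$ witnesses a $D_k$ \submatrix\ of $\timeszero{P}$; since case~3 excludes the hypotheses of cases~1 and~2, one of $O_{1\times 2}, O_{2\times 1}, O_{1\times 3}, O_{3\times 1}, O_{2\times 2}$ must then be a \submatrix\ of $\timeszero{P}$, supplying the required bound $M\geq 2$ or $M\geq 3$.
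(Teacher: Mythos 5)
Your proof is correct. The lower bounds, the upper bounds in cases 1 and 2 (via parts 2 and 3 of Lemma~\ref{0Rees: clique number, characterization of R}), and the use of Lemma~\ref{0Rees: clique number, Lagrange} to cap the blocks with $p_{\lambda i}\in G$ when $G$ is non-abelian all coincide with the paper's argument. Where you genuinely diverge is the upper bound in case 3: the paper runs an explicit case analysis on the shape of $R$ --- whether $\Psubmatrix{\timeszero{P}}{I'}{\Lambda'}$ is \equivalent\ to $D_2$ or $D_3$, whether it is a larger diagonal $D_k$ (handled there via the $O_{2\times(k-2)}$ block for $k\geqslant 4$), whether some diagonal entry is zero, and finally whether some coordinate repeats --- whereas you partition $R=R_0\sqcup R_+$, observe that part 1 of Lemma~\ref{0Rees: clique number, characterization of R} isolates every element of $R_+$, and extract the uniform family of zero blocks giving $M\geqslant\parens{\abs{I_0}+a}\parens{\abs{\Lambda_0}+b}$ for $a+b\leqslant k$, reducing everything to a short optimisation over $(a,b)$. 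I checked the two load-bearing claims (isolation of $R_+$, and the vanishing of all four types of entry in $\Psubmatrix{P}{I_0\cup\set{i_s:s\in S_I}}{\Lambda_0\cup\set{\lambda_s:s\in S_\Lambda}}$) and they do follow from part 1 of that lemma; the balanced choice covers the non-abelian $k\in\set{2,3}$ cases that the paper handles separately via $3\abs{G}/2\leqslant 2\abs{G}$, and, like the paper, you must fall back on the exclusion of the hypotheses of cases 1 and 2 for the residual abelian $D_2$/$D_3$ configurations. Your parametrised-block formulation absorbs the paper's cases 3 and 4 (the $D_k$, zero-diagonal, and repeated-coordinate situations) into a single inequality, which is a tidier packaging of the same underlying combinatorics; the paper's version makes the extremal configurations more visible at the cost of more branching.
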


\begin{proof}
	\textbf{Part 1.} Suppose that $G$ is abelian, $D_3$ is a \submatrix\ of $\timeszero{P}$ and $O_{1\times 3}$, $O_{3\times 1}$ and $O_{2 \times 2}$ are not \submatrices\ of $\timeszero{P}$. Our purpose is to establish that $\cliquenumber{\commgraph{\Reeszero{G}{I}{\Lambda}{P}}}=3\abs{G}$.
	
	We begin by showing that $\cliquenumber{\commgraph{\Reeszero{G}{I}{\Lambda}{P}}}\leqslant 3\abs{G}$. Let $C$ be a clique in $\commgraph{\Reeszero{G}{I}{\Lambda}{P}}$ such that $\abs{C}=\cliquenumber{\commgraph{\Reeszero{G}{I}{\Lambda}{P}}}$. Let
	\begin{displaymath}
		R=\gset{\parens{i,\lambda}\in I\times\Lambda}{\parens{i,x,\lambda}\in C \text{ for some } x\in G}.
	\end{displaymath}
	We have $C\subseteq\bigcup_{\parens{i,\lambda}\in R}\set{i}\times G\times\set{\lambda}$. Furthermore, it follows from part 2 of Lemma~\ref{0Rees: clique number, characterization of R}, and the fact that $O_{1\times 3}$, $O_{3\times 1}$ and $O_{2 \times 2}$ are not \submatrices\ of $\timeszero{P}$, that $\abs{R}\leqslant 3$. Thus
	\begin{align*}
		\cliquenumber{\commgraph{\Reeszero{G}{I}{\Lambda}{P}}}&=\abs{C}\\
		&\leqslant\abs[\bigg]{\,\bigcup_{\parens{i,\lambda}\in R}\set{i}\times G\times\set{\lambda}}\\
		&=\sum_{\parens{i,\lambda}\in R}\abs{\set{i}\times G\times\set{\lambda}}\\
		&=\sum_{\parens{i,\lambda}\in R}\abs{G}\\
		&=\abs{R}\cdot\abs{G}\\
		&\leqslant 3\abs{G}.
	\end{align*}
	
	Now we are going to show that $\cliquenumber{\commgraph{\Reeszero{G}{I}{\Lambda}{P}}}\geqslant 3\abs{G}$. Since $D_3$ is a \submatrix\ of $\timeszero{P}$, then there exist pairwise distinct $i_1,i_2,i_3\in I$ and pairwise distinct $\lambda_1,\lambda_2,\lambda_3\in\Lambda$ such that $\Psubmatrix{\timeszero{P}}{i_1,i_2,i_3}{\lambda_1,\lambda_2,\lambda_3}=D_3$. Hence $p_{\lambda_j i_k}=0$ for all distinct $j,k\in\set{1,2,3}$. Let $C=\parens{\set{i_1}\times G \times \set{\lambda_1}}\cup\parens{\set{i_2}\times G \times \set{\lambda_2}}\cup\parens{\set{i_3}\times G \times \set{\lambda_3}}$. Since $G$ is abelian, then $xp_{\lambda_j i_j}y=yp_{\lambda_j i_j}x$ for all $x,y\in G$ and $j\in\set{1,2,3}$. This implies, together with Lemma~\ref{0Rees: commutativity}, that $\parens{i_j,x,\lambda_j}\parens{i_j,y,\lambda_j}=\parens{i_j,y,\lambda_j}\parens{i_j,x,\lambda_j}$ for all $x,y\in G$ and $j\in\set{1,2,3}$. Furthermore, since $p_{\lambda_j i_k}=0$ for all distinct $j,k\in\set{1,2,3}$, then Lemma~\ref{0Rees: commutativity} also guarantees that $\parens{i_j,x,\lambda_j}\parens{i_k,y,\lambda_k}=\allowbreak\parens{i_k,y,\lambda_k}\parens{i_j,x,\lambda_j}$ for all $x,y\in G$ and distinct $j,k\in\set{1,2,3}$. Thus $C$ is a clique in $\commgraph{\Reeszero{G}{I}{\Lambda}{P}}$, which implies that
	\begin{displaymath}
		\cliquenumber{\commgraph{\Reeszero{G}{I}{\Lambda}{P}}}\geqslant \abs{C}=3\abs{G}.
	\end{displaymath}
	
	\medskip
	
	\textbf{Part 2.} Suppose that $G$ is abelian, $D_2$ is a \submatrix\ of $\timeszero{P}$ and $O_{1\times 2}$ and $O_{2\times 1}$ are not \submatrices\ of $\timeszero{P}$. We can check, in a similar way to part 1 (and by using part 3 of Lemma~\ref{0Rees: clique number, characterization of R}), that $\cliquenumber{\commgraph{\Reeszero{G}{I}{\Lambda}{P}}}=2\abs{G}$.
	
	\medskip
	
	\textbf{Part 3.} Suppose that the following conditions hold:
	\begin{itemize}
		\item[a)] $G$ is not abelian or $D_3$ is not a \submatrix\ of $\timeszero{P}$ or at least one of the matrices $O_{1\times 3}$, $O_{3\times 1}$ and $O_{2\times 2}$ is a \submatrix\ of $\timeszero{P}$.
		
		\item[b)] $G$ is not abelian or $D_2$ is not a \submatrix\ of $\timeszero{P}$ or at least one of the matrices $O_{1\times 2}$ and $O_{2\times 1}$ is a \submatrix\ of $\timeszero{P}$.
	\end{itemize}
	
	Let $N=\gset{nm}{O_{n\times m} \text{ is a \submatrix\ of } \timeszero{P}}$. We notice that, since $P$ (and, consequently, $\timeszero{P}$) contains at least one zero entry, then $O_{1\times 1}$ is a \submatrix\ of $\timeszero{P}$ and, consequently, $N\neq\emptyset$ and $\max N\geqslant 1$. We want to demonstrate that $\cliquenumber{\commgraph{\Reeszero{G}{I}{\Lambda}{P}}}=\abs{G}\cdot\max N$.
	
	We begin by checking that $\cliquenumber{\commgraph{\Reeszero{G}{I}{\Lambda}{P}}}\geqslant \abs{G}\cdot\max N$. Let $n,m\in\mathbb{N}$ be such that $nm=\max N$ and $O_{n \times m}$ is a \submatrix\ of $\timeszero{P}$. There exist $I'\subseteq I$ and $\Lambda'\subseteq\Lambda$ such that $\abs{I'}=m$, $\abs{\Lambda'}=n$ and $\Psubmatrix{\timeszero{P}}{I'}{\Lambda'}=O_{n\times m}$. Our aim is to verify that $I'\times G\times\Lambda'$ is a clique in $\commgraph{\Reeszero{G}{I}{\Lambda}{P}}$. Let $i,j\in I'$, $\lambda,\mu\in\Lambda'$ and $x,y\in G$. We have $p_{\lambda j}=p_{\mu i}=0$ (because $\Psubmatrix{\timeszero{P}}{I'}{\Lambda'}=O_{n\times m}$), which implies, by Lemma~\ref{0Rees: commutativity}, that $\parens{i,x,\lambda}\parens{j,y,\mu}=\parens{j,y,\mu}\parens{i,x,\lambda}$. Thus $\parens{i,x,\lambda}\sim\parens{j,y,\mu}$ (in $\commgraph{\Reeszero{G}{I}{\Lambda}{P}}$). Therefore $I'\times G\times\Lambda'$ is a clique in $\commgraph{\Reeszero{G}{I}{\Lambda}{P}}$ and, consequently,
	\begin{displaymath}
		\cliquenumber{\commgraph{\Reeszero{G}{I}{\Lambda}{P}}}\geqslant\abs{I'\times G\times\Lambda'}=\abs{G}\cdot\abs{I'}\cdot\abs{\Lambda'}=\abs{G}\cdot nm=\abs{G}\cdot\max N.
	\end{displaymath}
	
	Now we establish that $\cliquenumber{\commgraph{\Reeszero{G}{I}{\Lambda}{P}}}\leqslant\abs{G}\cdot\max N$. Let $C$ be a clique in $\commgraph{\Reeszero{G}{I}{\Lambda}{P}}$. We intend to check that $\abs{C}\leqslant\abs{G}\cdot\max N$. Let
	\begin{gather*}
		R=\gset{\parens{i,\lambda}\in I\times\Lambda}{\parens{i,x,\lambda}\in C \text{ for some } x\in G}\\
		\shortintertext{and}
		I'=\gset{i\in I}{\parens{i,\lambda}\in R \text{ for some } \lambda\in\Lambda}\\
		\shortintertext{and}
		\Lambda'=\gset{\lambda\in\Lambda}{\parens{i,\lambda}\in R \text{ for some } i\in I}.
	\end{gather*}
	We have $C\subseteq \bigcup_{\parens{i,\lambda}\in R}\set{i}\times G\times\set{\lambda}$, which implies that
	\begin{displaymath}
		\abs{C}\leqslant\abs[\bigg]{\,\bigcup_{\parens{i,\lambda}\in R}\set{i}\times G\times\set{\lambda}}=\sum_{\parens{i,\lambda}\in R}\abs{\set{i}\times G\times\set{\lambda}}=\sum_{\parens{i,\lambda}\in R}\abs{G}=\abs{R}\cdot\abs{G}.
	\end{displaymath}
	Additionally, we have $\abs{I'}\leqslant \abs{R}$ and $\abs{\Lambda'}\leqslant\abs{R}$.
	
	Assume that $R=\set{\parens{i_1,\lambda_1},\parens{i_2,\lambda_2},\ldots,\parens{i_k,\lambda_k}}$. Then $I'=\set{i_1,\ldots,i_k}$ and $\Lambda'=\set{\lambda_1,\ldots,\lambda_k}$. We observe that $i_1,\ldots,i_k$ (respectively, $\lambda_1,\ldots,\lambda_k$) are not necessarily pairwise distinct; that is, there might exist distinct $l,l'\in\X{k}$ such that $i_l=i_{l'}$ (respectively, $\lambda_l=\lambda_{l'}$). Then we can have $\abs{I'}<k$ (respectively, $\abs{\Lambda'}<k$). We consider several cases:
	
	\smallskip
	
	\textit{Case 1:} Suppose that $\Psubmatrix{\timeszero{P}}{I'}{\Lambda'}$ is \equivalent\ to $D_3$ and that $\abs{I'}=\abs{\Lambda'}=k$. This implies that $\abs{R}=k=\abs{I'}=\abs{\Lambda'}=3$ and, consequently, $R=\set{\parens{i_1,\lambda_1},\parens{i_2,\lambda_2},\parens{i_3,\lambda_3}}$. Moreover, we have that $i_1,i_2,i_3$ are pairwise distinct and $\lambda_1,\lambda_2,\lambda_3$ are pairwise distinct. Since part 1 of Lemma~\ref{0Rees: clique number, characterization of R} guarantees that $p_{\lambda_l i_{l'}}=0$ for all distinct $l,l'\in\set{1,2,3}$, then we have that
	\begin{displaymath}
		\Psubmatrix{P}{i_1,i_2,i_3}{\lambda_1,\lambda_2,\lambda_3}=\begin{bmatrix}
			p_{\lambda_1 i_1}&0&0\\
			0&p_{\lambda_2 i_2}&0\\
			0&0&p_{\lambda_3 i_3}
		\end{bmatrix}.
	\end{displaymath}
	Due to that fact that $\Psubmatrix{\timeszero{P}}{I'}{\Lambda'}$ is \equivalent\ to $D_3$, then we must have $p_{\lambda_1 i_1},p_{\lambda_2 i_2},p_{\lambda_3 i_3}\in G$.
	
	\smallskip
	
	\textsc{Sub-case 1:} Suppose that $G$ is not abelian. It follows from the fact that $R=\set{\parens{i_1,\lambda_1},\parens{i_2,\lambda_2},\parens{i_3,\lambda_3}}$ that there exist $H_1,H_2,H_3\subseteq G$ such that $C=\bigcup_{l\in\set{1,2,3}}\set{i_l}\times H_l \times \set{\lambda_l}$. For each $l\in\set{1,2,3}$ let $\mathcal{H}_l$ be the subgraph of $\commgraph{\Reeszero{G}{I}{\Lambda}{P}}$ induced by $\set{i_l}\times G\times\set{\lambda_l}$. As a consequence of the fact that $C$ is a clique in $\commgraph{\Reeszero{G}{I}{\Lambda}{P}}$, then we also have that $\set{i_l}\times H_l\times\set{\lambda_l}$ is a clique in $\commgraph{\Reeszero{G}{I}{\Lambda}{P}}$ for all $l\in\set{1,2,3}$. Furthermore, for each $l\in\set{1,2,3}$ we have that $\set{i_l}\times H_l\times\set{\lambda_l}\subseteq\set{i_l}\times G\times\set{\lambda_l}$, the vertex set of $\mathcal{H}_l$, which implies that $\set{i_l}\times H_l\times\set{\lambda_l}$ is a clique in $\mathcal{H}_l$ for all $l\in\set{1,2,3}$. Moreover, according to Lemma~\ref{0Rees: subgraph induced by ixGxlambda}, for each $l\in\set{1,2,3}$ we have that $\mathcal{H}_l$ is isomorphic to $K_{\abs{\centre{G}}}\graphjointwo\commgraph{G}$ (because $G$ is not abelian and $p_{\lambda_l i_l}\in G$ for all $l\in\set{1,2,3}$). Thus for each $l\in\set{1,2,3}$ we have
	\begin{align*}
		\abs{\set{i_l}\times H_l\times\set{\lambda_l}} &\leqslant\cliquenumber{\mathcal{H}_l}\\
		&=\cliquenumber{K_{\abs{\centre{G}}}\graphjointwo\commgraph{G}}\\
		&=\cliquenumber{K_{\abs{\centre{G}}}}+\cliquenumber{\commgraph{G}}& \bracks{\text{by Lemma~\ref{preli: graph join clique/chromatic numbers}}}\\
		&=\abs{\centre{G}}+\cliquenumber{\commgraph{G}}.
	\end{align*}
	Therefore
	\begin{align*}
		\abs{C}&=\abs[\bigg]{\,\bigcup_{l\in\set{1,2,3}}\set{i_l}\times H_l \times \set{\lambda_l}} \kern -15mm \\
		&=\sum_{l\in\set{1,2,3}}\abs{\set{i_l}\times H_l \times \set{\lambda_l}} \kern -15mm \\
		&\leqslant \sum_{l\in\set{1,2,3}} \parens[\big]{\abs{\centre{G}}+\cliquenumber{\commgraph{G}}} \kern -15mm \\
		&=3\parens[\big]{\abs{\centre{G}}+\cliquenumber{\commgraph{G}}} \kern -15mm \\
		&\leqslant 3\parens{\abs{G}/2} &\bracks{\text{by Lemma~\ref{0Rees: clique number, Lagrange}}}\\
		&\leqslant \abs{G}\cdot 2\\
		&\leqslant \abs{G}\cdot\max N. & \bracks{\text{because }\Psubmatrix{\timeszero{P}}{i_2,i_3}{\lambda_1}=O_{1\times 2} \text{ is a \submatrix\ of } \timeszero{P}}
	\end{align*}
	
	\smallskip
	
	\textsc{Sub-case 2:} Suppose that $G$ is abelian. Since $\Psubmatrix{\timeszero{P}}{I'}{\Lambda'}$ is \equivalent\ to $D_3$, then $D_3$ is a \submatrix\ of $\timeszero{P}$. Consequently, since condition a) holds, we must have that at least one the matrices $O_{1\times 3}$, $O_{3\times 1}$ and $O_{2\times 2}$ is a \submatrix\ of $\timeszero{P}$. Then $\set{3,4}\cap N\neq\emptyset$ and, consequently, $\max N\geqslant 3$. Thus
	\begin{displaymath}
		\abs{C}\leqslant \abs{G}\cdot\abs{R}=\abs{G}\cdot 3\leqslant\abs{G}\cdot\max N.
	\end{displaymath}
	
	\smallskip
	
	\textit{Case 2:} Suppose that $\Psubmatrix{\timeszero{P}}{I'}{\Lambda'}$ is \equivalent\ to $D_2$ and that $\abs{I'}=\abs{\Lambda'}=k$. We can prove that $\abs{C}\leqslant \abs{G}\cdot\max N$ in a similar way to case 1, using condition b) instead of a).
	
	\smallskip
	
	\textit{Case 3:} Suppose that $\Psubmatrix{\timeszero{P}}{I'}{\Lambda'}$ is not \equivalent\ to $D_3$ or $D_2$, and that $\abs{I'}=\abs{\Lambda'}=k$. Then $i_1,\ldots,i_k$ are pairwise distinct and $\lambda_1,\ldots,\lambda_k$ are pairwise distinct. It follows from part 1 of Lemma~\ref{0Rees: clique number, characterization of R} that $p_{\lambda_l i_{l'}}=0$ for all distinct $l,l'\in\X{k}$. Then
	\begin{displaymath}
		\Psubmatrix{P}{i_1,\ldots,i_k}{\lambda_1,\ldots,\lambda_k}=\begin{bmatrix}
			p_{\lambda_1 i_1}&0&0&\cdots&0\\
			0&p_{\lambda_2 i_2}&0&\cdots&0\\
			0&0&p_{\lambda_3 i_3}&\cdots&0\\
			\vdots&\vdots&\vdots&\ddots&\vdots\\
			0&0&0&\cdots&p_{\lambda_k i_k}
		\end{bmatrix}.
	\end{displaymath}
	
	\smallskip
	
	\textsc{Sub-case 1:} Suppose that there exists $l\in\X{k}$ such that $p_{\lambda_l i_l}=0$. Then $\Psubmatrix{P}{i_1,\ldots,i_k}{\lambda_l}=O_{1\times k}$, which implies that $O_{1\times k}$ is a \submatrix\ of $\timeszero{P}$. Hence $k\in N$ and, consequently, $k\leqslant\max N$. Therefore
	\begin{displaymath}
		\abs{C}\leqslant\abs{G}\cdot\abs{R}=\abs{G}\cdot k\leqslant\abs{G}\cdot \max N.
	\end{displaymath}
	
	\smallskip
	
	\textsc{Sub-case 2:} Suppose that $p_{\lambda_l i_l}\neq 0$ for all $l\in\X{k}$. Then $\Psubmatrix{\timeszero{P}}{i_1,\ldots,i_k}{\lambda_1,\ldots,\lambda_k}=D_k$ and, consequently, $\Psubmatrix{\timeszero{P}}{I'}{\Lambda'}$ is \equivalent\ to $D_k$. Due to the fact that $\Psubmatrix{\timeszero{P}}{I'}{\Lambda'}$ is not \equivalent\ to $D_3$ or $D_2$, then we must have $k=1$ or $k\geqslant 4$.
	
	Assume that $k=1$. We have
	\begin{displaymath}
		\abs{C}\leqslant\abs{G}\cdot\abs{R}=\abs{G}\cdot 1\leqslant\abs{G}\cdot\max N.
	\end{displaymath}
	
	Now assume that $k\geqslant 4$. We have $\Psubmatrix{P}{i_3,\ldots,i_k}{\lambda_1,\lambda_2}=O_{\abs{\set{\lambda_1,\lambda_2}}\times\abs{\set{i_3,\ldots,i_k}}}=O_{2\times\parens{k-2}}$. Hence $O_{2\times\parens{k-2}}$ is a \submatrix\ of $\timeszero{P}$, which implies that $2\parens{k-2}\in N$ and, consequently, $2\parens{k-2}\leqslant\max N$. Therefore
	\begin{displaymath}
		\abs{C}\leqslant\abs{G}\cdot\abs{R}=\abs{G}\cdot k\leqslant\abs{G}\cdot\parens{k+\parens{k-4}}=\abs{G}\cdot 2\parens{k-2}\leqslant\abs{G}\cdot\max N.
	\end{displaymath}
	
	\smallskip
	
	\textit{Case 4:} Suppose that $\abs{I'}<k$ or $\abs{\Lambda'}<k$. Assume, without loss of generality, that $\abs{\Lambda'}<k$, that is, that $\lambda_1,\ldots,\lambda_k$ are not pairwise distinct. For each $\lambda\in\Lambda'$ we define $I_\lambda=\gset{i\in I}{\parens{i,\lambda}\in R}$. Let $\lambda'\in\Lambda'$ be such that $\abs{I_{\lambda'}}=\max\gset{\abs{I_\lambda}}{\lambda\in\Lambda'}$. In what follows we ascertain that $\Psubmatrix{P}{I_{\lambda'}}{\Lambda'}=O_{\abs{\Lambda'}\times\abs{I_{\lambda'}}}$. Let $i\in I_{\lambda'}$ and $\lambda\in\Lambda'$. We are going to see that $p_{\lambda i}=0$.
	
	Assume that $\lambda=\lambda'$. As a consequence of the fact that $\abs{\Lambda'}<k=\abs{R}$, then there exist $\mu\in\Lambda'$ and distinct $j,j'\in I$ such that $\parens{j,\mu},\parens{j',\mu}\in R$, which implies that $j,j'\in I_\mu$ and, consequently, $\abs{I_{\lambda'}}\geqslant\abs{I_\mu}\geqslant 2$. This implies the existence of $i'\in I_{\lambda'}$ such that $i'\neq i$. We have $\parens{i,\lambda'},\parens{i',\lambda'}\in R$ and, by part 1 of Lemma~\ref{0Rees: clique number, characterization of R}, we have $p_{\lambda' i'}=p_{\lambda' i}=0$. Thus $p_{\lambda i}=0$.
	
	Now assume that $\lambda\neq\lambda'$. There exists $i'\in I$ such that $\parens{i',\lambda}\in R$. Since we also have $\parens{i,\lambda'}\in R$ (and $\parens{i',\lambda}\neq\parens{i,\lambda'}$), then part 1 of Lemma~\ref{0Rees: clique number, characterization of R} guarantees that $p_{\lambda i}=p_{\lambda' i'}=0$.
	
	Since $i$ and $\lambda$ are arbitrary elements of $I_{\lambda'}$ and $\Lambda'$, respectively, then this proves that $\Psubmatrix{P}{I_{\lambda'}}{\Lambda'}=O_{\abs{\Lambda'}\times\abs{I_{\lambda'}}}$. Hence $O_{\abs{\Lambda'}\times\abs{I_{\lambda'}}}$ is a \submatrix\ of $\timeszero{P}$, which implies that $\abs{\Lambda'}\cdot\abs{I_{\lambda'}}\in N$ and $\abs{\Lambda'}\cdot\abs{I_{\lambda'}}\leqslant \max N$. Therefore
	\begin{align*}
		\abs{C}&\leqslant\abs{G}\cdot\abs{R}\\
		&=\abs{G}\cdot\abs[\bigg]{\,\bigcup_{\lambda\in\Lambda'}\parens{I_\lambda\times\set{\lambda}}}\\
		&=\abs{G}\sum_{\lambda\in\Lambda'} \abs{I_\lambda\times\set{\lambda}}\\
		&=\abs{G}\sum_{\lambda\in\Lambda'}\abs{I_\lambda}\\
		&\leqslant\abs{G}\sum_{\lambda\in\Lambda'} \abs{I_{\lambda'}}\\
		&=\abs{G}\cdot\abs{\Lambda'}\cdot\abs{I_{\lambda'}}\\
		&\leqslant\abs{G}\cdot\max N.
	\end{align*}
	
	\smallskip
	
	In the previous four cases we saw that $\abs{C}\leqslant\abs{G}\cdot\max N$. Since $C$ is an arbitrary clique in $\commgraph{\Reeszero{G}{I}{\Lambda}{P}}$, then we can conclude that $\cliquenumber{\commgraph{\Reeszero{G}{I}{\Lambda}{P}}}\leqslant\abs{G}\cdot\max N$.
\end{proof}

\section{Girth of the commuting graph of a 0-Rees matrix semigroup over a group}\label{sec: girth 0Rees}

Let $G$ be a finite group, let $I$ and $\Lambda$ be finite index sets and let $P$ be a regular $\Lambda \times I$ matrix whose entries are elements of $G^0$. Suppose that $P$ contains at least one zero entry.

In this section we are going to study the cycles in $\commgraph{\Reeszero{G}{I}{\Lambda}{P}}$. More specifically, we are going to characterize the situations in which $\commgraph{\Reeszero{G}{I}{\Lambda}{P}}$ contains cycles, and in that case we are going to determine $\girth{\commgraph{\Reeszero{G}{I}{\Lambda}{P}}}$, the length of a shortest cycle in $\commgraph{\Reeszero{G}{I}{\Lambda}{P}}$.

\begin{lemma}\label{0Rees: lemma girth}
	Suppose that $G$ is trivial. Let
	\begin{displaymath}
		A=\begin{bmatrix}
			0&0&\times&\times\\
			\times&\times&0&0
		\end{bmatrix}\quad
		B=\begin{bmatrix}
			0&0&\times\\
			\times&0&0
		\end{bmatrix}.
	\end{displaymath}
	If $D_3$, $O_{1\times 3}$, $O_{3\times 1}$, $O_{2\times 2}$, $A$, $A^T$, $B$ and $B^T$ are not \submatrices\ of $\timeszero{P}$, then $\commgraph{\Reeszero{G}{I}{\Lambda}{P}}$ contains no cycles. 
\end{lemma}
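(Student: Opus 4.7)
The proof proceeds by contradiction, using a shortest-cycle argument. Since $G$ is trivial, Lemma~\ref{0Rees: commutativity} reduces adjacency of distinct vertices $(i,1_G,\lambda)$ and $(j,1_G,\mu)$ to the single condition $p_{\lambda j} = p_{\mu i} = 0$, so $\commgraph{\Reeszero{G}{I}{\Lambda}{P}}$ is isomorphic to $\simplifiedgraph{I}{\Lambda}{P}$, and it suffices to prove the latter is acyclic. Suppose otherwise, and let $v_1 - v_2 - \cdots - v_n - v_1$, with $v_k = (i_k, \lambda_k)$, be a cycle of shortest length; so $n \geq 3$ and, for $n \geq 4$, no two non-consecutive $v_k, v_l$ are adjacent (otherwise the cycle could be shortened). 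Each edge $v_k v_{k+1}$ forces $p_{\lambda_k i_{k+1}} = p_{\lambda_{k+1} i_k} = 0$, so row $\lambda_k$ has zeros at columns $i_{k-1}, i_{k+1}$ and column $i_k$ has zeros at rows $\lambda_{k-1}, \lambda_{k+1}$ (indices read cyclically). The task is to show that these forced zeros always produce one of the eight forbidden patterns.

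I would first dispose of $n = 3$: if all $\lambda$'s and $i$'s are pairwise distinct, then $\Psubmatrix{\timeszero{P}}{i_1, i_2, i_3}{\lambda_1, \lambda_2, \lambda_3}$ has zeros off the diagonal and is \equivalent\ to $D_3$ when the three diagonal entries are non-zero, and otherwise contains $O_{1 \times 3}$. Each coincidence among the $\lambda$'s or $i$'s instead forces a row or column with three zeros, or else a $2 \times 2$ all-zero block, yielding $O_{1 \times 3}$, $O_{3 \times 1}$, or $O_{2 \times 2}$. The case $n = 4$ runs similarly: when all eight indices are distinct, rows $\lambda_1, \lambda_3$ share zero columns $\{i_2, i_4\}$ and give $O_{2 \times 2}$; when $\lambda_1 = \lambda_3$ and $\lambda_2 = \lambda_4$ (with the $i$'s distinct), the submatrix $\Psubmatrix{\timeszero{P}}{i_1, i_2, i_3, i_4}{\lambda_1, \lambda_2}$ has rows $(\times, 0, \times, 0)$ and $(0, \times, 0, \times)$, \equivalent\ to $A$ (or to $O_{1 \times 3}$ if a ``diagonal'' entry happens to be zero); the analogous identification among the $i$'s instead gives $A^T$. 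Partial coincidences are routine: they produce additional zeros and reduce to one of $O_{1 \times 3}$, $O_{3 \times 1}$, or $O_{2 \times 2}$.

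The main difficulty is $n \geq 5$, since the list of forbidden matrices does not grow with $n$. The decisive observation is that rows $\lambda_k$ and $\lambda_{k+2}$ carry zeros at the overlapping sets $\{i_{k-1}, i_{k+1}\}$ and $\{i_{k+1}, i_{k+3}\}$. When $i_{k-1}, i_{k+1}, i_{k+3}$ are pairwise distinct and $\lambda_k \neq \lambda_{k+2}$, the submatrix $\Psubmatrix{\timeszero{P}}{i_{k-1}, i_{k+1}, i_{k+3}}{\lambda_k, \lambda_{k+2}}$ has the form $(0, 0, \ast)$ and $(\ast, 0, 0)$: if either $\ast$ is $0$ then $O_{1 \times 3}$ appears, and otherwise the submatrix is \equivalent\ to $B$. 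The technical core of the proof will then be to rule out the coincidences: a collision $i_{k-1} = i_{k+3}$ forces three zeros in column $i_{k-1}$ (from the edges $v_{k-1}v_k$, $v_{k+2}v_{k+3}$, and $v_{k+3}v_{k+4}$, whose row indices remain distinct by the chord conditions), producing $O_{3 \times 1}$, while the dual row-identification $\lambda_k = \lambda_{k+2}$ is handled by the column-based counterpart of the previous paragraph, producing $B^T$. Carrying out this sub-case analysis carefully — checking that the chord conditions prevent any degenerate collapse from escaping the forbidden list — will complete the proof.
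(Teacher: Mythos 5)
Your reduction to $\simplifiedgraph{I}{\Lambda}{P}$ is correct (for trivial $G$ the two graphs are indeed isomorphic), and your treatment of a shortest cycle of length $3$ coincides with the analysis the paper itself performs at the end of the proof of Theorem~\ref{0Rees: girth}. Your route, however, is genuinely different from the paper's proof of this lemma: the paper first deduces from the excluded patterns that every row and column of $\timeszero{P}$ has at most two zeros and that at most one row (and at most one column) has exactly two, then shows that every \zeroclosuresub\ is \equivalent\ to $O_{1\times 1}$, $O_{1\times 2}$, $O_{2\times 1}$ or $\begin{bmatrix}0&0\\0&\times\end{bmatrix}$, and finally enumerates the resulting connected components (via Theorem~\ref{0Rees: finding connected components}) and checks by inspection that each of these tiny graphs is acyclic. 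That structural route never has to confront long cycles; yours must defeat them head-on, and that is where the gap lies.

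Concretely, for $n\geqslant 5$ your pattern $B$ on rows $\lambda_k,\lambda_{k+2}$ and columns $i_{k-1},i_{k+1},i_{k+3}$ is the right generic observation, but the degenerate cases are not under control. First, you assert that the row indices of the three zeros in column $i_{k-1}$ ``remain distinct by the chord conditions''; this is unjustified: chordlessness only says that $v_k$ and $v_{k+2}$ are non-adjacent, i.e.\ that $p_{\lambda_k i_{k+2}}$ and $p_{\lambda_{k+2}i_k}$ are not both zero, and it does not forbid $\lambda_k=\lambda_{k+2}$ (nor $\lambda_{k+2}=\lambda_{k+4}$). Second, the coincidences $i_{k-1}=i_{k+1}$ and $i_{k+1}=i_{k+3}$, which equally destroy the $B$-pattern, are never mentioned. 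Third, the coincidences compound rather than resolve independently: if $\lambda_k=\lambda_{k+2}$, then row $\lambda_k$ carries zeros in columns $i_{k-1},i_{k+1},i_{k+3}$, so (absent $O_{1\times 3}$) two of these columns must collide, and each such collision piles three zeros into one column unless further row indices coincide, and so on. So the deferred ``sub-case analysis'' is not routine bookkeeping but the bulk of the proof, and the one concrete claim you make about it fails. A workable repair along your lines is to argue globally rather than locally: from the exclusions, at most one row and at most one column of $\timeszero{P}$ have two zeros; every vertex $\parens{i,\lambda}$ of a cycle has degree at least $2$, so by Lemma~\ref{0Rees: degree in G(I,^)} it satisfies $c_ir_\lambda\geqslant 2$, forcing $i$ to be the unique two-zero column or $\lambda$ the unique two-zero row for every cycle vertex; this constrains the vertex set of any cycle enough to rule out $n\geqslant 5$ uniformly. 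Alternatively, adopt the paper's component-by-component classification, which sidesteps the issue entirely.
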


\begin{proof}
	Assume that $D_3$, $O_{1\times 3}$, $O_{3\times 1}$, $O_{2\times 2}$, $A$, $A^T$, $B$ and $B^T$ are not \submatrices\ of $\timeszero{P}$. Since $O_{1\times 3}$ and $O_{3\times 1}$ are not \submatrices\ of $\timeszero{P}$, then all the rows and columns of $\timeszero{P}$ (and, consequently, $P$) have at most two zero entries. Furthermore, since $O_{2\times 2}$, $A$ and $B$ (respectively, $O_{2\times 2}$, $A^T$ and $B^T$) are not \submatrices\ of $\timeszero{P}$, then there is at most one row (respectively, column) of $\timeszero{P}$ (and, consequently, $P$) with exactly two zero entries.
	
	We divide the proof into two parts: in the first part we establish that if $Q$ is a \zeroclosuresub, then $\timeszero{Q}$ is \equivalent\ to $O_{1\times 1}$, $O_{1\times 2}$, $O_{2\times 1}$ or $\begin{bmatrix}0&0\\ 0&\times\end{bmatrix}$, and in the second part we use part 1 to prove that $\commgraph{\Reeszero{G}{I}{\Lambda}{P}}$ has no cycles.
	
	\medskip
	
	\textbf{Part 1.} Let $Q=\Psubmatrix{P}{I_Q}{\Lambda_Q}$ be a \zeroclosuresub.
	
	\smallskip
	
	\textit{Case 1:} Suppose that $\abs{I_Q}=1$. Since, by Lemma~\ref{0Rees: every row/column of 0-closure submatrix has a 0}, every row of $Q$ contains at least one zero entry, then $Q=\Psubmatrix{P}{I_Q}{\Lambda_Q}=O_{\abs{\Lambda_Q}\times 1}$. Furthermore, we have that every column of $P$ contains at most two zero entries, which implies that $\abs{\Lambda_Q}\leqslant 2$. Hence $Q=O_{1\times 1}$ or $Q=O_{2\times 1}$. Therefore $\timeszero{Q}$ is \equivalent\ to $O_{1\times 1}$ or $\timeszero{Q}$ is \equivalent\ to $O_{2\times 1}$.
	
	\smallskip
	
	\textit{Case 2:} Suppose that $\abs{\Lambda_Q}=1$. We can verify in an analogous way to case 1 that $\timeszero{Q}=O_{1\times 1}$ or $\timeszero{Q}=O_{1\times 2}$.
	
	\smallskip
	
	\textit{Case 3:} Suppose that $\abs{I_Q}>1$ and $\abs{\Lambda_Q}>1$. Let $i\in I_Q$ and $\lambda\in\Lambda_Q$ be such that $p_{\lambda i}=0$. For each $i\in\set{0,1,\ldots,\zeroindex{i}{\lambda}}$ let $Q_i$ be the submatrix of $P$ constructed at step $i$ of the $0$-closure method. We observe that, by Lemma~\ref{0Rees: comparing 0-closure submatrices}, $Q$ is the \zeroclosuresubmatrix{i}{\lambda}, that is, $Q$ corresponds to the matrix $Q_{\zeroindex{i}{\lambda}}$. We have that $Q_0=\Psubmatrix{P}{i}{\lambda}$. Since $\abs{I_Q}>1$, then $\zeroindex{i}{\lambda}>0$, which implies that row $\lambda$ of $P$ contains more than one zero entry or column $i$ of $P$ contains more than one zero entry. As a consequence of the fact that all rows and all columns of $P$ contain at most two zero entries, then we have that row $\lambda$ of $P$ contains (exactly) two zero entries or column $i$ of $P$ contains (exactly) two zero entries. We consider the following sub-cases:
	
	\smallskip
	
	\textsc{Sub-case 1:} Suppose that row $\lambda$ and column $i$ of $P$ both contain exactly two zero entries. Then there exist $j\in I\setminus\set{i}$ and $\mu\in\Lambda\setminus\set{\lambda}$ such that $p_{\lambda j}=p_{\mu i}=0$. Due to the fact that $P$ contains at most one row with exactly two zero entries, and row $\lambda$ has two zero entries, then row $\mu$ contains only one zero entry --- the $\parens{\mu,i}$-th entry of $P$. Consequently, $\timeszero{P}$ is \equivalent\ to
	\begin{displaymath}
		\begin{bNiceArray}{cc|cccccccccccc}[first-row,first-col,margin]
			& i & j & \Block{1-12}{I\setminus \set{i,j}} &&&&&&&&&&&\\
			\lambda & 0 & 0 & \Block{2-12}{\blocktimes{0.61em}{5em}{0em}} &&&&&&&\\
			\mu & 0 & \times &&&&&&&&&&&&\\
			\hline
			\Block{2-1}{\Lambda\setminus\set{\lambda,\mu}} & \Block{2-2}{\blocktimes{0.61em}{1.19em}{-0.25em}} & & \Block{2-12}{\Psubmatrix{\timeszero{P}}{I\setminus\set{i,j}}{\Lambda\setminus\set{\lambda,\mu}}} &&&&&&&&&&&\\
			\\
		\end{bNiceArray}
	\end{displaymath}
	and it is easy to conclude that $\timeszero{Q}=\Psubmatrix{\timeszero{P}}{\set{i,j}}{\set{\lambda,\mu}}$, which is \equivalent\ to $\begin{bmatrix}
		0&0\\
		0&\times
	\end{bmatrix}$.
	
	\smallskip
	
	\textsc{Sub-case 2:} Suppose that row $\lambda$ of $P$ contains exactly two zero entries and that column $i$ of $P$ contains only one zero entry. Then there exists $j\in I\setminus\set{i}$ such that $p_{\lambda j}=0$, and we have that the only zero entry of column $i$ is the $\parens{\lambda,i}$-th entry of $P$. Consequently, $Q_1=\Psubmatrix{P}{\set{i,j}}{\lambda}=O_{1\times 2}$. In addition, we have that $\abs{\Lambda_Q}>1$, which implies that $\zeroindex{i}{\lambda}>1$. Since column $i$ has no more zero entries other than the $\parens{\lambda,i}$-th entry of $P$, then this means that column $j$  must contain (exactly) two zero entries. Let $\mu\in\Lambda\setminus\set{\lambda}$ be such that $p_{\mu j}=0$. Since row $\lambda$ contains two zero entries, then row $\mu$ contains only one zero entry --- the $\parens{\mu,j}$-th entry of $P$. Thus $\timeszero{P}$ is \equivalent\ to
	\begin{displaymath}
		\begin{bNiceArray}{cc|cccccccccccc}[first-row,first-col,margin]
			& i & j & \Block{1-12}{I\setminus \set{i,j}} &&&&&&&&&&&\\
			\lambda & 0 & 0 & \Block{2-12}{\blocktimes{0.61em}{5em}{0em}} &&&&&&&\\
			\mu & \times & 0 &&&&&&&&&&&&\\
			\hline
			\Block{2-1}{\Lambda\setminus\set{\lambda,\mu}} & \Block{2-2}{\blocktimes{0.61em}{1.19em}{-0.25em}} & & \Block{2-12}{\Psubmatrix{\timeszero{P}}{I\setminus\set{i,j}}{\Lambda\setminus\set{\lambda,\mu}}} &&&&&&&&&&&\\
			\\
		\end{bNiceArray}
	\end{displaymath}
	and we can conclude that $\timeszero{Q}=\Psubmatrix{\timeszero{P}}{\set{i,j}}{\set{\lambda,\mu}}$, which is \equivalent\ to $\begin{bmatrix}
		0&0\\
		0&\times
	\end{bmatrix}$.
	
	\smallskip
	
	\textsc{Sub-case 3:} Suppose that column $i$ of $P$ contains exactly two zero entries and that row $\lambda$ of $P$ contains only one entry. We can verify, analogously to sub-case 2, that $\timeszero{Q}$ is \equivalent\ to $\begin{bmatrix}
		0&0\\
		0&\times
	\end{bmatrix}$.

	\medskip
	
	\textbf{Part 2.} Now we are going to check that $\commgraph{\Reeszero{G}{I}{\Lambda}{P}}$ contains no cycles. In order to see that, we are going to verify that none of the connected components of $\commgraph{\Reeszero{G}{I}{\Lambda}{P}}$ contains cycles. Let $\mathcal{C}$ be a connected component of $\commgraph{\Reeszero{G}{I}{\Lambda}{P}}$. We have three possibilities for $\mathcal{C}$:
	
	\smallskip
	
	\textit{Case 1:} Suppose that $\mathcal{C}$ is determined by a \zeroclosuresub. Assume that $Q=\Psubmatrix{P}{I_Q}{\Lambda_Q}$ is that submatrix. We know that $\timeszero{Q}$ is \equivalent\ to $O_{1\times 1}$, $O_{1\times 2}$, $O_{2\times 1}$ or $\begin{bmatrix}0&0\\ 0&\times\end{bmatrix}$.
	
	If $\timeszero{Q}$ is \equivalent\ to $O_{1\times 1}$, $O_{1\times 2}$ or $O_{2\times 1}$, then $\abs{I_Q\times G\times\Lambda_Q}=\abs{I_Q}\cdot\abs{G}\cdot\abs{\Lambda_Q}=\abs{I_Q}\cdot\abs{\Lambda_Q}\leqslant 2$, that is, $\mathcal{C}$ has at most two vertices, which implies that $\mathcal{C}$ has no cycles.
	
	If $\timeszero{Q}$ is \equivalent\ to $\begin{bmatrix}0&0\\ 0&\times\end{bmatrix}$, then there exist distinct $i,j\in I_Q$ and distinct $\lambda,\mu\in\Lambda_Q$ such that $\Psubmatrix{\timeszero{Q}}{i,j}{\lambda,\mu}=\Psubmatrix{\timeszero{P}}{i,j}{\lambda,\mu}=\begin{bmatrix}0&0\\ 0&\times\end{bmatrix}$. Then $\mathcal{C}$ is the graph
	\begin{center}
		\begin{tikzpicture}
			
			\node[vertex] (11) at (0,0) {};
			\node[vertex] (12) at (1,0) {};
			\node[vertex] (21) at (0,-1) {};
			\node[vertex] (22) at (1,-1) {};

			\node[anchor=south east] at (11) {$\parens{i,1_G,\lambda}$};
			\node[anchor=south west] at (12) {$\parens{j,1_G,\lambda}$};
			\node[anchor=north east] at (21) {$\parens{i,1_G,\mu}$};
			\node[anchor=north west] at (22) {$\parens{j,1_G,\mu}$};

			\draw[edge] (11)--(12);
			\draw[edge] (11)--(21);
			\draw[edge] (11)--(22);
			
		\end{tikzpicture}
	\end{center}
	and, by inspection, $\mathcal{C}$ has no cycles.
	
	\smallskip
	
	\textit{Case 2:} Suppose that $\mathcal{C}$ is determined by two (distinct) $0$-closure submatrices of $P$. Let $Q=\Psubmatrix{P}{I_Q}{\lambda_Q}$ and $M=\Psubmatrix{P}{I_M}{\Lambda_M}$ be those two submatrices. We observe that $\timeszero{\Psubmatrix{P}{I_Q\cup I_M}{\Lambda_Q\cup\Lambda_M}}$ is \equivalent\ to
	
	\begin{displaymath}
		\begin{bNiceArray}{cc|cc}[first-row,first-col,margin]
			& \Block{1-2}{I_Q} & & \Block{1-2}{I_M} & \\
			\Block{2-1}{\Lambda_Q} & \Block{2-2}{\timeszero{Q}} & & \Block{2-2}{\blocktimes{0.61em}{0.61em}{-0.25em}} & \\
			\\
			\hline
			\Block{2-1}{\Lambda_M} & \Block{2-2}{\blocktimes{0.61em}{0.61em}{-0.25em}} & & \Block{2-2}{\timeszero{M}} & \\
			\\
		\end{bNiceArray}.
	\end{displaymath}
	
	\smallskip
	
	\textsc{Sub-case 1:} Suppose that $\timeszero{Q}$ is \equivalent\ to $O_{1\times 2}$ or $\timeszero{M}$ is \equivalent\ to $O_{1\times 2}$. Assume, without loss of generality, that $\timeszero{Q}$ is \equivalent\ to $O_{1\times 2}$. Then there exist distinct $i,i'\in I$ and $\lambda\in\Lambda$ such that $I_Q=\set{i,i'}$, $\Lambda_Q=\set{\lambda}$ and $p_{\lambda i}=p_{\lambda i'}=0$. Since row $\lambda$ of $\timeszero{P}$ contains two zero entries, then for all $\mu\in\Lambda\setminus\set{\lambda}$ we have that row $\mu$ of $\timeszero{P}$ contain at most one zero entry. In particular, this implies that $\timeszero{M}$ cannot have rows with two zero entries and, consequently, $\timeszero{M}$ is not \equivalent\ to $O_{1\times 2}$ or $\begin{bmatrix} 0&0\\ 0&\times \end{bmatrix}$. Thus $\timeszero{M}$ must be \equivalent\ to $O_{1\times 1}$ or $O_{2\times 1}$.
	
	If $\timeszero{M}$ is \equivalent\ to $O_{1\times 1}$, then there exist $j\in I$ and $\lambda'\in\Lambda$ such that $I_M=\set{j}$, $\Lambda_M=\set{\lambda'}$ and $p_{\lambda' j}=0$. Consequently, the connected component $\mathcal{C}$ is the graph
	\begin{center}
		\begin{tikzpicture}
			
			\node[vertex] (11) at (0,0) {};
			\node[vertex] (21) at (1,-1) {};
			\node[vertex] (12) at (2,0) {};

			\node[anchor=south east] at (11) {$\parens{i,1_G,\lambda'}$};
			\node[anchor=south west] at (12) {$\parens{i',1_G,\lambda'}$};
			\node[anchor=north] at (21) {$\parens{j,1_G,\lambda}$};

			\draw[edge] (11)--(21);
			\draw[edge] (21)--(12);
			
		\end{tikzpicture}
	\end{center}
	which has no cycles.
	
	If $\timeszero{M}$ is \equivalent\ to $O_{2\times 1}$, then there exist $j\in I$ and distinct $\mu,\mu'\in\Lambda$ such that $I_M=\set{j}$, $\Lambda_M=\set{\mu,\mu'}$ and $p_{\mu j}=p_{\mu' j}=0$. Thus $\mathcal{C}$ is the graph
	\begin{center}
		\begin{tikzpicture}
			
			\node[vertex] (11) at (0,0) {};
			\node[vertex] (12) at (2,0) {};
			\node[vertex] (21) at (0,-1) {};
			\node[vertex] (22) at (2,-1) {};
			\node[vertex] (3) at (1,-0.5) {};

			\node[anchor=south east] at (11) {$\parens{i,1_G,\mu}$};
			\node[anchor=south west] at (12) {$\parens{i',1_G,\mu}$};
			\node[anchor=north east] at (21) {$\parens{i,1_G,\mu'}$};
			\node[anchor=north west] at (22) {$\parens{i',1_G,\mu'}$};
			\node[anchor=south west,rotate=45] at (3) {$\parens{j,1_G,\lambda}$};

			\draw[edge] (3)--(11);
			\draw[edge] (3)--(12);
			\draw[edge] (3)--(21);
			\draw[edge] (3)--(22);
			
		\end{tikzpicture}
	\end{center}
	which also has no cycles.
	
	\smallskip
	
	\textsc{Sub-case 2:} Suppose that $\timeszero{Q}$ is \equivalent\ to $O_{2\times 1}$ or $\timeszero{M}$ is \equivalent\ to $O_{2\times 1}$. We can verify, in a similar way to sub-case 1, that $\mathcal{C}$ has no cycles.
	
	\smallskip
	
	\textsc{Sub-case 3:} Suppose that $\timeszero{Q}$ is \equivalent\ to $\begin{bmatrix} 0&0\\ 0&\times \end{bmatrix}$ or $\timeszero{M}$ is \equivalent\ to $\begin{bmatrix} 0&0\\ 0&\times \end{bmatrix}$. Assume, without loss of generality, that $\timeszero{Q}$ is \equivalent\ to $\begin{bmatrix} 0&0\\ 0&\times \end{bmatrix}$. Let $i,i'\in I_Q$ and $\lambda,\lambda'\in\Lambda_Q$ be such that $\Psubmatrix{\timeszero{Q}}{i,i'}{\lambda,\lambda'}=\Psubmatrix{\timeszero{P}}{i,i'}{\lambda,\lambda'}=\begin{bmatrix} 0&0\\ 0&\times \end{bmatrix}$. Since row $\lambda$ (respectively, column $i$) of $\timeszero{P}$ contains two zero entries, then all the other rows (respectively, columns) of $\timeszero{P}$ have at most one zero entry. Hence all rows and all columns of $\timeszero{M}$ contain at most one zero entry, which implies that $\timeszero{M}$ is \equivalent\ to $O_{1\times 1}$. Assume that $I_M=\set{j}$ and $\Lambda_M=\set{\mu}$. Then $p_{\mu j}=0$. We have that $\mathcal{C}$ is the graph
	\begin{center}
		\begin{tikzpicture}
			
			\node[vertex] (11) at (0,0) {};
			\node[vertex] (21) at (1,0) {};
			\node[vertex] (12) at (2,-1) {};
			\node[vertex] (22) at (2,-2) {};

			\node[anchor=south east] at (11) {$\parens{i,1_G,\mu}$};
			\node[anchor=south west] at (12) {$\parens{j,1_G,\lambda}$};
			\node[anchor=south west] at (21) {$\parens{i',1_G,\mu}$};
			\node[anchor=north west] at (22) {$\parens{j,1_G,\lambda'}$};

			\draw[edge] (11)--(12);
			\draw[edge] (11)--(22);
			\draw[edge] (21)--(12);
			
		\end{tikzpicture}
	\end{center}
	and it is straightforward to check that $\mathcal{C}$ has no cycles.
	
	\smallskip
	
	\textsc{Sub-case 4:} Suppose that $\timeszero{Q}$ and $\timeszero{M}$ are not \equivalent\ to $O_{1\times 2}$, $O_{2\times 1}$ or $\begin{bmatrix} 0&0\\ 0&\times \end{bmatrix}$. Then $\timeszero{Q}$ and $\timeszero{M}$ are both \equivalent\ to $O_{1\times 1}$ and, consequently, $\abs{I_Q}=\abs{\Lambda_Q}=\abs{I_M}=\abs{\Lambda_M}=1$. Therefore $\mathcal{C}$ has $\abs{\parens{I_Q\times G\times\Lambda_M}\cup\parens{I_M\times G\times\Lambda_Q}}=\abs{I_Q}\cdot\abs{G}\cdot\abs{\Lambda_M}+\abs{I_M}\cdot\abs{G}\cdot\abs{\Lambda_Q}=1+1=2$ vertices, which implies that $\mathcal{C}$ has no cycles.
	
	\smallskip
	
	\textit{Case 3:} Assume that $\mathcal{C}$ is determined by $\parens{\lambda,i}$, for some $i\in I$ and $\lambda\in\Lambda$ such that column $i$ has no zero entries or row $\lambda$ has no zero entries. Then the vertex set of $\mathcal{C}$ is $\set{i}\times G\times\set{\lambda}$, which is a singleton. Thus $\mathcal{C}$ contains no cycles.
	
	\smallskip
	
	It follows from cases 1, 2 and 3 that none of the connected components of $\commgraph{\Reeszero{G}{I}{\Lambda}{P}}$ contains cycles. Thus $\commgraph{\Reeszero{G}{I}{\Lambda}{P}}$ contains no cycles.
\end{proof}

\begin{theorem}\label{0Rees: girth}
	Let
	\begin{displaymath}
		A=\begin{bmatrix}
			0&0&\times&\times\\
			\times&\times&0&0
		\end{bmatrix}\quad
		B=\begin{bmatrix}
			0&0&\times\\
			\times&0&0
		\end{bmatrix}.
	\end{displaymath}
	\begin{enumerate}
		\item Suppose that $|G|\geqslant 3$. Then $\commgraph{\Reeszero{G}{I}{\Lambda}{P}}$ contains cycles and $\girth{\commgraph{\Reeszero{G}{I}{\Lambda}{P}}}=3$.
		
		\item Suppose that $|G|=2$. Then $\commgraph{\Reeszero{G}{I}{\Lambda}{P}}$ contains cycles if and only if $P$ contains more than one zero entry, in which case $\girth{\commgraph{\Reeszero{G}{I}{\Lambda}{P}}}=3$.
		
		\item Suppose that $|G|=1$. Then $\commgraph{\Reeszero{G}{I}{\Lambda}{P}}$ contains cycles if and only if at least one of the matrices $D_3$, $O_{1\times 3}$, $O_{3\times 1}$, $O_{2\times 2}$, $A$, $A^T$, $B$ or $B^T$ is a \submatrix\ of $\timeszero{P}$. Furthermore,
		\begin{enumerate}
			\item If at least one of the matrices $D_3$, $O_{1\times 3}$, $O_{3\times 1}$ or $O_{2\times 2}$ is a \submatrix\ of $\timeszero{P}$, then $\girth{\commgraph{\Reeszero{G}{I}{\Lambda}{P}}}=3$.
			
			\item If at least one of the matrices $A$, $A^T$, $B$ or $B^T$ is a \submatrix\ of $\timeszero{P}$ and $D_3$, $O_{1\times 3}$, $O_{3\times 1}$ and $O_{2\times 2}$ are not \submatrices\ of $\timeszero{P}$, then $\girth{\commgraph{\Reeszero{G}{I}{\Lambda}{P}}}=4$.
		\end{enumerate}
	\end{enumerate}
\end{theorem}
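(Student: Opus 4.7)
My plan is to organise the proof according to the three cases for $|G|$, in each case either exhibiting an explicit short cycle or invoking Lemma~\ref{0Rees: lemma girth} to rule out cycles entirely. The girth will be at most $3$ in all cases except the $|G|=1$ case with only $A, A^T, B$ or $B^T$ as a \submatrix, where a $4$-cycle will be the shortest.

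For part~1 ($|G|\geqslant 3$), I would pick any zero entry $p_{\lambda i}=0$ of $P$ and apply Lemma~\ref{0Rees: subgraph induced by ixGxlambda} to conclude that $\{i\}\times G\times\{\lambda\}$ induces a copy of $K_{|G|}$ in $\commgraph{\Reeszero{G}{I}{\Lambda}{P}}$; since $|G|\geqslant 3$, this contains a triangle. For part~2 ($|G|=2$), if $P$ has exactly one zero entry then Theorem~\ref{0Rees: finding connected components} combined with Lemma~\ref{0Rees: subgraph induced by ixGxlambda} shows that every connected component of $\commgraph{\Reeszero{G}{I}{\Lambda}{P}}$ is isomorphic to $K_2$ (two vertices connected via condition~2 of Lemma~\ref{0Rees: commutativity}), hence contains no cycles. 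If $P$ has two zeros $p_{\lambda_1 i_1}=p_{\lambda_2 i_2}=0$, I split into the subcase $\lambda_1=\lambda_2$ or $i_1=i_2$ (producing the triangle $(i_1,1_G,\lambda_1),(i_2,1_G,\lambda_1),(i_1,g,\lambda_1)$ for the non-identity $g\in G$, with all three commutations following from $p_{\lambda_1 i_1}=p_{\lambda_1 i_2}=0$), and the subcase $\lambda_1\neq\lambda_2$ and $i_1\neq i_2$ (producing the triangle $(i_1,1_G,\lambda_2),(i_2,1_G,\lambda_1),(i_1,g,\lambda_2)$, with commutations verified via Lemma~\ref{0Rees: commutativity} cases~1 and~2).

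For part~3 ($|G|=1$), the no-cycle direction is exactly Lemma~\ref{0Rees: lemma girth}. For the existence of cycles when one of the listed matrices is a \submatrix, I would exhibit explicit cycles: if any of $D_3, O_{1\times 3}, O_{3\times 1}, O_{2\times 2}$ is a \submatrix, the three vertices reading off a diagonal/row/column/zero-block form a triangle by direct application of Lemma~\ref{0Rees: commutativity}(2). If $A$ is a \submatrix\ with rows indexed $\lambda_1,\lambda_2$ and columns $i_1,i_2,i_3,i_4$ as in the statement, the cycle $(i_1,1,\lambda_2)-(i_3,1,\lambda_1)-(i_2,1,\lambda_2)-(i_4,1,\lambda_1)-(i_1,1,\lambda_2)$ has length~$4$ and all four commutations follow from the four zero entries; the $B$ case is analogous, using $(i_2,1,\lambda_1)-(i_1,1,\lambda_2)-(i_3,1,\lambda_1)-(i_2,1,\lambda_2)-(i_2,1,\lambda_1)$, and the transposes are symmetric.

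The main obstacle is verifying in part~3(b) that no triangle exists under the hypothesis that only $A, A^T, B$ or $B^T$ occurs as a \submatrix\ (and none of $D_3, O_{1\times 3}, O_{3\times 1}, O_{2\times 2}$ do). Since $|G|=1$ only condition~2 of Lemma~\ref{0Rees: commutativity} produces edges between distinct vertices, so any triangle corresponds to three pairwise distinct pairs $(i_1,\lambda_1),(i_2,\lambda_2),(i_3,\lambda_3)\in I\times\Lambda$ with $p_{\lambda_j i_k}=0$ for all $j\neq k$. I would then perform a case analysis on coincidences among the $i_k$'s and $\lambda_k$'s. If all six indices are distinct, then $\Psubmatrix{\timeszero{P}}{i_1,i_2,i_3}{\lambda_1,\lambda_2,\lambda_3}$ has zero off-diagonal, so either its diagonal contains a zero (forcing an $O_{3\times 1}$ or $O_{1\times 3}$ in the corresponding row/column of $\timeszero{P}$) or it equals $D_3$: both contradict the hypothesis. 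If exactly two of the $\lambda_k$ coincide (with the $i_k$ distinct), row $\lambda_1$ of $\timeszero{P}$ picks up zeros in three distinct columns, forcing $O_{1\times 3}$; the symmetric case forces $O_{3\times 1}$. If both one pair of $i_k$'s and one pair of $\lambda_k$'s coincide (which, by distinctness of the pairs, must happen on different indices), the relevant $2\times 2$ submatrix turns out to be $O_{2\times 2}$. In every case the contradiction is reached, completing the proof.
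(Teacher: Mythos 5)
Your proposal is correct and follows essentially the same route as the paper: explicit triangles and $4$-cycles in each case, Lemma~\ref{0Rees: lemma girth} for the cycle-free direction of part~3, and a case analysis on coincidences among the indices of a hypothetical triangle to establish part~3(b). The only cosmetic differences are that you obtain the part~1 triangle via Lemma~\ref{0Rees: subgraph induced by ixGxlambda} rather than directly from Lemma~\ref{0Rees: commutativity}, and you prove the forward implication of part~2 by contrapositive through the connected-component structure instead of analysing an arbitrary cycle; both variants are sound.
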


\begin{proof}
	\textbf{Part 1.} Suppose that $\abs{G}\geqslant 3$. Then there exist distinct $x,y,z\in G$. Since $P$ contains at least one zero entry, there exist $i\in I$ and $\lambda\in \Lambda$ such that $p_{\lambda i}=0$. It follows from Lemma~\ref{0Rees: commutativity} that $\parens{i,x,\lambda}$, $\parens{i,y,\lambda}$ and $\parens{i,z,\lambda}$ commute with each other, which implies that
	\begin{displaymath}
		\parens{i,x,\lambda}-\parens{i,y,\lambda}-\parens{i,z,\lambda}-\parens{i,x,\lambda}
	\end{displaymath}
	is a cycle (of length $3$) in $\commgraph{\Reeszero{G}{I}{\Lambda}{P}}$. Thus $\girth{\commgraph{\Reeszero{G}{I}{\Lambda}{P}}}=3$.
	
	\medskip
	
	\textbf{Part 2.} Suppose that $\abs{G}=2$. First, we prove the forward implication. Assume that $\commgraph{\Reeszero{G}{I}{\Lambda}{P}}$ contains cycles. Let
	\begin{displaymath}
		\parens{i_1,x_1,\lambda_1}-\parens{i_2,x_2,\lambda_2}-\cdots-\parens{i_n,x_n,\lambda_n}-\parens{i_1,x_1,\lambda_1}
	\end{displaymath}
	be a cycle in $\commgraph{\Reeszero{G}{I}{\Lambda}{P}}$. We have $n\geqslant 3$. Assume that $\parens{i_1,\lambda_1}\neq\parens{i_2,\lambda_2}$. Then, it follows from Lemma~\ref{0Rees: commutativity} that $p_{\lambda_1 i_2}=p_{\lambda_2 i_1}=0$ and, consequently, $P$ contains (at least) two zero entries. Now assume that $\parens{i_1,\lambda_1}=\parens{i_2,\lambda_2}$. Then $x_1\neq x_2$ and $G=\set{x_1,x_2}$. We have that $x_3\in\set{x_1,x_2}$. Furthermore, $\parens{i_3,x_3,\lambda_3}$ is distinct from $\parens{i_1,x_1,\lambda_1}=\parens{i_2,x_1,\lambda_2}$ and $\parens{i_2,x_2,\lambda_2}$, which implies that $\parens{i_3,\lambda_3}\neq\parens{i_2,\lambda_2}$. Then, by Lemma~\ref{0Rees: commutativity}, we have that $p_{\lambda_2 i_3}=p_{\lambda_3 i_2}=0$. Hence $P$ contains (at least) two zero entries.
	
	Now we prove the reverse implication. Assume that $P$ contains at least two zero entries. Then there exist $i,j\in I$ and $\lambda,\mu\in\Lambda$ such that $\parens{i,\lambda}\neq\parens{j,\mu}$ and $p_{\lambda i}=p_{\mu j}=0$. Furthermore, since $\abs{G}=2$, then $G$ is abelian and there exist distinct $x,y\in G$. Due to the fact that $p_{\lambda i}=p_{\mu j}=0$, and as a consequence of Lemma~\ref{0Rees: commutativity}, we have that $\parens{i,x,\mu}$ commutes with $\parens{j,x,\lambda}$ and $\parens{j,y,\lambda}$. Moreover, if $p_{\lambda j}=0$, we have $xp_{\lambda j}y=0=yp_{\lambda j}x$ and, if $p_{\lambda j}\in G$, we have $xp_{\lambda j}y=yp_{\lambda j}x$ because $G$ is abelian. Hence Lemma~\ref{0Rees: commutativity} also ensures that $\parens{j,x,\lambda}$ and $\parens{j,y,\lambda}$ commute. Therefore
	\begin{displaymath}
		\parens{i,x,\mu}-\parens{j,x,\lambda}-\parens{j,y,\lambda}-\parens{i,x,\mu}
	\end{displaymath}
	is a cycle (of length $3$) in $\commgraph{\Reeszero{G}{I}{\Lambda}{P}}$ and we have $\girth{\commgraph{\Reeszero{G}{I}{\Lambda}{P}}}=3$.
	
	\medskip
	
	\textbf{Part 3.} Suppose that $\abs{G}=1$. We observe that the forward implication is an immediate consequence of Lemma~\ref{0Rees: lemma girth}. We are going to prove the reverse implication. In order to achieve this we consider the following cases:
	
	\smallskip
	
	\textit{Case 1:} Suppose that $D_3$ is a \submatrix\ of $\timeszero{P}$. Then there exist pairwise distinct $i_1,i_2,i_3\in I$ and pairwise distinct $\lambda_1,\lambda_2,\lambda_3\in\Lambda$ such that $\Psubmatrix{\timeszero{P}}{i_1,i_2,i_3}{\lambda_1,\lambda_2,\lambda_3}\allowbreak=D_3$. Hence $p_{\lambda_1 i_2}=p_{\lambda_1 i_3}=p_{\lambda_2 i_1}=p_{\lambda_2 i_3}=p_{\lambda_3 i_1}=p_{\lambda_3 i_2}=0$. Consequently, Lemma~\ref{0Rees: commutativity} ensures that $\parens{i_1,1_G,\lambda_1}$, $\parens{i_2,1_G,\lambda_2}$ and $\parens{i_3,1_G,\lambda_3}$ commute with each other. Thus
	\begin{displaymath}
		\parens{i_1,1_G,\lambda_1}-\parens{i_2,1_G,\lambda_2}-\parens{i_3,1_G,\lambda_3}-\parens{i_1,1_G,\lambda_1}
	\end{displaymath}
	is a cycle (of length $3$) in $\commgraph{\Reeszero{G}{I}{\Lambda}{P}}$, which implies that $\girth{\commgraph{\Reeszero{G}{I}{\Lambda}{P}}}=3$.
	
	\smallskip
	
	\textit{Case 2:} Suppose that $O_{1\times 3}$ is a \submatrix\ of $\timeszero{P}$. (The proof is symmetrical if $O_{3\times 1}$ is a \submatrix\ of $\timeszero{P}$.) Then there exist pairwise distinct $i_1,i_2,i_3\in I$ and $\lambda\in\Lambda$ such that $\Psubmatrix{\timeszero{P}}{i_1,i_2,i_3}{\lambda}=O_{1\times 3}$, which implies that $p_{\lambda i_1}=p_{\lambda i_2}=p_{\lambda i_3}=0$. Then, by Lemma~\ref{0Rees: commutativity}, $\parens{i_1,1_G,\lambda}$, $\parens{i_2,1_G,\lambda}$ and $\parens{i_3,1_G,\lambda}$ commute with one another and, consequently,
	\begin{displaymath}
		\parens{i_1,1_G,\lambda}-\parens{i_2,1_G,\lambda}-\parens{i_3,1_G,\lambda}-\parens{i_1,1_G,\lambda}
	\end{displaymath}
	is a cycle (of length $3$) in $\commgraph{\Reeszero{G}{I}{\Lambda}{P}}$. Thus $\girth{\commgraph{\Reeszero{G}{I}{\Lambda}{P}}}=3$.
	
	\smallskip
	
	\textit{Case 3:} Suppose that $O_{2\times 2}$ is a \submatrix\ of $\timeszero{P}$. Then $\Psubmatrix{\timeszero{P}}{i_1,i_2}{\lambda_1,\lambda_2}=O_{2\times 2}$ for some distinct $i_1,i_2\in I$ and distinct $\lambda_1,\lambda_2\in\Lambda$. We have $p_{\lambda_1 i_1}=p_{\lambda_1 i_2}=p_{\lambda_2 i_1}=p_{\lambda_2 i_2}=0$, which implies that $\parens{i_1,1_G,\lambda_1}$, $\parens{i_2,1_G,\lambda_2}$ and $\parens{i_2,1_G,\lambda_1}$ commute with each other. Therefore,
	\begin{displaymath}
		\parens{i_1,1_G,\lambda_1}-\parens{i_2,1_G,\lambda_2}-\parens{i_2,1_G,\lambda_1}-\parens{i_1,1_G,\lambda_1}
	\end{displaymath}
	is a cycle (of length $3$) in $\commgraph{\Reeszero{G}{I}{\Lambda}{P}}$ and we have $\girth{\commgraph{\Reeszero{G}{I}{\Lambda}{P}}}=3$.
	
	\smallskip
	
	\textit{Case 4:} Suppose that $A$ is a \submatrix\ of $\timeszero{P}$. (The proof is similar if we assume that $A^T$ is a \submatrix\ of $\timeszero{P}$.) It follows that there exist pairwise distinct $i_1,i_2,i_3,i_4\in I$ and distinct $\lambda_1,\lambda_2\in\Lambda$ such that $\Psubmatrix{\timeszero{P}}{i_1,i_2,i_3,i_4}{\lambda_1,\lambda_2}=A$. Then we have $p_{\lambda_1 i_1}=p_{\lambda_1 i_2}=p_{\lambda_2 i_3}=p_{\lambda_2 i_4}=0$ and, consequently, Lemma~\ref{0Rees: commutativity} ensures that $\parens{i_3,1_G,\lambda_1}$ commutes with $\parens{i_1,1_G,\lambda_2}$ and $\parens{i_2,1_G,\lambda_2}$, and $\parens{i_4,1_G,\lambda_1}$ commutes with $\parens{i_2,1_G,\lambda_2}$ and $\parens{i_1,1_G,\lambda_2}$. Thus
	\begin{displaymath}
		\parens{i_1,1_G,\lambda_2}-\parens{i_3,1_G,\lambda_1}-\parens{i_2,1_G,\lambda_2}-\parens{i_4,1_G,\lambda_1}-\parens{i_1,1_G,\lambda_2}
	\end{displaymath}
	is a cycle (of length $4$) in $\commgraph{\Reeszero{G}{I}{\Lambda}{P}}$, which implies that $\girth{\commgraph{\Reeszero{G}{I}{\Lambda}{P}}}\leqslant 4$.
	
	\smallskip
	
	\textit{Case 5:} Suppose that $B$ is a \submatrix\ of $\timeszero{P}$. (The proof is similar when $B^T$ is a \submatrix\ of $\timeszero{P}$.) There exist pairwise distinct $i_1,i_2,i_3\in I$ and distinct $\lambda_1,\lambda_2\in\Lambda$ such that $\Psubmatrix{\timeszero{P}}{i_1,i_2,i_3}{\lambda_1,\lambda_2}=B$. Since $p_{\lambda_1 i_1}=p_{\lambda_1 i_2}=p_{\lambda_2 i_2}=p_{\lambda_2 i_3}=0$, then Lemma~\ref{0Rees: commutativity} guarantees that $\parens{i_2,1_G,\lambda_1}$ commutes with $\parens{i_1,1_G,\lambda_2}$ and $\parens{i_2,1_G,\lambda_2}$, and that $\parens{i_3,1_G,\lambda_1}$ commutes with $\parens{i_2,1_G,\lambda_2}$ and $\parens{i_1,1_G,\lambda_2}$. Consequently,
	\begin{displaymath}
		\parens{i_1,1_G,\lambda_2}-\parens{i_2,1_G,\lambda_1}-\parens{i_2,1_G,\lambda_2}-\parens{i_3,1_G,\lambda_1}-\parens{i_1,1_G,\lambda_2}
	\end{displaymath}
	is a cycle (of length $4$) in $\commgraph{\Reeszero{G}{I}{\Lambda}{P}}$. Thus $\girth{\commgraph{\Reeszero{G}{I}{\Lambda}{P}}}\leqslant 4$.
	
	\smallskip
	
	This concludes the proof of the reverse implication. Furthermore, we notice that cases 1, 2 and 3 demonstrate part a). In order to prove part b), it is enough to show that when $\commgraph{\Reeszero{G}{I}{\Lambda}{P}}$ contains a cycle of length $3$, then at least one of the matrices $D_3$, $O_{1\times 3}$, $O_{3 \times 1}$ or $O_{2\times 2}$ is a \submatrix\ of $\timeszero{P}$. Assume that $\commgraph{\Reeszero{G}{I}{\Lambda}{P}}$ contains cycles of length $3$, and let
	\begin{displaymath}
		\parens{i_1,1_G,\lambda_1}-\parens{i_2,1_G,\lambda_2}-\parens{i_3,1_G,\lambda_3}-\parens{i_1,1_G,\lambda_1}
	\end{displaymath}
	be one of those cycles. Since $\parens{i_1,1_G,\lambda_1}$, $\parens{i_2,1_G,\lambda_2}$ and $\parens{i_3,1_G,\lambda_3}$ are pairwise distinct, then $\parens{i_1,\lambda_1}$, $\parens{i_2,\lambda_2}$ and $\parens{i_3,\lambda_3}$ are also pairwise distinct. Furthermore, $\parens{i_1,1_G,\lambda_1}$, $\parens{i_2,1_G,\lambda_2}$ and $\parens{i_3,1_G,\lambda_3}$ commute with each other. Thus Lemma~\ref{0Rees: commutativity} guarantees that $p_{\lambda_1 i_2}=p_{\lambda_2 i_1}=p_{\lambda_2 i_3}=p_{\lambda_3 i_2}=p_{\lambda_3 i_1}=p_{\lambda_1 i_3}=0$. We consider the following three cases.
	
	\smallskip
	
	\textit{Case 1:} Suppose that there exist distinct $j,k\in\set{1,2,3}$ such that $i_j=i_k$. Then we must have $\lambda_j\neq\lambda_k$. Let $l\in\set{1,2,3}\setminus\set{j,k}$. We have two possibilities: $\lambda_l,\lambda_j,\lambda_k$ are pairwise distinct, or $\lambda_l\in\set{\lambda_j,\lambda_k}$. If $\lambda_l,\lambda_j,\lambda_k$ are pairwise distinct, then we have $\Psubmatrix{\timeszero{P}}{i_j}{\lambda_l,\lambda_j,\lambda_k}=O_{3 \times 1}$. If $\lambda_l\in\set{\lambda_j,\lambda_k}$, then $i_l\neq i_j=i_k$ and $\Psubmatrix{\timeszero{P}}{\lambda_j,\lambda_k}{i_j,i_l}=O_{2\times 2}$. Therefore $O_{3\times 1}$ is a \submatrix\ of $\timeszero{P}$ or $O_{2\times 2}$ is a \submatrix\ of $\timeszero{P}$.
	
	\smallskip
	
	\textit{Case 2:} Suppose that there exist distinct $j,k\in\set{1,2,3}$ such that $\lambda_j=\lambda_k$. We can prove in a similar way to case 1 that $O_{1\times 3}$ is a \submatrix\ of $\timeszero{P}$ or $O_{2\times 2}$ is a \submatrix\ of $\timeszero{P}$.
	
	\smallskip
	
	\textit{Case 3:} Suppose that $i_1,i_2,i_3$ are pairwise distinct and $\lambda_1,\lambda_2,\lambda_3$ are pairwise distinct. If $p_{\lambda_1 i_1}, p_{\lambda_2 i_2}, p_{\lambda_3 i_3}\in G$, then $\Psubmatrix{\timeszero{P}}{i_1,i_2,i_3}{\lambda_1,\lambda_2,\lambda_3}=D_3$. If there exist $j\in\set{1,2,3}$ such that $p_{\lambda_j i_j}=0$, then $\Psubmatrix{\timeszero{P}}{i_1,i_2,i_3}{\lambda_j}=O_{1\times 3}$. Therefore $D_3$ is a \submatrix\ of $\timeszero{P}$ or $O_{1\times 3}$ is a submatrix of $\timeszero{P}$.
\end{proof}

\section{Chromatic number of the commuting graph of a 0-Rees matrix semigroup over a group}\label{sec: chromatic number 0Rees}

Let $G$ be a finite group, let $I$ and $\Lambda$ be finite index sets and let $P$ be a regular $\Lambda \times I$ matrix whose entries are elements of $G^0$ and such that $P$ contains at least one zero entry.

The aim of this section is to gain some insight regarding the chromatic number of $\commgraph{\Reeszero{G}{I}{\Lambda}{P}}$. In Theorem~\ref{0Rees: clique number} we exhibited the clique number of $\commgraph{\Reeszero{G}{I}{\Lambda}{P}}$. It is known that the clique number of a graph provides a lower bound for its chromatic number. Here we obtain an upper bound for the chromatic number of $\commgraph{\Reeszero{G}{I}{\Lambda}{P}}$. We will present two methods for determining such an upper bound, which are given by Theorems~\ref{0Rees: chromatic number edges} and \ref{0Rees: chromatic number max vertex degree}. Moreover, in Example~\ref{0Rees: example chromatic number} we will see that none of the methods is better than the other, in the sense that in some cases the smaller upper bound comes from the first method and in other cases it comes from the second method.

Although $\simplifiedgraph{I}{\Lambda}{P}$ was introduced to study $\commgraph{\Reeszero{G}{I}{\Lambda}{P}}$ in terms of connectedness and diameter (see Definition~\ref{0Rees: definition G(I,^,P)}), the next lemma shows that it can also be helpful in the determination of (an upper bound for) the chromatic number of $\commgraph{\Reeszero{G}{I}{\Lambda}{P}}$.

\begin{lemma}\label{0Rees: lemma chromatic number}
	Let $\mathcal{D}$ be a subgraph of $\simplifiedgraph{I}{\Lambda}{P}$ and let $D$ be its vertex set. Let $\mathcal{C}$ be the subgraph of $\commgraph{\Reeszero{G}{I}{\Lambda}{P}}$ induced by $\bigcup_{\parens{i,\lambda}\in D}\set{i}\times G\times\set{\lambda}$. Then $\chromaticnumber{\mathcal{C}}\leqslant\chromaticnumber{\mathcal{D}}\cdot\abs{G}$.
\end{lemma}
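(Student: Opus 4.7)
The plan is to build a proper colouring of $\mathcal{C}$ by taking the Cartesian product of a proper colouring of $\mathcal{D}$ with an arbitrary colouring of $G$ in which distinct elements receive distinct colours. Concretely, fix a proper colouring $c\colon D\to C_1$ of $\mathcal{D}$ with $\abs{C_1}=\chromaticnumber{\mathcal{D}}$, and let $C_2$ be any set in bijection with $G$ via some map $\iota\colon G\to C_2$. Then define
\begin{displaymath}
    \varphi\colon\bigcup_{\parens{i,\lambda}\in D}\set{i}\times G\times\set{\lambda}\longrightarrow C_1\times C_2,\qquad \parens{i,x,\lambda}\varphi=\parens{c\parens{i,\lambda},x\iota}.
\end{displaymath}
The target set has cardinality $\chromaticnumber{\mathcal{D}}\cdot\abs{G}$, so once we show that $\varphi$ is a proper colouring of $\mathcal{C}$ the bound $\chromaticnumber{\mathcal{C}}\leqslant\chromaticnumber{\mathcal{D}}\cdot\abs{G}$ follows.

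The verification of properness is where the work lies. Let $\parens{i,x,\lambda}$ and $\parens{j,y,\mu}$ be adjacent vertices of $\mathcal{C}$; in particular they are distinct. Apply Lemma~\ref{0Rees: commutativity} to split into two cases. If condition $1$ of Lemma~\ref{0Rees: commutativity} holds, then $\parens{i,\lambda}=\parens{j,\mu}$ and hence $x\neq y$, so the second coordinates $x\iota$ and $y\iota$ differ and $\parens{i,x,\lambda}\varphi\neq\parens{j,y,\mu}\varphi$. If instead condition $2$ holds, then $p_{\lambda j}=p_{\mu i}=0$ and $\parens{i,\lambda}\neq\parens{j,\mu}$; by Lemma~\ref{0Rees: adjacency in G(I,^) and G(M0[...])} the vertices $\parens{i,\lambda}$ and $\parens{j,\mu}$ are adjacent in $\simplifiedgraph{I}{\Lambda}{P}$, hence also in $\mathcal{D}$, so by properness of $c$ we get $c\parens{i,\lambda}\neq c\parens{j,\mu}$ and again $\parens{i,x,\lambda}\varphi\neq\parens{j,y,\mu}\varphi$.

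The main (and essentially only) obstacle is the bookkeeping in the second case: we need to know that the edge $\set{\parens{i,\lambda},\parens{j,\mu}}$ of $\simplifiedgraph{I}{\Lambda}{P}$ is actually present in $\mathcal{D}$, so that the proper colouring $c$ can be used. This is automatic when $\mathcal{D}$ is taken to be the induced subgraph of $\simplifiedgraph{I}{\Lambda}{P}$ on $D$ (the only case in which applying this lemma is useful for bounding $\chromaticnumber{\mathcal{C}}$), and the argument above is all that is needed.
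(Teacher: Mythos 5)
Your proposal is correct and follows essentially the same route as the paper: colour each vertex $\parens{i,x,\lambda}$ by the pair (colour of $\parens{i,\lambda}$ in a proper colouring of $\mathcal{D}$, the group element $x$), and verify properness via Lemma~\ref{0Rees: commutativity}, the paper arguing the contrapositive (same colour implies non-adjacent) where you argue directly. Your closing remark about $\mathcal{D}$ needing to be the induced subgraph on $D$ is a fair observation — the paper's proof tacitly makes the same assumption — and does not affect correctness since every application of the lemma uses an induced subgraph.
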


\begin{proof}
	Let $n=\chromaticnumber{\mathcal{D}}$. Then $\mathcal{D}$ is $n$-colourable, which implies that there exists a set $X$ of size $n$ and a map $\varphi:D\to X$ such that for all $x\in X$ the set $\set{x}\varphi^{-1}$ contains no adjacent vertices of $\mathcal{D}$.
	
	Let $\psi:\bigcup_{\parens{i,\lambda}\in D}\set{i}\times G\times\set{\lambda}\to X\times G$ be the map such that $\parens{i,g,\lambda}\psi=\parens{\parens{i,\lambda}\varphi,g}$ for all $\parens{i,\lambda}\in D$ and $g\in G$.
	
	We are going to see that for each $x\in X$ and $g\in G$ the set $\braces{\parens{x,g}}\psi^{-1}$ contains no adjacent vertices of $\mathcal{C}$. Let $x\in X$ and $g\in G$. Let $\parens{i,\lambda},\parens{j,\mu}\in D$ and $g_1,g_2\in G$ be such that $\parens{i,g_1,\lambda}\neq\parens{j,g_2,\mu}$ and $\parens{i,g_1,\lambda},\parens{j,g_2,\mu}\in\braces{\parens{x,g}}\psi^{-1}$. Then
	\begin{displaymath}
		\parens{\parens{i,\lambda}\varphi,g_1}=\parens{i,g_1,\lambda}\psi=\parens{x,g}=\parens{j,g_2,\mu}\psi=\parens{\parens{j,\mu}\varphi,g_2}
	\end{displaymath}
	and, consequently, $\parens{i,\lambda}\varphi=\parens{j,\mu}\varphi$ and $g_1=g_2$. The latter implies, together with the fact that $\parens{i,g_1,\lambda}\neq\parens{j,g_2,\mu}$, that $\parens{i,\lambda}\neq\parens{j,\mu}$. Moreover, since $\parens{i,\lambda}\varphi=\parens{j,\mu}\varphi$, we have that $\parens{i,\lambda},\parens{j,\mu}\in\set{\parens{i,\lambda}\varphi}\varphi^{-1}$ and, since $\set{\parens{i,\lambda}\varphi}\varphi^{-1}$ contains no adjacent vertices, then $\parens{i,\lambda}$ and $\parens{j,\mu}$ are (distinct) non-adjacent vertices of $\simplifiedgraph{I}{\Lambda}{P}$. Consequently, $p_{\lambda j}\neq 0$ or $p_{\mu i}\neq 0$ and it follows from Lemma~\ref{0Rees: commutativity}, together with the fact that $\parens{i,\lambda}\neq\parens{j,\mu}$, that $\parens{i,g_1,\lambda}$ and $\parens{j,g_2,\mu}$ are not adjacent (in $\commgraph{\Reeszero{G}{I}{\Lambda}{P}}$).
	
	This concludes the proof that $\set{\parens{x,g}}\psi^{-1}$ contains no adjacent vertices of $\mathcal{C}$ for all $x\in X$ and $g\in G$. Thus $\mathcal{C}$ is $\abs{X\times G}$-colourable and, consequently,
	\begin{displaymath}
		\chromaticnumber{\mathcal{C}}\leqslant \abs{X\times G}=\abs{X}\cdot\abs{G}=n\abs{G}=\chromaticnumber{\mathcal{D}}\cdot\abs{G}.\qedhere
	\end{displaymath}
\end{proof}

The key point of the first method (Theorem~\ref{0Rees: chromatic number edges}) for determining an upper bound for $\chromaticnumber{\commgraph{\Reeszero{G}{I}{\Lambda}{P}}}$ is the number of zero entries of each one of the (distinct) $0$-closure submatrices of $P$. Using this information we can easily determine the upper bound required.

\begin{theorem}\label{0Rees: chromatic number edges}
	\begin{enumerate}
		\item Suppose that $\commgraph{\Rees{G}{I}{\Lambda}{P}}$ is connected. Let $z$ be the number of zeros in $P$. Then
		\begin{displaymath}
			\chromaticnumber{\commgraph{\Rees{G}{I}{\Lambda}{P}}}\leqslant z|G|.
		\end{displaymath}
		
		\item Suppose that $\commgraph{\Rees{G}{I}{\Lambda}{P}}$ is not connected. Let $A_1,\ldots,A_n$ be all the $0$-closure submatrices of $P$ and let $z_1,\ldots,z_n$ be the number of zeros in $A_1,\ldots,A_n$, respectively. Then
		\begin{displaymath}
			\chromaticnumber{\commgraph{\Rees{G}{I}{\Lambda}{P}}} \leqslant \max\set{2,z_1,\ldots, z_n}|G|.
		\end{displaymath}
	\end{enumerate}
\end{theorem}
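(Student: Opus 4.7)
The plan is to reduce the computation to colouring induced subgraphs of $\simplifiedgraph{I}{\Lambda}{P}$ by means of Lemma~\ref{0Rees: lemma chromatic number}. The key elementary observation is that every edge $\braces{\parens{i,\lambda},\parens{j,\mu}}$ of $\simplifiedgraph{I}{\Lambda}{P}$ is determined by the unordered pair of distinct zero entries $\braces{\parens{\lambda,j},\parens{\mu,i}}$ of $P$, and this correspondence is injective. Consequently, if $\mathcal{D}$ is an induced subgraph of $\simplifiedgraph{I}{\Lambda}{P}$ whose edges are all witnessed by zeros lying inside a submatrix of $P$ with $z'$ zero entries, then $\abs{E(\mathcal{D})}\leqslant\binom{z'}{2}$, and Lemma~\ref{preli: 0Rees chromatic number} gives $\chromaticnumber{\mathcal{D}}\parens{\chromaticnumber{\mathcal{D}}-1}\leqslant z'\parens{z'-1}$, so $\chromaticnumber{\mathcal{D}}\leqslant z'$.

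For part~1, I would apply this observation directly with $\mathcal{D}=\simplifiedgraph{I}{\Lambda}{P}$: every edge of $\simplifiedgraph{I}{\Lambda}{P}$ comes from a pair of zeros of $P$, so $\chromaticnumber{\simplifiedgraph{I}{\Lambda}{P}}\leqslant z$, and Lemma~\ref{0Rees: lemma chromatic number} yields $\chromaticnumber{\commgraph{\Reeszero{G}{I}{\Lambda}{P}}}\leqslant z\abs{G}$.

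For part~2, I would invoke Theorem~\ref{0Rees: finding connected components} to split $\commgraph{\Reeszero{G}{I}{\Lambda}{P}}$ into its connected components and bound the chromatic number of each separately; since the chromatic number of a graph equals the maximum over its connected components, the stated bound then follows by taking maxima. For a component determined by a single $0$-closure submatrix $A_k=\Psubmatrix{P}{I_k}{\Lambda_k}$, the associated induced subgraph of $\simplifiedgraph{I}{\Lambda}{P}$ (on $I_k\times\Lambda_k$) has every edge witnessed exclusively by zeros of $A_k$, since both endpoints have row index in $\Lambda_k$ and column index in $I_k$; by the initial observation it therefore has chromatic number at most $z_k$, and Lemma~\ref{0Rees: lemma chromatic number} bounds this component's chromatic number by $z_k\abs{G}$. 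For a component determined by two distinct $0$-closure submatrices $A_k$ and $A_m$, Lemma~\ref{0Rees: lemma connected component determined Q M} shows that the associated induced subgraph of $\simplifiedgraph{I}{\Lambda}{P}$ is bipartite with parts $I_k\times\Lambda_m$ and $I_m\times\Lambda_k$, so it has chromatic number at most $2$, and Lemma~\ref{0Rees: lemma chromatic number} bounds this component's chromatic number by $2\abs{G}$. Finally, for a component determined by some $\parens{\lambda,i}$ with a row or column of $P$ containing no zeros, the associated induced subgraph of $\simplifiedgraph{I}{\Lambda}{P}$ has just the single vertex $\parens{i,\lambda}$, giving chromatic number at most $\abs{G}$. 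Taking the maximum of these component-wise bounds yields $\chromaticnumber{\commgraph{\Reeszero{G}{I}{\Lambda}{P}}}\leqslant\max\braces{2,z_1,\ldots,z_n}\abs{G}$.

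There is no substantial obstacle in this plan: the structural heavy lifting is already supplied by Lemma~\ref{0Rees: lemma chromatic number}, Lemma~\ref{0Rees: lemma connected component determined Q M}, and Theorem~\ref{0Rees: finding connected components}, and the edge-counting observation is immediate from the definition of $\simplifiedgraph{I}{\Lambda}{P}$. The only point requiring (minimal) care is the confinement of the witnessing zeros to $A_k$ in the first-type component case, which I noted above follows at once from both endpoints lying in $I_k\times\Lambda_k$.
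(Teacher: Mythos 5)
Your proposal is correct and follows essentially the same route as the paper: the paper likewise counts edges of $\simplifiedgraph{I}{\Lambda}{P}$ via pairs of zero entries, applies Lemma~\ref{preli: 0Rees chromatic number} to get $\chromaticnumber{\mathcal{D}}\leqslant z$, lifts this by Lemma~\ref{0Rees: lemma chromatic number}, and in the disconnected case treats the same three types of components with the same bounds (the paper merely packages the first step as an internal auxiliary lemma applied to each $0$-closure submatrix).
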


\begin{proof}
	Before demonstrating the statements of Theorem~\ref{0Rees: chromatic number edges} we are going to prove the following lemma.
	
	\begin{lemma}\label{0Rees: chromatic number edges lemma}
		Let $Q=\Psubmatrix{P}{I_Q}{\Lambda_Q}$ be a \zeroclosuresub\ and let $\mathcal{C}$ be the connected component of $\commgraph{\Reeszero{G}{I}{\Lambda}{P}}$ determined by $Q$. Let $z_Q$ be the number of zeros in $Q$. Then $\chromaticnumber{\mathcal{C}}\leqslant z_Q\abs{G}$.
	\end{lemma}
	
	\begin{proof}
		Let $\mathcal{D}$ be the subgraph of $\simplifiedgraph{I}{\Lambda}{P}$ induced by $I_Q\times\Lambda_Q$, which is a connected component of $\simplifiedgraph{I}{\Lambda}{P}$ (by part 1 of Lemma~\ref{0Rees: connected component G(I,^,P) G(M0[...])}). We notice that $Q$ contains at least one zero entry, which implies that $z_Q\geqslant 1$.
		
		We have that for all $i,j\in I_Q$ and $\lambda,\mu\in\Lambda_Q$
		\begin{align*}
			&\braces{\parens{i,\lambda},\parens{j,\mu}} \text{ is an edge of } \simplifiedgraph{I}{\Lambda}{P}\\
			\iff{}& \parens{i,\lambda}\neq\parens{j,\mu} \text{ and } p_{\lambda j}=p_{\mu i}=0\\
			\iff{}& \parens{j,\lambda}\neq\parens{i,\mu} \text{ and } p_{\lambda j}=p_{\mu i}=0.
		\end{align*}
		This implies that there is a bijection between edges of $\simplifiedgraph{I}{\Lambda}{P}$ and pairs of distinct zero entries of $P$. Hence the number of edges of $\simplifiedgraph{I}{\Lambda}{P}$ is equal to $\dbinom{z_Q}{2}$ and, by Lemma~\ref{preli: 0Rees chromatic number}, we have
		\begin{align*}
			&\chromaticnumber{\mathcal{D}}\parens{\chromaticnumber{\mathcal{D}}-1}\leqslant 2\dbinom{z_Q}{2}\\
			\iff{}& \parens{\chromaticnumber{\mathcal{D}}-1/2}^2-1/4\leqslant z_Q\parens{z_Q-1}\\
			\iff{}& \parens{\chromaticnumber{\mathcal{D}}-1/2}^2\leqslant z_Q\parens{z_Q-1}+1/4\\
			\iff{}& \parens{\chromaticnumber{\mathcal{D}}-1/2}^2\leqslant \parens{z_Q-1/2}^2\\
			\iff{}& \chromaticnumber{\mathcal{D}}-1/2\leqslant z_Q-1/2& \bracks{\text{because } \chromaticnumber{\mathcal{D}}\geqslant 1 \text{ and } z_Q\geqslant 1}\\
			\iff{}& \chromaticnumber{\mathcal{D}}\leqslant z_Q.
		\end{align*}
		
		Additionally, we have that the vertex set of $\mathcal{C}$ is
		\begin{displaymath}
			I_Q\times G\times\Lambda_Q=\bigcup_{\parens{i,\lambda}\in I_Q\times\Lambda_Q}\set{i}\times G\times\set{\lambda},
		\end{displaymath}
		where $I_Q\times\Lambda_Q$ is the vertex set of $\mathcal{D}$. Therefore, by Lemma~\ref{0Rees: lemma chromatic number}, we have that $\chromaticnumber{\mathcal{C}}\leqslant\chromaticnumber{\mathcal{D}}\cdot\abs{G}\leqslant z_Q\abs{G}$.
	\end{proof}
	
	Now we prove part 1 and part 2 of Theorem~\ref{0Rees: chromatic number edges}.
	
	\medskip
	
	\textbf{Part 1.} Suppose that $\commgraph{\Reeszero{G}{I}{\Lambda}{P}}$ is connected. It follows from Theorem~\ref{0Rees: connectedness} that $P=\Psubmatrix{P}{I}{\Lambda}$ is the unique \zeroclosuresub. Moreover, the connected component of $\commgraph{\Reeszero{G}{I}{\Lambda}{P}}$ determined by $P$ (whose vertex set is $I\times G\times\Lambda$) is $\commgraph{\Reeszero{G}{I}{\Lambda}{P}}$ itself. Hence it follows from Lemma~\ref{0Rees: chromatic number edges lemma} that $\chromaticnumber{\Reeszero{G}{I}{\Lambda}{P}}\leqslant z\abs{G}$.
	
	\medskip
	
	\textbf{Part 2:} Suppose that $\commgraph{\Reeszero{G}{I}{\Lambda}{P}}$ is not connected. Let $\mathcal{C}$ be a connected component of $\commgraph{\Reeszero{G}{I}{\Lambda}{P}}$. Then we have three possibilities for $\mathcal{C}$:
	
	\smallskip
	
	\textit{Case 1:} Suppose that $\mathcal{C}$ is the connected component determined by $A_l$, for some $l\in\Xn$. Then Lemma~\ref{0Rees: chromatic number edges lemma} ensures that
	\begin{displaymath}
		\chromaticnumber{\mathcal{C}}\leqslant z_l\abs{G}\leqslant\max\set{2,z_1,\ldots,z_n}\abs{G}.
	\end{displaymath}
	
	\smallskip
	
	\textit{Case 2:} Suppose that $\mathcal{C}$ is the connected component determined by $A_l$ and $A_m$, for some distinct $l,m\in\Xn$. Assume that $A_l=\Psubmatrix{P}{I_l}{\Lambda_l}$ and $A_m=\Psubmatrix{P}{I_m}{\Lambda_m}$. Let $\mathcal{D}$ be the subgraph of $\simplifiedgraph{I}{\Lambda}{P}$ induced by $\parens{I_l\times\Lambda_m} \cup \parens{I_m\times\Lambda_l}$. Due to Lemma~\ref{0Rees: lemma connected component determined Q M} we have that the vertices of $\mathcal{D}$ that belong to $I_l\times\Lambda_m$ (respectively, $I_m\times\Lambda_l$) are only adjacent to vertices that belong to $I_m\times\Lambda_l$ (respectively, $I_l\times\Lambda_m$). Consequently, if we associate one colour to the vertices that belong to $I_l\times\Lambda_m$ and another colour to the vertices that belong to $I_m\times\Lambda_l$, then $\mathcal{D}$ will not have adjacent vertices with the same colour. Thus $\chromaticnumber{\mathcal{D}}\leqslant 2$.
	
	Furthermore, the vertex set of $\mathcal{C}$ is
	\begin{displaymath}
		\parens{I_l\times G\times\Lambda_m}\cup\parens{I_m\times G\times\Lambda_l}=\bigcup_{\parens{i,\lambda}\in \parens{I_l\times\Lambda_m}\cup\parens{I_m\times\Lambda_l}}\set{i}\times G\times\set{\lambda},
	\end{displaymath}
	where $\parens{I_l\times\Lambda_m}\cup\parens{I_m\times\Lambda_l}$ is the vertex set of $\mathcal{D}$. Therefore, by Lemma~\ref{0Rees: lemma chromatic number}, we have
	\begin{displaymath}
		\chromaticnumber{\mathcal{C}}\leqslant\chromaticnumber{\mathcal{D}}\cdot\abs{G}\leqslant 2\abs{G}\leqslant\max\set{2,z_1,\ldots,z_n}\abs{G}.
	\end{displaymath}
	
	\smallskip
	
	\textit{Case 3:} Suppose that $\mathcal{C}$ is the connected component determined by $\parens{\lambda, i}$, for some $i\in I$ and $\lambda\in\Lambda$ such that column $i$ has no zero entries or row $\lambda$ has no zero entries. Then the set of vertices of $\mathcal{C}$ is $\set{i}\times G\times\set{\lambda}$ and we have
	\begin{displaymath}
		\chromaticnumber{\mathcal{C}}\leqslant\abs{\set{i}\times G\times\set{\lambda}}=\abs{G}\leqslant\max\set{2,z_1,\ldots,z_n}\abs{G}.
	\end{displaymath}
	
	Therefore cases 1, 2 and 3 allow us to conclude that
	\begin{align*}
		\MoveEqLeft\chromaticnumber{\commgraph{\Reeszero{G}{I}{\Lambda}{P}}}\\
		&=\max\gset{\chromaticnumber{\mathcal{C}}}{\mathcal{C} \text{ is a connected component of } \commgraph{\Reeszero{G}{I}{\Lambda}{P}}}\\
		&\leqslant \max\set{2,z_1,\ldots,z_n}\abs{G}.\qedhere
	\end{align*}
\end{proof}

We note that, when $\commgraph{\Reeszero{G}{I}{\Lambda}{P}}$ is not connected and $P$ contains only one \zeroclosuresub, then $\commgraph{\Reeszero{G}{I}{\Lambda}{P}}$ contains only one connected component determined by a \zeroclosuresub. Consequently, $\commgraph{\Reeszero{G}{I}{\Lambda}{P}}$ contains no connected components determined by (distinct) $0$-closure submatrices of $P$. This implies that case 2 of part 2 of the proof does not exist in this situation. Thus, if $z$ is the number of zeros in the unique \zeroclosuresub, then $z\abs{G}$ is an upper bound for $\chromaticnumber{
	\commgraph{\Reeszero{G}{I}{\Lambda}{P}}}$ (which is better than $\max\set{2,z}\abs{G}$, the upper bound provided by the statement of Theorem~\ref{0Rees: chromatic number edges}).

In order to provide a second method to determine an upper bound for the chromatic number of $\commgraph{\Reeszero{G}{I}{\Lambda}{P}}$, we need to prove two lemmata: Lemma~\ref{0Rees: chromatic number odd cycle lemma} characterizes the $0$-closure submatrices of $P$ that determine the connected components of $\commgraph{\Reeszero{G}{I}{\Lambda}{P}}$ that are odd cycles; and Lemma~\ref{0Rees: degree in G(I,^)} shows how to calculate the degree of a vertex of $\simplifiedgraph{I}{\Lambda}{P}$.

\begin{lemma}\label{0Rees: chromatic number odd cycle lemma}
	Let $Q=\Psubmatrix{P}{I_Q}{\Lambda_Q}$ be a \zeroclosuresub. If the subgraph of $\simplifiedgraph{I}{\Lambda}{P}$ induced by $I_Q\times\Lambda_Q$ is an odd cycle, then $Q=O_{\abs{\Lambda_Q}\times\abs{I_Q}}$.
\end{lemma}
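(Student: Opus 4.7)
The plan is to translate the hypothesis that the subgraph $\mathcal{D}$ of $\simplifiedgraph{I}{\Lambda}{P}$ induced on $I_Q \times \Lambda_Q$ is an odd cycle into a degree equation, and then read off the structure of $Q$. First, by Lemma~\ref{0Rees: connected component G(I,^,P) G(M0[...])}(1), $\mathcal{D}$ is a connected component of $\simplifiedgraph{I}{\Lambda}{P}$, so the degree of any vertex of $\mathcal{D}$ in $\simplifiedgraph{I}{\Lambda}{P}$ coincides with its degree in $\mathcal{D}$, which is $2$. For each $\lambda \in \Lambda_Q$ and $i \in I_Q$, let $z_\lambda$ and $z'_i$ denote the number of zero entries in row $\lambda$ and in column $i$ of $P$, respectively; since $Q$ is a $0$-closure submatrix, all such zeros in fact lie inside $Q$, and by Lemma~\ref{0Rees: every row/column of 0-closure submatrix has a 0} we have $z_\lambda, z'_i \geqslant 1$. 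Enumerating, directly from Definition~\ref{0Rees: definition G(I,^,P)}, the pairs $\parens{j,\mu}$ with $p_{\lambda j} = p_{\mu i} = 0$ and subtracting the pair $\parens{i,\lambda}$ when it is counted (which happens exactly when $p_{\lambda i} = 0$) gives
\begin{displaymath}
	\degree{\simplifiedgraph{I}{\Lambda}{P}}{\parens{i,\lambda}} = z_\lambda \, z'_i - \epsilon_{\lambda i},
\end{displaymath}
where $\epsilon_{\lambda i} = 1$ if $p_{\lambda i} = 0$ and $\epsilon_{\lambda i} = 0$ otherwise. Equating this to $2$ yields, for every $\parens{i,\lambda} \in I_Q \times \Lambda_Q$, one of the two alternatives: either $p_{\lambda i} = 0$ and $z_\lambda z'_i = 3$, or $p_{\lambda i} \neq 0$ and $z_\lambda z'_i = 2$.

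Next, I would rule out the possibility that $\abs{I_Q} \geqslant 2$ and $\abs{\Lambda_Q} \geqslant 2$ simultaneously. If some $\parens{i,\lambda}$ satisfies $p_{\lambda i} \neq 0$, the second alternative forces $\set{z_\lambda, z'_i} = \set{1,2}$; assuming $z_\lambda = 1$ and $z'_i = 2$, there is $\mu \in \Lambda_Q \setminus \set{\lambda}$ with $p_{\mu i} = 0$, and applying the degree equation at $\parens{i,\mu}$ forces $2 z_\mu = 3$, which is impossible. Hence every entry of $Q$ is zero, and then $z_\lambda = \abs{I_Q}$ and $z'_i = \abs{\Lambda_Q}$ turn the equation into $\abs{I_Q} \cdot \abs{\Lambda_Q} = 3$, again contradicting $\abs{I_Q}, \abs{\Lambda_Q} \geqslant 2$.

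Thus $\abs{I_Q} = 1$ or $\abs{\Lambda_Q} = 1$; by symmetry I would assume $I_Q = \set{i}$. For every $\lambda \in \Lambda_Q$ the quantity $z_\lambda$ is at most $1$ (row $\lambda$ of $Q$ has only one entry) and at least $1$ (by Lemma~\ref{0Rees: every row/column of 0-closure submatrix has a 0}), so $z_\lambda = 1$ and its unique zero is precisely $p_{\lambda i}$. Consequently $p_{\lambda i} = 0$ for all $\lambda \in \Lambda_Q$, which gives $Q = O_{\abs{\Lambda_Q} \times \abs{I_Q}}$ as required. The main obstacle in carrying out this plan is the careful bookkeeping in the degree formula above, particularly the correction term arising from the pair $\parens{i,\lambda}$ itself; once that formula is established the case analysis is short.
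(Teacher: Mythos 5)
Your proof is correct, but it follows a genuinely different route from the paper's. The paper first notes that an odd cycle has an odd number of vertices, so $\abs{I_Q}$ and $\abs{\Lambda_Q}$ are both odd; it then rules out $Q$ being regular by observing that the induced subgraph is isomorphic to $\commgraph{\Reeszero{\set{1_G}}{I_Q}{\Lambda_Q}{Q}}$ and invoking the girth theorem (Theorem~\ref{0Rees: girth}) to force the cycle to be a triangle, and finally handles the non-regular case by a row-of-zeros argument split on whether $\abs{I_Q}\geqslant 3$. You instead exploit the fact that every vertex of a cycle has degree $2$ and feed this into the degree formula $\degree{\simplifiedgraph{I}{\Lambda}{P}}{\parens{i,\lambda}}=z_\lambda z'_i-\epsilon_{\lambda i}$, which you derive inline but which is exactly the content of Lemma~\ref{0Rees: degree in G(I,^)} (stated immediately after this lemma in the paper, so you could simply cite it). Your bookkeeping is right: the $0$-closure property guarantees all relevant zeros of $P$ lie in $Q$, the sub-case $p_{\lambda i}\neq 0$ with $\set{z_\lambda,z'_i}=\set{1,2}$ collapses to $2z_\mu=3$ (the ordering you omit is handled by the evident row--column symmetry), the all-zero sub-case gives $\abs{I_Q}\cdot\abs{\Lambda_Q}=3$, and the final one-column case closes via Lemma~\ref{0Rees: every row/column of 0-closure submatrix has a 0}. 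Your argument is more elementary and self-contained --- it avoids the girth machinery entirely and never uses the oddness of the cycle, so it in fact proves the stronger statement that any connected component of $\simplifiedgraph{I}{\Lambda}{P}$ of the form $I_Q\times\Lambda_Q$ that is a cycle forces $Q=O_{\abs{\Lambda_Q}\times\abs{I_Q}}$; the paper's version buys brevity by reusing already-established results.
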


\begin{proof}
	Let $\mathcal{C}$ be the subgraph of $\simplifiedgraph{I}{\Lambda}{P}$ induced by $I_Q\times\Lambda_Q$ (which, by Lemma~\ref{0Rees: connected component G(I,^,P) G(M0[...])}, is a connected component of $\simplifiedgraph{I}{\Lambda}{P}$) and assume that $\mathcal{C}$ is an odd cycle. Then $\mathcal{C}$ has an odd number of vertices, that is, $\abs{I_Q\times\Lambda_Q}=\abs{I_Q}\cdot\abs{\Lambda_Q}$ is odd, which implies that $\abs{ I_Q}$ and $\abs{\Lambda_Q }$ are odd.
	
	
	Assume, with the aim of obtaining a contradiction, that $Q$ is regular. It is easy to verify that $\mathcal{C}$ is isomorphic to $\commgraph{\Reeszero{\set{1_G}}{I_Q}{\Lambda_Q}{Q}}$. Consequently, $\commgraph{\Reeszero{\set{1_G}}{I_Q}{\Lambda_Q}{Q}}$ is also an odd cycle. Moreover, by Theorem~\ref{0Rees: girth}, we have that $\girth{\commgraph{\Reeszero{\set{1_G}}{I_Q}{\Lambda_Q}{Q}}}\in\set{3,4}$. Thus $\commgraph{\Reeszero{\set{1_G}}{I_Q}{\Lambda_Q}{Q}}$ (and, consequently, $\mathcal{C}$) must be a cycle of length $3$. Hence $\abs{I_Q}\cdot\abs{\Lambda_Q}=\abs{I_Q\times \Lambda_Q}=3$, which implies that $\abs{I_Q}=1$ or $\abs{\Lambda_Q}=1$. In addition, Lemma~\ref{0Rees: every row/column of 0-closure submatrix has a 0}, guarantees that each row and each column of $Q$ contains at least one zero entry. Therefore $Q=O_{\abs{\Lambda_Q}\times 1}$ or $Q=O_{1\times \abs{I_Q}}$. This implies that $Q$ is not regular, which is a contradiction.
	
	Thus $Q$ is not regular and, consequently, $Q$ contains a row of zeros or $Q$ contains a column of zeros. Assume, without loss of generality, that $Q$ contains a row of zeros. Let $\lambda\in\Lambda_Q$ be the index of that row.
	
	\smallskip
	
	\textit{Case 1:} Assume that $\abs{I_Q}\geqslant 3$. Then there exist distinct $i_1,i_2,i_3\in I_Q$. We have $p_{\lambda i_1}=p_{\lambda i_2}=p_{\lambda i_3}=0$, which implies that $\parens{i_1,\lambda}$, $\parens{i_2,\lambda}$ and $\parens{i_3,\lambda}$ are adjacent to each other (in $\simplifiedgraph{I}{\Lambda}{P}$). Hence
	\begin{displaymath}
		\parens{i_1,\lambda}-\parens{i_2,\lambda}-\parens{i_3,\lambda}-\parens{i_1,\lambda}
	\end{displaymath}
	is a cycle of length $3$ in $\mathcal{C}$. Thus $\mathcal{C}$ must be a cycle of length $3$ and, consequently, $\abs{I_Q}\cdot\abs{\Lambda_Q}=\abs{I_Q\times\Lambda_Q}=3$. Since $\abs{I_Q}\geqslant 3$, then $\abs{I_Q}=3$ and $\abs{\Lambda_Q}=1$. Thus $\Lambda_Q=\set{\lambda}$ and $Q=\Psubmatrix{P}{I_Q}{\Lambda_Q}=\Psubmatrix{P}{I_Q}{\lambda}=O_{1\times\abs{I_Q}}=O_{\abs{\Lambda_Q}\times\abs{I_Q}}$.
	
	\smallskip
	
	\textit{Case 2:} Assume that $\abs{I_Q}<3$. Due to the fact that $\abs{I_Q}$ is odd, we have $\abs{I_Q}=1$. Additionally, Lemma~\ref{0Rees: every row/column of 0-closure submatrix has a 0} ensures that all the rows of $Q$ contain at least one zero entry, which implies that $Q=O_{\abs{\Lambda_Q}\times 1}=O_{\abs{\Lambda_Q}\times\abs{I_Q}}$.
\end{proof}

\begin{lemma}\label{0Rees: degree in G(I,^)}
	Let $i\in I$ and $\lambda\in\Lambda$. Let $c_i$ (respectively, $r_\lambda$) be the number of zeros in column $i$ (respectively, row $\lambda$) of $P$. Then
	\begin{displaymath}
		\degree{\simplifiedgraph{I}{\Lambda}{P}}{\parens{i,\lambda}}=\begin{cases}
			c_ir_\lambda-1& \text{if } p_{\lambda i}=0,\\
			c_ir_\lambda& \text{if } p_{\lambda i}\in G.
		\end{cases}
	\end{displaymath}
\end{lemma}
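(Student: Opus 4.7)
The plan is to count directly the number of vertices of $\simplifiedgraph{I}{\Lambda}{P}$ that are adjacent to $(i,\lambda)$, using the definition of adjacency from Definition~\ref{0Rees: definition G(I,^,P)}: two distinct vertices $(i,\lambda)$ and $(j,\mu)$ are adjacent if and only if $p_{\lambda j}=p_{\mu i}=0$. So I need to count the pairs $(j,\mu)\in I\times\Lambda$ with $(j,\mu)\neq(i,\lambda)$ satisfying both $p_{\lambda j}=0$ and $p_{\mu i}=0$.

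First I would observe that the two conditions $p_{\lambda j}=0$ and $p_{\mu i}=0$ are independent: the first constrains only $j\in I$ and the second constrains only $\mu\in\Lambda$. By definition, the number of $j\in I$ with $p_{\lambda j}=0$ is $r_\lambda$, and the number of $\mu\in\Lambda$ with $p_{\mu i}=0$ is $c_i$. Hence the set
\begin{displaymath}
	S=\gset{(j,\mu)\in I\times\Lambda}{p_{\lambda j}=0 \text{ and } p_{\mu i}=0}
\end{displaymath}
has exactly $c_i r_\lambda$ elements.

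Next I would check whether $(i,\lambda)\in S$: this holds precisely when $p_{\lambda i}=0$ (taking $j=i$ and $\mu=\lambda$ makes both defining conditions reduce to $p_{\lambda i}=0$). So if $p_{\lambda i}=0$, then $(i,\lambda)\in S$ and the number of vertices adjacent to $(i,\lambda)$ equals $|S|-1=c_i r_\lambda-1$, since we must exclude $(i,\lambda)$ itself. If instead $p_{\lambda i}\in G$, then $(i,\lambda)\notin S$, so all $c_i r_\lambda$ pairs give distinct vertices adjacent to $(i,\lambda)$, yielding degree $c_i r_\lambda$. This gives the stated formula.

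There is no real obstacle here; the proof is a direct counting argument based solely on the definition of adjacency in $\simplifiedgraph{I}{\Lambda}{P}$ and the standard care needed to exclude $(i,\lambda)$ itself from the count when appropriate.
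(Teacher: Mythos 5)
Your proof is correct and follows essentially the same route as the paper: the paper's argument also identifies the neighbours of $(i,\lambda)$ with the set $(R_\lambda\times C_i)\setminus\{(i,\lambda)\}$, where $R_\lambda$ and $C_i$ are the index sets of the zero entries of row $\lambda$ and column $i$, and then splits into the two cases according to whether $p_{\lambda i}=0$. Your set $S$ is exactly $R_\lambda\times C_i$, so the two proofs coincide.
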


\begin{proof}
	Let $C_i=\gset{\mu\in\Lambda}{p_{\mu i}=0}$ and $R_\lambda=\gset{j\in I}{p_{\lambda j}=0}$. Then $\abs{C_i}=c_i$ and $\abs{R_\lambda}=r_\lambda$. For all $j\in I$ and $\mu\in\Lambda$ we have
	\begin{align*}
		&\parens{j,\mu} \text{ is adjacent to } \parens{i,\lambda}\\
		\iff{}& \parens{j,\mu}\neq\parens{i,\lambda} \text{ and } p_{\mu i}=p_{\lambda j}=0\\
		\iff{}& \parens{j,\mu}\neq\parens{i,\lambda} \text{ and } \mu\in C_i \text{ and } j\in R_\lambda\\
		\iff{}& \parens{j,\mu}\in \parens{R_\lambda\times C_i}\setminus\set{\parens{i,\lambda}}.
	\end{align*}
	This implies that $\degree{\simplifiedgraph{I}{\Lambda}{P}}{\parens{i,\lambda}}=\abs{\parens{R_\lambda\times C_i}\setminus\set{\parens{i,\lambda}}}$.
	
	\smallskip
	
	\textit{Case 1:} Assume that $p_{\lambda i}=0$. Then $\parens{i,\lambda}\in R_\lambda\times C_i$ and, consequently,
	\begin{displaymath}
		\degree{\simplifiedgraph{I}{\Lambda}{P}}{\parens{i,\lambda}}=\abs{\parens{R_\lambda\times C_i}\setminus\set{\parens{i,\lambda}}}=\abs{R_\lambda\times C_i}-1=\abs{R_\lambda}\cdot\abs{C_i}-1=r_\lambda c_i-1.
	\end{displaymath}
	
	\smallskip
	
	\textit{Case 2:} Assume that $p_{\lambda i}\in G$ (that is, $p_{\lambda i}\neq 0$). Then $\parens{i,\lambda}\notin R_\lambda\times C_i$ and, consequently,
	\begin{displaymath}
		\degree{\simplifiedgraph{I}{\Lambda}{P}}{\parens{i,\lambda}}=\abs{\parens{R_\lambda\times C_i}\setminus\set{\parens{i,\lambda}}}=\abs{R_\lambda\times C_i}=\abs{R_\lambda}\cdot\abs{C_i}=r_\lambda c_i.\qedhere
	\end{displaymath}
\end{proof}

\begin{theorem}\label{0Rees: chromatic number max vertex degree}
	\begin{enumerate}
		\item Suppose that $\commgraph{\Rees{G}{I}{\Lambda}{P}}$ is connected. Let $c$ (respectively, $r$) be the maximum number of zeros in a column (respectively, row) of $P$. Then
		\begin{enumerate}
			\item If there exist $i\in I$ and $\lambda\in\Lambda$ such that $p_{\lambda i}\in G$, column $i$ of $P$ has $c$ zeros and row $\lambda$ of $P$ has $r$ zeros, then
			\begin{displaymath}
				\chromaticnumber{\commgraph{\Rees{G}{I}{\Lambda}{P}}}\leqslant cr\abs{G}.
			\end{displaymath}
			
			\item Otherwise,
			\begin{displaymath}
				\chromaticnumber{\commgraph{\Rees{G}{I}{\Lambda}{P}}}\leqslant \parens{cr-1}\abs{G}.
			\end{displaymath}
		\end{enumerate}
		
		\item Suppose that $\commgraph{\Rees{G}{I}{\Lambda}{P}}$ is not connected. Let $A_1,\ldots,A_n$ be all the $0$-closure submatrices of $P$ and assume that for each $l\in\Xn$ we have $A_l=\Psubmatrix{P}{I_l}{\Lambda_l}$. For each $l\in\Xn$ let $c_l$ (respectively, $r_l$) be the maximum number of zeros in a column (respectively, row) of $A_l$, and let $z_l$ be defined as follows:
		\begin{enumerate}
			\item If $A_l= O_{\abs{\Lambda_l}\times\abs{I_l}}$ or there exist $i\in I_l$ and $\lambda\in\Lambda_l$ such that $p_{\lambda i}\in G$, column $i$ of $A_l$ has $c_l$ zeros and row $\lambda$ of $A_l$ has $r_l$ zeros, then let $z_l=c_lr_l$.
			
			\item Otherwise, let $z_l=\parens{c_lr_l-1}$.
		\end{enumerate}
		Then
		\begin{displaymath}
			\chromaticnumber{\commgraph{\Rees{G}{I}{\Lambda}{P}}}\leqslant  \max\set{2,z_1,\ldots, z_n}\abs{G}.
		\end{displaymath}
	\end{enumerate}
\end{theorem}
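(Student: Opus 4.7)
The plan is to follow the template of Theorem~\ref{0Rees: chromatic number edges}, but replace the edge-counting bound (Lemma~\ref{preli: 0Rees chromatic number}) with Brooks' theorem (Theorem~\ref{preli: ORees Brooks}) applied to the components of $\simplifiedgraph{I}{\Lambda}{P}$. Throughout, Lemma~\ref{0Rees: lemma chromatic number} lifts a chromatic-number bound on any induced subgraph of $\simplifiedgraph{I}{\Lambda}{P}$ to one on the corresponding induced subgraph of $\commgraph{\Reeszero{G}{I}{\Lambda}{P}}$ at the cost of a factor of $\abs{G}$.

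First I would prove an auxiliary lemma, analogous to Lemma~\ref{0Rees: chromatic number edges lemma}: if $Q=\Psubmatrix{P}{I_Q}{\Lambda_Q}$ is a $0$-closure submatrix with column/row maxima $c_Q,r_Q$ of zeros, and $\mathcal{D}$ denotes the connected component of $\simplifiedgraph{I}{\Lambda}{P}$ induced by $I_Q\times\Lambda_Q$, then $\chromaticnumber{\mathcal{D}}\leqslant z_Q$, where $z_Q=c_Qr_Q$ whenever $Q=O_{\abs{\Lambda_Q}\times\abs{I_Q}}$ or there exist $i\in I_Q$, $\lambda\in\Lambda_Q$ with $p_{\lambda i}\in G$, $c_i=c_Q$ and $r_\lambda=r_Q$, and $z_Q=c_Qr_Q-1$ otherwise. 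The key computation is a degree bound on $\mathcal{D}$: because $Q$ is a $0$-closure submatrix, every zero in column $i$ of $P$ (for $i\in I_Q$) and every zero in row $\lambda$ of $P$ (for $\lambda\in\Lambda_Q$) already lies in $Q$, so $c_i\leqslant c_Q$ and $r_\lambda\leqslant r_Q$ for every vertex $\parens{i,\lambda}$ of $\mathcal{D}$. Plugging this into Lemma~\ref{0Rees: degree in G(I,^)} gives $\maxdegree{\mathcal{D}}\leqslant c_Qr_Q$ in general, with $\maxdegree{\mathcal{D}}\leqslant c_Qr_Q-1$ whenever no non-zero entry of $Q$ attains both maxima.

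The main obstacle will be handling the two exceptional cases of Brooks' theorem. If $\mathcal{D}$ is an odd cycle, Lemma~\ref{0Rees: chromatic number odd cycle lemma} forces $Q=O_{\abs{\Lambda_Q}\times\abs{I_Q}}$, and then $\mathcal{D}$ is the complete graph $K_{\abs{I_Q}\cdot\abs{\Lambda_Q}}$, which coincides with $K_3$; in either reading $\chromaticnumber{\mathcal{D}}=\abs{I_Q}\cdot\abs{\Lambda_Q}=c_Qr_Q=z_Q$. If $\mathcal{D}$ is complete with at least two vertices, then for any two distinct vertices $\parens{i,\lambda}$ and $\parens{j,\lambda}$ in $I_Q\times\Lambda_Q$ (or analogously with coincident first coordinate) the adjacency criterion in $\simplifiedgraph{I}{\Lambda}{P}$ forces $p_{\lambda i}=p_{\lambda j}=0$, whence every row (or column) of $Q$ is all zeros, so once again $Q=O_{\abs{\Lambda_Q}\times\abs{I_Q}}$ and $\chromaticnumber{\mathcal{D}}=c_Qr_Q=z_Q$. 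Thus both exceptional cases land strictly inside sub-case (a) of the auxiliary lemma, and outside sub-case (a) Brooks applies directly to yield $\chromaticnumber{\mathcal{D}}\leqslant\maxdegree{\mathcal{D}}\leqslant c_Qr_Q-1=z_Q$.

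To assemble the theorem, Lemma~\ref{0Rees: lemma chromatic number} converts $\chromaticnumber{\mathcal{D}}\leqslant z_Q$ into $\chromaticnumber{\mathcal{C}}\leqslant z_Q\abs{G}$ for the connected component $\mathcal{C}$ of $\commgraph{\Reeszero{G}{I}{\Lambda}{P}}$ determined by $Q$. For part 1, Theorem~\ref{0Rees: connectedness} ensures that the entire graph is the unique such component (with $Q=P$), and regularity of $P$ rules out the all-zero case, so the sub-cases of the auxiliary lemma line up exactly with cases (a) and (b) of the theorem. For part 2, the other types of connected components (classified by Theorem~\ref{0Rees: finding connected components}) are easy to control: those determined by two distinct $0$-closure submatrices have chromatic number at most $2\abs{G}$ by the bipartiteness argument used in Case~2 of the proof of Theorem~\ref{0Rees: chromatic number edges}, and those determined by a single entry $\parens{\lambda,i}$ have at most $\abs{G}$ vertices. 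Taking the maximum over all components of $\commgraph{\Reeszero{G}{I}{\Lambda}{P}}$ yields the advertised bound $\max\set{2,z_1,\ldots,z_n}\abs{G}$.
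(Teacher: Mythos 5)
Your proposal is correct and follows essentially the same route as the paper: the same auxiliary lemma for components determined by a single $0$-closure submatrix, the same use of Lemma~\ref{0Rees: degree in G(I,^)} to bound $\maxdegree{\mathcal{D}}$, the same dispatch of Brooks' exceptional cases via Lemma~\ref{0Rees: chromatic number odd cycle lemma} and the observation that completeness forces $Q=O_{\abs{\Lambda_Q}\times\abs{I_Q}}$, and the same treatment of the remaining component types before lifting by $\abs{G}$ with Lemma~\ref{0Rees: lemma chromatic number}. One wording to tighten: Brooks' theorem is needed not only outside sub-case (a) but also inside it whenever $Q\neq O_{\abs{\Lambda_Q}\times\abs{I_Q}}$ (to pass from $\maxdegree{\mathcal{D}}= c_Qr_Q$ to $\chromaticnumber{\mathcal{D}}\leqslant c_Qr_Q$), which your contrapositive argument does license since $Q\neq O_{\abs{\Lambda_Q}\times\abs{I_Q}}$ rules out both exceptional cases.
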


\begin{proof}
	We begin by proving the following lemma.
	
	\begin{lemma}\label{0Rees: chromatic number max vertex degree lemma}
		Let $Q=\Psubmatrix{P}{I_Q}{\Lambda_Q}$ be a \zeroclosuresub\ and let $\mathcal{C}$ be the connected component of $\commgraph{\Reeszero{G}{I}{\Lambda}{P}}$ determined by $Q$. Let $c_Q$ (respectively, $r_Q$) be the maximum number of zeros in a column (respectively, row) of $Q$. Then
		\begin{enumerate}
			\item If $Q= O_{\abs{\Lambda_Q}\times\abs{I_Q}}$ or there exist $i\in I_Q$ and $\lambda\in\Lambda_Q$ such that $p_{\lambda i}\in G$, column $i$ of $Q$ has $c_Q$ zeros and row $\lambda$ of $Q$ has $r_Q$ zeros, then $\chromaticnumber{\mathcal{C}}\leqslant c_Qr_Q\abs{G}$.
			
			\item Otherwise, $\chromaticnumber{\mathcal{C}}\leqslant \parens{c_Qr_Q-1}\abs{G}$.
		\end{enumerate}
	\end{lemma}
	
	\begin{proof}
		Let $\mathcal{D}$ be the subgraph of $\simplifiedgraph{I}{\Lambda}{P}$ induced by $I_Q\times\Lambda_Q$. For each $i\in I_Q$ (respectively, $\lambda\in\Lambda_Q$) let $c_i$ (respectively, $r_\lambda$) be the number of zeros in column $i$ (respectively, row $\lambda$) of $Q$. Then $c_Q=\max\gset{c_i}{i\in I_Q}$ and $r_Q=\max\gset{r_\lambda}{\lambda\in\Lambda_Q}$.
		
		We note that, as a consequence of the fact that $Q$ is a \zeroclosuresub, then there are no zero entries in the rows and columns of $P$ intersecting $Q$ that are not in $Q$. This implies that for all $i\in I_Q$ (respectively, $\lambda\in\Lambda_Q$) the number of zeros of column $i$ (respectively, row $\lambda$) of $Q$ is equal to the number of zeros of column $i$ (respectively, row $\lambda$) of $P$. Thus for all $i\in I_Q$ (respectively, $\lambda\in\Lambda_Q$) we have that $c_i$ (respectively, $r_\lambda$) is equal to the number of zeros in column $i$ (respectively, row $\lambda$) of $P$.
		
		\smallskip
		
		\textit{Case 1:} Assume that $Q=O_{\abs{\Lambda_Q}\times\abs{I_Q}}$. Then each row (respectively, each column) of $Q$ has $\abs{I_Q}$ (respectively, $\abs{\Lambda_Q}$) zero entries and, consequently, $r_Q=\abs{I_Q}$ and $c_Q=\abs{\Lambda_Q}$. Furthermore, it follows from Lemma~\ref{0Rees: diameter 0 or 1 or 2} that $\diam{\mathcal{C}}\in\set{0,1}$, which implies that $\mathcal{C}$ is isomorphic to $K_{\abs{I_Q\times G\times\Lambda_Q}}$. Consequently,
		\begin{displaymath}
			\chromaticnumber{\mathcal{C}}=\abs{I_Q\times G\times \Lambda_Q}=\abs{I_Q}\cdot\abs{G}\cdot\abs{\Lambda_Q}=r_Q\abs{G}c_Q=c_Qr_Q\abs{G}.
		\end{displaymath}
		
		\smallskip
		
		\textit{Case 2:} Assume that $Q\neq O_{\abs{\Lambda_Q}\times\abs{I_Q}}$. Then there exist $j\in I_Q$ and $\mu\in\Lambda_Q$ such that $p_{\mu j}\neq 0$. Since $Q$ is a \zeroclosuresub, then Lemma~\ref{0Rees: every row/column of 0-closure submatrix has a 0} ensures that row $\mu$ and column $j$ of $Q$ both contain at least one zero entry. Consequently, there exist $j'\in I_Q$ and $\mu'\in\Lambda_Q$ such that $p_{\mu j'}=p_{\mu' j}=0$. As a consequence of the fact that $p_{\mu j}\neq 0$, then we have that $\parens{j,\mu'}$ and $\parens{j',\mu}$ are not adjacent (in $\simplifiedgraph{I}{\Lambda}{P}$), which implies that $\mathcal{D}$ is not a complete graph. Furthermore, it follows from part 1 of Lemma~\ref{0Rees: connected component G(I,^,P) G(M0[...])} that $\mathcal{D}$ is connected, and it follows from Lemma~\ref{0Rees: chromatic number odd cycle lemma} and the fact that $Q\neq O_{\abs{\Lambda_Q}\times\abs{I_Q}}$ that $\mathcal{D}$ is not an odd cycle. Then we can use Brooks' Theorem (Theorem~\ref{preli: ORees Brooks}) to conclude that $\chromaticnumber{\mathcal{D}}\leqslant\maxdegree{\mathcal{D}}$. Since the vertex set of $\mathcal{C}$ is
		\begin{displaymath}
			I_Q\times G\times\Lambda_Q=\bigcup_{\parens{i,\lambda}\in I_Q\times\Lambda_Q}\set{i}\times G\times\set{\lambda}
		\end{displaymath}
		and $I_Q\times\Lambda_Q$ is the vertex set of $\mathcal{D}$, then we can use Lemma~\ref{0Rees: lemma chromatic number} to see that
		\begin{displaymath}
			\chromaticnumber{\mathcal{C}}\leqslant \chromaticnumber{\mathcal{D}}\cdot\abs{G}\leqslant\maxdegree{\mathcal{D}}\cdot\abs{G}.
		\end{displaymath}
		In order to finish this case we only need to determine $\maxdegree{\mathcal{D}}$, which is done in the following two sub-cases.
		
		\smallskip
		
		\textsc{Sub-case 1:} Assume that there exist $i\in I_Q$ and $\lambda\in\Lambda_Q$ such that $p_{\lambda i}\in G$, $c_i=c_Q$ and $r_\lambda=r_Q$. We are going to verify that $\maxdegree{\mathcal{D}}=c_Qr_Q$. Let $i'\in I_Q$ and $\lambda'\in\Lambda_Q$. By Lemma~\ref{0Rees: degree in G(I,^)} we have
		\begin{displaymath}
			\degree{\simplifiedgraph{I}{\Lambda}{P}}{\parens{i',\lambda'}}\leqslant c_{i'}r_{\lambda'}\leqslant c_Qr_Q=c_ir_\lambda=\degree{\simplifiedgraph{I}{\Lambda}{P}}{\parens{i,\lambda}}.
		\end{displaymath}
		Furthermore, since $\mathcal{D}$ is a connected component of $\simplifiedgraph{I}{\Lambda}{P}$, we have that $\parens{i',\lambda'}$ and $\parens{i,\lambda}$ are only adjacent to vertices of $\mathcal{D}$. Hence $\degree{\simplifiedgraph{I}{\Lambda}{P}}{\parens{i',\lambda'}}=\degree{\mathcal{D}}{\parens{i',\lambda'}}$ and $\degree{\simplifiedgraph{I}{\Lambda}{P}}{\parens{i,\lambda}}=\degree{\mathcal{D}}{\parens{i,\lambda}}$ and, consequently, $\degree{\mathcal{D}}{\parens{i',\lambda'}}\leqslant\degree{\mathcal{D}}{\parens{i,\lambda}}$.
		
		Due to the fact that $i'$ and $\lambda'$ are arbitrary elements of $I_Q$ and $\Lambda_Q$, we can conclude that $\maxdegree{\mathcal{D}}=\degree{\mathcal{D}}{\parens{i,\lambda}}=c_Qr_Q$ and, consequently, $\chromaticnumber{\mathcal{C}}\leqslant c_Qr_Q\abs{G}$.
		
		\smallskip
		
		\textsc{Sub-case 2:} Assume that for all $i\in I_Q$ and $\lambda\in\Lambda_Q$ we have $p_{\lambda i}=0$ or $c_i<c_Q$ or $r_\lambda<r_Q$. We intend to show that $\maxdegree{\mathcal{D}}=c_Qr_Q-1$. Let $i\in I_Q$ and $\lambda\in\Lambda_Q$ be such that $c_i=c_Q$ and $r_\lambda=r_Q$. Then we must have $p_{\lambda i}=0$. Let $i'\in I_Q$ and $\lambda'\in\Lambda_Q$.
		
		If $p_{\lambda' i'}=0$, then Lemma~\ref{0Rees: degree in G(I,^)} implies that
		\begin{displaymath}
			\degree{\simplifiedgraph{I}{\Lambda}{P}}{\parens{i',\lambda'}}=c_{i'}r_{\lambda'}-1\leqslant c_Qr_Q-1=c_ir_\lambda-1=\degree{\simplifiedgraph{I}{\Lambda}{P}}{\parens{i,\lambda}}.
		\end{displaymath}
		
		If $p_{\lambda' i'}\in G$, then we must have $c_{i'}<c_Q$ or $r_{\lambda'}<r_Q$. Assume, without loss of generality, that $c_{i'}<c_Q$. Due to the fact that $r_Q\geqslant 1$ (because, by Lemma~\ref{0Rees: every row/column of 0-closure submatrix has a 0}, each row of $Q$ contains at least one zero entry), and due to Lemma~\ref{0Rees: degree in G(I,^)}, we have that
		\begin{align*}
			\degree{\simplifiedgraph{I}{\Lambda}{P}}{\parens{i',\lambda'}}&=c_{i'}r_{\lambda'}\leqslant\parens{c_Q-1}r_Q=c_Qr_Q-r_Q\leqslant c_Qr_Q-1=c_ir_\lambda-1\\
			&= \degree{\simplifiedgraph{I}{\Lambda}{P}}{\parens{i,\lambda}}.
		\end{align*}
		
		As a consequence of the fact that $\mathcal{D}$ is a connected component of $\simplifiedgraph{I}{\Lambda}{P}$, we have that $\degree{\simplifiedgraph{I}{\Lambda}{P}}{\parens{i',\lambda'}}=\degree{\mathcal{D}}{\parens{i',\lambda'}}$ and $\degree{\simplifiedgraph{I}{\Lambda}{P}}{\parens{i,\lambda}}=\degree{\mathcal{D}}{\parens{i,\lambda}}$. Therefore we have $\degree{\mathcal{D}}{\parens{i',\lambda'}}\leqslant\degree{\mathcal{D}}{\parens{i,\lambda}}$.
		Since $i'$ and $\lambda'$ are arbitrary elements of $I_Q$ and $\Lambda_Q$, we can conclude that $\maxdegree{\mathcal{D}}=\degree{\mathcal{D}}{\parens{i,\lambda}}=c_Qr_Q-1$ and, consequently, $\chromaticnumber{\mathcal{C}}\leqslant \parens{c_Qr_Q-1}\abs{G}$. 
	\end{proof}
	
	Now we prove statements 1 and 2 of Theorem~\ref{0Rees: chromatic number max vertex degree}.
	
	\medskip
	
	\textbf{Part 1.} Suppose that $\commgraph{\Reeszero{G}{I}{\Lambda}{P}}$ is connected. By Theorem~\ref{0Rees: connectedness} we have that $P=\Psubmatrix{P}{I}{\Lambda}$ is a \zeroclosuresub\ and we have that the connected component of $\commgraph{\Reeszero{G}{I}{\Lambda}{P}}$ determined by $P$ (whose vertex set is $I\times G\times\Lambda$) is $\commgraph{\Reeszero{G}{I}{\Lambda}{P}}$ itself. Furthermore, $P$ is regular, which implies that $P\neq O_{\abs{\Lambda}\times\abs{I}}$. Thus Lemma~\ref{0Rees: chromatic number max vertex degree lemma} implies that that:
	\begin{enumerate}
		\item[a)] If there exist $i\in I$ and $\lambda\in\Lambda$ such that $p_{\lambda i}\in G$, column $i$ of $P$ has $c$ zeros and row $\lambda$ of $P$ has $r$ zeros, then $\chromaticnumber{\Reeszero{G}{I}{\Lambda}{P}}\leqslant cr\abs{G}$.
		
		\item[b)] Otherwise, $\chromaticnumber{\Reeszero{G}{I}{\Lambda}{P}}\leqslant \parens{cr-1}\abs{G}$.
	\end{enumerate}
	
	\medskip
	
	\textbf{Part 2:} Suppose that $\commgraph{\Reeszero{G}{I}{\Lambda}{P}}$ is not connected. Let $\mathcal{C}$ be a connected component of $\commgraph{\Reeszero{G}{I}{\Lambda}{P}}$. Then we have three possibilities for $\mathcal{C}$:
	
	\smallskip
	
	\textit{Case 1:} Suppose that $\mathcal{C}$ is the connected component determined by $A_l$, for some $l\in\Xn$. Then, by Lemma~\ref{0Rees: chromatic number max vertex degree lemma}, we have that:
	\begin{enumerate}
		\item[a)] If $A_l= O_{\abs{\Lambda_l}\times\abs{I_l}}$ or there exist $i\in I_l$ and $\lambda\in\Lambda_l$ such that $p_{\lambda i}\in G$, column $i$ of $A_l$ has $c_l$ zeros and row $\lambda$ of $A_l$ has $r_l$ zeros, then
		\begin{displaymath}
			\chromaticnumber{\mathcal{C}}\leqslant c_lr_l\abs{G}=z_l\abs{G}\leqslant\max\set{2,z_1,\ldots,z_n}\abs{G}.
		\end{displaymath}
		
		\item[b)] Otherwise,
		\begin{displaymath}
			\chromaticnumber{\mathcal{C}}\leqslant \parens{c_lr_l-1}\abs{G}=z_l\abs{G}\leqslant\max\set{2,z_1,\ldots,z_n}\abs{G}.
		\end{displaymath}
	\end{enumerate}
	
	\smallskip
	
	\textit{Case 2:} Suppose that $\mathcal{C}$ is the connected component determined by $A_l$ and $A_m$, for some distinct $l,m\in\Xn$, and let $\mathcal{D}$ be the subgraph of $\simplifiedgraph{I}{\Lambda}{P}$ induced by $\parens{I_l\times\Lambda_m} \cup \parens{I_m\times\Lambda_l}$. By Lemma~\ref{0Rees: lemma connected component determined Q M}, we have that the vertices of $\mathcal{D}$ that belong to $I_l\times\Lambda_m$ (respectively, $I_m\times\Lambda_l$) are only adjacent to vertices that belong to $I_m\times\Lambda_l$ (respectively, $I_l\times\Lambda_m$). Therefore $\chromaticnumber{\mathcal{D}}\leqslant 2$.
	
	Due to the fact that the vertex set of $\mathcal{C}$ is
	\begin{displaymath}
		\parens{I_l\times G\times\Lambda_m}\cup\parens{I_m\times G\times\Lambda_l}=\bigcup_{\parens{i,\lambda}\in \parens{I_l\times\Lambda_m}\cup\parens{I_m\times\Lambda_l}}\set{i}\times G\times\set{\lambda}
	\end{displaymath}
	and the vertex set of $\mathcal{D}$ is $\parens{I_l\times\Lambda_m}\cup\parens{I_m\times\Lambda_l}$, then we can use Lemma~\ref{0Rees: lemma chromatic number} to conclude that
	\begin{displaymath}
		\chromaticnumber{\mathcal{C}}\leqslant\chromaticnumber{\mathcal{D}}\cdot\abs{G}\leqslant 2\abs{G} \leqslant\max\set{2,z_1,\ldots,z_n}\abs{G}.
	\end{displaymath}
	
	\smallskip
	
	\textit{Case 3:} Suppose that $\mathcal{C}$ is the connected component determined by $\parens{\lambda,i}$, for some $i\in I$ and $\lambda\in\Lambda$ such that column $i$ has no zero entries or row $\lambda$ has no zero entries. Then the vertex set of $\mathcal{C}$ is $\set{i}\times G\times\set{\lambda}$ and we have
	\begin{displaymath}
		\chromaticnumber{\mathcal{C}}\leqslant\abs{\set{i}\times G\times\set{\lambda}}=\abs{G}\leqslant\max\set{2,z_1,\ldots,z_n}\abs{G}.
	\end{displaymath}
	
	It follows from cases 1, 2 and 3 that
	\begin{align*}
		\MoveEqLeft\chromaticnumber{\commgraph{\Reeszero{G}{I}{\Lambda}{P}}}\\
		&=\max\gset{\chromaticnumber{\mathcal{C}}}{\mathcal{C} \text{ is a connected component of } \commgraph{\Reeszero{G}{I}{\Lambda}{P}}}\\
		&\leqslant \max\set{2,z_1,\ldots,z_n}\abs{G}.\qedhere
	\end{align*}
\end{proof}

When $\commgraph{\Reeszero{G}{I}{\Lambda}{P}}$ is not connected and $P$ contains only one $0$-closure submatrix $A_1$ of $P$, then the previous theorem states that $\max\set{2,z_1}\abs{G}$ is an upper bound for $\chromaticnumber{\commgraph{\Reeszero{G}{I}{\Lambda}{P}}}$. However, since $P$ contains only one \zeroclosuresub, then this implies that $\commgraph{\Reeszero{G}{I}{\Lambda}{P}}$ does not contain multiple connected components determined by (distinct) $0$-closure submatrices of $P$. Consequently, case 2 of part 2 of the proof of Theorem~\ref{0Rees: chromatic number max vertex degree} is not a possibility. Thus we can simplify the upper bound of $\chromaticnumber{\commgraph{\Reeszero{G}{I}{\Lambda}{P}}}$ to $z_1\abs{G}$.

\begin{example}\label{0Rees: example chromatic number}
	The aim of this example is to demonstrate that sometimes Theorem~\ref{0Rees: chromatic number edges} provides a smaller upper bound for the chromatic number than \ref{0Rees: chromatic number max vertex degree}), and sometimes it provides a greater upper bound.
	
	Let $G$ be a group, let $I=\set{1,2,3,4}$ and $\Lambda=\set{1,2,3}$, and let $P$ and $P'$ be $\Lambda\times I$ matrices such that
	\begin{displaymath}
		\timeszero{P}=\begin{bNiceMatrix}[first-row,first-col]
			&1&2&3&4\\
			1&0&0&\times&\times\\
			2&\times&0&0&\times\\
			3&\times&\times&0&\times
		\end{bNiceMatrix}\quad
		\timeszero{P'}=\begin{bNiceMatrix}[first-row,first-col]
			&1&2&3&4\\
			1&0&0&0&\times\\
			2&0&\times&\times&\times\\
			3&\times&\times&\times&\times
		\end{bNiceMatrix}.
	\end{displaymath}
	
	It is easy to see that both $P$ and $P'$ are regular. Additionally, we can easily see that the unique \zeroclosuresub\ is $\Psubmatrix{P}{\set{1,2,3}}{\set{1,2,3}}$ and the unique $0$-closure submatrix of $P'$ is $\Psubmatrix{P'}{\set{1,2,3}}{\set{1,2}}$.
	
	Since $P$ has $5$ zero entries, then Theorem~\ref{0Rees: chromatic number edges} implies that
	\begin{displaymath}
		\chromaticnumber{\commgraph{\Reeszero{G}{I}{\Lambda}{P}}}\leqslant \max\set{2,5}= 5\abs{G}.
	\end{displaymath}
	Furthermore, it is easy to verify that the maximum number of zeros of a row (respectively, column) of $\Psubmatrix{P}{\set{1,2,3}}{\set{1,2,3}}$ is $2$. We have that $p_{1 3}\in G$ and both row $1$ and column $3$ of $\Psubmatrix{P}{\set{1,2,3}}{\set{1,2,3}}$ have exactly two zero entries. Then, by Theorem~\ref{0Rees: chromatic number max vertex degree}, we have
	\begin{displaymath}
		\chromaticnumber{\commgraph{\Reeszero{G}{I}{\Lambda}{P}}}\leqslant \max\set{2,2\cdot 2}= 4\abs{G}.
	\end{displaymath}
	
	We have that $P'$ has $4$ zero entries. Hence Theorem~\ref{0Rees: chromatic number edges} implies that
	\begin{displaymath}
		\chromaticnumber{\commgraph{\Reeszero{G}{I}{\Lambda}{P'}}}\leqslant \max\set{2,4}= 4\abs{G}.
	\end{displaymath}
	In addition, the maximum number of zeros of a row (and of a column) of $\Psubmatrix{P'}{\set{1,2,3}}{\set{1,2}}$ is $3$ (respectively, $2$). We have that all the entries of $\Psubmatrix{P'}{\set{1,2,3}}{\set{1,2}}$ which are in a row with three zeros are zero entries. Consequently, Theorem~\ref{0Rees: chromatic number max vertex degree} guarantees that
	\begin{displaymath}
		\chromaticnumber{\commgraph{\Reeszero{G}{I}{\Lambda}{P'}}}\leqslant \max\set{2,3\cdot 2-1}= 5\abs{G}.
	\end{displaymath}
	
	We can see that, in the case of $\chromaticnumber{\commgraph{\Reeszero{G}{I}{\Lambda}{P}}}$, Theorem~\ref{0Rees: chromatic number max vertex degree} gives the smaller upper bound, and in the case of $\chromaticnumber{\commgraph{\Reeszero{G}{I}{\Lambda}{P'}}}$, Theorem~\ref{0Rees: chromatic number edges} gives the smaller upper bound.
\end{example}

\section{Knit degree of a 0-Rees matrix semigroup over a group}\label{sec: knit degree 0Rees}

Let $G$ be a finite group, let $I$ and $\Lambda$ be finite index sets and let $P$ be a regular $\Lambda \times I$ matrix whose entries are elements of $G^0$. Throughout this section we are going to assume that $P$ contains at least one zero entry.

The aim of this section is to ascertain under which conditions $\commgraph{\Reeszero{G}{I}{\Lambda}{P}}$ contains left paths and, in that case, determine the length of the shortest left paths.

\begin{theorem}\label{0Rees: knit degree}
	$\commgraph{\Reeszero{G}{I}{\Lambda}{P}}$ contains left paths if and only if at least one of the following conditions is satisfied:
	\begin{enumerate}
		\item $\abs{G}>1$.
		\item $O_{1\times 2}$ is a \submatrix\ of $P$.
		\item $O_{2\times 1}$ is a \submatrix\ of $P$.
	\end{enumerate}
	Furthermore, if $\commgraph{\Reeszero{G}{I}{\Lambda}{P}}$ contains left paths, then $\knitdegree{\commgraph{\Reeszero{G}{I}{\Lambda}{P}}}=1$.
\end{theorem}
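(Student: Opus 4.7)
The plan is to prove the biconditional and the knit-degree statement simultaneously by (i) exhibiting, in each of the three cases listed, a left path of length $1$ (which is the smallest possible length because a left path has distinct endpoints), and (ii) showing by contradiction that if none of the three conditions holds, no left path exists at all. Notice that a path of length $1$ from $a$ to $b$ is a left path precisely when $a\neq b$, $a\sim b$, $aa=ba$, and $ab=bb$.

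For the reverse direction, fix $i\in I$ and $\lambda\in\Lambda$ with $p_{\lambda i}=0$ (this exists since $P$ has at least one zero entry). In Case 1, pick distinct $x,y\in G$ and set $a=(i,x,\lambda)$, $b=(i,y,\lambda)$; by the multiplication rule $aa=ab=ba=bb=0$, and $a\sim b$ by Lemma~\ref{0Rees: commutativity}. In Case 2, choose distinct $i,j\in I$ and $\lambda\in\Lambda$ with $p_{\lambda i}=p_{\lambda j}=0$ and set $a=(i,1_G,\lambda)$, $b=(j,1_G,\lambda)$; again all four products vanish and $a\sim b$. Case 3 is symmetric with $a=(i,1_G,\lambda)$, $b=(i,1_G,\mu)$ where $p_{\lambda i}=p_{\mu i}=0$. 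In every case we obtain a left path of length $1$, giving $\knitdegree{\commgraph{\Reeszero{G}{I}{\Lambda}{P}}}\leqslant 1$; combined with the trivial lower bound (endpoints are distinct), equality follows.

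For the forward direction I argue the contrapositive: suppose $|G|=1$ (write $G=\set{1_G}$) and neither $O_{1\times 2}$ nor $O_{2\times 1}$ is a \submatrix\ of $P$, so every row and every column of $P$ contains at most one zero entry. Suppose, towards a contradiction, that $x_1,\ldots,x_n$ is a left path with $x_1=(i_1,1_G,\lambda_1)\neq(i_n,1_G,\lambda_n)=x_n$. The left-path conditions include $x_1x_1=x_nx_1$ and $x_1x_n=x_nx_n$. By Lemma~\ref{0Rees: p=0 <=> p'=0} each of these equations has both sides zero or both sides non-zero. If $x_1x_1\neq 0$, then $x_1x_1=x_nx_1$ reads $(i_1,p_{\lambda_1 i_1},\lambda_1)=(i_n,p_{\lambda_n i_1},\lambda_1)$, forcing $i_1=i_n$; likewise from $x_1x_n=x_nx_n\neq 0$ we would get $\lambda_1=\lambda_n$, contradicting $x_1\neq x_n$ (since $G$ is trivial). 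Hence both products are zero, so $p_{\lambda_1 i_1}=p_{\lambda_n i_1}=0$ and $p_{\lambda_1 i_n}=p_{\lambda_n i_n}=0$. The first pair puts two zeros in column $i_1$, forcing $\lambda_1=\lambda_n$; the second pair then puts two zeros in row $\lambda_1=\lambda_n$, forcing $i_1=i_n$. This contradicts $x_1\neq x_n$.

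The main obstacle is the forward direction, and in particular the case split on whether the products $x_1x_1$ and $x_nx_n$ are zero: once this is organized correctly, the at-most-one-zero-per-row-and-per-column hypothesis delivers the contradiction almost mechanically. Everything else is a direct verification from Lemma~\ref{0Rees: commutativity} and the multiplication rule in $\Reeszero{G}{I}{\Lambda}{P}$.
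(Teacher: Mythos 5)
Your reverse direction --- the three explicit left paths of length $1$ and the conclusion $\knitdegree{\commgraph{\Reeszero{G}{I}{\Lambda}{P}}}=1$ --- is correct and is essentially the paper's own argument. The forward direction, however, has a genuine gap. In $\Reeszero{G}{I}{\Lambda}{P}$ a non-zero product $\parens{i,x,\lambda}\parens{j,y,\mu}$ has first component $i$ and \emph{last} component $\mu$; hence, when non-zero, $x_1x_1=x_nx_1$ compares $\parens{i_1,\ast,\lambda_1}$ with $\parens{i_n,\ast,\lambda_1}$ and $x_1x_n=x_nx_n$ compares $\parens{i_1,\ast,\lambda_n}$ with $\parens{i_n,\ast,\lambda_n}$, so \emph{both} equations yield only $i_1=i_n$. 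Your claim that the second one ``would give $\lambda_1=\lambda_n$'' is false, and the case $x_1x_1\neq 0$ is therefore not excluded: it is perfectly consistent with the two endpoint equations (it gives $i_1=i_n$, $\lambda_1\neq\lambda_n$ and $p_{\lambda_1 i_1},p_{\lambda_n i_1}\in G$, from which no contradiction follows). The ensuing step ``hence both products are zero'' is consequently unjustified --- and would be a non sequitur even if the both-non-zero case had been contradictory, since the negation of ``both non-zero'' is ``at least one is zero''.

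The missing case can be ruled out, but only by using more of the left path than its endpoints. Since $x_1\neq x_n$ we have $n\geqslant 2$, so $x_1$ is adjacent to $x_2$; as $G$ is trivial and $x_1\neq x_2$, Lemma~\ref{0Rees: commutativity} forces $p_{\lambda_1 i_2}=p_{\lambda_2 i_1}=0$, whence $x_1x_2=0$, and the left-path condition $x_1x_2=x_nx_2$ together with Lemma~\ref{0Rees: p=0 <=> p'=0} gives $p_{\lambda_n i_2}=0$. Column $i_2$ then contains zeros in the two distinct rows $\lambda_1$ and $\lambda_n$, so $O_{2\times 1}$ is a \submatrix\ of $P$ --- a contradiction. (The paper sidesteps this case analysis entirely: it first shows that under the contrapositive hypotheses every vertex of $\commgraph{\Reeszero{G}{I}{\Lambda}{P}}$ is adjacent to at most one other vertex, so every non-trivial path has length $1$, and then checks that a length-$1$ path $\parens{i,1_G,\lambda}-\parens{j,1_G,\mu}$ violates $x_1x_1=x_nx_1$ because $p_{\lambda i}\neq 0$ while $p_{\mu i}=0$.) With this repair your argument becomes a correct, slightly different route to the same conclusion.
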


\begin{proof}
	\textbf{Part 1.} We are going to prove the forward implication. We do this through a proof by contrapositive. Suppose that $\abs{G}=1$ (that is, $G=\set{1_G}$) and that $O_{1\times 2}$ and $O_{2 \times 1}$ are not \submatrices\ of $P$. Let $i\in I$ and $\lambda\in\Lambda$. We want to see that $\parens{i,1_G,\lambda}$ is adjacent to at most one vertex. Let $i_1,i_2\in I$ and $\lambda_1,\lambda_2\in\Lambda$ be such that $\parens{i_1,1_G,\lambda_1}$ and $\parens{i_2,1_G,\lambda_2}$ are adjacent to $\parens{i,1_G,\lambda}$ and distinct from $\parens{i,1_G,\lambda}$. Then we have $\parens{i,1_G,\lambda}\parens{i_1,1_G,\lambda_1}=\parens{i_1,1_G,\lambda_1}\parens{i,1_G,\lambda}$ and $\parens{i,1_G,\lambda}\parens{i_2,1_G,\lambda_2}=\parens{i_2,1_G,\lambda_2}\parens{i,1_G,\lambda}$, and we have $\parens{i,\lambda}\neq\parens{i_1,\lambda_1}$ and $\parens{i,\lambda}\neq\parens{i_2,\lambda_2}$. It follows from Lemma~\ref{0Rees: commutativity} that $p_{\lambda i_1}=p_{\lambda_1 i}=p_{\lambda i_2}=p_{\lambda_2 i}=0$. Since $O_{1\times 2}$ is not a \submatrix\ of $P$ and $p_{\lambda i_1}=p_{\lambda i_2}=0$, then we must have $i_1=i_2$ and, since $O_{2\times 1}$ is not a \submatrix\ of $P$ and $p_{\lambda_1 i}=p_{\lambda_2 i}=0$, then $\lambda_1=\lambda_2$.
	
	We just proved that all the vertices of $\commgraph{\Reeszero{G}{I}{\Lambda}{P}}$ are adjacent to at most one other vertex. This implies that all non-trivial paths in $\commgraph{\Reeszero{G}{I}{\Lambda}{P}}$ have length $1$. In order to show that $\commgraph{\Reeszero{G}{I}{\Lambda}{P}}$ contains no left paths, we just need to verify that all paths in $\commgraph{\Reeszero{G}{I}{\Lambda}{P}}$ of length $1$ are not left paths. Let $\parens{i,1_G,\lambda}-\parens{j,1_G,\mu}$ be a path in $\commgraph{\Reeszero{G}{I}{\Lambda}{P}}$. We have $\parens{i,\lambda}\neq\parens{j,\mu}$, which implies (by Lemma~\ref{0Rees: commutativity}), that $p_{\lambda j}=p_{\mu i}=0$. Since $O_{1\times 2}$ is not a \submatrix\ of $P$ and $p_{\lambda j}=0$, then we must have $p_{\lambda i}\neq 0$. Therefore, $\parens{i,1_G,\lambda}\parens{i,1_G,\lambda}=\parens{i,1_Gp_{\lambda_i}1_G,\lambda}\neq 0=\parens{j,1_G,\mu}\parens{i,1_G,\lambda}$. Thus $\parens{i,1_G,\lambda}-\parens{j,1_G,\mu}$ is not a left path.
	
	\medskip
	
	\textbf{Part 2.} Now we prove the reverse implication. We consider three cases.
	
	\smallskip
	
	\textit{Case 1:} Assume that $\abs{G}>1$. Then there exist distinct $x,y\in G$. Since $P$ contains at least one zero entry, then there exist $i\in I$ and $\lambda\in\Lambda$ such that $p_{\lambda i}=0$. Consequently, we have $\parens{i,x,\lambda}\parens{i,y,\lambda}=\parens{i,y,\lambda}\parens{i,x,\lambda}$ (by Lemma~\ref{0Rees: commutativity}), which implies that $\parens{i,x,\lambda}-\parens{i,y,\lambda}$ is a path in $\commgraph{\Reeszero{G}{I}{\Lambda}{P}}$. Additionally, we have $\parens{i,x,\lambda}\parens{i,x,\lambda}=0=\parens{i,y,\lambda}\parens{i,x,\lambda}$ and $\parens{i,x,\lambda}\parens{i,y,\lambda}=0=\parens{i,y,\lambda}\parens{i,y,\lambda}$. Hence $\parens{i,x,\lambda}-\parens{i,y,\lambda}$ is a left path (of length $1$) and $\knitdegree{\commgraph{\Reeszero{G}{I}{\Lambda}{P}}}=1$.
	
	\smallskip
	
	\textit{Case 2:} Assume that $O_{1\times 2}$ is a \submatrix\ of $P$. It follows that there exist distinct $i,j\in I$ and $\lambda\in\Lambda$ such that $p_{\lambda i}=p_{\lambda j}=0$. Let $x\in G$. As a consequence of Lemma~\ref{0Rees: commutativity}, we have $\parens{i,x,\lambda}\parens{j,x,\lambda}=\parens{j,x,\lambda}\parens{i,x,\lambda}$. Hence $\parens{i,x,\lambda}-\parens{j,x,\lambda}$ is a path in $\commgraph{\Reeszero{G}{I}{\Lambda}{P}}$. We also have $\parens{i,x,\lambda}\parens{i,x,\lambda}=0=\parens{j,x,\lambda}\parens{i,x,\lambda}$ and $\parens{i,x,\lambda}\parens{j,x,\lambda}=0=\parens{j,x,\lambda}\parens{j,x,\lambda}$. Thus $\parens{i,x,\lambda}-\parens{j,x,\lambda}$ is a left path (of length 1) in $\commgraph{\Reeszero{G}{I}{\Lambda}{P}}$ and $\knitdegree{\commgraph{\Reeszero{G}{I}{\Lambda}{P}}}=1$.
	
	\smallskip
	
	\textit{Case 3:} Assume that $O_{2\times 1}$ is a \submatrix\ of $P$. We can prove in a similar way to case 2 that $\commgraph{\Reeszero{G}{I}{\Lambda}{P}}$ contains a left path of length $1$. Thus $\knitdegree{\commgraph{\Reeszero{G}{I}{\Lambda}{P}}}=1$.
\end{proof}

\section{Properties of commuting graphs of completely 0-simple semigroups}\label{sec: completely 0-simple smg}

This section is devoted to researching properties of commuting graphs of completely $0$-simple semigroups. The properties we consider are the diameter, clique number, girth, chromatic number and knit degree. The main idea is to give an answer to the following question: for each $n\in\mathbb{N}$ is there a (finite non-commutative) completely $0$-simple semigroup $S$ such that
\begin{enumerate}
	\item $\diam{\commgraph{S}}=n$?
	\item $\cliquenumber{\commgraph{S}}=n$?
	\item $\chromaticnumber{\commgraph{S}}=n$?
	\item $\girth{\commgraph{S}}=n$?
	\item $\knitdegree{S}=n$?
\end{enumerate}

\begin{example}\label{example diameter}
	Let $n\in \mathbb{N}$ be such that $n\geqslant 2$. Let $G$ be a group, let $I=\Lambda=\X{n+1}$ be index sets and let $P$ be a $\Lambda\times I$ matrix such that	
	\begin{displaymath}
		\timeszero{P}=\begin{bNiceMatrix}[first-row,first-col]
			& 1 & 2 & 3 & 4 & \cdots & n & n+1 \\
			1      & 0 & 0 & \times & \times & \cdots & \times & \times \\
			2      & \times & 0 & 0 & \times & \cdots & \times & \times \\
			3      & \times & \times & 0 & 0 & \cdots & \times & \times \\
			\vdots & \vdots & \vdots & \vdots & \vdots & \ddots & \vdots & \vdots\\
			n      & \times & \times & \times & \times & \cdots & 0 & 0 \\
			n+1    & 0 & \times & \times & \times & \cdots & \times & 0
		\end{bNiceMatrix}.
	\end{displaymath}
	That is, for all $i\in I=\X{n+1}$ and $\lambda\in\Lambda=\X{n+1}$ we have that $p_{\lambda i}=0$ if and only if $i=\lambda$, or $\lambda<n+1$ and $i=\lambda+1$, or $\lambda=n+1$ and $i=1$.
	
	Since $n\geqslant 2$ and each row and each column contains $n+1$ entries, two of which are zeros, then this means that each row and each column contains at least one non-zero entry, which implies that $P$ is regular. Hence $\Reeszero{G}{I}{\Lambda}{P}$ is a completely $0$-simple semigroup. Furthermore, $\Reeszero{G}{I}{\Lambda}{P}$ is finite and non-commutative (by Proposition~\ref{0Rees: center}).
	
	We begin by verifying that $\commgraph{\Reeszero{G}{I}{\Lambda}{P}}$ is connected. In order to do this we are going to rearrange the rows and columns of $P$ in a more convenient way, and then we are going to choose the top-leftmost entry of the new matrix to start the $0$-closure method --- that entry will correspond to the $\parens{1,1}$-th entry of $P$. (We observe that rearranging rows and columns of a matrix does not change the sequence of matrices we obtain from the $0$-closure method --- or its length.)
	
	\smallskip
	
	\textit{Case 1:} Assume that $n$ is odd. We consider the following order for the rows of $P$
	\begin{center}
		\begin{tikzpicture}[baseline=-5mm]
			\node (1) at (0,0) {$1$};
			\node (2) at (1,0) {$2$};
			\node (3) at (2,0) {$3$};
			\node (4) at (3,0) {$4$};
			\node (frac-1) at (5,0) {$\smash{\frac{n+1}{2}}-1$};
			\node (frac) at (6.8,0) {$\mathstrut\smash{\frac{n+1}{2}}$};
			
			\node (n+1) at (0,-1) {$n+1$};
			\node (n) at (1,-1) {$n$};
			\node (n-1) at (2,-1) {$n-1$};
			\node (n-2) at (3,-1) {$n-2$};
			\node (frac+2) at (5,-1) {$\smash{\frac{n+1}{2}}+2$};
			\node (frac+1) at (6.8,-1) {$\smash{\frac{n+1}{2}}+1$};

			\begin{scope}[->,thick]
				\draw (1) -- (n+1);
				\draw (n+1) -- (2);
				\draw (2) -- (n);
				\draw (n) -- (3);
				\draw (3) -- (n-1);
				\draw (n-1) -- (4);
				\draw (4) -- (n-2);
				\draw (frac-1) -- (frac+2);
				\draw (frac+2) -- (frac);
				\draw (frac) -- (frac+1);
			\end{scope}
			
			\begin{scope}[dash pattern=on 0pt off 4pt,line cap=round, very thick]
				\draw (3.5,0) -- (4.2,0);
				\draw (3.8,-1) -- (4.2,-1);
			\end{scope}
		\end{tikzpicture}\;,
	\end{center}
	and the following order for the columns of $P$
	\begin{center}
		\begin{tikzpicture}[baseline=-5mm]
			\node (1) at (0,0) {$1$};
			\node (2) at (1,0) {$2$};
			\node (3) at (2,0) {$3$};
			\node (4) at (3,0) {$4$};
			\node (frac-1) at (5,0) {$\smash{\frac{n+1}{2}}-1$};
			\node (frac) at (6.8,0) {$\mathstrut\smash{\frac{n+1}{2}}$};
			
			\node (n+1) at (1,-1) {$n+1$};
			\node (n) at (2,-1) {$n$};
			\node (n-1) at (3,-1) {$n-1$};
			\node (frac+3) at (5,-1) {$\smash{\frac{n+1}{2}}+3$};
			\node (frac+2) at (6.8,-1) {$\smash{\frac{n+1}{2}}+2$};
			\node (frac+1) at (8.8,-1) {$\smash{\frac{n+1}{2}}+1$};

			\begin{scope}[->,thick]
				\draw (1) -- (2);
				\draw (2) -- (n+1);
				\draw (n+1) -- (3);
				\draw (3) -- (n);
				\draw (n) -- (4);
				\draw (4) -- (n-1);
				\draw (frac-1) -- (frac+3);
				\draw (frac+3) -- (frac);
				\draw (frac) -- (frac+2);
				\draw (frac+2) -- (frac+1);
			\end{scope}
			
			\begin{scope}[dash pattern=on 0pt off 4pt,line cap=round, very thick]
				\draw (3.5,0) -- (4.2,0);
				\draw (3.8,-1) -- (4.2,-1);
			\end{scope}
		\end{tikzpicture}\;.
	\end{center}
	Then the matrix $P'$ we obtain by reordering the rows and columns of $P$ (in the way described above) is such that
	\begin{displaymath}
		\timeszero{P'}=\begin{bNiceMatrix}[first-row,first-col]
			& 1 & 2 & n+1 & 3 & \cdots & \frac{n+1}{2} & \frac{n+1}{2}+2 & \frac{n+1}{2}+1 \\
			1        & 0 & 0 & \times & \times & \cdots & \times & \times & \times \\
			n+1      & 0 & \times & 0 & \times & \cdots & \times & \times & \times \\
			2        & \times & 0 & \times & 0 & \cdots & \times & \times & \times \\
			n        & \times & \times & 0 & \times & \cdots & \times & \times & \times \\
			\vdots   & \vdots & \vdots & \vdots & \vdots & \ddots & \vdots & \vdots & \vdots\\
			\frac{n+1}{2}+2       & \times & \times & \times & \times & \cdots & \times & 0 & \times \\
			\frac{n+1}{2}         & \times & \times & \times & \times & \cdots & 0 & \times & 0 \\
			\frac{n+1}{2}+1       & \times & \times & \times & \times & \cdots & \times & 0 & 0 \\
		\end{bNiceMatrix}.
	\end{displaymath}
	
	\smallskip
	
	\textit{Case 2:} Assume that $n$ is even. If we reorder the rows of $P$ in the following way
	\begin{center}
		\begin{tikzpicture}[baseline=-5mm]
			\node (1) at (0,0) {$1$};
			\node (2) at (1,0) {$2$};
			\node (3) at (2,0) {$3$};
			\node (4) at (3,0) {$4$};
			\node (frac-1) at (5,0) {$\smash{\frac{n}{2}}-1$};
			\node (frac) at (6.2,0) {$\mathstrut\smash{\frac{n}{2}}$};
			\node (frac+1) at (7.4,0) {$\mathstrut\smash{\frac{n}{2}}+1$};
			
			\node (n+1) at (0,-1) {$n+1$};
			\node (n) at (1,-1) {$n$};
			\node (n-1) at (2,-1) {$n-1$};
			\node (n-2) at (3,-1) {$n-2$};
			\node (frac+3) at (5,-1) {$\smash{\frac{n}{2}}+3$};
			\node (frac+2) at (6.2,-1) {$\smash{\frac{n}{2}}+2$};

			\begin{scope}[->,thick]
				\draw (1) -- (n+1);
				\draw (n+1) -- (2);
				\draw (2) -- (n);
				\draw (n) -- (3);
				\draw (3) -- (n-1);
				\draw (n-1) -- (4);
				\draw (4) -- (n-2);
				\draw (frac-1) -- (frac+3);
				\draw (frac+3) -- (frac);
				\draw (frac) -- (frac+2);
				\draw (frac+2) -- (frac+1);
			\end{scope}
			
			\begin{scope}[dash pattern=on 0pt off 4pt,line cap=round, very thick]
				\draw (3.5,0) -- (4.33,0);
				\draw (3.8,-1) -- (4.3,-1);
			\end{scope}
		\end{tikzpicture}\;,
	\end{center}
	and reorder the columns of $P$ in the following way
	\begin{center}
		\begin{tikzpicture}[baseline=-5mm]
			\node (1) at (0,0) {$1$};
			\node (2) at (1,0) {$2$};
			\node (3) at (2,0) {$3$};
			\node (4) at (3,0) {$4$};
			\node (frac-1) at (5,0) {$\smash{\frac{n}{2}}-1$};
			\node (frac) at (6.2,0) {$\mathstrut\smash{\frac{n}{2}}$};
			\node (frac+1) at (7.4,0) {$\mathstrut\smash{\frac{n}{2}}+1$};
			
			\node (n+1) at (1,-1) {$n+1$};
			\node (n) at (2,-1) {$n$};
			\node (n-1) at (3,-1) {$n-1$};
			\node (frac+4) at (5,-1) {$\smash{\frac{n}{2}}+4$};
			\node (frac+3) at (6.2,-1) {$\smash{\frac{n}{2}}+3$};
			\node (frac+2) at (7.4,-1) {$\smash{\frac{n}{2}}+2$};

			\begin{scope}[->,thick]
				\draw (1) -- (2);
				\draw (2) -- (n+1);
				\draw (n+1) -- (3);
				\draw (3) -- (n);
				\draw (n) -- (4);
				\draw (4) -- (n-1);
				\draw (frac-1) -- (frac+4);
				\draw (frac+4) -- (frac);
				\draw (frac) -- (frac+3);
				\draw (frac+3) -- (frac+1);
				\draw (frac+1) -- (frac+2);
			\end{scope}
			
			\begin{scope}[dash pattern=on 0pt off 4pt,line cap=round, very thick]
				\draw (3.5,0) -- (4.33,0);
				\draw (3.8,-1) -- (4.3,-1);
			\end{scope}
		\end{tikzpicture}\; ,
	\end{center}
	then the matrix $P'$ we obtain is such that
	\begin{displaymath}
		\timeszero{P'}=\begin{bNiceMatrix}[first-row,first-col]
			& 1 & 2 & n+1 & 3 & \cdots & \frac{n}{2}+3 & \frac{n}{2}+1 & \frac{n}{2}+2 \\
			1        & 0 & 0 & \times & \times & \cdots & \times & \times & \times \\
			n+1      & 0 & \times & 0 & \times & \cdots & \times & \times & \times \\
			2        & \times & 0 & \times & 0 & \cdots & \times & \times & \times \\
			n        & \times & \times & 0 & \times & \cdots & \times & \times & \times \\
			\vdots   & \vdots & \vdots & \vdots & \vdots & \ddots & \vdots & \vdots & \vdots\\
			\frac{n}{2}         & \times & \times & \times & \times & \cdots & \times & 0 & \times \\
			\frac{n}{2}+2       & \times & \times & \times & \times & \cdots & 0 & \times & 0 \\
			\frac{n}{2}+1       & \times & \times & \times & \times & \cdots & \times & 0 & 0 \\
		\end{bNiceMatrix}.
	\end{displaymath}
	
	\smallskip
	
	We are going to choose the top-leftmost entry of the new matrix (obtained in cases 1 and 2) and start the $0$-closure method. Below we illustrate how the method works when we start with the chosen entry: the entries in red, yellow, green and blue are selected at steps $0$, $1$, $2$ and $3$, respectively, of the $0$-closure method.
	\begin{displaymath}
		\begin{bNiceMatrix}
			\cellcolor{Red} 0 & \cellcolor{Goldenrod} 0 & \cellcolor{Green} \times & \cellcolor{cyan!40} \times & \cdots & \times & \times & \times \\
			\cellcolor{Goldenrod} 0 & \cellcolor{Goldenrod} \times & \cellcolor{Green} 0 & \cellcolor{cyan!40} \times & \cdots & \times & \times & \times \\
			\cellcolor{Green} \times & \cellcolor{Green} 0 & \cellcolor{Green} \times & \cellcolor{cyan!40} 0 & \cdots & \times & \times & \times \\
			\cellcolor{cyan!40} \times & \cellcolor{cyan!40} \times & \cellcolor{cyan!40} 0 & \cellcolor{cyan!40} \times & \cdots & \times & \times & \times \\
			\vdots & \vdots & \vdots & \vdots & \ddots & \vdots & \vdots & \vdots\\
			\times & \times & \times & \times & \cdots & \times & 0 & \times \\
			\times & \times & \times & \times & \cdots & 0 & \times & 0 \\
			\times & \times & \times & \times & \cdots & \times & 0 & 0 \\
		\end{bNiceMatrix}
	\end{displaymath}
	
	Due to the fact that this matrix has $n+1$ rows and $n+1$ rows, it is easy to understand that it will take $n$ steps to finish the $0$-closure method, that is, $\zeroindex{1}{1}=n$. The matrix we obtain at the end of step $n$ will be the entire matrix, which implies that $P$ is a \zeroclosuresub\ and, consequently, $\commgraph{\Reeszero{G}{I}{\Lambda}{P}}$ is connected (by Theorem~\ref{0Rees: connectedness}). Furthermore, $\commgraph{\Reeszero{G}{I}{\Lambda}{P}}$ is a connected component determined by $P$ and, by Theorem~\ref{0Rees: diameter}, we have that $\diam{\commgraph{\Reeszero{G}{I}{\Lambda}{P}}}\geqslant\zeroindex{1}{1}=n$.
	
	In order to see that $\diam{\commgraph{\Reeszero{G}{I}{\Lambda}{P}}}=n$ we are going to see that for all $i\in I=\X{n+1}$ and $\lambda\in\Lambda=\X{n+1}$ such that $p_{\lambda i}=0$ we have $\zeroindex{\lambda}{i}=\zeroindex{1}{1}=n$. We prove this by showing that for all $i\in I=\X{n+1}$ and $\lambda\in\Lambda=\X{n+1}$ such that $p_{\lambda i}=0$ we can reorder the rows and columns of $P$ and obtain a new matrix $P_{\lambda i}$ such that:
	\begin{itemize}
		\item The top-leftmost entry of $P_{\lambda i}$ corresponds to the $\parens{\lambda, i}$-th entry of $P$.
		\item $\timeszero{P_{\lambda i}}=\timeszero{P}$.
	\end{itemize}
	This shows that, for all $i\in I=\X{n+1}$ and $\lambda\in\Lambda=\X{n+1}$ such that $p_{\lambda i}=0$, starting the $0$-clocure method with the $\parens{\lambda, i}$-th entry of $P$ is equivalent to starting the $0$-closure method with the $\parens{1,1}$-th entry of $P$ and, consequently, $\zeroindex{\lambda}{i}=\zeroindex{1}{1}=n$ for all $i\in I=\X{n+1}$ and $\lambda\in\Lambda=\X{n+1}$ such that $p_{\lambda i}=0$. Thus Theorem~\ref{0Rees: diameter} implies that
	\begin{displaymath}
		\diam{\commgraph{\Reeszero{G}{I}{\Lambda}{P}}}=\max\gset{\zeroindex{i}{\lambda}}{i\in I, \text{ } \lambda\in\Lambda \text{ and } p_{\lambda i}=0 }=n.
	\end{displaymath}

	We are going to see that it is possible to achieve the reordering we described. Let $i\in I=\Xn$ and $\lambda\in\Lambda=\Xn$ be such that $p_{\lambda i}=0$. Then $i=\lambda$, or $\lambda<n+1$ and $i=\lambda+1$, or $\lambda=n+1$ and $i=1$.
	
	\smallskip
	
	\textit{Case 1:} Assume that $i=\lambda$. We have that
	\begin{align*}
		\timeszero{P_{\lambda i}} & = \Psubmatrix{\timeszero{P}}{i,\ldots,n+1,1,\ldots,i-1}{\lambda,\ldots,n+1,1,\ldots,\lambda-1}\\
		& = \Psubmatrix{\timeszero{P}}{\lambda,\ldots,n+1,1,\ldots,\lambda-1}{\lambda,\ldots,n+1,1,\ldots,\lambda-1}\\
		& = \begin{bNiceMatrix}[first-row,first-col]
			& \lambda & \lambda+1 & \lambda+2 & \cdots & n+1 & 1 & 2 & \cdots & \lambda-2 & \lambda-1 \\
			\lambda        & 0 & 0 & \times & \cdots & \times & \times & \times & \cdots & \times & \times \\
			\lambda+1      & \times & 0 & 0 & \cdots & \times & \times & \times & \cdots & \times & \times \\
			\vdots   & \vdots & \vdots & \vdots & \ddots & \vdots & \vdots & \vdots & \ddots & \vdots & \vdots \\
			n+1      & \times & \times & \times & \cdots & 0 & 0 & \times & \cdots & \times & \times \\
			1        & \times & \times & \times & \cdots & \times & 0 & 0 & \cdots & \times & \times \\
			\vdots   & \vdots & \vdots & \vdots & \ddots & \vdots & \vdots & \vdots & \ddots & \vdots & \vdots \\
			\lambda-2      & \times & \times & \times & \cdots & \times & \times & \times & \cdots & 0 & 0\\
			\lambda-1      & 0 & \times & \times & \cdots & \times & \times & \times & \cdots & \times & 0
		\end{bNiceMatrix}\\
		& = \timeszero{P}.
	\end{align*}
	Moreover, entry $\parens{\lambda, \lambda}=\parens{\lambda,i}$ of $P$ is the top-leftmost entry of $P_{\lambda i}$.
	
	\smallskip
	
	\textit{Case 2:} Assume that $\lambda<n+1$ and $i=\lambda+1$. We have that
	\begin{align*}
		\timeszero{P_{\lambda i}} &= \Psubmatrix{\timeszero{P}}{i,i-1\ldots,1,n+1,n,\ldots,i+1}{\lambda,\lambda-1,\ldots,1,n+1,n,\ldots,\lambda+1}\\
		& = \Psubmatrix{\timeszero{P}}{\lambda+1,\lambda,\ldots,1,n+1,n,\ldots,\lambda+2}{\lambda,\lambda-1,\ldots,1,n+1,n,\ldots,\lambda+1}\\
		& = \begin{bNiceMatrix}[first-row,first-col]
			& \lambda+1 & \lambda & \lambda-1 & \cdots & 2 & 1 & n+1 & n & \cdots & \lambda+3 & \lambda+2 \\
			\lambda   & 0 & 0 & \times & \cdots & \times & \times & \times & \times & \cdots & \times & \times \\
			\lambda-1 & \times & 0 & 0 & \cdots & \times & \times & \times & \times & \cdots & \times & \times \\
			\vdots    & \vdots & \vdots & \vdots & \ddots & \vdots & \vdots & \vdots & \vdots & \ddots & \vdots & \vdots \\
			1         & \times & \times & \times & \cdots & 0 & 0 & \times & \times & \cdots & \times & \times \\
			n+1       & \times & \times & \times & \cdots & \times & 0 & 0 & \times & \cdots & \times & \times \\
			n         & \times & \times & \times & \cdots & \times & \times & 0 & 0 & \cdots & \times & \times\\
			\vdots    & \vdots & \vdots & \vdots & \ddots & \vdots & \vdots & \vdots & \ddots & \vdots & \vdots & \vdots \\
			\lambda+2 & \times & \times & \times & \cdots & \times & \times & \times & \times & \cdots & 0 & 0\\
			\lambda+1 & 0 & \times & \times & \cdots & \times & \times & \times & \times & \cdots & \times & 0
		\end{bNiceMatrix}\\
		& = \timeszero{P}.
	\end{align*}
	We can also see that entry $\parens{\lambda,\lambda+1}=\parens{\lambda,i}$ of $P$ is the top-leftmost entry of $P_{\lambda i}$.
	
	\smallskip
	
	\textit{Case 3:} Assume that $\lambda=n+1$ and $i=1$. We have that
	\begin{displaymath}
		\timeszero{P_{\lambda i}} = \Psubmatrix{\timeszero{P}}{1,n+1,n\ldots,2}{n+1,n,\ldots,1} = \begin{bNiceMatrix}[first-row,first-col]
			& 1 & n+1 & n & \cdots & 3 & 2\\
			n+1   & 0 & 0 & \times & \cdots & \times & \times\\
			n & \times & 0 & 0 & \cdots & \times & \times\\
			\vdots    & \vdots & \vdots & \vdots & \ddots & \vdots & \vdots\\
			2         & \times & \times & \times & \cdots & 0 & 0\\
			1       & 0 & \times & \times & \cdots & \times & 0
		\end{bNiceMatrix} = \timeszero{P}
	\end{displaymath}
	and it is straightforward to verify that entry $\parens{n+1,1}=\parens{\lambda,i}$ of $P$ is the top-leftmost entry of $P_{\lambda i}$.
\end{example}

Example~\ref{example diameter} proves the following result.

\begin{corollary}\label{0Rees: completely 0-simple diameter}
	For each $n\in\mathbb{N}$ such that $n\geqslant 2$, there is a (finite non-commutative) completely $0$-simple semigroup whose commuting graph has diameter equal to $n$.
\end{corollary}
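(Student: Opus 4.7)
The proof proceeds by explicit construction, following Example~\ref{example diameter}. Given $n \geqslant 2$, take any group $G$, index sets $I = \Lambda = \X{n+1}$, and define the $\Lambda \times I$ matrix $P$ whose zero entries are exactly those $(\lambda, i)$ with $i = \lambda$, or $\lambda < n+1$ and $i = \lambda+1$, or $\lambda = n+1$ and $i = 1$. Since $n \geqslant 2$, each row and column contains $n-1 \geqslant 1$ non-zero entries, so $P$ is regular. By the Rees--Suschkewitsch Theorem, $S = \Reeszero{G}{I}{\Lambda}{P}$ is a completely $0$-simple semigroup; it is clearly finite, and Proposition~\ref{0Rees: center} shows it is non-commutative. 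This means $\commgraph{S}$ is well-defined and I only need to show $\diam{\commgraph{S}} = n$.

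The plan for the diameter computation is to apply the $0$-closure method to the entry $(\lambda,i) = (1,1)$. After suitable row and column reordering (handled by cases according to the parity of $n$, as in the example), the relevant zeros form a ``staircase'' pattern around the diagonal, so that at each step exactly one new row and one new column of zeros becomes accessible. Counting these steps carefully shows $\zeroindex{1}{1} = n$ and that the \zeroclosuresubmatrix{1}{1} is all of $P$. By Theorem~\ref{0Rees: connectedness}, $\commgraph{S}$ is connected; by Theorem~\ref{0Rees: diameter}, $\diam{\commgraph{S}} \geqslant \zeroindex{1}{1} = n$.

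For the matching upper bound, Theorem~\ref{0Rees: diameter} gives $\diam{\commgraph{S}} = \max \gsetsplit{\zeroindex{i}{\lambda}}{i \in I,\ \lambda \in \Lambda,\ p_{\lambda i} = 0}$, so I must show $\zeroindex{i}{\lambda} = n$ for every zero entry. The key observation is a symmetry: for each zero entry $(\lambda, i)$, one can cyclically permute the rows and columns of $P$ to obtain a matrix $P_{\lambda i}$ with $\timeszero{P_{\lambda i}} = \timeszero{P}$ and whose top-leftmost entry corresponds to $(\lambda, i)$. This reduces to three cases, depending on whether the zero lies on the diagonal, on the super-diagonal, or at position $(n+1, 1)$; in each case the explicit row/column reordering can be written down directly. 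Since the $0$-closure method depends only on the positions of zeros and not on the chosen starting entry after such a reordering, $\zeroindex{i}{\lambda} = \zeroindex{1}{1} = n$ in every case.

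The main obstacle is the bookkeeping in verifying that the cyclic reorderings indeed preserve $\timeszero{P}$ for each of the three types of zero entry --- this is purely combinatorial but must be done carefully to confirm that the staircase pattern is invariant. Once this symmetry is established, the diameter equals $n$ by Theorem~\ref{0Rees: diameter}, completing the proof.
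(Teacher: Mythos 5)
Your proposal is correct and is essentially the paper's own argument: the paper proves this corollary via Example~\ref{example diameter}, using exactly the same staircase matrix, the same application of the $0$-closure method at the $(1,1)$ entry (with the parity-dependent reordering), and the same three-case cyclic-symmetry argument showing $\zeroindex{i}{\lambda}=\zeroindex{1}{1}=n$ for every zero entry before invoking Theorems~\ref{0Rees: connectedness} and \ref{0Rees: diameter}. No gaps to report.
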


We observe that there is also a (finite non-commutative) completely $0$-simple semigroup whose commuting graph has diameter equal to infinity --- see Example~\ref{0Rees: example connectedness}. Furthermore, there are no (finite non-commutative) completely $0$-simple semigroups whose commuting graph has diameter equal to $1$ (because the diameter of any connected commuting graph of a semigroup is at least $2$).

\begin{example}\label{example clique chromatic numbers}
	Let $n\in\mathbb{N}$. Let $G$ be a trivial group, let $I=\X{n+1}$ and $\Lambda=\set{1,2}$, and let $P$ be any $\Lambda\times I$ matrix such that $p_{11}=p_{12}=\cdots=p_{1n}=0$ and $p_{1\parens{n+1}},p_{21},p_{22},\ldots,p_{2\parens{n+1}}\in G$, that is, such that
	\begin{displaymath}
		\timeszero{P}=\begin{bNiceMatrix}[first-row,first-col]
			& 1 & 2 & \cdots & n & n+1 \\
			1      & 0 & 0 & \cdots & 0 & \times \\
			2      & \times & \times & \cdots & \times & \times
		\end{bNiceMatrix}.
	\end{displaymath}
	It is straightforward to see that $P$ is a regular matrix. Hence $\Reeszero{G}{I}{\Lambda}{P}$ is a completely $0$-simple semigroup. In addition, $\Reeszero{G}{I}{\Lambda}{P}$ is finite (because $G$, $I$ and $\Lambda$ are finite) and Lemma~\ref{0Rees: center} ensures that $\Reeszero{G}{I}{\Lambda}{P}$ is non-commutative.
	
	It is easy to see that $D_3$ and $D_2$ are not \submatrices\ of $P$, and that $O_{\abs{\set{1}}\times\abs{\Xn}}=O_{1\times n}$ is the largest \submatrix\ of zeros of $\timeszero{P}$. Consequently, Theorem~\ref{0Rees: clique number} implies that
	\begin{align*}
		\MoveEqLeft \cliquenumber{\commgraph{\Rees{G}{I}{\Lambda}{P}}}\\
		&=\abs{G}\cdot\max\{km: O_{k\times m} \text{ is a \submatrix\ of } \timeszero{P}\}\kern -4mm\\
		&=\max\{km: O_{k\times m} \text{ is a \submatrix\ of } \timeszero{P}\}& \bracks{\text{because } G \text{ is trivial}}\\
		&=n.&&\qedhere
	\end{align*}
	
	Furthermore, it is straightforward to see that the unique \zeroclosuresub\ is $\Psubmatrix{P}{1,\ldots,n}{1}=O_{1\times n}$, which contains $n$ zero entries. We are going to see that $\chromaticnumber{\commgraph{\Reeszero{G}{I}{\Lambda}{P}}}\leqslant n$.
	
	\smallskip
	
	\textit{Case 1:} Assume that $n\geqslant 2$. Hence Theorem~\ref{0Rees: chromatic number edges} implies that
	\begin{align*}
		\chromaticnumber{\commgraph{\Reeszero{G}{I}{\Lambda}{P}}}&\leqslant\abs{G}\cdot\max\set{2,n}\\
		&=\max\set{2,n}& \bracks{\text{because } G \text{ is trivial}}\\
		&=n.& \bracks{\text{because } n\geqslant 2}
	\end{align*}
	
	\smallskip
	
	\textit{Case 2:} Assume that $n=1$. Due to the fact that $P$ contains a row with no zero entries, Theorem~\ref{0Rees: connectedness} guarantees that $\commgraph{\Reeszero{G}{I}{\Lambda}{P}}$ is not connected. Moreover, $P$ contains only one \zeroclosuresub\ and, consequently, the remark we made after Theorem~\ref{0Rees: chromatic number edges} ensures that $\chromaticnumber{\commgraph{\Reeszero{G}{I}{\Lambda}{P}}}\leqslant n\abs{G}=n$.
	
	\smallskip
	
	The previous two cases imply that $\chromaticnumber{\commgraph{\Reeszero{G}{I}{\Lambda}{P}}}\leqslant n$. Since $\cliquenumber{\commgraph{\Reeszero{G}{I}{\Lambda}{P}}}=n$, then we also have $\chromaticnumber{\commgraph{\Reeszero{G}{I}{\Lambda}{P}}}\geqslant n$ (because the clique number provides a lower bound for the chromatic number). Therefore $\chromaticnumber{\commgraph{\Reeszero{G}{I}{\Lambda}{P}}}=n$.
\end{example}

Example~\ref{example clique chromatic numbers} proves the following two corollaries.

\begin{corollary}\label{0Rees: completely 0-simple clique number}
	For each $n\in\mathbb{N}$, there is a (finite non-commutative) completely $0$-simple semigroup whose commuting graph has clique number equal to $n$.
\end{corollary}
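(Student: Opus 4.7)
The plan is to construct, for each positive integer $n$, an explicit $0$-Rees matrix semigroup $\Reeszero{G}{I}{\Lambda}{P}$ whose commuting graph has clique number exactly $n$, and then invoke the Rees--Suschkewitsch Theorem to identify it with a completely $0$-simple semigroup. The natural starting point is the formula in Theorem~\ref{0Rees: clique number}, since it reduces the problem to a purely combinatorial question about zero-patterns in $P$: if we can arrange matters so that the third case of that theorem applies, then $\cliquenumber{\commgraph{\Reeszero{G}{I}{\Lambda}{P}}} = \abs{G}\cdot\max\gset{km}{O_{k\times m}\text{ is a \submatrix\ of }\timeszero{P}}$, and by choosing $G$ trivial we only have to engineer the size of the largest all-zero \submatrix.

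The construction I would propose is to take $G$ trivial, $I = \X{n+1}$, $\Lambda = \set{1,2}$, and $P$ a regular $\Lambda\times I$ matrix with
\begin{displaymath}
\timeszero{P} = \begin{bNiceMatrix}[first-row,first-col]
    & 1 & 2 & \cdots & n & n+1 \\
1 & 0 & 0 & \cdots & 0 & \times \\
2 & \times & \times & \cdots & \times & \times
\end{bNiceMatrix}.
\end{displaymath}
Regularity is immediate (row $2$ and column $n+1$ are entirely non-zero, while each of columns $1,\ldots,n$ has a non-zero in row $2$), so $\Reeszero{G}{I}{\Lambda}{P}$ is a finite completely $0$-simple semigroup, and it is non-commutative by Proposition~\ref{0Rees: center}.

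The key verification is that we are in case~3 of Theorem~\ref{0Rees: clique number}. Since $\abs{\Lambda} = 2$, there is no room for $D_3$ as a \submatrix; and $D_2$ would require a zero entry in row $2$, which does not exist, so $D_2$ is likewise not a \submatrix\ of $\timeszero{P}$. Thus neither the first nor the second part of Theorem~\ref{0Rees: clique number} applies, so the third part gives
\begin{displaymath}
\cliquenumber{\commgraph{\Reeszero{G}{I}{\Lambda}{P}}} = \abs{G}\cdot\max\gset{km}{O_{k\times m}\text{ is a \submatrix\ of }\timeszero{P}}.
\end{displaymath}
The all-zero submatrices of $\timeszero{P}$ lie inside the first row, so the largest has shape $O_{1\times n}$, yielding a maximum product of $n$. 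Combined with $\abs{G}=1$, this gives $\cliquenumber{\commgraph{\Reeszero{G}{I}{\Lambda}{P}}} = n$, completing the proof.

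I do not foresee a substantial obstacle: the design of $P$ with one short row of zeros and one dense row simultaneously forbids the diagonal patterns $D_2, D_3$ (placing us in case~3) and controls the largest zero block. The only minor subtlety is ensuring that the chosen $P$ is genuinely regular and that we have not inadvertently produced a commutative semigroup, both of which are settled by inspection and by Proposition~\ref{0Rees: center}.
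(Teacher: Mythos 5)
Your construction is exactly the one the paper uses in its Example~\ref{example clique chromatic numbers}: trivial $G$, $I=\X{n+1}$, $\Lambda=\set{1,2}$, and the same matrix with $n$ zeros in the first row, followed by the same appeal to case~3 of Theorem~\ref{0Rees: clique number}. The verification that $D_2$ and $D_3$ are not \submatrices\ and that $O_{1\times n}$ is the largest zero block matches the paper's argument, so the proposal is correct and essentially identical to the paper's proof.
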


\begin{corollary}\label{0Rees: completely 0-simple chromatic number}
	For each $n\in\mathbb{N}$, there is a (finite non-commutative) completely $0$-simple semigroup whose commuting graph has chromatic number equal to $n$.
\end{corollary}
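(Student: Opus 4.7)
The plan is to exhibit an explicit family of completely $0$-simple semigroups, one for each $n\in\mathbb{N}$, parametrized so that the commuting graph has chromatic number exactly $n$. I would take $G$ to be the trivial group, $I=\{1,\ldots,n+1\}$, $\Lambda=\{1,2\}$, and choose $P$ so that $\timeszero{P}$ has $0$ in positions $(1,1),\ldots,(1,n)$ and $\times$ everywhere else. This matrix is clearly regular (row $2$ is all non-zero, and each column has at least one non-zero entry from row $2$), so $\Reeszero{G}{I}{\Lambda}{P}$ is completely $0$-simple by the Rees--Suschkewitsch theorem, and it is finite and non-commutative by Proposition~\ref{0Rees: center}.

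For the clique number, I would invoke Theorem~\ref{0Rees: clique number}. Since $\timeszero{P}$ has only two rows, $D_3$ cannot be a \submatrix\ of $\timeszero{P}$; and since $G$ is trivial, regardless of whether $D_2$ is a \submatrix\ or not, we land in the third case of the theorem. The largest all-zero \submatrix\ of $\timeszero{P}$ is visibly $O_{1\times n}$ (the $n$ zeros of row $1$), giving $\cliquenumber{\commgraph{\Reeszero{G}{I}{\Lambda}{P}}}=|G|\cdot n=n$. Since the clique number is a lower bound for the chromatic number, this already gives $\chromaticnumber{\commgraph{\Reeszero{G}{I}{\Lambda}{P}}}\geqslant n$.

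For the matching upper bound I would apply Theorem~\ref{0Rees: chromatic number edges}. A direct inspection shows that the only \zeroclosuresub\ of $P$ is $\Psubmatrix{P}{\{1,\ldots,n\}}{1}=O_{1\times n}$, which has $n$ zero entries. Since row $2$ of $P$ has no zeros, $\commgraph{\Reeszero{G}{I}{\Lambda}{P}}$ is disconnected by Theorem~\ref{0Rees: connectedness}. For $n\geqslant 2$, Theorem~\ref{0Rees: chromatic number edges} yields $\chromaticnumber{\commgraph{\Reeszero{G}{I}{\Lambda}{P}}}\leqslant |G|\cdot\max\{2,n\}=n$.

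The one delicate point---and the main obstacle worth flagging---is the case $n=1$, where the naive bound $\max\{2,z_1\}|G|=2$ is too large. Here I would appeal to the remark immediately following Theorem~\ref{0Rees: chromatic number edges}: when $P$ admits only one \zeroclosuresub, the connected components determined by two distinct $0$-closure submatrices do not exist, so case~2 of the proof of that theorem is vacuous and the bound improves to $z_1|G|=1$. Combining both bounds with the clique-number lower bound gives $\chromaticnumber{\commgraph{\Reeszero{G}{I}{\Lambda}{P}}}=n$ for every $n\in\mathbb{N}$, which establishes the corollary.
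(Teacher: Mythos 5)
Your proposal is correct and is essentially identical to the paper's own proof (Example~\ref{example clique chromatic numbers}): same trivial group, same $2\times(n+1)$ matrix with $n$ zeros in the first row, the clique number computed via Theorem~\ref{0Rees: clique number} as the lower bound, Theorem~\ref{0Rees: chromatic number edges} for $n\geqslant 2$, and the remark following that theorem for the $n=1$ case. One small slip: the reason you land in case 3 of Theorem~\ref{0Rees: clique number} is not that $G$ is trivial (a trivial group is abelian, so triviality alone does not exclude case 2), but rather that $D_2$ and $D_3$ are not \submatrices\ of $\timeszero{P}$ --- immediate here since any $2\times 2$ \submatrix\ must use the zero-free second row.
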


\begin{corollary}\label{0Rees: completely 0-simple girth}
	Let $S$ be a (finite non-commutative) completely $0$-simple semigroup. If $\commgraph{S}$ contains cycles, then $\girth{\commgraph{S}}\in\set{3,4}$.
\end{corollary}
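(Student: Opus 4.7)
The plan is to reduce to the $0$-Rees matrix semigroup presentation and then invoke the girth theorem (Theorem~\ref{0Rees: girth}). By the Rees--Suschkewitsch Theorem, any completely $0$-simple semigroup $S$ is isomorphic to some $\Reeszero{G}{I}{\Lambda}{P}$ with $G$ a group, $I,\Lambda$ index sets, and $P$ a regular $\Lambda\times I$ matrix over $G^0$. So I would argue about $\commgraph{\Reeszero{G}{I}{\Lambda}{P}}$, which is isomorphic to $\commgraph{S}$, and since $S$ is finite and non-commutative, $G$, $I$ and $\Lambda$ are all finite.

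The argument then splits according to whether $P$ contains a zero entry. If $P$ has no zero entries, then $\commgraph{\Reeszero{G}{I}{\Lambda}{P}}$ is isomorphic to $\commgraph{\Rees{G}{I}{\Lambda}{P}}$ (as explained at the beginning of Section~\ref{sec: 0Rees}), so the semigroup is effectively completely simple with a zero adjoined, and the result that $3$ is the only possible value for the girth of the commuting graph of a completely simple semigroup (recalled in the introduction, from \cite{Completely_simple_semigroups_paper}) gives $\girth{\commgraph{S}}=3\in\set{3,4}$ as soon as cycles exist. If instead $P$ contains at least one zero entry, we are in the setting of Section~\ref{sec: girth 0Rees} and may apply Theorem~\ref{0Rees: girth} directly: in parts~(1) and~(2) (i.e.\ $\abs{G}\geqslant 2$), whenever $\commgraph{\Reeszero{G}{I}{\Lambda}{P}}$ has cycles its girth equals $3$, and in part~(3) (i.e.\ $\abs{G}=1$) cycles force the girth to be $3$ or $4$ depending on which of the listed \submatrices\ appears in $\timeszero{P}$. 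In every case where cycles occur, $\girth{\commgraph{S}}\in\set{3,4}$.

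Since essentially all the work has already been done in Theorem~\ref{0Rees: girth} and its completely simple analogue, there is no genuine obstacle here: the proof is a dichotomy on whether $P$ has a zero, followed by a citation in each branch. The only point requiring a line of care is the observation that adjoining a zero does not affect the commuting graph (since zero elements are central and hence not vertices), which is precisely the reduction already noted at the start of Section~\ref{sec: 0Rees} and so can be quoted without further argument.
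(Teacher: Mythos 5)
Your proposal is correct and follows essentially the same route as the paper: the paper's proof likewise splits on whether $P$ has a zero entry, reducing the zero-free case to the completely simple result of \cite{Completely_simple_semigroups_paper} via the isomorphism $\commgraph{\Reeszero{G}{I}{\Lambda}{P}}\simeq\commgraph{\Rees{G}{I}{\Lambda}{P}}$, and applying Theorem~\ref{0Rees: girth} in the other case. No gaps.
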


In order to prove this corollary, we require the following theorem:

\begin{theorem}{\cite[Theorem 4.3]{Completely_simple_semigroups_paper}}\label{girth completely simple semigroup}
	Let $S$ be a finite non-commutative completely simple semigroup. If $\commgraph{S}$ contains at least one cycle, then $\girth{\commgraph{S}}=3$.
\end{theorem}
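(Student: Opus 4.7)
By the Rees--Suschkewitsch Theorem, there exist a group $G$, index sets $I$ and $\Lambda$, and a regular $\Lambda\times I$ matrix $P$ with entries in $G^0$ such that $S\simeq\Reeszero{G}{I}{\Lambda}{P}$. Since $S$ is finite and non-commutative, $G$, $I$ and $\Lambda$ are finite and, by Proposition~\ref{0Rees: center}, this isomorphism respects the fact that $\commgraph{S}$ is defined. My plan is to split into two cases according to whether $P$ has a zero entry.

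First I would handle the case where $P$ contains no zero entry. In that situation, as noted at the beginning of Section~\ref{sec: 0Rees}, we have $\Reeszero{G}{I}{\Lambda}{P}=\parens{\Rees{G}{I}{\Lambda}{P}}^0$ and the graphs $\commgraph{\Reeszero{G}{I}{\Lambda}{P}}$ and $\commgraph{\Rees{G}{I}{\Lambda}{P}}$ are isomorphic (because a zero is always central and hence not a vertex of a commuting graph). Therefore $\commgraph{S}$ is isomorphic to the commuting graph of the finite non-commutative completely simple semigroup $\Rees{G}{I}{\Lambda}{P}$. Applying Theorem~\ref{girth completely simple semigroup}, if $\commgraph{S}$ contains a cycle, then $\girth{\commgraph{S}}=3\in\set{3,4}$.

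Next I would handle the case where $P$ contains at least one zero entry. This is exactly the setting of Section~\ref{sec: girth 0Rees}, so Theorem~\ref{0Rees: girth} applies directly. Inspecting the three parts of that theorem: in parts 1 and 2 (when $\abs{G}\geqslant 2$) the girth equals $3$ whenever cycles exist, and in part 3 (when $\abs{G}=1$) cycles exist if and only if one of the listed \submatrices\ of $\timeszero{P}$ is present, in which case the girth is either $3$ (subcase~a) or $4$ (subcase~b). In all situations where $\commgraph{\Reeszero{G}{I}{\Lambda}{P}}$ contains a cycle, the conclusion $\girth{\commgraph{S}}\in\set{3,4}$ follows.

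I do not anticipate any real obstacle: the proof is essentially a two-line case split, with Theorem~\ref{girth completely simple semigroup} handling the zero-free case and Theorem~\ref{0Rees: girth} handling the case with at least one zero. The only thing to be careful about is to record that when $P$ has no zeros, the commuting graph of $\Reeszero{G}{I}{\Lambda}{P}$ coincides (up to isomorphism) with that of the completely simple Rees matrix semigroup $\Rees{G}{I}{\Lambda}{P}$, which is what legitimises the invocation of the completely simple result.
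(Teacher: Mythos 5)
Your proposal does not prove the stated theorem. The statement is about a finite non-commutative completely \emph{simple} semigroup $S$ (which has no zero), and its conclusion is that $\girth{\commgraph{S}}$ equals exactly $3$. What you have written is instead a proof of Corollary~\ref{0Rees: completely 0-simple girth}, which concerns completely \emph{$0$-simple} semigroups and only concludes $\girth{\commgraph{S}}\in\set{3,4}$. Three things go wrong as a consequence. First, you apply the Rees--Suschkewitsch Theorem in its $0$-simple form, writing $S\simeq\Reeszero{G}{I}{\Lambda}{P}$; a completely simple semigroup has no zero and is not of this form (it is isomorphic to a Rees matrix semigroup $\Rees{G}{I}{\Lambda}{P}$ without zero). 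Second, your first case explicitly invokes Theorem~\ref{girth completely simple semigroup} --- the very statement you are supposed to be proving --- so the argument is circular. Third, even granting everything, your conclusion is $\girth{\commgraph{S}}\in\set{3,4}$, which is strictly weaker than the claimed $\girth{\commgraph{S}}=3$.

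Note also that the paper does not prove this theorem at all: it is imported verbatim from \cite[Theorem 4.3]{Completely_simple_semigroups_paper}, and the proof environment that follows its statement in the source is the proof of Corollary~\ref{0Rees: completely 0-simple girth} (it begins ``Since $S$ is a completely $0$-simple semigroup\dots''). A genuine proof of the theorem would have to analyse commutation in $\Rees{G}{I}{\Lambda}{P}$ when $P$ has no zero entries, which is precisely the material this paper delegates to the completely simple semigroups paper; the tools developed here all assume $P$ has at least one zero entry and so do not apply. Your case split and your treatment of the zero-containing case do essentially reproduce the paper's proof of the corollary, so the work is not wasted --- it is just attached to the wrong statement.
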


\begin{proof}
	Since $S$ is a completely $0$-simple semigroup, there exist a group $G$, index sets $I$ and $\Lambda$, and a regular $\Lambda\times I$ matrix $P$ with entries belonging to $G^0$ such that $S\simeq\Reeszero{G}{I}{\Lambda}{P}$. We observe that, since $S$ is finite, then so are $G$, $I$ and $\Lambda$. Moreover, we have that $\commgraph{S}$ is isomorphic to $\commgraph{\Reeszero{G}{I}{\Lambda}{P}}$. We consider two cases.
	
	\smallskip
	
	\textit{Case 1:} Assume that all the entries of $P$ belong to $G$ (that is, $P$ has no zero entries). This implies that $\Reeszero{G}{I}{\Lambda}{P}\simeq\parens{\Rees{G}{I}{\Lambda}{P}}^0$. Then we have that $\commgraph{\Reeszero{G}{I}{\Lambda}{P}}$ is isomorphic to $\commgraph{\parens{\Rees{G}{I}{\Lambda}{P}}^0}$, which is isomorphic to $\commgraph{\Rees{G}{I}{\Lambda}{P}}$ (because $0$ is a central element and, consequently, not a vertex of the commuting graph). Due to the fact that $\commgraph{S}$ contains cycles, then $\commgraph{\Rees{G}{I}{\Lambda}{P}}$ also contains cycles. Furthermore, $\Rees{G}{I}{\Lambda}{P}$ is a completely simple semigroup and, by Theorem~\ref{girth completely simple semigroup}, this implies that $\girth{S}=\girth{\commgraph{\Reeszero{G}{I}{\Lambda}{P}}}=\girth{\commgraph{\parens{\Rees{G}{I}{\Lambda}{P}}^0}}=\girth{\commgraph{\Rees{G}{I}{\Lambda}{P}}}=3$.
	
	\smallskip
	
	\textit{Case 2:} Assume that $P$ contains a zero entry. Then Theorem~\ref{0Rees: girth} immediately implies that $\girth{S}=\girth{\commgraph{\Reeszero{G}{I}{\Lambda}{P}}}\in\set{3,4}$.  
\end{proof}

We observe that it is possible to find a (finite non-commutative) completely $0$-simple semigroup whose commuting graph has no cycles. For instance, if we consider the cyclic group $C_2$ of order $2$, and if $I=\Lambda=\set{1,2}$ and $P$ is a $\Lambda\times I$ matrix such that
\begin{displaymath}
	\timeszero{P}=\begin{bNiceMatrix}[first-row, first-col]
		& 1 & 2 \\
		1 & 0 & \times \\
		2 & \times & \times
	\end{bNiceMatrix},
\end{displaymath}
then $\Reeszero{C_2}{I}{\Lambda}{P}$ is a completely $0$-simple semigroup and Theorem~\ref{0Rees: girth} implies that $\commgraph{\Reeszero{C_2}{I}{\Lambda}{P}}$ has no cycles (because $P$ contains only one zero entry).

\begin{corollary}\label{0Rees: completely 0-simple knit degree}
	Let $S$ be a (finite non-commutative) completely $0$-simple semigroup. If $\commgraph{S}$ contains left paths, then $\knitdegree{S}=1$.
\end{corollary}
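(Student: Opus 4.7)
The plan is to apply the Rees--Suschkewitsch Theorem to write $S\simeq\Reeszero{G}{I}{\Lambda}{P}$ for some finite group $G$, finite index sets $I$ and $\Lambda$, and a regular $\Lambda\times I$ matrix $P$ with entries in $G^0$, and then split the argument according to whether $P$ contains a zero entry. Since $\commgraph{S}$ is isomorphic to $\commgraph{\Reeszero{G}{I}{\Lambda}{P}}$ and the existence and length of left paths is preserved under isomorphism, we may carry out the argument on $\Reeszero{G}{I}{\Lambda}{P}$.

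If $P$ contains at least one zero entry, then $\Reeszero{G}{I}{\Lambda}{P}$ satisfies the standing hypotheses of Section~\ref{sec: knit degree 0Rees}, so Theorem~\ref{0Rees: knit degree} applies. Since by assumption $\commgraph{S}$ (equivalently, $\commgraph{\Reeszero{G}{I}{\Lambda}{P}}$) contains left paths, Theorem~\ref{0Rees: knit degree} yields $\knitdegree{S}=\knitdegree{\commgraph{\Reeszero{G}{I}{\Lambda}{P}}}=1$, as required.

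If, on the other hand, $P$ has no zero entries, then (as already noted at the start of Section~\ref{sec: 0Rees}) we have $\Reeszero{G}{I}{\Lambda}{P}=\parens{\Rees{G}{I}{\Lambda}{P}}^0$, and since a zero element is always central, the graphs $\commgraph{\Reeszero{G}{I}{\Lambda}{P}}$ and $\commgraph{\Rees{G}{I}{\Lambda}{P}}$ are isomorphic. The semigroup $\Rees{G}{I}{\Lambda}{P}$ is completely simple, and by the result of \cite{Completely_simple_semigroups_paper} stating that no positive integer is a possible value for the knit degree of a completely simple semigroup, the graph $\commgraph{\Rees{G}{I}{\Lambda}{P}}$ contains no left paths. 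Hence $\commgraph{S}$ contains no left paths either, contradicting the hypothesis. So this case cannot occur.

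I do not expect a serious obstacle here: the work has already been done in Theorem~\ref{0Rees: knit degree} and in the referenced result for completely simple semigroups, and the statement follows by a clean case distinction. The only point demanding a little care is to verify that the ``no zero entry'' case really is vacuous under the hypothesis, i.e.\ to invoke the prior result on completely simple semigroups correctly via the isomorphism $\commgraph{\Reeszero{G}{I}{\Lambda}{P}}\simeq\commgraph{\Rees{G}{I}{\Lambda}{P}}$ --- which is explicitly recorded in the opening paragraph of Section~\ref{sec: 0Rees}.
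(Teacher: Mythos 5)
Your proposal is correct and follows essentially the same route as the paper: apply the Rees--Suschkewitsch Theorem, rule out the case where $P$ has no zero entries by appealing to the fact that commuting graphs of finite non-commutative completely simple semigroups have no left paths (the paper phrases this as a proof by contradiction rather than a vacuous case, but the content is identical), and then invoke Theorem~\ref{0Rees: knit degree} in the remaining case. No gaps.
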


The following result is required to prove Corollary~\ref{0Rees: completely 0-simple knit degree}:

\begin{corollary}{\cite[Corollary 4.5]{Completely_simple_semigroups_paper}}\label{knit degree completely simple semigroup}
	Let $S$ be a finite non-commutative completely simple semigroup. Then $\commgraph{S}$ has no left paths.
\end{corollary}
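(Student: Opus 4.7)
The plan is to reduce to the two semigroup constructions that represent $S$, exactly mirroring the case split used in the proof of Corollary~\ref{0Rees: completely 0-simple girth}. By the Rees--Suschkewitsch Theorem, I can write $S\simeq\Reeszero{G}{I}{\Lambda}{P}$ for some finite group $G$, finite index sets $I,\Lambda$, and a regular $\Lambda\times I$ matrix $P$ with entries from $G^0$. Since the commuting graph and knit degree depend only on the isomorphism type of $S$, it suffices to show that $\knitdegree{\Reeszero{G}{I}{\Lambda}{P}}=1$ under the hypothesis that $\commgraph{\Reeszero{G}{I}{\Lambda}{P}}$ contains at least one left path.

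Then I split on whether $P$ contains any zero entries. If $P$ has no zero entries, then $\Reeszero{G}{I}{\Lambda}{P}\simeq\parens{\Rees{G}{I}{\Lambda}{P}}^0$, and because a zero element is always central, the commuting graph of $\parens{\Rees{G}{I}{\Lambda}{P}}^0$ is isomorphic to the commuting graph of the completely simple semigroup $\Rees{G}{I}{\Lambda}{P}$. By Corollary~\ref{knit degree completely simple semigroup}, this graph has no left paths, contradicting the hypothesis. So this subcase cannot occur.

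In the remaining subcase, $P$ contains at least one zero entry, which is precisely the standing assumption of Section~\ref{sec: knit degree 0Rees}. Since $\commgraph{\Reeszero{G}{I}{\Lambda}{P}}$ contains left paths by hypothesis, Theorem~\ref{0Rees: knit degree} applies and gives $\knitdegree{\Reeszero{G}{I}{\Lambda}{P}}=1$ directly, with no further work needed.

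There is no real obstacle here: the argument is a short two-case reduction, and both cases are immediately handled by already-established results (Corollary~\ref{knit degree completely simple semigroup} for the zero-free case, and Theorem~\ref{0Rees: knit degree} for the case with at least one zero). The only subtlety worth stating explicitly is the observation that adjoining a zero to a semigroup does not alter its commuting graph, since the zero is central and therefore excluded from the vertex set.
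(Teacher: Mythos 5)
Your proposal does not prove the stated result. The statement you were asked to prove concerns a finite non-commutative \emph{completely simple} semigroup $S$ (no zero), and asserts that $\commgraph{S}$ contains \emph{no} left paths at all. What you have written is instead a proof of Corollary~\ref{0Rees: completely 0-simple knit degree}, the companion result about completely \emph{$0$-simple} semigroups, which says that \emph{if} $\commgraph{S}$ contains left paths then $\knitdegree{S}=1$. These are different claims: the first rules out left paths entirely, the second computes the knit degree under the assumption that left paths exist. Your very first step --- invoking the Rees--Suschkewitsch Theorem to write $S\simeq\Reeszero{G}{I}{\Lambda}{P}$ --- already does not apply, since a completely simple semigroup is represented by a Rees matrix semigroup $\Rees{G}{I}{\Lambda}{P}$ without zero, not by a $0$-Rees matrix semigroup.

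Worse, your argument is circular with respect to the target: in the subcase where $P$ has no zero entries, you explicitly invoke Corollary~\ref{knit degree completely simple semigroup} --- which is precisely the statement you are supposed to be proving. Note that the paper itself does not reprove this corollary; it is imported from \cite[Corollary 4.5]{Completely_simple_semigroups_paper}, where it is established for Rees matrix semigroups over groups by direct computation with the multiplication $\parens{i,x,\lambda}\parens{j,y,\mu}=\parens{i,xp_{\lambda j}y,\mu}$, in which no product ever vanishes. A correct self-contained argument would have to work with that multiplication and show that the defining condition of a left path, $x_1x_i=x_nx_i$ for all $i$ with $x_1\neq x_n$, cannot be satisfied; none of the machinery you cite (Theorem~\ref{0Rees: knit degree}, the zero-entry case split) addresses this.
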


\begin{proof}
	It follows from the fact that $S$ is a completely $0$-simple semigroup that there exist a group $G$, index sets $I$ and $\Lambda$, and a regular $\Lambda\times I$ matrix $P$ whose entries belong to $G^0$ such that $S\simeq\Reeszero{G}{I}{\Lambda}{P}$. Hence $\commgraph{S}$ is isomorphic to $\commgraph{\Reeszero{G}{I}{\Lambda}{P}}$. Moreover, we have that $G$, $I$ and $\Lambda$ are finite because $S$ is finite.
	
	Assume, with the aim of obtaining a contradiction, that all the entries of $P$ are elements of $G$. Then we have $\Reeszero{G}{I}{\Lambda}{P}\simeq\parens{\Rees{G}{I}{\Lambda}{P}}^0$ and, consequently, $\commgraph{\Reeszero{G}{I}{\Lambda}{P}}$ is isomorphic to $\commgraph{\parens{\Rees{G}{I}{\Lambda}{P}}^0}$. Since $0$ is not a vertex of $\commgraph{\parens{\Rees{G}{I}{\Lambda}{P}}^0}$ (because it is a central element), then we also have that $\commgraph{\parens{\Rees{G}{I}{\Lambda}{P}}^0}$ is isomorphic to $\commgraph{\Rees{G}{I}{\Lambda}{P}}$. As a consequence of Corollary~\ref{knit degree completely simple semigroup}, we have that $\commgraph{\Rees{G}{I}{\Lambda}{P}}$ contains no left paths. Hence $\commgraph{S}$ also has no left paths, which is a contradiction.
	
	Thus $P$ must contain at least one zero entry. Therefore, Theorem~\ref{0Rees: knit degree} ensures that $\knitdegree{S}=\knitdegree{\Reeszero{G}{I}{\Lambda}{P}}=1$.
\end{proof}

We observe that it also follows from Theorem~\ref{0Rees: knit degree} that there exist completely $0$-simple semigroups whose commuting graph has no left paths. For instance, if $G$ is a trivial group, and if $I=\Lambda=\set{1,2}$ and $P$ is a $\Lambda\times I$ matrix such that
\begin{displaymath}
	\timeszero{P}=\begin{bNiceMatrix}[first-row, first-col]
		& 1 & 2 \\
		1 & 0 & \times \\
		2 & \times & 0
	\end{bNiceMatrix},
\end{displaymath}
then $\Reeszero{G}{I}{\Lambda}{P}$ is a completely $0$-simple semigroup and $\commgraph{\Reeszero{G}{I}{\Lambda}{P}}$ has no left paths (because $\abs{G}=1$ and $O_{1 \times 2}$ and $O_{2\times 1}$ are not \submatrices\ of $P$).

\section{Open problems}\label{sec: open problems}

In this section we discuss some unanswered questions concerning commuting graphs of completely $0$-simple semigroups.

\begin{problem}
	Determine the chromatic number of the commuting graph of a $0$-Rees matrix semigroup over a group.
	
	In Theorems~\ref{0Rees: chromatic number edges} and \ref{0Rees: chromatic number max vertex degree} we obtained two upper bounds for the chromatic number of the commuting graph of a $0$-Rees matrix semigroup over a group. This leaves the question of what is the exact value of the chromatic number. In the paper \cite{Brandt_semigroups_1} this has already been answered for particular Brandt semigroups (these semigroups are $0$-Rees matrix semigroups over groups with $I=\Lambda$ and $P$ being a $I\times I$ matrix whose entries are all equal to $0$, except the ones in the diagonal --- these are all equal to $1_G$).
\end{problem}

\begin{problem}
	Describe the simple graphs that are isomorphic to the commuting graph of some completely $0$-simple semigroup.
\end{problem}

\begin{problem}
	Characterize the completely $0$-simple semigroups whose commuting graphs are isomorphic.
\end{problem}

    \bibliography{Bibliography} 
\bibliographystyle{alphaurl}

\end{document}